\numberwithin{equation}{section} \theoremstyle{plain}
\newtheorem{thm}{Theorem}[section]
\newtheorem{prop}{Proposition}[section]
\newtheorem{lem}{Lemma}[section]
\newtheorem{rem}{Remark}[section]
\newtheorem{cor}{Corollary}[section]
\newcommand{\Hfun}[5]
{H_{#2}^{#1} \left[#3\left|
\begin{array}{l} #4 \vspace*{.1in}\\ #5
\end{array}\right.\right]} 
\newcommand{\pFq}[5]
{\, _{#1}F_{#2}\left(
\begin{array}{c}
#3\vspace*{.1in}\\
#4
\end{array}
; #5
\right)}
\newcommand{\Ip}[3]{\left(I^{#1}_+{#2}\right)\hspace*{-.05in}#3}
\newcommand{\Ineg}[3]{\left(I^{#1}_-{#2}\right)\hspace*{-.05in}#3}
\begin{document}

\begin{frontmatter}
\title{Gibbs Partitions, Riemann-Liouville Fractional Operators, Mittag-Leffler Functions, and Fragmentations Derived From Stable Subordinators.\protect}
\runtitle{Gibbs CRP}

\begin{aug}
\author{\fnms{Man-Wai} \snm{Ho}
\ead[label=e1]{mwho@hsmc.edu.hk}},
\author{\fnms{Lancelot F.} \snm{James}\thanksref{t3}\ead[label=e2]{lancelot@ust.hk}
\ead[label=u2,url]{www.bm.ust.hk/isom/faculty-and-staff/directory/lancelot}} and
\author{\fnms{John W.} \snm{Lau}
\ead[label=e3]{john.w.lau@googlemail.com}}
\ead[label=u3,url]{www.web.uwa.edu.au/person/john.lau}

\thankstext{t3}{Supported in part by the grant RGC-GRF 16300217 of the HKSAR.} \runauthor{M.-W. Ho, L.~F. James
and J.~W. Lau}

\affiliation{Hang Seng Management College, The Hong Kong
University of Science and Technology and 
The University of Western Australia}

\address{Department of Mathematics and Statistics\\
School of Decision Sciences\\
Hang Seng Management College\\
Siu Lek Yuen, Shatin, N.T.\\
Hong Kong\\
\printead{e1}\\
}

\address{The Hong Kong University of Science and Technology\\
Department of Information Systems, Business Statistics\\
and Operations Management\\
Clear Water Bay, Kowloon\\
Hong Kong\\
\printead{e2}\\
\printead{u2} }

\address{UWA Centre for Applied Statistics\\
The University of Western Australia (M019)\\
35 Stirling Highway\\
CRAWLEY WA 6009\\
Australia\\
\printead{e3}\\
\printead{u3} }
\end{aug}

\begin{abstract}
Pitman~\cite{Pit03}~(and subsequently Gnedin and Pitman~\cite{Gnedin06}) showed that a large class of random partitions of the integers derived from a stable subordinator of index $\alpha\in(0,1)$
have infinite Gibbs (product) structure as a characterizing feature. The most notable case are random partitions derived from the 
two-parameter Poisson-Dirichlet distribution, $\mathrm{PD}(\alpha,\theta)$, 
which are induced by mixing over variables with generalized Mittag-Leffler distributions, denoted by $\mathrm{ML}(\alpha,\theta).$ 
Our aim in this work is to provide indications on the utility of the wider class of Gibbs partitions as it relates to a study of Riemann-Liouville fractional integrals and size-biased sampling, decompositions of special functions, and its potential use in the understanding of various constructions of more exotic processes. We provide novel characterizations of general laws associated with
two nested families of $\mathrm{PD}(\alpha,\theta)$ mass partitions that are constructed from notable fragmentation operations described in Dong, Goldschmidt and Martin~\cite{DGM} and Pitman~\cite{Pit99Coag}, respectively. These operations are known to be related in distribution to various constructions of discrete random trees/graphs in $[n],$
and their scaling limits, such as stable trees. A centerpiece of 
our work are results related to Mittag-Leffler functions, which play a key role in fractional calculus and are otherwise Laplace transforms of the $\mathrm{ML}(\alpha,\theta)$ variables. Notably, this leads to an interpretation of $\mathrm{PD}(\alpha,\theta)$ laws within a mixed Poisson waiting time framework based on $\mathrm{ML}(\alpha,\theta)$ variables, which suggests connections to recent construction of P\'olya urn models with random immigration by Pek\"oz, R\"ollin and Ross~\cite{Pek2016}.  Simplifications in the Brownian case are highlighted.
\end{abstract}

\begin{keyword}[class=AMS]
\kwd[Primary ]{60C05} \kwd[; secondary ]{60E05}
\end{keyword}

\begin{keyword}
\kwd{beta gamma algebra, Brownian and Bessel processes, Gibbs partitions, Mittag-Leffler functions, stable Poisson-Kingman distributions}
\end{keyword}


\end{frontmatter}
\section{Introduction}
It is known~\cite{Pit97,Pit03,Pit06} that random partitions of the integers $[n]:=\{1,\ldots,n\},$ say $\{C_{1},\ldots,C_{K_{n}}\},$ with $K_{n}\leq n$ unique blocks and sizes $n_{j}=|C_{j}|,$ can be generated by a process of discovery of excursion intervals away from $0$ of Brownian motion or more general Bessel processes of dimension $2-2\alpha,$ for $0<\alpha<1$. That is to say, more generally, processes whose inverse local time follows a stable subordinator $(\hat{S}_{\alpha}(t): t\ge 0)$ of index $\alpha\in(0,1),$ where we can take $\hat{S}_{\alpha}(1)=c^{1/\alpha}S_{\alpha},$ with a positive stable random variable $S_{\alpha}$ having Laplace transform $\mathbb{E}[{\mbox e}^{-\lambda S_{\alpha}}]={\mbox e}^{-\lambda^{\alpha}}$ and density denoted as $f_{\alpha}(t).$ Taking $c=1$ and letting $(\Delta_{k})$ denote the ranked jumps of the subordinator, the ranked lengths of excursion can be constructed as $(P_{\ell}:=\Delta_{\ell}/S_{\alpha})\in \mathcal{P}_{\infty}=\{\mathbf{s}=(s_{1},s_{2},\ldots):s_{1}\ge
s_{2}\ge\cdots\ge 0 {\mbox { and }} \sum_{i=1}^{\infty}s_{i}=1\}$, where $\mathcal{P}_{\infty}$ denotes the space of mass partitions summing to $1$~\cite{BerFrag,Pit06}, and $(P_{\ell})\sim \mathrm{PD}(\alpha,0)$ denotes the Poisson-Dirichlet distribution with parameters $(\alpha,0)$~\cite{PY97}. 

For $K_{n}=k,$ the probability of $\{C_{1},\ldots,C_{{k}}\}$ is given by, what is referred to as the exchangeable partition probability function~(EPPF),
\begin{equation}
p_{\alpha}(n_{1},\ldots,n_{k})=\frac{\alpha^{k-1}\Gamma(k)}{\Gamma(n)}\prod_{j=1}^{k}(1-\alpha)_{n_{j}-1}.
\label{canonEPPF}
\end{equation}
where, for any non-negative integer $x$, $(x)_n = x(x+1)\cdots(x+n-1) = {\Gamma(x+n)}/{\Gamma(x)}$ denotes the Pochhammer symbol. The above EPPF~(\ref{canonEPPF}) and its two-parameter extension~(see~\cite{Pit95,Pit96}),
\begin{equation}
p_{\alpha,\theta}(n_{1},\ldots,n_{k})=\frac{\alpha(\frac{\theta}{\alpha})_k}{(\theta)_n}  \frac{\Gamma(n)}{\Gamma(k)} p_{\alpha}(n_{1},\ldots,n_{k}),
\label{twoEPPF}
\end{equation}
derived from the two-parameter Poisson-Dirichlet distribution, $(P_{\ell})\sim\mathrm{PD}(\alpha,\theta),$ as defined in~\cite{PY97}, constitute the most tractable and notable class of EPPF's that exhibit an inifinite Gibbs or product form~\cite{Pit06}. The EPPF~(\ref{twoEPPF}) is obtained by replacing $S_{\alpha}$ in the above discussion with another variable $S_{\alpha,\theta}$ having density $f_{\alpha,\theta}(t)=t^{-\theta}f_{\alpha}(t)/\mathbb{E}[S^{-\theta}_{\alpha}]$. Furthermore, it corresponds to random partitions generated by the two-parameter Chinese restaurant process, with law denoted as $\mathrm{CRP}(\alpha,\theta),$ as described in~\cite{Pit95,Pit96,Pit06}. 

An important quantity, derived from~(\ref{twoEPPF}), is the probability of the number of blocks $K_n = k$, denoted by $\mathbb{P}_{\alpha,\theta}^{(n)}(k)= \frac{\alpha({\theta}/{\alpha})_k}{(\theta)_n}  \frac{\Gamma(n)}{\Gamma(k)} \mathbb{P}_{\alpha}^{(n)}(k)$ in the $\mathrm{PD}(\alpha,\theta)$ case,
where 
$$
\mathbb{P}_{\alpha}^{(n)}(k):=\mathbb{P}_{\alpha,0}(K_n = k)=\frac{\alpha^{k-1}\Gamma(k)}{\Gamma(n)} S_\alpha(n,k),
$$ 
with $S_\alpha(n,k) =  \frac{1}{\alpha^k k!} \sum_{j=1}^k (-1)^j \binom{k}{j} (-j\alpha)_n$ denoting the generalized Stirling number of the second kind. See~\cite{Pit99,Pit06} for more details in relation to
the derivation of $\mathbb{P}_{\alpha,\theta}^{(n)}(k).$ 
\cite{Pit03} shows that, as 
$n\rightarrow \infty,$  $n^{-\alpha}K_{n}\rightarrow S^{-\alpha}_{\alpha,\theta}$ almost surely~(a.s.). 
Within this context, $S^{-\alpha}_{\alpha,\theta}$ is referred to as the $\alpha$-diversity of the $\mathrm{PD}(\alpha,\theta).$ Following~\cite{PPY92,Pit06,PY97}, a version of $S^{-\alpha}_{\alpha,\theta}$ may be interpreted in terms of the local time up to time $1$ of a generalized Bessel process, specifically,
\begin{equation}
S^{-\alpha}_{\alpha,\theta}:=\frac{1}{\Gamma(1-\alpha)}\lim_{\epsilon\rightarrow
0}\epsilon^{\alpha}|\{i:P_{i}\ge \epsilon\}|~\mathrm{a.s.}.
\label{inverselocaltime}
\end{equation}
For general $\alpha,$ they also arise in various P\'olya urn and random graph/tree growth models as described in, for instance,~\cite{AddarioCut,AldousCRTI,BloemOrbanzPref,CurienHaas,GoldschmidtHaas2015,Haas,Haas2,James2015,SvanteUrn,SvanteKuba,Pek2013,Pek2016Gengam,Pek2017jointpref,RembartWinkel2016,RembartWinkel2016a}.
$S^{-\alpha}_{\alpha},$ with density $g_{\alpha}(z):=f_{\alpha}(z^{-\frac1\alpha})z^{-\frac{1}\alpha-1}/\alpha$, is often referred to as having a Mittag-Leffler distribution. Hence, the generalized Mittag-Leffler variable, $S^{-\alpha}_{\alpha,\theta},$ with distribution denoted as~$\mathrm{ML}(\alpha,\theta),$ has the power-biased density of $g_{\alpha},$
$$
g_{\alpha,\theta}(z)=\frac{z^{\frac\theta\alpha}g_{\alpha}(z)}{\mathbb{E}[S^{-\theta}_{\alpha}]}.
$$
See~\cite{DevroyeGen,DevroyeJames2} for its simulation and other properties. The $\mathrm{CRP}(\alpha,\theta)$ partition of $[n]$ may also be generated by exchangeably sampling $n$ variables from the random distribution function, $P_{\alpha,\theta}(y)=\sum_{k=1}^{\infty}P_{k}\mathbb{I}_{\{\tilde{U}_{k}\leq y\}},$ defined for $(\tilde{U}_{k})$ an infinite collection of iid $\mathrm{Uniform}(0,1)$ variables, independent of $(P_{k}).$
$P_{\alpha,\theta}$ is now known as a Pitman-Yor process~(named in~\cite{IJ2001}), which has applications in Bayesian statistics and machine learning, and arises in numerous areas constituting combinatorial stochastic processes~\cite{BerFrag,Buntine,Goldwater,IJ2001,IJ2003,JamesLamp,Pit96,Pit06,PY97,TehPY,Wood}. 
We note the recursive procedure of \textit{size biased sampling with excision (of excursion intervals)}, as described in~\cite{PPY92}~(see also~\cite{Pit03,Pit06,PY92,PY97}), whereby newly discovered intervals are immediately excised and the remaining lengths are re-scaled to be of length $1,$ produces the stick-breaking sequence $(\tilde{P}_{\ell})\overset{d}=(\beta_{1-\alpha,\theta+\ell\alpha}\prod_{j=1}^{\ell-1}\beta_{\theta+j\alpha,1-\alpha};\ell\ge 1)\sim \mathrm{GEM}(\alpha,\theta)$ of independent beta variables, which is the size-biased re-arrangement of $(P_{k}).$ $\mathrm{GEM}(\alpha,\theta)$ stands for the two-parameter extension of the \textit{Griffiths-Engen-McCloskey} distribution. Throughout this paper, $G_{a}$ denotes a $\mathrm{Gamma}(a,1)$ variable, and $\beta_{a,b}$ denotes a $\mathrm{Beta}(a,b)$ variable with density
$$
f_{\beta_{a,b}}(u)=\frac{\Gamma(a+b)}{\Gamma(a)\Gamma(b)}
u^{a-1}
{(1-u)}^{b-1},\,\,0<u<1.
$$

\subsection{Preliminaries on Poisson-Kingman distributions and Gibbs partitions}
While we shall discuss many properties of the $\mathrm{PD}(\alpha,\theta)$ distribution, our primary focus in this paper are results centered around the general class of EPPF's constituting Gibbs partitions, as derived and discussed in~\cite{Gnedin06,Pit03,Pit06}, called the \textit{Poisson-Kingman~(PK) partitions}. Those works showed that sampling from $(P_{\ell})|S_{\alpha}=t,$ with law denoted as $\mathrm{PD}(\alpha|t),$ leads to a general class of random partitions that have 
infinite Gibbs (product) structure as a characterizing feature. Specifically, the law of $\{C_{1},\ldots,C_{{k}}\}|S_{\alpha}=t$ can be expressed as
\begin{equation}\label{PitEPPF}
p_{\alpha}(n_{1},\ldots,n_{k}|t)=
\mathbb{G}_{\alpha}^{(n,k)}(t)\prod_{j=1}^{k}(1-\alpha)_{n_{j}-1},
\end{equation}
where
\begin{equation}
\label{bigG}
\mathbb{G}_{\alpha}^{(n,k)}(t) =
\frac{\alpha^{k}t^{-n}}{\Gamma(n-k\alpha)f_{\alpha}(t)}
\left[\int_{0}^{t}f_{\alpha}(v)(t-v)^{n-k\alpha-1}dv\right].
\end{equation}
As in~\cite{Pit03}, for any non-negative function $h(t)$ satisfying $\mathbb{E}[h(S_{\alpha})]=1,$ one may mix 
$\mathrm{PD}(\alpha|t)$ over the density, $\gamma(dt)/dt:=h(t)f_{\alpha}(t)$, to obtain a huge class of distributions 
for the Gibbs random partitions. We shall write 
\begin{equation}
(P_\ell) \sim \mathrm{PK}_{\alpha}(\gamma)=\int_{0}^{\infty}\mathrm{PD}(\alpha|t)\gamma(dt),
\label{PKmodel}
\end{equation}
and also use the notation $\mathrm{PK}_{\alpha}(h\circ f_{\alpha})=\mathrm{PK}_{\alpha}(\gamma).$ For instance, $\mathrm{PD}(\alpha,\theta)$ arises when $\gamma(dt) = f_{\alpha,\theta}(t)dt$. Integrating over~(\ref{PitEPPF}) with respect to $\gamma(dt)$ leads to the EPPF of the PK 
partitions~(see~\cite[Theorem 4.6]{Pit06} and~\cite[Theorem 12]{Gnedin06}), expressed as
\begin{equation}
p^{[\gamma]}_{\alpha}(n_{1},\ldots,n_{k})=V_{n,k}\frac{\alpha^{1-k}\Gamma(n)}{\Gamma(k)}p_{\alpha}(n_{1},\ldots,n_{k}),
\label{VEPPF}
\end{equation}
where $V_{n,k}=\int_{0}^{\infty}\mathbb{G}_{\alpha}^{(n,k)}(t)\gamma(dt)$. Naturally, evaluation of~(\ref{VEPPF}) relies very much on the form of $\mathbb{G}_{\alpha}^{(n,k)}(t)$. Pitman~(see \cite[Section 8]{Pit03} and~\cite[Section 4.5, p.90]{Pit06}) developed the Brownian case of $\alpha=\frac12,$ which in many respects is the most remarkable, 
and showed that the EPPF in that case can be expressed explicitly in terms of Hermite functions, or equivalently, confluent hypergeometric functions. Some details of those results are given here in Section~\ref{sec:Hermitesection}.
For a general $0<\alpha<1$, it is nonetheless non-trivial to obtain a representation of $\mathbb{G}_{\alpha}^{(n,k)}(t)$ 
in terms of special functions or other transcendental functions, a question posed in~\cite[Problem 4.3.3, p.87]{Pit06}. In~\cite[Theorem 2.1 and Theorem 3.1]{HJL}, we provided an answer whereby, using representations in~\cite{Schneider86,Schneider87}, we gave alternative expressions of $\mathbb{G}_{\alpha}^{(n,k)}(t)$ in terms of Fox $H$ functions for any general $\alpha$, and in terms of readily computable Meijer $G$ functions for the case of $\alpha=\frac{m}r,$ with co-prime integers $m<r.$ See~\cite{Mathai} and references therein, as well as \cite{HJL}, for more on these special functions, especially their connections to fractional calculus. The Meijer $G$ representations are facilitated by the use of~\cite{Springer70} and the following distributional representatons of $\mathrm{ML}(\frac{m}{r},\theta)$ variables, which can be found in~\cite[Section 8]{JamesLamp}~(see also~\cite{Chaumont,Zolotarev86}, for $\theta=0$),
\begin{equation}
{\left(\frac{m}{S_{\frac{m}{r},\theta}}\right)}^{\frac{m}{r}}\overset{d}=r
\left(\prod_{k=1}^{m-1}\beta^{\frac1r}_{\frac{\theta}{m}+\frac{k}{n},k(\frac{1}{m}-\frac{1}{r})}\right)
\left(\prod_{k=m}^{r-1}G^{\frac1r}_{\frac{\theta}{m}+\frac{k}{r}}\right).
\label{stablegeninteger}
\end{equation}
Importantly, we provided a distributional interpretation which expresses~(\ref{PitEPPF}) as 
\begin{equation}
p_{\alpha}(n_{1},\ldots,n_{k}|t)
=\frac{f^{(n-k\alpha)}_{\alpha,k\alpha}(t)}{f_{\alpha}(t)}\times  p_{\alpha}(n_{1},\ldots,n_{k}),
\label{pitEPPFdecomp}
\end{equation}
where $f^{(n-k\alpha)}_{\alpha,k\alpha}(t)$ denotes the conditional density of $S_{\alpha}|K_{n}=k$, and it corresponds to the densities of random variables of independent pairs,
\begin{equation}
\frac{S_{\alpha,k\alpha}}{\beta_{k\alpha,n-k\alpha}}=\frac{S_{\alpha,n}}{\beta^{\frac1\alpha}_{k,\frac{n}{\alpha}-k}}.
\label{jamesidspecial}
\end{equation}
The equalities in distribution can be read from James~\cite[eq. (2.11)
]{JamesLamp}. The expression $\bigg(S_{\alpha,n}\,\beta^{-\frac1\alpha}_{k,\frac{n}{\alpha}-k}\bigg)^{-\alpha}=S^{-\alpha}_{\alpha,n}\,\beta_{k,\frac{n}{\alpha}-k}$ also arises in~\cite[Proposition 2]{FavaorLijoi2009} as the conditional $\alpha$-diversity of a $\mathrm{PD}(\alpha,0)$ distribution. 
As such, one may represent~(\ref{VEPPF}) as
\begin{equation}
p^{[\gamma]}_{\alpha}(n_{1},\ldots,n_{k})=\mathbb{E}\left[h\left(\frac{S_{\alpha,n}} {\beta^{\frac1\alpha}_{k,\frac{n}{\alpha}-k}}\right)\right]
p_{\alpha}(n_{1},\ldots,n_{k}),
\label{VEPPF2}
\end{equation}
where the expectation is also identical to $\mathbb{E}[h(S_{\alpha})|K_{n}=k].$
Although the $\mathrm{PD}(\alpha,\theta)$ class of models dominates the broad literature, there has been significant interest in the general class of Gibbs partitions. Here we note a few examples in~\cite{Bacallado,CaronFox,Deblasi,GriffithsSpano,KoaGibbs,HJL,LomeliFavaro,PYaku}. Our exposition takes another viewpoint of this general class as we begin to describe next.

\subsection{Outline}
The results in~\cite{Gnedin06,HJL,Pit03}, coupled with refinements in this work, 
allows one to describe explicit distributions and establish scaled limit theorems for a myriad of random partitions of $[n]$, and related constructions based on  $(P_{\ell})\sim \mathrm{PK}_{\alpha}(\gamma)$.
However, in general, those results have not been exploited to provide insights in terms of interpretations, or in fact how to utilize the general framework of Gibbs partitions in novel ways, for what would otherwise be interesting exotic random processes. More specifically, for a given choice of $\gamma$, how does one interpret $(P_{\ell})\sim \mathrm{PK}_{\alpha}(\gamma)$ in~(\ref{PKmodel})? 
For example, if $\gamma$ corresponds to $S_{\alpha}|Y=y,$ $(P_{\ell})\sim \mathrm{PK}_{\alpha}(\gamma)$ does not necessarily equate to the distribution of $(P_{\ell})|Y=y.$ As another example, which we shall discuss further in Section~\ref{sec:Hermitesection},~\cite[eq. (1.2)]{Pek2016} describe a class of P\'olya urn models based on randomized discrete waiting times, that induce random limits corresponding to a broad class of distributions denoted as $\mathrm{UL}(v,(a_{k})_{\{k\ge 1\}}).$ It is a simple matter to select $\gamma$ with this distribution, and thus achieve comparable limits, however there is not an immediate interpretation of $(P_{\ell})$ etc. 
The \textit{deletion of classes}~\cite[Proposition 7]{Pit03} in the case of a stable subordinator, derived from the stick-breaking regime~\cite{PPY92}, is an important case where interpretation is clear. 

In order to give some insights into issues of novel usage and distributional interpretations of the Gibbs partitions, this paper presents three broad based intertwined themes which we first sketch below.

\subsubsection{Fractional operators indexed by $f_{\alpha}$ and size-biased sampling}
Section~\ref{sec:Liouville} presents results from the viewpoint of fractional integrals indexed by $f_{\alpha}$ and a parameter $\nu>0.$ In particular, applying a simple change of variable, one can express $\alpha^{-k}t^{n}f_{\alpha}(t)\mathbb{G}_{\alpha}^{(n,k)}(t)$ as
\begin{eqnarray}
\label{bigI}
\Ip{n-k\alpha}{f_{\alpha}}{(t)} &=&
\frac{1}{\Gamma(n-k\alpha)}\int_{0}^{t}f_{\alpha}(v){(t-v)}^{n-k\alpha-1}dv
\\\nonumber
&=&\frac{\mathbb{E}\big[{(t-S_{\alpha})}^{n-k\alpha-1}\mathbb{I}_{\{S_{\alpha}<t\}}\big]}{\Gamma(n-k\alpha)}.
\end{eqnarray}
Replacing $f_{\alpha}(t)$ with any integrable function $f(t),$ one sees that these equations arise as special cases of right-sided Riemann-Liouville fractional operators of orders $\nu=n-k\alpha$, for $k=1,\ldots n$, defined by
\begin{equation}
\label{RightRL}
\Ip{\nu}{f}{(t)}= \frac{1}{\Gamma(\nu)} \int_0^t f(u) (t-u)^{\nu-1} du.
\end{equation}
One may also consider left-sided Riemann-Liouville fractional integral operators, defined by
$$
\Ineg{\nu}{f}{(t)}= \frac{1}{\Gamma(\nu)} \int_t^\infty f(u) (u-t)^{\nu-1} du,
$$
which we omit further discussion for brevity though. The identity (\ref{bigI}) leads to natural connections to the field of fractional calculus, wherein the interplay between special functions, probability theory, in particular as it relates to size-biased sampling, and fractional operator theory is illustrated. Noting that $\mathbb{G}_{\alpha}^{(1,1)}(t)=1$ leads to the equation,
\begin{equation}
\alpha \Ip{1-\alpha}{f_\alpha}{(t)}
=tf_{\alpha}(t),
\label{diffeo}
\end{equation}
which identifies $f_{\alpha}(t)$ as the unique solution to a particular Abel equation involving general functions $f(t).$ This solution arises as an example in, for instance, \cite{JedidiSimon,Pakesgstable,Schneider87}. In addition, as can be read from~\cite[eqs. (18-19)
]{Pit03}, the equation and its unicity arises as a special case of properties of infinitely divisible variables, see~\cite{Steutel}, and is directly related to size-biased sampling with $n=1.$ In this regard, it is easy to check that the expression~(\ref{diffeo}) corresponds to the identity $S_{\alpha}=S_{\alpha,\alpha}/\beta_{\alpha,1-\alpha},$ established in~\cite{PPY92}. \cite[Section~2.6.b, p.386]{JRY} obtain the equation~(\ref{diffeo}) in their derivation of the Thorin measure (hence the  L\'evy density) of the random variable $G^{1/\alpha}_{1}\overset{d}=G_{1}/S_{\alpha}, $ where the variables on  the right hand side are independent.~\cite[Lemma 2.4]{JRY}, not taking into account the points mentioned above, re-establishes the unicity of~(\ref{diffeo}) via a Laplace transform argument. It is interesting to note that for each $n,$ the random variable with distribution $T_{n}\overset{d}=G_{n}/S_{\alpha}$ plays a subtle but important role here in regards to size-biased sampling~(see~\cite{James2002,JLP}). Furthermore, $(P_{\ell})|G_{n}/S_{\alpha}=\lambda$ does indeed have a PK 
distribution. As such, we incorporate the recent exposition of~\cite{PitmanPoissonMix} on mixed Poisson processes where, in particular, new interpretations of these distributions in the size-biased sampling/species sampling setting are given. A study of the general class $
\Ip{\nu}{f_\alpha}{(t)}$ is given, which connects to various distributional results and identities, including known results for $\mathrm{PD}(\alpha,\theta)$ derived from a different perspective.

\subsubsection{Operations on nested families of mass partitions -- Fragmentations}
Section~\ref{sec:MLC}, \ref{sec:MLCmittag}, and \ref{sec:PitFrag} describe various results for $\mathrm{PK}_{\alpha}(\gamma)$ distributions in~(\ref{PKmodel}) based on well-known constructive operations in the $\mathrm{PD}(\alpha,\theta)$ setting that induce Markov chains, relevant to the construction of random graphs, trees and related structures in continuous and discrete time, and exhibit dual fragmentation/coagulation properties.
In particular, in this broader setting, we obtain distributional properties of two types of fragmentation operations described in $\mathrm{PD}(\alpha,\theta)$ setting in~Dong, Goldschmidt and Martin~\cite{DGM}, and Pitman~\cite{Pit99}. These are complementary results to those for stick-breaking operations in~\cite{PPY92,Pit03,Pit06} in the $\mathrm{PK}_{\alpha}(\gamma)$ setting. That is to say, complementary to~\cite[Proposition 7]{Pit03}, and the family of distributions, $((P_{k,r}),r\ge 0)\sim (\mathrm{PD}(\alpha,\theta+r\alpha),r\ge 0),$
produced by deletion/insertion operations~\cite[Section~6.1]{PY97} when $(P_{k,0})\sim\mathrm{PD}(\alpha,\theta).$ As such, this gives tractable descriptions of combinatorial structures, their non-trivial limits, and other properties, derived from (arguably) the three most prominent actions on nested spaces of mass partitions appearing in the literature. In fact, as far as we know, our descriptions of the fragmentation operations in Section~\ref{sec:PitFrag} have not been developed in any detail elsewhere. Our descriptions in Section~\ref{sec:MLC} and~\ref{sec:PitFrag} exploit quite subtle structural properties, in the two respective cases, to obtain rather remarkable distributional representations. 

\subsubsection{Mittag-Leffler function Gibbs classes and mixed Poisson/inter-arrival time models}
A centerpiece of our work, constituting most of our explicit examples, are results related to various generalizations of the Mittag-Leffler function which play a key role in fractional calculus~\cite{GorenfloMittag}. In Section~\ref{sec:decomp}, as special functions, we consider these in the range of $0<\alpha<1,$ which corresponds to the case where the functions are completely monotone, and, hence, in the first instance here, are Laplace transforms of  
$\mathrm{ML}(\alpha,\theta)$ variables, $S^{-\alpha}_{\alpha,\theta}.$ We show that there is a decomposition in terms of Prabhakar functions, and then describe a corresponding PK 
class and its EPPF in terms of such functions. Distributional interpretatons are further explored in Section~\ref{sec:MLCmittag}. In particular, the view as Laplace transforms of $\mathrm{ML}(\alpha,\theta)$ variables naturally links to the mixed Poisson/inter-arrival framework in~\cite{PitmanPoissonMix}.
This suggests potentially a dual interpretation to the species sampling framework involving the inverse local time $S_{\alpha}.$ In the $\alpha=\frac12$ case, we derive more explicit results,
and establish connections to variables appearing in~\cite{ChassaingJanson,Pek2016,Pit99local}. For instance, we connect the continuous waiting time framework with a class of discrete waiting time models in~\cite{Pek2016}, which also invites further study.
We also encounter further generalizations using the fragmentation operations described in Section~\ref{sec:MLC} and~\ref{sec:PitFrag}. 

An outline of the paper is as follows. Section~\ref{sec:Liouville} presents a characterization of $\mathbb{G}_{\alpha}^{(n,k)}(t)$ in terms of Riemann-Liouville fractional integrals of orders $\nu=n-k\alpha$ indexed by a stable density, and connects this with size-biased sampling and a waiting time framework discussed in~\cite{PitmanPoissonMix}. Extending this to a study for all $\nu>0,$ we encounter some interesting identities appearing in the literature from another viewpoint. Section~\ref{sec:MLC} presents detailed characterizations of the distribution of the Mittag-Leffler Markov chain in~\cite{RembartWinkel2016} under the more general $\mathrm{PK}_{\alpha}(\gamma)$ setting. Simplifications and decompositions are achieved by exploiting identities in Proposition~\ref{Propbetaid1} of Section~\ref{sec:Liouville}. Section~\ref{sec:decomp} describes how one can use the Gibbs partition framework as a method to decompose various special functions. An extensive example is given in terms of a Mittag-Leffler function Gibbs class which can be expressed in terms of Prabhakar functions discussed in~\cite{GorenfloMittag}. Section~\ref{sec:MLCmittag} gives several distributional interpretations of the Mittag-Leffler function Gibbs class. In particular, concrete~(conditional) distributional descriptions of $\mathrm{PD}(\alpha,\theta)$ mass partitions are made in terms of a mixed Poisson waiting time framework based on $S^{-\alpha}_{\alpha,\theta}\sim\mathrm{ML}(\alpha,\theta)$ variables, which exhibit distinguished properties. The results of Section~\ref{sec:MLC} are applied to this setting. Section~\ref{sec:Hermitesection} specializes to the Brownian case of $\alpha=\frac12,$ where many explicit results are given and connections are made to a $\mathrm{UL}(2\theta+j+1,(\frac{\lambda}{2\theta+j+1},\frac{1}{2\theta+j+1}))$ distribution arising in~\cite{Pek2016}. 
These distributions are exponentially tilted and power-biased Rayleigh distributions. Section~\ref{sec:PitFrag} presents the analysis of a Markov chain deduced from fragmentation operations in~\cite{Pit99}, which should, for instance, be connected to the nesting phenomena observed in~\cite{CurienHaas}. Remarkable characterizations, which exploit results deduced from dual coagulation operations, are given. 
Interesting examples are given in term of fragmentations of a Brownian excursion partition conditioned on its local time and a further fragmentation of the Mittag-Leffler function Gibbs class. The appendix contains various results deduced from the coagulation operation in~\cite{Pit99}, including, for example, an apparently new representation for the distribution of the number of blocks $K_{n}$ in the $\alpha=\frac14$ case. Throughout variations of the Mittag-Leffler function are encountered.

\section{Connections to Riemann-Liouville fractional operators}\label{sec:Liouville}
We first present a simple but revealing result, which ties in the properties of
$\mathbb{G}_{\alpha}^{(n,k)}(t)$ 
with the right-sided Riemann-Liouville fractional operator of orders $\nu=n-k\alpha$, $I^{n-k\alpha}_+f_{\alpha}$, defined in~(\ref{RightRL}).
\begin{lem}
\label{simpleprop}
There is the relation for the operator,
\begin{equation}
\Gamma(n)\sum_{k=1}^{n}\mathbb{P}^{(n)}_{\alpha,0}(k)\frac{\Ip{n-k\alpha}{f_\alpha}{(t)}
}{\Gamma(k)}=t^{n-1}\Ip{1-\alpha}{f_\alpha}{(t)}
=\frac{t^{n}f_{\alpha}(t)}\alpha
\label{sizebiasedoperators}.
\end{equation}
\begin{enumerate}

\item[(i)]Taking Laplace transforms of both sides of~(\ref{sizebiasedoperators}) yields, for $\lambda>0,$
\begin{equation}
\frac{\lambda^{n-1}}{\Gamma(n)}\int_{0}^{\infty}{\mbox e}^{-\lambda t}t^{n}f_{\alpha}(t)dt=\alpha{\mbox e}^{-\lambda^{\alpha}}\sum_{k=1}^{n}\mathbb{P}^{(n)}_{\alpha,0}(k)\frac{\lambda^{k\alpha-1}}{\Gamma(k)}.
\label{expmoment}
\end{equation}
\item[(ii)]Letting $\phi^{(n)}_{\alpha}(\lambda)$ denote the $n$-th derivative of $\phi_{\alpha}(\lambda)={\mbox e}^{-\lambda^{\alpha}},$~(\ref{expmoment}) corresponds to the known general representation,
\begin{equation}
\int_{0}^{\infty}{\mbox e}^{-\lambda t}t^{n}f_{\alpha}(t)dt={(-1)}^{n}
\phi^{(n)}_{\alpha}(\lambda)={\mbox e}^{-\lambda^{\alpha}}\sum_{\pi}\prod_{j=1}^{k}\kappa_{n_{j}}(\lambda),
\label{expmoment2}
\end{equation}
where the notation $\sum_{\pi}$ denotes the sum over all partitions of $[n]$ and $\kappa_{j}(\lambda)$, defined by $\Gamma(1-\alpha)\kappa_{j}(\lambda)=\alpha\int_{0}^{\infty}s^{j-\alpha-1}{\mbox e}^{-\lambda s}ds$, is the exponential cumulant.
\end{enumerate}
\end{lem}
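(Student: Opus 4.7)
The plan is to read the main identity (\ref{sizebiasedoperators}) as a probabilistic statement about the conditional distribution of $K_n$ given $S_\alpha=t$ under the Gibbs EPPF (\ref{PitEPPF}), and then derive parts (i) and (ii) from, respectively, the standard Laplace-transform rule for Riemann--Liouville fractional integrals and a Fa\`a di Bruno expansion of $\phi_\alpha(\lambda)=e^{-\lambda^\alpha}$.

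For the right-hand equality of (\ref{sizebiasedoperators}), I would sum the EPPF in (\ref{PitEPPF}) over all set partitions of $[n]$ with exactly $k$ blocks to obtain $\mathbb{P}_{\alpha|t}^{(n)}(k)=S_\alpha(n,k)\,\mathbb{G}_\alpha^{(n,k)}(t)$, then use (\ref{bigG}) to rewrite $\mathbb{G}_\alpha^{(n,k)}(t)=\alpha^{k}t^{-n}f_\alpha(t)^{-1}\Ip{n-k\alpha}{f_\alpha}{(t)}$ and substitute $S_\alpha(n,k)=\Gamma(n)\mathbb{P}_{\alpha,0}^{(n)}(k)/(\alpha^{k-1}\Gamma(k))$. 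Summing in $k$ and using $\sum_{k=1}^n\mathbb{P}_{\alpha|t}^{(n)}(k)=1$ yields $\Gamma(n)\sum_{k=1}^n\mathbb{P}_{\alpha,0}^{(n)}(k)\Ip{n-k\alpha}{f_\alpha}{(t)}/\Gamma(k)=t^n f_\alpha(t)/\alpha$, which is the right-hand equality. The middle equality $t^{n-1}\Ip{1-\alpha}{f_\alpha}{(t)}=t^n f_\alpha(t)/\alpha$ is then immediate from (\ref{diffeo}) after multiplying by $t^{n-1}$.

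For part (i), I would apply the standard Laplace-transform rule $\mathcal{L}\{\Ip{\nu}{f}{(\cdot)}\}(\lambda)=\lambda^{-\nu}\mathcal{L}\{f\}(\lambda)$, proved by a Fubini interchange, which here gives $\mathcal{L}\{\Ip{n-k\alpha}{f_\alpha}{(\cdot)}\}(\lambda)=\lambda^{-(n-k\alpha)}e^{-\lambda^\alpha}$. Applying this term-by-term to (\ref{sizebiasedoperators}) and collecting powers of $\lambda$ produces (\ref{expmoment}) after modest algebra.

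For part (ii), the first equality $\int_0^\infty e^{-\lambda t}t^n f_\alpha(t)\,dt=(-1)^n\phi_\alpha^{(n)}(\lambda)$ is immediate from differentiating the Laplace transform of $f_\alpha$ under the integral $n$ times. For the second equality I would write $\phi_\alpha=e^{\psi}$ with $\psi(\lambda)=-\lambda^\alpha$ and apply the exponential formula to get $\phi_\alpha^{(n)}(\lambda)=e^{\psi(\lambda)}\sum_\pi\prod_{j=1}^k\psi^{(n_j)}(\lambda)$, the sum running over set partitions $\pi$ of $[n]$ with block sizes $n_1,\ldots,n_k$. A direct computation gives $\psi^{(j)}(\lambda)=-\alpha(\alpha-1)\cdots(\alpha-j+1)\lambda^{\alpha-j}=(-1)^j\kappa_j(\lambda)$, where the explicit form $\kappa_j(\lambda)=\alpha(1-\alpha)_{j-1}\lambda^{\alpha-j}$ is read off the defining integral for $\kappa_j$ using $\Gamma(j-\alpha)=(1-\alpha)_{j-1}\Gamma(1-\alpha)$. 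Since $\sum_j n_j=n$, the per-block factors of $(-1)^{n_j}$ aggregate into a global $(-1)^n$ that exactly cancels the $(-1)^n$ prefactor, delivering the claimed representation. The main care point, which also verifies consistency between (i) and (ii), is to regroup the Fa\`a di Bruno sum by the number of blocks $k$ and use the identity $\sum_{\pi:\,k\text{ blocks of }[n]}\prod_j(1-\alpha)_{n_j-1}=S_\alpha(n,k)=\Gamma(n)\mathbb{P}_{\alpha,0}^{(n)}(k)/(\alpha^{k-1}\Gamma(k))$, which, combined with the explicit cumulant formula, recovers exactly the right-hand side of (\ref{expmoment}).
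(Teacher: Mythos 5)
Your proposal is correct and takes essentially the same route as the paper: the identity (\ref{sizebiasedoperators}) is obtained by summing the conditional EPPF (\ref{PitEPPF}) over partitions (equivalently, normalizing $\sum_k S_\alpha(n,k)\mathbb{G}^{(n,k)}_\alpha(t)=1$) together with (\ref{diffeo}), and part (i) is the standard Laplace-transform rule $\int_0^\infty e^{-\lambda t}\Ip{\nu}{f}{(t)}\,dt=\lambda^{-\nu}\int_0^\infty e^{-\lambda s}f(s)\,ds$, exactly as in the paper's proof. For part (ii) the paper merely cites the representation as known, whereas you correctly supply the Fa\`a di Bruno verification with $\psi^{(j)}(\lambda)=(-1)^{j}\kappa_j(\lambda)$ and the consistency check against (\ref{expmoment}); this is a sound filling-in of detail rather than a different approach.
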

\begin{proof}
(\ref{sizebiasedoperators}) arises from~(\ref{PitEPPF}). Statement (i) follows from the known fact that, for any non-negative function $f$, the Laplace transform of the operator $I^{\nu}_+f$, at a point $\lambda>0,$ is equal to  
$
\int_{0}^{\infty}{\mbox e}^{-\lambda t}\Ip{\nu}{f}{(t)}
\,dt=\lambda^{-\nu}\int_{0}^{\infty}{\mbox e}^{-\lambda s}f(s)ds,$ which specializes to
$$
\int_{0}^{\infty}{\mbox e}^{-\lambda t}\Ip{\nu}{f_\alpha}{(t)} 
\,dt=\lambda^{-\nu}{\mbox e}^{-\lambda^{\alpha}}.
$$
\end{proof}

\subsection{A mixed Poisson process viewpoint with respect to $S_{\alpha}$}
\label{sec:mixedPoisson}
The expressions in (\ref{expmoment}) correspond to the density of a random variable  $T_{n}\overset{d}=G_{n}/S_{\alpha},$ with argument $\lambda.$ Furthermore, the expression on the right indicates that $T_{n}\overset{d}=G^{1/\alpha}_{K_{n}},$ where $K_{n}$ is the random number of blocks of a $\mathrm{PD}(\alpha,0)$ partition of $[n]$. When $S_{\alpha}$ is replaced by an infinitely divisible random variable corresponding to a general  subordinator evaluated at a fixed time, such variables $T_{n}$ appear, among other places, in the form of a gamma randomization in relation to exchangeable sampling from discrete random measures formed by normalized subordinators in~\cite{James2002,JLP}. See also~\cite{Bondesson1992,Kingman75,Pit03}.
Pitman~\cite{PitmanPoissonMix} offers a fresh exposition on mixed Poisson processes and their relevance to applications involving, for instance, species sampling models and random partitions, which allows one to enrich the interpretation of $T_{n}$ and related variables. The next general facts and definitions may be read from \cite{PitmanPoissonMix}. For $r=1,2\ldots,$ let $G_{r}:=\sum_{j=1}^{r}\mathbf{e}_{j}:=G_{r-1}+\mathbf{e}_{r}$ denote increasing sums of independent standard exponential variables. For a non-negative random variable $A$ independent of the sequence $(G_{r}),$ define, for each $r,$ $T_{r}=G_{r}/A,$ whence $(T_{r})$ may be interpreted as the sequence of waiting times of a mixed Poisson process $(N_{A}(t) ;t\ge 0)$ defined as 
$$
N_{A}(t)=\sum_{r=1}^{\infty}\mathbb{I}_{\{T_{r}\leq t\}}.
$$
That is, $T_{r}=\inf{\{t:N_{A}(t)=r\}}$, for $r=1,2,\ldots$. There is the following description of the conditional distribution  of $A$ given $(N_{A}(y);0\leq y\leq \lambda),$ for $j=0,1,2,\ldots,$
\begin{equation}
\mathbb{P}(A\in da|(N_{A}(y);0\leq y\leq \lambda), N_{A}(\lambda)=j)=\frac{a^{j}{\mbox e}^{-\lambda a}\mathbb{P}(A\in da)}{\mathbb{E}[A^{j}{\mbox e}^{-\lambda A}]}.
\label{AconPoisson}
\end{equation}

Throughout this paper, let $(\tau_{\alpha}(y);y\ge 0)$ denote a generalized gamma subordinator so that, for fixed $\lambda,$ $\tau_{\alpha}(\lambda^{\alpha})/\lambda$ is a random variable with density ${\mbox e}^{\lambda^{\alpha}}{\mbox e}^{-\lambda t}f_{\alpha}(t)$. In general, for $j=0,1,2,\ldots,$ define 
\begin{equation}
\hat{f}^{[j]}_{\alpha}(t|\lambda)=\frac{\lambda^{j-1}{\mbox e}^{\lambda^{\alpha}}{\mbox e}^{-\lambda t}t^{j}f_{\alpha}(t)}{\Gamma(j)\alpha\sum_{\ell=1}^{j}\mathbb{P}^{(j)}_{\alpha,0}(\ell)\frac{\lambda^{\ell\alpha-1}}{\Gamma(\ell)}}=\frac{t^{j} \times (
{\mbox e}^{\lambda^{\alpha}}{\mbox e}^{-\lambda t}f_{\alpha}(t))}{\mathbb{E}\left[{\left(\frac{\tau_{\alpha}(\lambda^{\alpha})}{\lambda}\right)}^{j}\right]},
\label{condSU}
\end{equation}
which is the $j$-th size biased density of $\tau_{\alpha}(\lambda^{\alpha})/\lambda$, and is otherwise a special case of (\ref{AconPoisson}) with $A=S_{\alpha}.$ See~\cite{PakesKhattree} for more on $j$-biased generalized gamma distributions.
Setting, $\gamma(dt)/dt=\hat{f}^{[j]}_{\alpha}(t|\lambda),$ for each $j,$ define $\mathrm{PK}_{\alpha}(\gamma)$ 
laws 
$$
\mathbb{P}^{[j]}_{\alpha}(\lambda):=\int_{0}^{\infty}\mathrm{PD}(\alpha|t)\hat{f}^{[j]}_{\alpha}(t|\lambda)dt,
$$ 
where  $\mathbb{P}^{[0]}_{\alpha}(\lambda)$ is the popular generalized gamma case. Along with that case, $\mathbb{P}^{[1]}_{\alpha}(\lambda)$ is treated in~\cite{JamesPGarxiv}. Set $\mathcal{S}_{\alpha}(\lambda,j)=\big(N_{S_{\alpha}}(y);0\leq y\leq \lambda, N_{S_{\alpha}}(\lambda)=j\big),$ for $j=0,1,2,\ldots$ 

We use these facts to obtain the following results, which are known in some form. 
In particular, all the results related explicitly to the mixed Poisson formulation can be read from~\cite{PitmanPoissonMix}, with further details.

\begin{prop}Let $(P_{\ell})\sim\mathrm{PD}(\alpha,0),$ which may be constructed from a stable subordinator such that $\hat{S}(1):=S_{\alpha}.$
Set $A=S_{\alpha}$ and consider the mixed Poisson process $(N_{S_{\alpha}}(t);t\ge 0)$ with waiting times $(T_{n}=G_{n}/S_{\alpha};n\ge 1),$ where, for each $n,$ $T_{n}$ has density as in~(\ref{expmoment}), and, in particular, $T_{1}\overset{d}=G^{1/\alpha}_{1}.$ 
\begin{enumerate}
\item[(i)]The conditional density of $S_{\alpha}|T_{n}=\lambda$ is given by $\hat{f}^{[n]}_{\alpha}(t|\lambda),$ in~(\ref{condSU}).
\item[(ii)]The conditional density of $S_{\alpha}|T_{n}=\lambda$ corresponds to the distribution of the random variable
$$
\frac{\tau_{\alpha}\left(\lambda^{\alpha}+G_{\frac{n}{\alpha}-K_{n}}\right)}{\lambda}.
$$

\item[(iii)]When $n=1,$ (\ref{condSU}) reduces to 
$\lambda^{1-\alpha}t ({\mbox e}^{\lambda^{\alpha}}{\mbox e}^{-\lambda t}f_{\alpha}(t)) /\alpha,$ which is the size biased density of $\tau_{\alpha}(\lambda^{\alpha})/\lambda$, and corresponds to the distribution of the random variable
$$
\frac{\tau_{\alpha}(\lambda^{\alpha})}{\lambda}+\frac{G_{1-\alpha}}{\lambda}=\frac{\tau_{\alpha}\left(\lambda^{\alpha}+G_{\frac{1-\alpha}{\alpha}}\right)}{\lambda}.
$$
\item[(iv)] For $j=0,1,2,\ldots,$ $(P_{\ell})|\mathcal{S}_{\alpha}(\lambda,j)\sim \mathbb{P}^{[j]}_{\alpha}(\lambda)$ .
\end{enumerate}
\end{prop}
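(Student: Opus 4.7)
Part (i) is a direct Bayes calculation. Since $T_{n}=G_{n}/S_{\alpha}$ with $G_{n}\perp S_{\alpha}$, the conditional density of $T_{n}$ given $S_\alpha = t$ is $t^{n}\lambda^{n-1}{\mbox e}^{-\lambda t}/\Gamma(n)$, so the posterior density of $S_{\alpha}$ at $t$ given $T_{n}=\lambda$ is proportional to $t^{n}{\mbox e}^{-\lambda t}f_{\alpha}(t)$. Normalising with the identity~(\ref{expmoment}) and recognising the denominator as $\mathbb{E}[(\tau_{\alpha}(\lambda^{\alpha})/\lambda)^{n}]\,{\mbox e}^{-\lambda^{\alpha}}$ reproduces~(\ref{condSU}).

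For (ii), I would bring $K_n$ into the picture through~(\ref{pitEPPFdecomp}): the joint density of $(S_{\alpha}, K_{n})$ at $(t,k)$ is $\mathbb{P}^{(n)}_{\alpha,0}(k)f^{(n-k\alpha)}_{\alpha,k\alpha}(t)$. Since $T_n$ and $K_n$ are conditionally independent given $S_\alpha$, refining the Bayes calculation of (i) by further conditioning on $K_n=k$ and unpacking $f^{(n-k\alpha)}_{\alpha,k\alpha}$ using~(\ref{bigG})--(\ref{bigI}) yields
\begin{equation*}
f_{S_{\alpha}|T_{n},K_{n}}(t|\lambda,k)=\lambda^{n-k\alpha}{\mbox e}^{\lambda^{\alpha}}{\mbox e}^{-\lambda t}\Ip{n-k\alpha}{f_{\alpha}}{(t)}.
\end{equation*}
I would then interpret the right-hand side as the density of the convolution $\tau_\alpha(\lambda^\alpha)/\lambda + G_{n-k\alpha}/\lambda$: the tilt ${\mbox e}^{\lambda^{\alpha}}{\mbox e}^{-\lambda v}f_{\alpha}(v)$ is precisely the density of $\tau_\alpha(\lambda^\alpha)/\lambda$, while $\lambda^{n-k\alpha}{\mbox e}^{-\lambda(t-v)}(t-v)^{n-k\alpha-1}/\Gamma(n-k\alpha)$ hidden inside the Riemann--Liouville kernel is a Gamma$(n-k\alpha,\lambda)$ density. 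To recast the convolution as $\tau_{\alpha}(\lambda^{\alpha}+G_{n/\alpha-k})/\lambda$, I would verify the subordination identity $\tilde{\tau}_\alpha(G_y)\overset{d}=G_{\alpha y}$ by matching Laplace transforms to $(1+\mu)^{-\alpha y}$, and then apply additivity of $\tau_\alpha$. Finally, mixing over the conditional law of $K_n$ given $T_n=\lambda$, whose probability mass function is $\propto \mathbb{P}^{(n)}_{\alpha,0}(k)\lambda^{k\alpha-1}/\Gamma(k)$ by~(\ref{expmoment}), and invoking the decomposition $f_\alpha(t)=\sum_k\mathbb{P}^{(n)}_{\alpha,0}(k)f^{(n-k\alpha)}_{\alpha,k\alpha}(t)$ dressed by $t^n{\mbox e}^{-\lambda t}$, exactly recovers $\hat{f}^{[n]}_\alpha(t|\lambda)$ and thereby matches (i).

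Parts (iii) and (iv) are then short corollaries. For (iii), $n=1$ forces $K_1\equiv 1$ and $\mathbb{P}^{(1)}_{\alpha,0}(1)=1$, so~(\ref{condSU}) collapses to the size-biased density of $\tau_\alpha(\lambda^\alpha)/\lambda$, whose mean is $\alpha\lambda^{\alpha-1}$. The standard size-biasing formula for infinitely divisible subordinators identifies the added ``jump'' as $J\overset{d}=G_{1-\alpha}$, since its density $u^{-\alpha}{\mbox e}^{-u}/\Gamma(1-\alpha)$ is proportional to $u$ times the L\'evy density $\alpha u^{-\alpha-1}{\mbox e}^{-u}/\Gamma(1-\alpha)$ of $\tau_\alpha$; the subordination identity used in (ii) then rewrites the sum as $\tau_\alpha(\lambda^\alpha+G_{(1-\alpha)/\alpha})/\lambda$. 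For (iv), the conditional law of $S_\alpha$ given $\mathcal{S}_\alpha(\lambda,j)$ is $\hat{f}^{[j]}_\alpha(t|\lambda)dt$ directly by~(\ref{AconPoisson}) with $A=S_\alpha$; since $(P_\ell)|S_\alpha=t\sim\mathrm{PD}(\alpha|t)$ is independent of the exponential sequence driving the waiting times, marginalising out $S_\alpha$ yields the Poisson--Kingman mixture $\mathbb{P}^{[j]}_\alpha(\lambda)$. The main delicate step throughout is the identification in (ii): recasting $\lambda^{n-k\alpha}{\mbox e}^{\lambda^{\alpha}}{\mbox e}^{-\lambda t}\Ip{n-k\alpha}{f_{\alpha}}{(t)}$ as the density of $\tau_{\alpha}(\lambda^{\alpha}+G_{n/\alpha-k})/\lambda$ requires both the Gamma-convolution unpacking of the Riemann--Liouville operator and the subordination property $\tau_\alpha(G_y)\overset{d}=G_{\alpha y}$ peculiar to the generalized gamma subordinator, together with careful bookkeeping of which conditional distribution $K_n$ is drawn from; the remaining steps are routine Bayes and the mixed-Poisson conditional structure of~\cite{PitmanPoissonMix}.
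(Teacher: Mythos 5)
Your argument is correct, but it takes a different route from the paper, whose proof of this proposition is essentially a citation: statements (i)--(iii) are attributed to a special case of \cite[Theorem 2]{JLP} (gamma randomization for normalized subordinators), and (i) and (iv) to the mixed Poisson framework of \cite{PitmanPoissonMix}. You instead give a self-contained derivation using only ingredients already in Section~\ref{sec:Liouville}: Bayes' rule plus the identity~(\ref{expmoment}) for (i); for (ii), the joint decomposition~(\ref{stabeldendecomp}) together with the Laplace-transform fact for $I^{\nu}_{+}f_{\alpha}$, the gamma-convolution reading of the Riemann--Liouville kernel, and the subordination identity $\tau_{\alpha}(G_{\nu/\alpha})\overset{d}=G_{\nu}$ from \cite[Proposition 21]{PY97}; and for (iv), the conditioning formula~(\ref{AconPoisson}). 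In effect you re-prove, at the special index $\nu=n-k\alpha$, what the paper later packages as Theorem~\ref{theoremgeneoperator} and Corollary~\ref{corcond}, and you correctly handle the one delicate bookkeeping point: in (ii) the variable $K_{n}$ must carry its conditional law given $T_{n}=\lambda$, with weights proportional to $\mathbb{P}^{(n)}_{\alpha,0}(k)\lambda^{k\alpha-1}/\Gamma(k)$, not its unconditional law, and your mixture computation via~(\ref{sizebiasedoperators}) recovers $\hat{f}^{[n]}_{\alpha}(t|\lambda)$ exactly. For (iii), your infinite-divisibility size-biasing argument (adding an independent jump with density proportional to $u$ times the L\'evy density, here giving $G_{1-\alpha}/\lambda$ after the $\lambda$-scaling) is valid and is precisely the interpretation the paper alludes to around~(\ref{diffeo}); note, though, that (iii) also drops out immediately from your (ii) by taking $n=1$, since $K_{1}\equiv 1$ and $\tfrac{1}{\alpha}-1=\tfrac{1-\alpha}{\alpha}$. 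The trade-off: the paper's citation-based proof is short and situates the result as known, while your version makes the proposition verifiable within the paper itself at the cost of repeating machinery that the surrounding subsections develop anyway.
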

\begin{proof}Statements (i) and (ii), along with the case of $n=1$ in~(iii), may be deduced as a special case of~\cite[Theorem 2]{JLP}. Both statements~(i) and~(iv) follow from~\cite{PitmanPoissonMix}.
\end{proof}
\begin{rem}It is simple to extend the results to $(P_{\ell})\sim\mathrm{PD}(\alpha,\theta)$ by setting $A=S_{\alpha,\theta}.$
\end{rem}

\subsection{Properties of $I^{\nu}_+f_{\alpha},~\nu>0$}
We next provide a study of the operator $I^{\nu}_+f_{\alpha}$, for general index $\nu>0.$

\begin{thm}\label{theoremgeneoperator}
Select $h(t)\ge 0$ such that $h(t)f_{\alpha}(t)$ is the density of a random variable $T,$ implying $\mathbb{E}[h(S_{\alpha})]=1.$ Then, for any $\nu,\lambda>0,$ there is the following property,
\begin{equation}
\int_{0}^{\infty}{\mbox e}^{-\lambda t}h(t) \Ip{\nu}{f_\alpha}{(t)} 
\,dt=\frac{1}{\lambda^{\nu}{\mbox e}^{\lambda^{\alpha}}}\int_{0}^{\infty}\int_{0}^{\infty}h(u+s)f^{(\nu)}_{\alpha,\lambda}(u,s)\,du\,ds,
\label{hgenop}
\end{equation}
where, for a fixed $\lambda,$
$
f^{(\nu)}_{\alpha,\lambda}(u,s)=\lambda^{\nu}u^{\nu-1}{\mbox e}^{-\lambda u}/\Gamma(\nu) \times \big({\mbox e}^{\lambda^{\alpha}}{\mbox e}^{-\lambda s}f_{\alpha}(s)\big)
$
corresponds to the density of the conditionally independent pair of random variables
\begin{equation}
\left(\frac{G_{\nu}}{\lambda},\frac{\tau_{\alpha}(\lambda^{\alpha})}{\lambda}\right)\overset{d}=\left(\frac{\tau_{\alpha} \big(G_{\frac{\nu}{\alpha}}\big)}{\lambda},\frac{\tau_{\alpha}(\lambda^{\alpha})}{\lambda}\right).
\label{Prop21usage}
\end{equation}
Hence, one may define the sum as a random process $(\tilde{S}_{\alpha,\nu}(\lambda);\lambda>0)$ by the sum of the random variables in~(\ref{Prop21usage}), with
\begin{equation}
\tilde{S}_{\alpha,\nu}(\lambda):=\frac{\tau_{\alpha}\big(G_{\frac{\nu}{\alpha}}\big)+\tau_{\alpha}(\lambda^{\alpha})}{\lambda}=\frac{\tau_{\alpha}\big(G_{\frac{\nu}{\alpha}}+\lambda^{\alpha}\big)}{\lambda},
\label{Gbiasrep}
\end{equation}
which can be equivalently expressed as
\begin{eqnarray}
\tilde{S}_{\alpha,\nu}(\lambda)&=& 
\frac{\tau_{\alpha}(G_{\frac{\nu}{\alpha}}+\lambda^{\alpha})}{\tau_{\alpha}(\lambda^{\alpha})} \times\frac{\tau_{\alpha}(\lambda^{\alpha})}
{\lambda}\\ 
 &=& \frac{\tau_{\alpha}\big(G_{\frac{\nu}{\alpha}}+\lambda^{\alpha}\big)}{{\left(G_{\frac{\nu}{\alpha}}+\lambda^{\alpha}\right)}^{\frac1\alpha}}\times
{\left(\frac{G_{\frac{\nu}{\alpha}}+\lambda^{\alpha}}
{\lambda^{\alpha}}\right)}^{\frac1\alpha}.
\end{eqnarray} 
The variables separated by $\times$ are not independent for fixed $\lambda.$
\end{thm}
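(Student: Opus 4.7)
The plan is to establish the integral identity (\ref{hgenop}) by direct computation, then read off the factorization as a product of two densities, and finally verify the subsequent distributional rewritings using subordinator properties.

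First I would insert the definition $I^{\nu}_{+}f_\alpha(t)=\Gamma(\nu)^{-1}\int_{0}^{t}f_\alpha(v)(t-v)^{\nu-1}dv$ into the left-hand side of (\ref{hgenop}), apply Fubini, and substitute $u=t-v$, so that, for fixed $v$, $t$ ranges over $(v,\infty)$ and $u$ over $(0,\infty)$. The integrand becomes $h(u+v)\,e^{-\lambda(u+v)}\,u^{\nu-1}f_\alpha(v)/\Gamma(\nu)$ on the quadrant $\{u,v>0\}$. Multiplying and dividing by $\lambda^{\nu}e^{\lambda^{\alpha}}$ produces exactly the kernel $f^{(\nu)}_{\alpha,\lambda}(u,s)$ defined in the statement (after renaming $v\to s$), together with the prefactor $\lambda^{-\nu}e^{-\lambda^{\alpha}}$ on the right-hand side.

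Next I would identify the two factors of $f^{(\nu)}_{\alpha,\lambda}$ as densities of independent variables: the $\mathrm{Gamma}(\nu,\lambda)$ factor is the density of $G_\nu/\lambda$, while the exponentially tilted stable factor $e^{\lambda^{\alpha}}e^{-\lambda s}f_\alpha(s)$ is, by the definition preceding (\ref{condSU}), the density of $\tau_\alpha(\lambda^{\alpha})/\lambda$. This yields the first equality in (\ref{Prop21usage}). For the distributional equality $G_\nu/\lambda\overset{d}{=}\tau_\alpha(G_{\nu/\alpha})/\lambda$, I would verify via Laplace transforms: reading the Laplace exponent $\phi(\eta)=(1+\eta)^{\alpha}-1$ of $\tau_\alpha$ off the tilted-stable density, and conditioning on $G_{\nu/\alpha}$, one obtains $\mathbb{E}[e^{-\eta\tau_\alpha(G_{\nu/\alpha})}]=\mathbb{E}[e^{-G_{\nu/\alpha}\phi(\eta)}]=(1+\phi(\eta))^{-\nu/\alpha}=(1+\eta)^{-\nu}$, which matches the Laplace transform of $G_\nu$.

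Finally, the sum formulation (\ref{Gbiasrep}) combines independence with the additivity $\tau_\alpha(a)+\tau_\alpha(b)\overset{d}{=}\tau_\alpha(a+b)$ of independent increments, applied to $a=G_{\nu/\alpha}$ and $b=\lambda^{\alpha}$. The two subsequent rewritings of $\tilde{S}_{\alpha,\nu}(\lambda)$ then follow by pathwise algebra, multiplying and dividing $\tau_\alpha(G_{\nu/\alpha}+\lambda^{\alpha})/\lambda$ first by $\tau_\alpha(\lambda^{\alpha})$ and then by $(G_{\nu/\alpha}+\lambda^{\alpha})^{1/\alpha}$; these are pointwise identities rather than distributional ones, which is why the theorem emphasizes that the factors separated by $\times$ are dependent at fixed $\lambda$. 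The main obstacle, modest as it is, is the bookkeeping needed to isolate $f^{(\nu)}_{\alpha,\lambda}$ cleanly from the raw Fubini output — once the prefactor $\lambda^{-\nu}e^{-\lambda^{\alpha}}$ is peeled off, everything else is routine manipulation and Laplace-transform verification.
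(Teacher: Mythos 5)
Your proposal is correct and follows essentially the same route as the paper: rewrite the left side by Fubini (the paper writes it as $\int_{0}^{\infty}\big[\int_{s}^{\infty}(t-s)^{\nu-1}h(t){\mbox e}^{-\lambda t}dt\big]f_{\alpha}(s)\,ds/\Gamma(\nu)$, which is exactly your $u=t-v$ substitution), identify the two factors as the $\mathrm{Gamma}(\nu,\lambda)$ density and the tilted-stable density of $\tau_{\alpha}(\lambda^{\alpha})/\lambda$, and use $\tau_{\alpha}(G_{\nu/\alpha})\overset{d}{=}G_{\nu}$ together with independent increments for the sum representation. The only difference is that the paper cites Pitman--Yor, Proposition 21, for $\tau_{\alpha}(G_{\nu/\alpha})\overset{d}{=}G_{\nu}$ (noting it is "otherwise easy to verify"), whereas you verify it directly via the Laplace exponent $(1+\eta)^{\alpha}-1$, which is precisely the intended elementary check.
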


\begin{proof}
(\ref{hgenop}) is obtained by noting that the left hand side can be expressed as\break 
$\int_{0}^{\infty}[\int_{s}^{\infty}(t-s)^{\nu-1}h(t){\mbox e}^{-\lambda t}dt]
f_{\alpha}(s)ds/\Gamma(\nu).$ One may then appeal to~\cite[Proposition~21]{PY97} to obtain the representation in~(\ref{Prop21usage}). That is, $\tau_{\alpha}\big(G_{\frac{\nu}{\alpha}}\big)\overset{d}=G_{\nu},$ which is otherwise easy to verify. 
\end{proof}

\begin{cor}\label{corgeneoperator} The random variable $\tilde{S}_{\alpha,\nu}(\lambda)$ has a density in $t$ as
\begin{equation}
\frac{\lambda^{\nu}{\mbox e}^{\lambda^{\alpha}}}{\Gamma(\frac{\nu}{\alpha})}{\mbox e}^{-\lambda t}
\int_{0}^{1}f_{\alpha}(tu^{\frac{1}{\alpha}})u^{-\frac{(\nu-1)}{\alpha}-1}{(1-u)}^{\frac{\nu}{\alpha}-1}du,
\label{nudensity}
\end{equation}
and its Laplace transform is given as, for $y>0$,
$$
\mathbb{E}\big[{\mbox e}^{-y\tilde{S}_{\alpha,\nu}(\lambda)}\big]=\frac{{\mbox e}^{\lambda^{\alpha}-(\lambda+y)^{\alpha}}}{{(1+\frac y\lambda)}^{\nu}}.
$$
\begin{enumerate}
\item[(i)] If the density~(\ref{nudensity}) is exponentially tilted by ${\mbox e}^{-yt}$ for a fixed $y>0,$ then the corresponding random variable can be represented as
$\tilde{S}_{\alpha,\nu}(\lambda+y).$
\item[(ii)] When $\nu=1-\alpha$,~(\ref{nudensity}) agrees with the density of $S_{\alpha}|T_{1}=\lambda$ specified as\break $\lambda^{1-\alpha}t \big({\mbox e}^{\lambda^{\alpha}}{\mbox e}^{-\lambda t} f_{\alpha}(t)\big)/\alpha.$ This yields the known identity,
$$
\frac{1}{\Gamma(\frac{1-\alpha}{\alpha})}\int_{0}^{1}f_{\alpha}(tu^{\frac{1}{\alpha}}){(1-u)}^{\frac{1-\alpha}{\alpha}-1}du=\frac{1}{\alpha}tf_{\alpha}(t),
$$
which corresponds to the result $S_{\alpha}=S_{\alpha,1}\times \beta^{-\frac{1}{\alpha}}_{1,\frac{1-\alpha}{\alpha}}.$
\end{enumerate}
\end{cor}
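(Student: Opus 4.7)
The plan is to derive the density first and then read off the Laplace transform, since the density form makes parts (i) and (ii) essentially transparent. The key observation is that Theorem~\ref{theoremgeneoperator} already gives the mixture representation
$$
\tilde{S}_{\alpha,\nu}(\lambda)\;\overset{d}{=}\;\frac{\tau_{\alpha}(G_{\nu/\alpha}+\lambda^{\alpha})}{\lambda},
$$
where the internal randomization $G_{\nu/\alpha}\sim\mathrm{Gamma}(\nu/\alpha,1)$ is independent of the subordinator $\tau_{\alpha}.$ Because $\tau_{\alpha}(y)/y^{1/\alpha}$ has density obtained by exponentially tilting the stable density $f_{\alpha},$ the conditional density of $\tau_{\alpha}(G_{\nu/\alpha}+\lambda^{\alpha})/\lambda$ at $t,$ given $G_{\nu/\alpha}=g,$ is
$$
\frac{\lambda\,{\mbox e}^{g+\lambda^{\alpha}}{\mbox e}^{-\lambda t}}{(g+\lambda^{\alpha})^{1/\alpha}}\,f_{\alpha}\!\left(\frac{\lambda t}{(g+\lambda^{\alpha})^{1/\alpha}}\right).
$$
Integrating this against the Gamma density of $G_{\nu/\alpha}$ and then applying the change of variables $u=\lambda^{\alpha}/(g+\lambda^{\alpha}),$ so that $g=\lambda^{\alpha}(1-u)/u$ and $(g+\lambda^{\alpha})^{1/\alpha}=\lambda u^{-1/\alpha},$ collapses the integral onto $(0,1)$ and, after cancelling powers of $\lambda$ and $u,$ produces exactly the claimed density~(\ref{nudensity}). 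The main bookkeeping is to keep track of the powers of $u$ correctly: the exponent $-(\nu-1)/\alpha - 1$ emerges from combining the factors $u^{1-\nu/\alpha}$ (from $g^{\nu/\alpha-1}$), $u^{1/\alpha}$ (from $(g+\lambda^{\alpha})^{-1/\alpha}$), and $u^{-2}$ (from the Jacobian).

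For the Laplace transform, the cleanest route is to compute it directly from the mixture representation rather than by integrating~(\ref{nudensity}). Since $\tau_{\alpha}$ has Laplace exponent $\psi(s)=(1+s)^{\alpha}-1,$ we have
$$
\mathbb{E}\bigl[{\mbox e}^{-y\tau_{\alpha}(w)/\lambda}\,\big|\,w\bigr]={\mbox e}^{-w\psi(y/\lambda)}={\mbox e}^{w(1-(1+y/\lambda)^{\alpha})},
$$
and conditioning on $G_{\nu/\alpha}$ yields
$$
\mathbb{E}\bigl[{\mbox e}^{-y\tilde{S}_{\alpha,\nu}(\lambda)}\bigr]={\mbox e}^{\lambda^{\alpha}-(\lambda+y)^{\alpha}}\,\mathbb{E}\bigl[{\mbox e}^{G_{\nu/\alpha}(1-(1+y/\lambda)^{\alpha})}\bigr]=\frac{{\mbox e}^{\lambda^{\alpha}-(\lambda+y)^{\alpha}}}{(1+y/\lambda)^{\nu}},
$$
after using $\mathbb{E}[{\mbox e}^{sG_{\nu/\alpha}}]=(1-s)^{-\nu/\alpha}$ at $s=1-(1+y/\lambda)^{\alpha}<0$ and the identity $\lambda^{\alpha}(1+y/\lambda)^{\alpha}=(\lambda+y)^{\alpha}.$

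Part (i) then follows with essentially no further work: the integral factor in~(\ref{nudensity}) depends on $t,\alpha,\nu$ but not on $\lambda,$ so exponentially tilting by ${\mbox e}^{-yt}$ merely replaces ${\mbox e}^{-\lambda t}$ by ${\mbox e}^{-(\lambda+y)t},$ and the new normalization constant $\lambda^{\nu}{\mbox e}^{\lambda^{\alpha}}\big/\mathbb{E}[{\mbox e}^{-y\tilde{S}_{\alpha,\nu}(\lambda)}]$ becomes $(\lambda+y)^{\nu}{\mbox e}^{(\lambda+y)^{\alpha}},$ which is precisely the prefactor in the density of $\tilde{S}_{\alpha,\nu}(\lambda+y).$ For part (ii), substituting $\nu=1-\alpha$ into~(\ref{nudensity}) and matching against the density $\lambda^{1-\alpha}t({\mbox e}^{\lambda^{\alpha}}{\mbox e}^{-\lambda t}f_{\alpha}(t))/\alpha$ of $S_{\alpha}\,|\,T_{1}=\lambda$ identified in Proposition~2.1(iii) reduces the two prefactors identically, leaving the stated Abel-type identity; the distributional counterpart $S_{\alpha}\overset{d}{=}S_{\alpha,1}\,\beta_{1,(1-\alpha)/\alpha}^{-1/\alpha}$ is then the $n=k=1$ case of~(\ref{jamesidspecial}).

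The only genuinely delicate step is the density derivation, specifically keeping the change of variables clean and ensuring the exponents on $u$ and $(1-u)$ agree with the form~(\ref{nudensity}); everything else is symbolic manipulation. The Laplace transform computation offers an independent cross-check: inverting ${\mbox e}^{\lambda^{\alpha}-(\lambda+y)^{\alpha}}/(1+y/\lambda)^{\nu}$ term-by-term using the Gamma-subordinator convolution identity recovers the same density, which is how one could alternatively organize the proof if the change of variables becomes unwieldy.
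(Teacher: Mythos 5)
Your proposal is correct and follows essentially the route the paper intends: the density and Laplace transform are obtained from the representation $\tilde{S}_{\alpha,\nu}(\lambda)=\tau_{\alpha}\big(G_{\frac{\nu}{\alpha}}+\lambda^{\alpha}\big)/\lambda$ of Theorem~\ref{theoremgeneoperator} (you simply carry out the conditioning on $G_{\nu/\alpha}$ and the change of variables that the paper dismisses as ``straightforward''), part (i) is read off the density exactly as in the paper, and your treatment of (ii) via the $n=1$ case of the size-biased/conditional density is one of the alternatives the paper itself cites (namely using~(\ref{condSU}) with $n=1$). No gaps; the bookkeeping in your change of variables and the identity $\lambda^{\alpha}(1+y/\lambda)^{\alpha}=(\lambda+y)^{\alpha}$ check out.
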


\begin{proof}
The density and the Laplace transform are straightforward. Result in (i) follows readily from the density~(\ref{nudensity}). For~(ii),
one may appeal to~\cite[Theorems 1 and 2]{JLP} in the case $n=1.$
Equivalently, this can be deduced from a careful read of~\cite{PPY92,PY92}, see in particular~\cite[remark 3.6 and eq.~(3.q)
]{PY92}, which yields the appropriate form of the conditional density. Or otherwise one can use~(\ref{condSU}) in the case of $n=1.$
\end{proof}

\subsection{Associated random variables appearing in Bertoin and Yor~\cite{BerYor} and James~\cite{JamesLamp}}
In this section, we show that the general operators $I^{\nu}_+f_{\alpha}$ are directly linked to random variables appearing in~\cite{BerYor,JamesLamp}. In particular, the variables $Z^{(\frac{\nu}{\alpha})}_{\alpha,\omega}$ indexed by $(\nu,\omega)$, described below, correspond to the entire range of variables given in~\cite[Lemma 6, eq. (10)]{BerYor}, and also~\cite{JamesLamp}, as described in the forthcoming~Remark~\ref{remJames}. See also~\cite{Patieexp}.
\begin{prop}
For any $\omega>0,$
let $G_{\frac{\omega}{\alpha}}$ be a gamma random variable with parameters $(\frac{\omega}{\alpha},1)$, which is independent of $\tilde{S}_{\alpha,\nu}(\lambda).$ 
\begin{enumerate}
\item[(i)] The following random variables are equivalent.
\begin{equation}
Y^{(\nu)}_{\alpha,\omega}:=\tilde{S}_{\alpha,\nu}\left(G^{\frac1\alpha}_{\frac{\omega}{\alpha}}\right)=\frac{S_{\alpha,\omega}}{\beta_{\omega,\nu}}=\frac{S_{\alpha,\omega+\nu}}{\beta^{\frac1\alpha}_{\frac{\omega}{\alpha},\frac{\nu}{\alpha}}}
\label{jamesid},
\end{equation}
where the variables appearing in the ratios are independent.
\item[(ii)]Define $Z^{(\frac{\nu}{\alpha})}_{\alpha,\omega}={\left(Y^{(\nu)}_{\alpha,\omega}\right)}^{-\alpha}$. Then,
\begin{equation}
Z^{(\frac{\nu}{\alpha})}_{\alpha,\omega}:=\left[\tilde{S}_{\alpha,\nu}\left(G^{\frac1\alpha}_{\frac{\omega}{\alpha}}\right)\right]^{-\alpha}=\frac{\beta^{\alpha}_{\omega,\nu}}{S^{\alpha}_{\alpha,\omega}}=\frac{\beta_{\frac{\omega}{\alpha},\frac{\nu}{\alpha}}}{S^{\alpha}_{\alpha,\omega+\nu}}.
\label{jamesid2}
\end{equation}
\end{enumerate}
\end{prop}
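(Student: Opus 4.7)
The plan is to specialize the representation $\tilde{S}_{\alpha,\nu}(\lambda)=\tau_{\alpha}(G_{\nu/\alpha}+\lambda^{\alpha})/\lambda$ from~(\ref{Gbiasrep}) of Theorem~\ref{theoremgeneoperator} to $\lambda=G^{1/\alpha}_{\omega/\alpha}$, with the gamma independent of all other ingredients. Then $\lambda^{\alpha}=G_{\omega/\alpha}$ is independent of $G_{\nu/\alpha}$, giving
$$
Y^{(\nu)}_{\alpha,\omega}\;=\;\frac{\tau_{\alpha}\big(G_{\nu/\alpha}+G_{\omega/\alpha}\big)}{G^{1/\alpha}_{\omega/\alpha}}.
$$
The key maneuver is then the classical beta--gamma split: $G_{\nu/\alpha}+G_{\omega/\alpha}\overset{d}{=}G_{(\nu+\omega)/\alpha}$ and the ratio $\beta_{\omega/\alpha,\nu/\alpha}:=G_{\omega/\alpha}/(G_{\nu/\alpha}+G_{\omega/\alpha})$ is independent of that sum. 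Substituting $G_{\omega/\alpha}=\beta_{\omega/\alpha,\nu/\alpha}\cdot(G_{\nu/\alpha}+G_{\omega/\alpha})$ into the denominator yields
$$
Y^{(\nu)}_{\alpha,\omega}\;=\;\frac{\tau_{\alpha}\big(G_{(\nu+\omega)/\alpha}\big)}{G^{1/\alpha}_{(\nu+\omega)/\alpha}}\cdot\frac{1}{\beta^{1/\alpha}_{\omega/\alpha,\nu/\alpha}},
$$
with the two factors independent, since the beta depends on the two gammas only through their sum.

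The remaining point is to identify, for arbitrary $r>0$, the distribution $\tau_{\alpha}(G_{r/\alpha})/G^{1/\alpha}_{r/\alpha}\overset{d}{=}S_{\alpha,r}$. I would do this by direct density computation: the conditional density of $\tau_{\alpha}(\lambda^{\alpha})/\lambda$ given $\lambda$ is $e^{\lambda^{\alpha}-\lambda t}f_{\alpha}(t)$ by the defining property of $\tau_{\alpha}$, and the density of $\lambda=G^{1/\alpha}_{r/\alpha}$ is $\alpha\lambda^{r-1}e^{-\lambda^{\alpha}}/\Gamma(r/\alpha)$. The $e^{\pm\lambda^{\alpha}}$ cancel, and integrating out $\lambda$ gives the marginal density
$$
\frac{\alpha f_{\alpha}(t)}{\Gamma(r/\alpha)}\int_{0}^{\infty}\lambda^{r-1}e^{-\lambda t}\,d\lambda\;=\;\frac{\alpha\Gamma(r)}{\Gamma(r/\alpha)}\cdot t^{-r}f_{\alpha}(t),
$$
which matches the density of $S_{\alpha,r}$ via the standard Mellin identity $\mathbb{E}[S_{\alpha}^{-r}]=\Gamma(r/\alpha)/(\alpha\Gamma(r))$. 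Applying this with $r=\omega+\nu$ produces the second equality in~(\ref{jamesid}), namely $Y^{(\nu)}_{\alpha,\omega}=S_{\alpha,\omega+\nu}/\beta^{1/\alpha}_{\omega/\alpha,\nu/\alpha}$.

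The first equality $S_{\alpha,\omega}/\beta_{\omega,\nu}=S_{\alpha,\omega+\nu}/\beta^{1/\alpha}_{\omega/\alpha,\nu/\alpha}$ is the full-parameter analog of~(\ref{jamesidspecial}) read off from James~\cite[eq.~(2.11)]{JamesLamp}; if a self-contained check is desired, it follows by matching $s$-th moments of both sides and reducing to an elementary beta/gamma identity via the Mellin formula above. Part~(ii) is then immediate: raising~(\ref{jamesid}) to the power $-\alpha$ preserves the independence of the two factors and produces the three claimed forms in~(\ref{jamesid2}). I expect the main (though still routine) obstacle to be verifying the independence after the beta--gamma split and running the density cancellation cleanly; nothing else really needs new ideas beyond the representation~(\ref{Gbiasrep}) already in hand.
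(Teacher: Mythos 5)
Your proof is correct and takes essentially the same route as the paper: you specialize the representation $\tilde{S}_{\alpha,\nu}(\lambda)=\tau_{\alpha}(G_{\nu/\alpha}+\lambda^{\alpha})/\lambda$ from Theorem~\ref{theoremgeneoperator} at $\lambda=G^{1/\alpha}_{\omega/\alpha}$ and combine beta--gamma algebra with the identification $\tau_{\alpha}(G_{r/\alpha})/G^{1/\alpha}_{r/\alpha}\overset{d}=S_{\alpha,r}$, which is exactly the content the paper imports from \cite[Proposition 21]{PY97} (you simply verify it by a direct density computation instead of citing it), while the equality with $S_{\alpha,\omega}/\beta_{\omega,\nu}$ rests on James's identity, just as in Remark~\ref{remJames}. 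Part (ii) is immediate in both treatments, so no gap remains.
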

\begin{proof}
Statement~(i) follows from the representations of $\tilde{S}_{\alpha,\nu}(\lambda)$ given in Theorem~\ref{theoremgeneoperator} coupled with~\cite[Proposition 21]{PY97}.
\end{proof}

\begin{rem}
\label{remJames}
Setting 
$\omega=\tau\sigma$ and $\nu=\tau(1-\sigma),$ for $\tau>0$ and $0<\sigma<1,$ one sees that
the equality on the right hand side of (\ref{jamesid}) agrees, in full generality, with the
identity in~James~\cite[eq. (2.11)
]{JamesLamp}, except, by construction, they are on the same space. 
\end{rem}

We now give equivalent expressions of the densities of the random variables in~(\ref{jamesid}).
\begin{prop}
Let 
 $f^{(\nu)}_{\alpha,\omega}(t)$ denote the density of $Y^{(\nu)}_{\alpha,\omega}$ defined in (\ref{jamesid}).
 \begin{enumerate}
\item[(i)]Using the form of the density indicated by 
$S_{\alpha,\omega}/\beta_{\omega,\nu},$ it follows that, for $\omega>0,$
\begin{equation}
{f}^{(\nu)}_{\alpha,\omega}(t)=\frac{\alpha \Gamma(\nu+\omega)}{\Gamma(\frac{\omega}{\alpha})}t^{-(\nu+\omega)}\Ip{\nu}{f_\alpha}{(t)}
\label{opdensity},
\end{equation}
where $\frac{\Gamma(\omega)\mathbb{E}[S^{-\omega}_{\alpha}]}{\Gamma(\nu+\omega)}=\frac{\Gamma(\omega/\alpha)}{\alpha\Gamma(\nu+\omega)}.$ 
\item[(ii)]Using~(\ref{nudensity}),  an alternate form of $f^{(\nu)}_{\alpha,\omega}(t)$ is obtained as
\begin{equation}
\frac{\alpha \Gamma(\nu+\omega)}{\Gamma(\frac{\nu}{\alpha})\Gamma(\frac{\omega}{\alpha})}t^{-(\nu+\omega)}
\int_{0}^{1}f_{\alpha}(tu^{\frac{1}{\alpha}})u^{-\frac{(\nu-1)}{\alpha}-1}{(1-u)}^{\frac{\nu}{\alpha}-1}du.
\label{gammanudensity}
\end{equation} 
\item[(iii)]Combining~(\ref{opdensity}) and~(\ref{gammanudensity}) gives an identity, for $\nu>0$,  
\begin{equation}
\Ip{\nu}{f_{\alpha}}{(t)}
=\frac{1}{\Gamma(\frac{\nu}{\alpha})}
\int_{0}^{1}f_{\alpha}(tu^{\frac{1}{\alpha}})u^{-\frac{(\nu-1)}{\alpha}-1}{(1-u)}^{\frac{\nu}{\alpha}-1}du.
\label{opnudensity2}
\end{equation}
\end{enumerate}
\end{prop}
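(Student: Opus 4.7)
The proof is a direct density calculation carried out in two independent ways, followed by a comparison. My plan is to treat the three parts in order, leveraging the two alternative representations of $Y^{(\nu)}_{\alpha,\omega}$ already established in~(\ref{jamesid}).

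For part~(i), I start from $Y^{(\nu)}_{\alpha,\omega}\stackrel{d}{=}S_{\alpha,\omega}/\beta_{\omega,\nu}$ and apply the usual change of variables $(x,y)\mapsto(x/y,y)$ to the joint density of the independent pair, producing the identity
$$f^{(\nu)}_{\alpha,\omega}(t)=\int_0^1 (ty)^{-\omega}\frac{f_\alpha(ty)}{\mathbb{E}[S^{-\omega}_\alpha]}\cdot\frac{\Gamma(\omega+\nu)}{\Gamma(\omega)\Gamma(\nu)}y^{\omega-1}(1-y)^{\nu-1}\,y\,dy.$$
After canceling $y^{\omega}$ and substituting $v=ty$, the inner integral becomes $\int_0^t f_\alpha(v)(t-v)^{\nu-1}dv=\Gamma(\nu)\,\Ip{\nu}{f_\alpha}{(t)}$, leaving a prefactor that simplifies using the standard stable-moment identity $\Gamma(\omega)\mathbb{E}[S^{-\omega}_\alpha]=\Gamma(\omega/\alpha)/\alpha$ (obtainable from Fubini applied to $\int_0^\infty\lambda^{\omega-1}e^{-\lambda^\alpha}d\lambda$). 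This collapses to~(\ref{opdensity}).

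For part~(ii), I use the second representation $Y^{(\nu)}_{\alpha,\omega}\stackrel{d}{=}\tilde{S}_{\alpha,\nu}(G^{1/\alpha}_{\omega/\alpha})$ together with the explicit density~(\ref{nudensity}) from Corollary~\ref{corgeneoperator}. Writing $\lambda=G^{1/\alpha}_{\omega/\alpha}$, a one-line change of variables yields the density of $\lambda$ as $\frac{\alpha}{\Gamma(\omega/\alpha)}\lambda^{\omega-1}e^{-\lambda^\alpha}$. The decisive observation is that the $e^{\lambda^\alpha}$ in~(\ref{nudensity}) exactly cancels this $e^{-\lambda^\alpha}$, so mixing reduces to the elementary gamma integral $\int_0^\infty \lambda^{\nu+\omega-1}e^{-\lambda t}d\lambda=\Gamma(\nu+\omega)t^{-(\nu+\omega)}$, which delivers~(\ref{gammanudensity}).

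Part~(iii) is then immediate: uniqueness of the density of $Y^{(\nu)}_{\alpha,\omega}$ forces the right-hand sides of~(\ref{opdensity}) and~(\ref{gammanudensity}) to agree, and canceling the common prefactor $\alpha\Gamma(\nu+\omega)t^{-(\nu+\omega)}/\Gamma(\omega/\alpha)$ yields~(\ref{opnudensity2}); a pleasant consistency check is that $\omega$ drops out of the final identity, as it must. Overall the argument is routine bookkeeping with no real obstacle; the only place to slip is in the stable-moment evaluation, which enters both part~(i) directly and part~(ii) via the normalization of $G^{1/\alpha}_{\omega/\alpha}$, so I would keep those $\alpha$'s under careful watch.
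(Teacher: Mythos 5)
Your computation is correct and follows exactly the route the paper indicates (which it leaves as a routine verification): part (i) via the quotient density of $S_{\alpha,\omega}/\beta_{\omega,\nu}$ together with the moment identity $\Gamma(\omega)\mathbb{E}[S_{\alpha}^{-\omega}]=\Gamma(\omega/\alpha)/\alpha$, part (ii) by mixing the density~(\ref{nudensity}) over $\lambda=G^{1/\alpha}_{\omega/\alpha}$ with the $e^{\pm\lambda^{\alpha}}$ cancellation, and part (iii) by equating the two expressions. No gaps.
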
 

\begin{rem}
Theorem~\ref{theoremgeneoperator} and Corollary~\ref{corgeneoperator} show what can be understood from operators of general index $\nu.$ It is, for instance, interesting, and  a bit surprising, to us that the identity~(\ref{jamesid}) appears in this context.
\end{rem}
Combining the results in Propositions~\ref{simpleprop} and Theorem~\ref{theoremgeneoperator} leads to
\begin{equation}
\int_{0}^{\infty}\frac{\lambda^{n-1}h(t)t^{n}{\mbox e}^{-\lambda t}}{\Gamma(n)}f_{\alpha}(t)dt=\frac{\alpha}{{\mbox e}^{\lambda^{\alpha}}} \sum_{k=1}^{n}\frac{\mathbb{P}^{(n)}_{\alpha,0}(k)\lambda^{k\alpha-1}}{\Gamma(k)}\mathbb{E}[h(\tilde{S}_{\alpha,n-k\alpha}(\lambda))].
\label{expmomentH}
\end{equation}

\begin{cor}\label{corcond}
The expression~(\ref{expmomentH}) indicates that $\tilde{S}_{\alpha,n-k\alpha}(\lambda)$ corresponds to the conditional distribution of $S_{\alpha}|K_{n}=k, T_{n}=\lambda$. Integrating over $\lambda$ on both sides of (\ref{expmomentH}) indicates that the conditional distribution of $S_{\alpha}|K_{n}=k$ is equivalent to 
\begin{equation}
Y^{(n-k\alpha)}_{\alpha,k\alpha}:=\tilde{S}_{\alpha,n-k\alpha}\left(G^{\frac1\alpha}_{k}\right)=
\frac{S_{\alpha,k\alpha}}{\beta_{k\alpha,n-k\alpha}}=\frac{S_{\alpha,n}}{\beta^{\frac1\alpha}_{k,\frac{n}{\alpha}-k}}.
\label{jamesid3}
\end{equation}
which, as in~\cite{HJL}, has the density 
$$
{f}^{(n-k\alpha)}_{\alpha,k\alpha}(t)=
\frac{\alpha \Gamma(n)}{\Gamma(k)} t^{-n}\Ip{n-k\alpha}{f_\alpha}{(t)}
.
$$
\end{cor}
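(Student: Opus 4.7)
The plan is to read off the joint structure of $(S_{\alpha},K_{n},T_{n})$ directly from equation (\ref{expmomentH}) and then perform the natural disintegration. First I would observe that, since $T_{n}=G_{n}/S_{\alpha}$ with $G_{n}$ independent of $S_{\alpha}$, the conditional density of $T_{n}$ given $S_{\alpha}=s$ is $\lambda^{n-1}s^{n}e^{-\lambda s}/\Gamma(n)$, so the joint density of $(S_{\alpha},T_{n})$ at $(t,\lambda)$ is $\lambda^{n-1}t^{n}e^{-\lambda t}f_{\alpha}(t)/\Gamma(n)$. Consequently, the left-hand side of (\ref{expmomentH}) equals $\mathbb{E}[h(S_{\alpha})\mid T_{n}=\lambda]\,f_{T_{n}}(\lambda)$, and from Lemma~\ref{simpleprop}(i) the marginal density is $f_{T_{n}}(\lambda)=\alpha e^{-\lambda^{\alpha}}\sum_{k=1}^{n}\mathbb{P}^{(n)}_{\alpha,0}(k)\lambda^{k\alpha-1}/\Gamma(k)$.

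Next, I would match (\ref{expmomentH}) term by term with this mixture representation. Dividing both sides of (\ref{expmomentH}) by $f_{T_{n}}(\lambda)$ yields
\begin{equation*}
\mathbb{E}[h(S_{\alpha})\mid T_{n}=\lambda]=\sum_{k=1}^{n}\mathbb{P}(K_{n}=k\mid T_{n}=\lambda)\,\mathbb{E}[h(\tilde{S}_{\alpha,n-k\alpha}(\lambda))],
\end{equation*}
where I identify $\mathbb{P}(K_{n}=k\mid T_{n}=\lambda)=\alpha\mathbb{P}^{(n)}_{\alpha,0}(k)\lambda^{k\alpha-1}e^{-\lambda^{\alpha}}/(\Gamma(k)f_{T_{n}}(\lambda))$. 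Since $h\ge 0$ is arbitrary, this identification gives the conditional law $S_{\alpha}\mid (K_{n}=k,T_{n}=\lambda)\overset{d}{=}\tilde{S}_{\alpha,n-k\alpha}(\lambda)$, which is the first assertion of the corollary.

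For the second assertion, I would integrate out $\lambda$ using Bayes' formula. Because $\mathbb{P}(K_{n}=k)=\mathbb{P}^{(n)}_{\alpha,0}(k)$, the conditional density of $T_{n}$ given $K_{n}=k$ simplifies to $f_{T_{n}\mid K_{n}=k}(\lambda)=\alpha\lambda^{k\alpha-1}e^{-\lambda^{\alpha}}/\Gamma(k)$. The change of variable $u=\lambda^{\alpha}$ converts this into the $\mathrm{Gamma}(k,1)$ density, so $T_{n}\mid K_{n}=k \overset{d}{=} G_{k}^{1/\alpha}$ with $G_{k}$ independent of the $\tilde{S}_{\alpha,n-k\alpha}(\,\cdot\,)$ process. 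Combining this with the conditional identity obtained in the preceding step gives
\begin{equation*}
S_{\alpha}\mid K_{n}=k \overset{d}{=}\tilde{S}_{\alpha,n-k\alpha}(G_{k}^{1/\alpha})= Y^{(n-k\alpha)}_{\alpha,k\alpha},
\end{equation*}
which is precisely (\ref{jamesid3}) after applying the representations of $Y^{(\nu)}_{\alpha,\omega}$ from (\ref{jamesid}) with $(\omega,\nu)=(k\alpha,n-k\alpha)$. Finally, the stated density formula is read off directly from (\ref{opdensity}) after noting $\nu+\omega=n$ and $\omega/\alpha=k$.

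The argument is essentially bookkeeping: the only subtle step is verifying that after Bayes' rule the conditional density of $T_{n}^{\alpha}$ given $K_{n}=k$ collapses to a clean $\mathrm{Gamma}(k,1)$, which is what makes $\tilde{S}_{\alpha,n-k\alpha}(G_{k}^{1/\alpha})$ coincide with the variables $Y^{(n-k\alpha)}_{\alpha,k\alpha}$. Once this cancellation is observed, the identification with the beta-ratio forms $S_{\alpha,k\alpha}/\beta_{k\alpha,n-k\alpha}=S_{\alpha,n}/\beta^{1/\alpha}_{k,n/\alpha-k}$ is immediate from the earlier proposition.
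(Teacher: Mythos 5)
Your proposal is correct and follows essentially the same route as the paper: the corollary is read off from (\ref{expmomentH}) by identifying the conditional law of $S_{\alpha}$ given $(K_{n}=k,T_{n}=\lambda)$ with $\tilde{S}_{\alpha,n-k\alpha}(\lambda)$, then mixing over $T_{n}\mid K_{n}=k\overset{d}{=}G_{k}^{1/\alpha}$ and invoking (\ref{jamesid}) and (\ref{opdensity}). Your explicit Bayes computation showing the collapse to a $\mathrm{Gamma}(k,1)$ law for $T_{n}^{\alpha}$ given $K_{n}=k$ is exactly the bookkeeping the paper leaves implicit; the only point worth stating openly is that the term-by-term identification of $\mathbb{P}(K_{n}=k\mid T_{n}=\lambda)$ rests on the conditional independence of $G_{n}$ (hence $T_{n}$) and the partition given $S_{\alpha}$, which is built into the mixed Poisson framework.
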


\subsection{Recursive density decompositions and semi-group properties}

Gnedin and Pitman~\cite[Definition~3 or eq. (8)]{Gnedin06}
establishes the following backward recursion for all $V_{n,k}$,
$n=1,2,\ldots,~k=1,2,\ldots,n$,
\begin{equation}\label{backward}
V_{n,k} = (n-k\alpha) V_{n+1,k} + V_{n+1, k+1},
\end{equation}
with $V_{1,1}=1$. We use this to obtain the next result.

\begin{prop}The densities ${f}^{(n-k\alpha)}_{\alpha,k\alpha}(t),$ and hence the operators $
I^{\nu}_+ f_\alpha,$
 satisfy the following mixture relationships.
\begin{enumerate}
\item[(i)]The stable density can be decomposed in terms of its conditional densities, and is given by
\noindent\begin{equation}
f_{\alpha}(t)=\sum_{k=1}^{n}
\mathbb{P}_{\alpha,0}^{(n)}(k)
f^{(n-k\alpha)}_{\alpha,k\alpha}(t).
\label{stabeldendecomp}
\end{equation}
\item[(ii)]Manipulating the recursive equation in~(\ref{backward}) leads to two equivalent backward recursive 2--point mixture representations,
\begin{equation}
f^{(n-k\alpha)}_{\alpha,k\alpha}(t)=
\left(\frac{k\alpha}{n}\right)f^{(n+1-(k+1)\alpha)}_{\alpha,(k+1)\alpha}(t) + \left(1-\frac{k\alpha}{n}\right) f^{(n+1-k\alpha)}_{\alpha,k\alpha}(t),
\label{recursedensity1}
\end{equation}
with $f^{(1-\alpha)}_{\alpha,\alpha}(t)=f_{\alpha}(t)$, and 
\begin{equation}
t\Ip{n-k\alpha}{f_\alpha}{(t)}
={(n-k\alpha)}\Ip{n+1-k\alpha}{f_\alpha}{(t)}
+{\alpha}\Ip{n+1-(k+1)\alpha}{f_\alpha}{(t)}
.
\label{recurseop2}
\end{equation}
\end{enumerate}
\end{prop}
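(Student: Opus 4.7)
The plan is to derive both parts from identities already established in the paper, avoiding any new analytic machinery.

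For part (i), I would invoke Lemma~\ref{simpleprop} directly. The identity (\ref{sizebiasedoperators}) reads
$$
\sum_{k=1}^{n}\mathbb{P}^{(n)}_{\alpha,0}(k)\,\frac{\alpha\,\Gamma(n)}{\Gamma(k)}\,\Ip{n-k\alpha}{f_\alpha}{(t)} = t^{n}f_{\alpha}(t).
$$
Dividing through by $t^{n}$ and recognizing, from Corollary~\ref{corcond}, that each summand equals $f^{(n-k\alpha)}_{\alpha,k\alpha}(t)$ up to the factor $\mathbb{P}^{(n)}_{\alpha,0}(k)$, yields (\ref{stabeldendecomp}). Conceptually this is nothing but the law of total probability for the marginal law of $S_{\alpha}$ conditioned on $K_{n}$ under $\mathrm{PD}(\alpha,0)$, which is reassuring but the proof itself is just the algebraic rewriting above.

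For part (ii), the key observation is that $\mathrm{PD}(\alpha|t)$ is itself a Gibbs partition (the extremal one) with V-coefficient $\mathbb{G}_{\alpha}^{(n,k)}(t)$ as in (\ref{bigG}), so the Gnedin--Pitman backward recursion (\ref{backward}) applies pointwise in $t$:
$$
\mathbb{G}_{\alpha}^{(n,k)}(t) = (n-k\alpha)\,\mathbb{G}_{\alpha}^{(n+1,k)}(t) + \mathbb{G}_{\alpha}^{(n+1,k+1)}(t).
$$
Substituting $\mathbb{G}_{\alpha}^{(n,k)}(t) = \alpha^{k}t^{-n}f_{\alpha}(t)^{-1}\,\Ip{n-k\alpha}{f_\alpha}{(t)}$ and clearing the common factor $\alpha^{k}t^{-(n+1)}/f_{\alpha}(t)$ produces the operator identity (\ref{recurseop2}). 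Then (\ref{recursedensity1}) follows by multiplying (\ref{recurseop2}) by $\alpha\Gamma(n)/[t^{n+1}\Gamma(k)]$ and using the density formula of Corollary~\ref{corcond}; the two resulting weights simplify to $(n-k\alpha)/n$ and $k\alpha/n$ via the $\Gamma$-function identities $\Gamma(n+1)=n\Gamma(n)$ and $\Gamma(k+1)=k\Gamma(k)$. The base case $f^{(1-\alpha)}_{\alpha,\alpha}(t) = f_{\alpha}(t)$ follows from (\ref{diffeo}).

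If one wished to avoid quoting (\ref{backward}), a self-contained verification of (\ref{recurseop2}) can be produced by writing $t = (t-v)+v$ inside the integral defining $t\,\Ip{n-k\alpha}{f_\alpha}{(t)}$. The $(t-v)$ piece raises the exponent by one and becomes $(n-k\alpha)\,\Ip{n+1-k\alpha}{f_\alpha}{(t)}$ after adjusting the Gamma normalization; the $v\,f_{\alpha}(v)$ piece is rewritten via (\ref{diffeo}) as $\alpha \,\Ip{1-\alpha}{f_\alpha}{(v)}$, and then Fubini plus the beta integral $\int_{u}^{t}(t-v)^{n-k\alpha-1}(v-u)^{-\alpha}dv = (t-u)^{n-(k+1)\alpha}B(n-k\alpha,1-\alpha)$ collapses the double integral into $\alpha\,\Ip{n+1-(k+1)\alpha}{f_\alpha}{(t)}$. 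The mildly tricky step is this semigroup-style collapse, but the Gamma factors cancel cleanly, and no real obstacle arises. Overall the proposition is a direct corollary of the earlier lemma and the general Gibbs recursion, with the only care needed being the $\Gamma$-function bookkeeping in going from operator form to density form.
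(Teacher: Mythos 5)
Your proposal is correct and follows essentially the paper's intended route: part (i) is just the rewriting of (\ref{sizebiasedoperators}) via the density formula of Corollary~\ref{corcond}, and part (ii) applies the Gnedin--Pitman recursion (\ref{backward}) pointwise in $t$ to $V_{n,k}=\mathbb{G}_{\alpha}^{(n,k)}(t)$, with the $\Gamma$-function bookkeeping giving the weights $k\alpha/n$ and $1-k\alpha/n$ and the base case coming from (\ref{diffeo}). Your optional self-contained check of (\ref{recurseop2}) by writing $t=(t-v)+v$ and using (\ref{diffeo}) together with the semigroup/beta-integral collapse is also valid, but it is not needed beyond the recursion argument.
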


\begin{rem}
A special case of~(\ref{sizebiasedoperators}) with $n=2$, 
$$
t^{2}f_{\alpha}(t)=\alpha{(1-\alpha)}\Ip{2-\alpha}{f_\alpha}{(t)}
+{\alpha^{2}}\Ip{2-2\alpha}{f_\alpha}{(t)}
,
\label{recurseop3}
$$
appears by setting $n=k=1$ in the latter equation.
\end{rem}

\subsection{Decompositions associated with the semi-group property of $I^{\nu}_+f_\alpha
$}
\label{sec:decompI}
For the relevant background in this section, see, for instance,~\cite{GorenfloMittag,Mathai}.
It is known that, for any function $f$, and $\nu_{1},\nu_{2}>0$, the operators $I^{\nu_{1}}_+$ and $I^{\nu_{2}}_+$ satisfy the following semi-group property, 
\begin{equation}
\big(\big(I^{\nu_{1}}_+\ast I^{\nu_{2}}_+\big)f\big)\hspace*{-.03in}(t):=\frac{1}{\Gamma(\nu_{1})}\int_{0}^{t}{(t-s)}^{\nu_{1}-1}\Ip{\nu_2}{f}{(s)}
\,ds=\Ip{\nu_1+\nu_2}{f}{(t)}
.
\label{semi}
\end{equation}
We also introduce Erd\'elyi-Kober operators defined as 
\begin{equation}
\big(\mathscr{E}^{\kappa,\nu}f\big)(t)=\frac{t^{-\kappa-\nu}}{\Gamma(\nu)}\int_{0}^{t}f(s)s^{\kappa}{(t-s)}^{\nu-1}ds.
\label{erdelyi}
\end{equation}
When $f(s)$ is the density for a random variable $Y,$  $\Gamma(\nu+\kappa)/\Gamma(\kappa)\times \big(\mathscr{E}^{\kappa,\nu}f\big)(t)$ is the density of the variable $Y/\beta_{\kappa,\nu}.$ 

\begin{prop}\label{Decompkk} 
For $j=1,\ldots,k,$ and $k=1,\ldots, n,$ 
\begin{equation}
\Ip{k-k\alpha}{f_\alpha}{(t)}
=\bigg(\big(\overset{k \scriptsize{\mbox{ terms}}}{\overbrace{I^{1-\alpha}_+\ast \cdots\ast I^{1-\alpha}_+}}\big)
f_{\alpha}\bigg)\hspace*{-.03in}(t)=\frac{t^{k}f^{(k-k\alpha)}_{\alpha,k\alpha}(t)}\alpha,
\label{Abelsemiproperty}
\end{equation}
where, for each fixed $k,$ $f^{(k-k\alpha)}_{\alpha,k\alpha}(t)$ is the conditional density of the random variable $S_{\alpha}|K_{k}=k,$ when the number of blocks $K_k$ of a partition of $[k]=\{1,\ldots,k\}$ is $k$ under $\mathrm{PD}(\alpha,0).$ Then,
\begin{enumerate}
\item[(i)]$\Ip{n-k\alpha}{f_\alpha}{(t)}
=\left(\big(I^{n-k}_+ \ast I^{k-k\alpha}_+\big)f_{\alpha}\right)\hspace*{-.04in}(t)$ is equivalent to
\begin{equation}
\alpha \Ip{n-k\alpha}{f_\alpha}{(t)}
=\frac{1}{\Gamma(n-k)}\int_{0}^{t}s^{k}{(t-s)}^{n-k-1}f^{(k-k\alpha)}_{\alpha,k\alpha}(s)ds.
\label{specialconvolution}
\end{equation}
\item[(ii)](\ref{specialconvolution}) indicates the following relation to Erd\'elyi-Kober operators,
\begin{equation}
f^{(n-k\alpha)}_{\alpha,k\alpha}(t)=
\frac{\alpha \Gamma(n)}{\Gamma(k)}t^{-n}
\Ip{n-k\alpha}{f_\alpha}{(t)}=
\frac{\Gamma(n)}{\Gamma(k)}\bigg(\mathscr{E}^{k,n-k}f^{(k-k\alpha)}_{\alpha,k\alpha}\bigg)\hspace*{-.03in}(t).
\label{EK1}
\end{equation}
\item[(iii)]$f^{(k-k\alpha)}_{\alpha,k\alpha}(t)$ is the density of random variables,
\begin{equation}
Y^{(k(1-\alpha))}_{\alpha,k\alpha}:=\tilde{S}_{\alpha,k(1-\alpha)} \left(G^{\frac1\alpha}_{k}\right)=
\frac{S_{\alpha,k\alpha}}{\beta_{k\alpha,k-k\alpha}}=\frac{S_{\alpha,k}}{\beta^{\frac1\alpha}_{k,k(\frac{1-\alpha}{\alpha})}}.
\label{jamesid3}
\end{equation}
\item[(iv)]It follows from~(\ref{EK1}) that $Y^{(n-k\alpha)}_{\alpha,k\alpha}=Y^{(k(1-\alpha))}_{\alpha,k\alpha}/\beta_{k,n-k}.$
\end{enumerate}
\end{prop}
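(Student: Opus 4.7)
The plan is to leverage two already-established tools: first, the semi-group identity \eqref{semi} for the operators $I^{\nu}_+$; and second, the explicit density formula
$f^{(n-k\alpha)}_{\alpha,k\alpha}(t)=\tfrac{\alpha\Gamma(n)}{\Gamma(k)}t^{-n}\Ip{n-k\alpha}{f_\alpha}{(t)}$
from Corollary~\ref{corcond}. The main display \eqref{Abelsemiproperty} is then really a bookkeeping statement combining these two inputs with the specialization $n=k$, and parts (i)--(iv) are short consequences.

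For the leading identity \eqref{Abelsemiproperty}, I would first iterate \eqref{semi} with $\nu_1=\nu_2=1-\alpha$ to get, by induction on the number of factors, that the $k$-fold convolution $I^{1-\alpha}_+\ast\cdots\ast I^{1-\alpha}_+$ equals $I^{k(1-\alpha)}_+=I^{k-k\alpha}_+$; applying it to $f_\alpha$ gives the first equality. The second equality is immediate from Corollary~\ref{corcond} with $n=k$, which yields $f^{(k-k\alpha)}_{\alpha,k\alpha}(t)=\alpha\,t^{-k}\Ip{k-k\alpha}{f_\alpha}{(t)}$, and this rearranges to $\Ip{k-k\alpha}{f_\alpha}{(t)}=t^{k}f^{(k-k\alpha)}_{\alpha,k\alpha}(t)/\alpha$. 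Statement (i) then follows by writing the semi-group split $I^{n-k\alpha}_+=I^{n-k}_+\ast I^{k-k\alpha}_+$ (legitimate since $(n-k)+(k-k\alpha)=n-k\alpha$ with $n-k\geq 0$; the $n=k$ case being trivially $I^0_+=\mathrm{Id}$), expanding the outer convolution according to \eqref{semi}, and substituting $\Ip{k-k\alpha}{f_\alpha}{(s)}=s^{k}f^{(k-k\alpha)}_{\alpha,k\alpha}(s)/\alpha$ inside the integrand.

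For (ii), I would recognize the right-hand side of \eqref{specialconvolution}, once divided by $t^{n}$, as exactly the Erd\'elyi-Kober operator $\mathscr{E}^{k,n-k}$ in \eqref{erdelyi} applied to $f^{(k-k\alpha)}_{\alpha,k\alpha}$; combining this with the expression for $f^{(n-k\alpha)}_{\alpha,k\alpha}$ from Corollary~\ref{corcond} and the factor $\Gamma(n)/\Gamma(k)$ that appears there delivers \eqref{EK1}. Statement (iii) is the specialization $n=k$ of the distributional identities \eqref{jamesid3} in Corollary~\ref{corcond}. Finally, for (iv), I would invoke the remark immediately following \eqref{erdelyi}: since $f^{(k-k\alpha)}_{\alpha,k\alpha}$ is the density of $Y^{(k(1-\alpha))}_{\alpha,k\alpha}$ and $(\Gamma(n)/\Gamma(k))\,\mathscr{E}^{k,n-k}$ is precisely the density transformation corresponding to division by an independent $\beta_{k,n-k}$, the identity in (ii) identifies the law of $Y^{(n-k\alpha)}_{\alpha,k\alpha}$ with that of $Y^{(k(1-\alpha))}_{\alpha,k\alpha}/\beta_{k,n-k}$.

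There is no genuine technical obstacle here; the only mild care is to align the normalizations $\Gamma(n)/\Gamma(k)$ and $\alpha$ correctly between the semi-group statement \eqref{specialconvolution}, the Erd\'elyi-Kober form \eqref{EK1}, and the beta-ratio interpretation used in (iv). One should also verify independence of the $\beta_{k,n-k}$ factor from $Y^{(k(1-\alpha))}_{\alpha,k\alpha}$, which is built into the Erd\'elyi-Kober-to-beta-quotient correspondence recalled after \eqref{erdelyi}.
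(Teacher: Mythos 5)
Your proposal is correct and follows essentially the same route as the paper: the paper's proof states that \eqref{Abelsemiproperty} is deduced from \eqref{opdensity} or Corollary~\ref{corcond} combined with the semi-group property \eqref{semi}, with the remaining statements following from this, which is exactly the argument you spell out (iterating \eqref{semi}, specializing Corollary~\ref{corcond} to $n=k$, splitting $I^{n-k\alpha}_+=I^{n-k}_+\ast I^{k-k\alpha}_+$, and invoking the Erd\'elyi--Kober/beta-quotient correspondence recalled after \eqref{erdelyi}). Your added care about the $n=k$ degenerate case and the independence of the $\beta_{k,n-k}$ factor is consistent with the paper's conventions.
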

\begin{proof}
(\ref{Abelsemiproperty}) can be deduced from (\ref{opdensity}) or Corollary~\ref{corcond}, coupled with the semi-group property. The remaining results follow from this.
\end{proof}

\begin{rem}Proposition~\ref{Decompkk} highlights the fact that the random variable $Y^{(k(1-\alpha))}_{\alpha,k\alpha}$, when further randomized, leads to the random variable $Y^{(K_{n}(1-\alpha))}_{\alpha,K_{n}\alpha},$ which seems worthwhile to investigate further.
\end{rem}

\subsection{Recovering results for $\mathrm{PD}(\alpha,\theta)$}
We now demonstrate how to recover some known and some not so well-known results in the $\mathrm{PD}(\alpha,\theta)$ setting from the current perspective. Here we will employ the following facts involving conditional expectations
$
\mathbb{E}[S^{-\theta}_{\alpha}|K_{n}=k]={\Gamma(n) \Gamma(\frac\theta\alpha+k)}/[{\Gamma(k) \Gamma(\theta+n)}]$
and hence
$$
d^{(n)}_{\alpha,\theta}(k)=\frac{\mathbb{E}[S^{-\theta}_{\alpha}|K_{n}=k]}{\mathbb{E}[S^{-\theta}_{\alpha}]}=\frac{\Gamma(n)\Gamma(\theta+1)\Gamma(\frac{\theta}{\alpha}+k)}{\Gamma(k)\Gamma(\theta+n)\Gamma(\frac{\theta}{\alpha}+1)} = \frac{\Gamma(n)}{\Gamma(k)} \frac{\alpha(\frac\theta\alpha)_k}{(\theta)_n}.
$$
\begin{cor}\label{PDRLresults}
Under the $\mathrm{PD}(\alpha,\theta)$ setting for $\theta>-\alpha,$ the change of measure corresponds to 
the choice of $h(t)=t^{-\theta}/\mathbb{E}[S^{-\theta}_{\alpha}].$ That is, $f_{\alpha,\theta}(t)=h(t)f_{\alpha}(t).$ Hence, for $\mathrm{PD}(\alpha,\theta)$, the joint distribution of $(S_{\alpha,\theta},K_{n}=k)$ can be expressed as
\begin{equation}
\frac{t^{-\theta}{f}^{(n-k\alpha)}_{\alpha,k\alpha}(t)}{\mathbb{E}[S^{-\theta}_{\alpha}]d^{(n)}_{\alpha,\theta}(k)}\times d^{(n)}_{\alpha,\theta}(k)\mathbb{P}^{(n)}_{\alpha,0}(k).
\label{jointSKn}
\end{equation}
\begin{enumerate}
\item[(i)]Using (\ref{opdensity}), the conditional density of $S_{\alpha,\theta}|K_{n}=k$ is $f^{(n-k\alpha)}_{\alpha,\theta+k\alpha}(t).$
\item[(ii)]Integrating (\ref{jointSKn}) over $t$ recovers the known distribution of $K_{n}$ under $\mathrm{PD}(\alpha,\theta),$ with
\begin{equation}
\mathbb{P}^{(n)}_{\alpha,\theta}(k)=
d^{(n)}_{\alpha,\theta}(k)
\mathbb{P}^{(n)}_{\alpha,0}(k).
\label{PDgenKn}
\end{equation}
\item[(iii)]An application of~(\ref{jamesid}) shows that $f^{(n-k\alpha)}_{\alpha,\theta+k\alpha}(t)$ corresponds to the density of random variables,
\begin{equation}
Y^{(n-k\alpha)}_{\alpha,\theta+k\alpha}:=\tilde{S}_{\alpha,n-k\alpha}\left(G^{\frac1\alpha}_{\frac{\theta}{\alpha}+k}\right)=
\frac{S_{\alpha,\theta+k\alpha}}{\beta_{\theta+k\alpha,n-k\alpha}}=\frac{S_{\alpha,\theta+n}}{\beta^{\frac1\alpha}_{\frac{\theta}{\alpha}+k,\frac{n}{\alpha}-k}}.
\label{conddentheta}
\end{equation}
Hence, there is a recovery of the identities
\begin{equation}
S_{\alpha,\theta}=\frac{S_{\alpha,\theta+K_{n}\alpha}}{\beta_{\theta+K_{n}\alpha,n-K_{n}\alpha}}=\frac{S_{\alpha,n+\theta}}{\beta^{\frac1\alpha}_{\frac{\theta}{\alpha}+K_{n},\frac{n}{\alpha}-K_{n}}}:=Y^{(n-K_{n}\alpha)}_{\alpha,\theta+K_{n}\alpha},
\label{SKn}
\end{equation}
and $G^{\frac1\alpha}_{\frac{\theta}{\alpha}+K_{n}}\overset{d}={G_{\theta+n}}/{S_{\alpha,\theta}}$.
\end{enumerate}
\end{cor}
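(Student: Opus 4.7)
The strategy is to start from the $\mathrm{PD}(\alpha,0)$ joint density of $(S_\alpha,K_n)$, which by the discussion surrounding~(\ref{pitEPPFdecomp}) factors as $f^{(n-k\alpha)}_{\alpha,k\alpha}(t)\mathbb{P}^{(n)}_{\alpha,0}(k)$. Under $\mathrm{PD}(\alpha,\theta)$, the mixing variable has density $f_{\alpha,\theta}(t)=h(t)f_\alpha(t)$ with $h(t)=t^{-\theta}/\mathbb{E}[S^{-\theta}_\alpha]$, while the Gibbs conditional law of $\{C_1,\ldots,C_{K_n}\}$ given the mixing variable depends only on the kernel $\mathbb{G}^{(n,k)}_\alpha(t)$ and is therefore unchanged by this change of measure. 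Multiplying the $\mathrm{PD}(\alpha,0)$ joint density by $h(t)$ thus produces the $\mathrm{PD}(\alpha,\theta)$ joint density of $(S_{\alpha,\theta},K_n)$, and inserting the multiplicative identity $d^{(n)}_{\alpha,\theta}(k)/d^{(n)}_{\alpha,\theta}(k)$ rewrites the result in the form~(\ref{jointSKn}).

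For part~(i), the plan is to recognize both $f^{(n-k\alpha)}_{\alpha,k\alpha}$ and the purported conditional density $f^{(n-k\alpha)}_{\alpha,\theta+k\alpha}$ as instances of the explicit formula~(\ref{opdensity}) with $\nu=n-k\alpha$ fixed, evaluated at $\omega=k\alpha$ and $\omega=\theta+k\alpha$ respectively. Showing that $t^{-\theta}f^{(n-k\alpha)}_{\alpha,k\alpha}(t)/[\mathbb{E}[S^{-\theta}_\alpha]d^{(n)}_{\alpha,\theta}(k)]$ equals $f^{(n-k\alpha)}_{\alpha,\theta+k\alpha}(t)$ then reduces to the Pochhammer identity $\mathbb{E}[S^{-\theta}_\alpha]d^{(n)}_{\alpha,\theta}(k)=\Gamma(n)\Gamma(\theta/\alpha+k)/[\Gamma(n+\theta)\Gamma(k)]$, which is immediate from the closed form of $d^{(n)}_{\alpha,\theta}(k)$ stated just before the corollary together with the beta-gamma relation $\mathbb{E}[S^{-\theta}_\alpha]=\Gamma(\theta/\alpha+1)/\Gamma(\theta+1)$. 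Part~(ii) then follows at once: by~(i) the first factor in~(\ref{jointSKn}) is a probability density in $t$, so integrating it out leaves the marginal $\mathbb{P}^{(n)}_{\alpha,\theta}(k)=d^{(n)}_{\alpha,\theta}(k)\mathbb{P}^{(n)}_{\alpha,0}(k)$.

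For part~(iii), the two equal expressions on the right of~(\ref{conddentheta}) are precisely~(\ref{jamesid}) specialized to $(\nu,\omega)=(n-k\alpha,\theta+k\alpha)$, so the conditional identity for $S_{\alpha,\theta}\,|\,K_n=k$ holds as stated; mixing $k$ out against the marginal from~(ii) yields the unconditional identity~(\ref{SKn}). The final companion identity $G^{1/\alpha}_{\theta/\alpha+K_n}\overset{d}{=}G_{n+\theta}/S_{\alpha,\theta}$ then follows by multiplying the second equality in~(\ref{SKn}) by $\beta^{1/\alpha}_{\theta/\alpha+K_n,n/\alpha-K_n}$ and invoking beta-gamma algebra, or equivalently by reading off the mixed Poisson/waiting-time picture of Section~\ref{sec:mixedPoisson} applied with $A=S_{\alpha,\theta}$ in place of $S_\alpha$. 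No serious obstacle arises: the corollary is essentially an algebraic translation of Theorem~\ref{theoremgeneoperator} and the identity~(\ref{jamesid}) to the $\theta$-tilted setting, and the only care needed is that $\omega=\theta+k\alpha>0$ for every $k\geq 1$, which holds throughout the stated range $\theta>-\alpha$.
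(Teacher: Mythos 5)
Your proposal is correct and follows essentially the same route the paper intends: the corollary is presented there without a separate proof precisely because it is the direct algebraic translation of the change of measure $h(t)=t^{-\theta}/\mathbb{E}[S^{-\theta}_{\alpha}]$ applied to the joint density $f^{(n-k\alpha)}_{\alpha,k\alpha}(t)\mathbb{P}^{(n)}_{\alpha,0}(k)$, combined with (\ref{opdensity}), Corollary~\ref{corcond}, the conditional-moment formula for $d^{(n)}_{\alpha,\theta}(k)$, and the specialization of (\ref{jamesid}) to $(\nu,\omega)=(n-k\alpha,\theta+k\alpha)$, exactly as you lay out. One small caveat: your parenthetical alternative for the last identity via the mixed Poisson picture with $A=S_{\alpha,\theta}$ would naturally produce $G_{n}/S_{\alpha,\theta}$ rather than $G_{\theta+n}/S_{\alpha,\theta}$, so the beta-gamma argument you give first (using $G_{\theta+n}/S_{\alpha,\theta+n}\overset{d}=G^{1/\alpha}_{(\theta+n)/\alpha}$ together with (\ref{SKn})) is the one to rely on.
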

The next Corollary corresponds to the notion that size-biased sampling with and without excision (of excursion intervals) agree when $n=1.$

\begin{cor}
Setting $n=1$ in~(\ref{SKn}), or otherwise noting that $\mathbb{P}_{\alpha,\theta}^{(1)}(1)
=1$ implies $S_{\alpha,\theta}|K_{1}=1$ is just $S_{\alpha,\theta}$, conclude the following.
\begin{enumerate}
\item[(i)] For  
$\nu=1-\alpha,~\omega=\theta+\alpha,$ and $\theta>-\alpha,$ 
\begin{equation}
S_{\alpha,\theta}=\tilde{S}_{\alpha,1-\alpha} \left(G^{\frac1\alpha}_{\frac{\theta+\alpha}{\alpha}}\right)=\frac{S_{\alpha,\theta+\alpha}}{\beta_{\theta+\alpha,1-\alpha}}=\frac{S_{\alpha,\theta+1}}{\beta^{\frac1\alpha}_{\frac{\theta+\alpha}{\alpha},\frac{1-\alpha}{\alpha}}}:=Y^{(1-\alpha)}_{\alpha,\theta+\alpha}.
\label{jamesidPD}
\end{equation}
\item[(ii)]The expressions in~(\ref{jamesidPD}) correspond to the size-biased sampling results in~\cite{PPY92,PY92,PY97}, specialized to the case of 
$\mathrm{PD}(\alpha,\theta).$ The natural size-biased representation, without scalings, is given by
\begin{equation}
\tilde{S}_{\alpha,1-\alpha}\left(G^{\frac1\alpha}_{\frac{\theta+\alpha}{\alpha}}\right)=
\frac{\tau_{\alpha}\left(G_{\frac{\theta+\alpha}{\alpha}}\right)}{G^{\frac1\alpha}_{\frac{\theta+\alpha}{\alpha}}}
+\frac{\tau_{\alpha}\left(G_{\frac{1-\alpha}{\alpha}}\right)}{G^{\frac1\alpha}_{\frac{\theta+\alpha}{\alpha}}},
\label{GbiasrepPPY}
\end{equation}
where the first ratio in the sum at the right hand side is equivalent to $S_{\alpha,\theta+\alpha}$, 
and the second term is the first jump picked according to size-biased sampling, when $n=1.$ The two terms are not independent.
\item[(iii)]With $G_{\frac{\theta+\alpha}{\alpha}}+G_{\frac{1-\alpha}{\alpha}}:=G_{\frac{\theta+1}{\alpha}}$,
$$
S_{\alpha,\theta+1}:=\frac{\tau_{\alpha}\left(G_{\frac{\theta+1}{\alpha}}\right)}{G^{\frac1\alpha}_{\frac{\theta+1}{\alpha}}},~
\beta_{\theta+\alpha,1-\alpha}=\frac{S_{\alpha,\theta+\alpha}}{S_{\alpha,\theta}},{\mbox { and }}\beta_{\frac{\theta+\alpha}{\alpha},\frac{1-\alpha}{\alpha}}=\frac{S^{-\alpha}_{\alpha,\theta}}{S^{-\alpha}_{\alpha,\theta+1}}.
$$

\item[(iv)]The recursions suggested by~(\ref{jamesidPD}) lead to the known representations, for each $n\ge 1,$
\begin{equation}
S_{\alpha,\theta}=\frac{S_{\alpha,\theta+n\alpha}}{\prod_{j=1}^{n}\beta_{\theta+j\alpha,1-\alpha}}=\frac{S_{\alpha,\theta+n}}{\prod_{j=1}^{n}\beta^{\frac1\alpha}_{\frac{\theta+\alpha+j-1}{\alpha},\frac{1-\alpha}{\alpha}}}.
\label{jamesidPDrecurse}
\end{equation}
\end{enumerate}
\end{cor}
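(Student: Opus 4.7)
The plan is to obtain all four parts by specializing the preceding Corollary~\ref{PDRLresults} to $n=1$, unpacking the representation of $\tilde S_{\alpha,\nu}(\lambda)$ furnished by Theorem~\ref{theoremgeneoperator}, and then iterating. The conceptual content is already carried by (\ref{SKn}) and (\ref{Gbiasrep}); what remains is essentially parameter bookkeeping.

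For (i), observe that the $\mathrm{CRP}(\alpha,\theta)$ partition of a single element must consist of one block, so $\mathbb{P}_{\alpha,\theta}^{(1)}(1)=1$ (as also seen from (\ref{PDgenKn}) with $n=k=1$) and the conditioning on $K_1=1$ is vacuous. Substituting $n=K_n=1$ in (\ref{SKn}) gives directly the two beta-ratio expressions in (\ref{jamesidPD}); the first expression $\tilde S_{\alpha,1-\alpha}(G^{1/\alpha}_{(\theta+\alpha)/\alpha})$ is then just the definition (\ref{jamesid}) of $Y^{(1-\alpha)}_{\alpha,\theta+\alpha}$.

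For (ii), I expand the middle expression using (\ref{Gbiasrep}) with $\nu=1-\alpha$ and $\lambda = G^{1/\alpha}_{(\theta+\alpha)/\alpha}$, so that $\nu/\alpha = (1-\alpha)/\alpha$ and $\lambda^{\alpha}=G_{(\theta+\alpha)/\alpha}$. The resulting two-term sum is exactly (\ref{GbiasrepPPY}). Identifying the first summand $\tau_{\alpha}(G_{(\theta+\alpha)/\alpha})/G^{1/\alpha}_{(\theta+\alpha)/\alpha}$ with $S_{\alpha,\theta+\alpha}$ uses the identity $S_{\alpha,\omega}\stackrel{d}{=}\tau_\alpha(G_{\omega/\alpha})/G^{1/\alpha}_{\omega/\alpha}$ (Proposition~21 of PY97, as invoked in the proof of Theorem~\ref{theoremgeneoperator}). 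The interpretation of the second summand as the first size-biased pick is the classical Perman--Pitman--Yor description recalled in the Introduction, and the shared denominator $G^{1/\alpha}_{(\theta+\alpha)/\alpha}$ makes the two terms dependent. For (iii), the gamma-convolution identity $G_{(\theta+\alpha)/\alpha}+G_{(1-\alpha)/\alpha}\stackrel{d}{=}G_{(\theta+1)/\alpha}$ (for independent gammas of common scale) combined with the same identification gives the stated formula for $S_{\alpha,\theta+1}$; the two beta identities then follow algebraically by rearranging the two equalities in (\ref{jamesidPD}), the second after raising to the power $-\alpha$.

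For (iv), I apply (\ref{jamesidPD}) recursively, replacing $\theta$ by $\theta+(j-1)\alpha$ at step $j$ in the first form and by $\theta+j-1$ in the second. Telescoping over $j=1,\ldots,n$ produces the two product representations in (\ref{jamesidPDrecurse}). The one non-trivial point is that the beta factors arising at successive steps must be jointly independent and independent of the terminal $S_{\alpha,\theta+n\alpha}$ (respectively $S_{\alpha,\theta+n}$); this independence is precisely the $\mathrm{GEM}(\alpha,\theta)$ stick-breaking independence recalled in the Introduction for the sequence $(\tilde P_\ell)$, so no new argument is required. The main obstacle, such as it is, is keeping track of the two parallel parametrizations (shifting by $\alpha$ versus shifting by $1$) and verifying that they are consistent under the gamma-sum identity of (iii); the actual probabilistic content is already built into the $n=1$ case.
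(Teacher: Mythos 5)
Your proposal is correct and follows essentially the same route the paper intends: the Corollary is a direct specialization of (\ref{SKn}), (\ref{jamesid}) and (\ref{Gbiasrep}) to $n=K_n=1$ (using $\mathbb{P}^{(1)}_{\alpha,\theta}(1)=1$), with the PY97 Proposition~21 identification $\tau_{\alpha}(G_{\omega/\alpha})/G^{1/\alpha}_{\omega/\alpha}\overset{d}{=}S_{\alpha,\omega}$, gamma additivity for (iii), and iteration with independent beta factors (the known Perman--Pitman--Yor representations) for (iv). No gaps beyond the parameter bookkeeping you already carry out.
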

\begin{rem} The beta variables are to be understood in terms of their representations as ratios of the local times that just happen to have independent beta distributions under $\mathrm{PD}(\alpha,\theta).$
\end{rem}

\subsection{Beta products and $K_{n}$}
Combining~(\ref{SKn}) and (\ref{jamesidPDrecurse}) leads to
\begin{equation}
S^{-\alpha}_{\alpha,\theta}={S^{-\alpha}_{\alpha,\theta+n}}\,{\prod_{j=1}^{n}\beta_{\frac{\theta+\alpha+j-1}{\alpha},\frac{1-\alpha}{\alpha}}}\overset{d}=
{S^{-\alpha}_{\alpha,\theta+n}}\,{\beta_{\frac{\theta}{\alpha}+K_{n},\frac{n}{\alpha}-K_{n}}},
\label{jamesidPDrecurse2}
\end{equation}
which, as pointed out in James~\cite[Proposition 6.6 (iii)
]{JamesPGarxiv}, results in the following key distributional equality,
\begin{equation}
\prod_{j=1}^{n}\beta_{\frac{\theta+\alpha+j-1}{\alpha},\frac{1-\alpha}{\alpha}}
\overset{d}=\beta_{\frac{\theta}{\alpha}+K_{n},\frac{n}{\alpha}-K_{n}}.
\label{betaKid}
\end{equation}
We provide the following identities, which play a key role in the next section.

\begin{prop}\label{Propbetaid1}
The relations in~(\ref{jamesidPDrecurse2}) and~(\ref{betaKid}) lead to the following results.
\begin{enumerate}
\item[(i)]
The density of $\prod_{j=1}^{n}\beta_{\frac{\theta+\alpha+j-1}{\alpha},\frac{1-\alpha}{\alpha}}$ can be expressed as
\begin{equation}
\sum_{k=1}^{n}\mathbb{P}^{(n)}_{\alpha,\theta}(k)f_{\beta_{\frac{\theta}{\alpha}+k,\frac{n}{\alpha}-k}}(u)
\label{prodbetadensityid}.
\end{equation}
\item[(ii)]Let $\alpha=\frac1r$, for $r=2,3,\ldots$. There is an identity
\begin{equation}
\prod_{j=1}^{n}\beta^{r}_{r(\theta+j-1)+1,r-1}\overset{d}=\prod_{i=1}^{r-1}\beta_{\theta+\frac{i}{r},n}.
\label{betaprodsimple}
\end{equation}
\item[(iii)] When $\alpha=\frac12,$ there is the easily deduced fact that
$$
\prod_{j=1}^{n}\beta^{2}_{2(\theta+j)-1,1}\overset{d}=\beta_{\theta+\frac{1}{2},n}.
$$
\item[(iv)]When $\alpha=\frac13,$
$
\prod_{j=1}^{n}\beta^{3}_{3(\theta+j)-2,2}\overset{d}=\beta_{\theta+\frac{1}{3},n}\times\beta_{\theta+\frac{2}{3},n} .
$
\end{enumerate}
\end{prop}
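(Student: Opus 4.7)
The plan for the proof naturally splits along the four items. Part (i) is an immediate consequence of~(\ref{betaKid}). That identity exhibits $\prod_{j=1}^{n}\beta_{\frac{\theta+\alpha+j-1}{\alpha},\frac{1-\alpha}{\alpha}}$ as a mixture: it has the same law as $\beta_{\frac{\theta}{\alpha}+K_{n},\frac{n}{\alpha}-K_{n}}$, where $K_{n}$ follows $\mathbb{P}^{(n)}_{\alpha,\theta}(\cdot)$ on $\{1,\ldots,n\}$ and, conditionally on $K_{n}=k$, the mixed variable is $\beta_{\frac{\theta}{\alpha}+k,\frac{n}{\alpha}-k}$. Conditioning on $K_{n}=k$ and integrating out by the law of total probability produces the representation~(\ref{prodbetadensityid}).

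Part (ii) is the substantive step. The plan is to combine~(\ref{jamesidPDrecurse2}) with the gamma-product representation~(\ref{stablegeninteger}) specialized to $m=1$. Setting $\alpha=1/r$ in~(\ref{jamesidPDrecurse2}) and raising both sides to the $r$-th power yields
\begin{equation*}
S^{-1}_{1/r,\theta}\,\overset{d}=\,S^{-1}_{1/r,\theta+n}\prod_{j=1}^{n}\beta^{r}_{r(\theta+j-1)+1,\,r-1},
\end{equation*}
with the beta factors independent of $S^{-1}_{1/r,\theta+n}$. On the other hand,~(\ref{stablegeninteger}) at $m=1$, taken to the $r$-th power, gives $S^{-1}_{1/r,\theta}\overset{d}= r^{r}\prod_{k=1}^{r-1}G_{\theta+k/r}$ and $S^{-1}_{1/r,\theta+n}\overset{d}= r^{r}\prod_{k=1}^{r-1}G_{\theta+n+k/r}$ with mutually independent gamma factors. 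Substituting, cancelling $r^{r}$, and applying the beta-gamma identity $G_{a}\overset{d}= G_{a+n}\beta_{a,n}$ (independent factors) for each $k\in\{1,\ldots,r-1\}$ produces
\begin{equation*}
\prod_{k=1}^{r-1}G_{\theta+n+k/r}\prod_{k=1}^{r-1}\beta_{\theta+k/r,\,n}\,\overset{d}=\,\prod_{k=1}^{r-1}G_{\theta+n+k/r}\prod_{j=1}^{n}\beta^{r}_{r(\theta+j-1)+1,\,r-1},
\end{equation*}
where on each side the common gamma product is independent of the beta part. A Mellin-transform argument, exploiting that $\mathbb{E}\bigl[\bigl(\prod_{k}G_{\theta+n+k/r}\bigr)^{s}\bigr]$ is strictly positive and analytic on a vertical strip, cancels the common factor and delivers~(\ref{betaprodsimple}). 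Parts (iii) and (iv) are then immediate specializations at $r=2$ and $r=3$, upon writing $r(\theta+j-1)+1=r(\theta+j)-(r-1)$.

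The main obstacle is the independence bookkeeping that justifies the coordinate-wise beta-gamma cancellation above: one must be careful that the betas from~(\ref{jamesidPDrecurse2}) and the gammas from~(\ref{stablegeninteger}) can be assembled into a single joint law with the required independence structure. A clean alternative that sidesteps any coupling is to execute the whole of (ii) purely at the Mellin-transform level, computing both sides via $\mathbb{E}[\beta^{s}_{a,b}]=\Gamma(a+s)\Gamma(a+b)/[\Gamma(a)\Gamma(a+b+s)]$ and collapsing the resulting gamma products through Gauss's multiplication formula $\prod_{i=0}^{r-1}\Gamma(x+i/r)=(2\pi)^{(r-1)/2}r^{1/2-rx}\Gamma(rx)$; the equality of Mellin transforms then reduces to an algebraic identity between products of ratios of gamma functions.
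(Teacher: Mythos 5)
Your proposal is correct and follows essentially the same route as the paper: part (i) is read off from~(\ref{betaKid}) as a mixture over $K_{n}$, and part (ii) is obtained by specializing~(\ref{jamesidPDrecurse2}) and~(\ref{stablegeninteger}) to $\alpha=\frac1r$, which gives exactly the paper's identity $\prod_{i=1}^{r-1}G_{\theta+\frac{i}{r}}\overset{d}=\prod_{i=1}^{r-1}G_{\theta+n+\frac{i}{r}}\times \prod_{j=1}^{n}\beta^{r}_{r(\theta+j-1)+1,r-1}$, followed by beta--gamma calculus. The only difference is that you spell out the cancellation of the common independent gamma factor (via Mellin transforms), a step the paper compresses into ``independence and standard beta--gamma calculus,'' and your $r=2,3$ specializations for (iii)--(iv) are handled the same way.
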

\begin{proof}
(\ref{prodbetadensityid}) is immediate from~(\ref{betaKid}). It remains to establish~(ii). 
Specializing (\ref{jamesidPDrecurse2})
and~(\ref{stablegeninteger}) to the case of $\alpha=\frac1r$ yields
$$
\prod_{i=1}^{r-1}G_{\theta+\frac{i}{r}}
\overset{d}=\prod_{i=1}^{r-1}G_{\theta+n+\frac{i}{r}}\times \prod_{i=1}^{n}\beta^{r}_{r(\theta+j-1)+1,r-1}.
$$
Independence of the products on the right hand side and standard beta-gamma calculus concludes the result.
\end{proof}
\begin{rem}Although the variables in~(\ref{betaKid}) are moment determinate, it is difficult to establish~(\ref{betaKid}) via direct arguments involving moments as that itself constitutes non-obvious identities. One can work through the $\alpha=\frac12$ case with some efforts. In this sense, the identity~(\ref{jamesidPDrecurse2}) is crucial.
\end{rem}
\begin{rem} Goldschmidt and Haas~\cite[Lemma 1.2]{GoldschmidtHaas2015} show that as $n\rightarrow \infty,$
$$n^{\frac{1-\alpha}{\alpha}}\prod_{j=1}^{n}\beta^{\frac1\alpha}_{\frac{\theta+\alpha+j-1}{\alpha},\frac{1-\alpha}{\alpha}}\overset{a.s.}\rightarrow \alpha^{\frac1\alpha}S^{-1}_{\alpha,\theta},
$$
which implies that $n^{1-\alpha}S_{\alpha,\theta+n}\overset{a.s.}\rightarrow \alpha^{\frac1\alpha}.$ It is easy to show directly the equivalent behavior for  $\beta^{\frac1\alpha}_{\frac{\theta}{\alpha}+K_{n},\frac{n}{\alpha}-K_{n}}.$
\end{rem}

\section{Mittag-Leffler Markov chains under $\mathrm{PK}_{\alpha}(\gamma)$ }\label{sec:MLC}
This section provides distributional characterizations of nested families induced by fragmentation operations described in 
\cite{DGM} under the general $\mathrm{PK}_{\alpha}(\gamma)$ setting. Simplifications and various decompositions are facilitated by Proposition~\ref{Propbetaid1}.
Consider nested families of mass partitions $((P_{k,r});r\ge 0)$ coupled with a family of random variables $\mathbf{Z}:=(Z_{r},r\ge 0)$, satisfying for each integer $r\ge 0,$ as in (\ref{inverselocaltime}),
\begin{equation}
Z_{r}:=\frac{1}{\Gamma(1-\alpha)}\lim_{\epsilon\rightarrow
0}\epsilon^{\alpha}|\{\ell:P_{\ell,r}\ge \epsilon\}|~\mathrm{a.s.}
\label{inverselocaltimeZ}
\end{equation}
and forming a Markov chain with stationary transition density $Z_{1}|Z_{0}=z$ given by, for $y>z,$
\begin{equation}
\mathbb{P}(Z_{1}\in dy|Z_{0}=z)/dy=\frac{\alpha {(y-z)}^{\frac{1-\alpha}{\alpha}-1}yg_{\alpha}(y)}{\Gamma(\frac{1-\alpha}{\alpha})g_{\alpha}(z)}.
\label{transitiong}
\end{equation}
When $Z_{0}\overset{d}=S^{-\alpha}_{\alpha,\theta},$ it follows that, for each $r,$ $Z_{r}\overset{d}=S^{-\alpha}_{\alpha,\theta+r}$ and it satisfies the relation indicated in~(\ref{jamesidPDrecurse2}). Correspondingly, for each $r,$ 
\begin{equation}
\begin{array}{rcl}
(P_{k,r})&\sim &\mathrm{PD}(\alpha,\theta+r),\\
(P_{k,r})|Z_{r}=z_{r},\ldots, Z_{0}=z_{0} &\sim & \mathrm{PD}\big(\alpha|z^{-\frac1\alpha}_{r}\big).\\
\end{array}
\end{equation}
In these cases, the sequence may be referred to as a \textit{Mittag-Leffler Markov chain} with law denoted as $\mathbf{Z}\sim \mathrm{MLMC}(\alpha,\theta),$ as in~\cite{RembartWinkel2016}. The Markov chain is described prominently in various generalities in~\cite{GoldschmidtHaas2015,Haas,JamesPGarxiv,James2015,RembartWinkel2016} and arises in P\'olya urn and random graph/tree growth models as described in, for instance,~\cite{AldousCRTI,DevroyeBranch,FordD,GoldschmidtHaas2015,Haas,SvanteUrn,Mori,Pek2016Gengam,Pek2017jointpref,RembartWinkel2016,Hofstad}. 
In many of these cases, one considers the spacings
$$
(Z_{0},Z_{1}-Z_{0},Z_{2}-Z_{1},\ldots).
$$
See~\cite{JamesRoss,RembartWinkel2016} for more details. We will sometimes write the distribution of the coupled family 
as $((P_{k,r}),Z_{r};r\ge 0)\sim \mathrm{MLMC}(\alpha,\theta)$ with obvious meaning.
The Brownian case of $\alpha=\frac12,$ that is, $\mathrm{MLMC}(\frac12,\theta),$ results in the simplest distributional description whereby, $Z_{0}\overset{d}=2\sqrt{G_{\theta+\frac12}},$ and
\begin{equation}
(Z_{r};r\ge 0)\overset{d}=\left(2\sqrt{G_{\theta+\frac12}+\sum_{\ell=1}^{r}\mathbf{e}_{\ell}};r\ge 0\right).
\label{BCRT}
\end{equation}
When $\theta=\frac12,$ $Z_{0}\overset{d}=2\sqrt{\mathbf{e}_{0}},$ for $\mathbf{e}_{0}\sim \mathrm{exponential}(1),$ this relates to the construction of the Brownian CRT~\cite{AldousCRTI,AldousCRTIII}. See~\cite{Pek2017jointpref}
for No-Loop and Loop preferential attachment graph models corresponding to $\alpha=\frac12$ and $\theta=0,\frac12$, respectively. As pointed out in~\cite{James2015}, the natural extensions for general $\alpha$ are the cases $\theta=1-2\alpha$ and $\theta=1-\alpha,$ corresponding to models analyzed by~\cite{Mori,Hofstad}, see also~\cite{BerUribe,DevroyeBranch,Dgraphs}.

While $\mathbf{Z}\sim \mathrm{MLMC}(\alpha,\theta)$ arises by various constructions in the literature, we focus on its description, and more directly that of $((P_{k,r});r\ge 0),$ in terms of the $\mathrm{PD}(\alpha,1-\alpha)$ single-block size-biased fragmentation operation described in~\cite{DGM}~(see also \cite{BertoinGoldschmidt2004}, for $\alpha=0$) which has a dual coagulation operation. Precisely, let $(P_{k})\in \mathcal{P}_{\infty}$ denote a mass partition, let $\tilde{P}_{1}$ denote its first size-biased pick and let $(P_{k})_{1}:=(P_{k})/\tilde{P}_{1}$ denote the remainder. A $\mathrm{PD}(\alpha,1-\alpha)$ fragmentation of $(P_{k})$ is defined as 
$$
\widehat{\mathrm{Frag}}_{\alpha,1-\alpha}((P_{\ell})):=\mathrm{Rank}((P_{k})_{1},\tilde{P}_{1}(Q_{\ell}))\in \mathcal{P}_{\infty},
$$
where, independent of $(P_{k}),$ $(Q_{\ell})\sim \mathrm{PD}(\alpha,1-\alpha),$ and $\mathrm{Rank}(\cdot)$ denotes the ranked re-arrangment. Let $((Q^{(j)}_{\ell});j\ge 1)$ denote an independent collection of $\mathrm{PD}(\alpha,1-\alpha)$ mass partitions defining a sequence of independent fragmentation operators $(\widehat{\mathrm{Frag}}^{(j)}_{\alpha,1-\alpha}(\cdot);j\ge 1).$ It follows from~\cite{DGM} that a version of the family $((P_{k,r});r\ge 0)$ may be constructed by the recursive fragmentation, for $r=1,2,\ldots,$ 
\begin{equation}
(P_{\ell,r})=\widehat{\mathrm{Frag}}^{(r)}_{\alpha,1-\alpha}((P_{\ell,r-1}))=\widehat{\mathrm{Frag}}^{(r)}_{\alpha,1-\alpha}\circ\cdots\circ \widehat{\mathrm{Frag}}^{(1)}_{\alpha,1-\alpha}((P_{\ell,0})).
\label{fraglittle}
\end{equation}
We now describe the law of $((P_{k,r}),Z_{r};r\ge 0)$ when $(P_{k,0})\sim \mathrm{PK}_{\alpha}(\gamma).$ The forthcoming initial descriptions, although not well-known, follow readily from the Markovian structure of $(Z_{r};r\ge 0)$ dictated by~(\ref{transitiong}), and also~(\ref{jamesidPDrecurse2}). With the dependent structure dictated by~(\ref{transitiong}) and~(\ref{fraglittle}), it suffices to describe the marginal distribution of $((P_{k,r}),Z_{r})$ for each integer $r\ge 0.$
For $\gamma(dt)/dt=h(t)f_{\alpha}(t),$ define 
\begin{equation*}
\begin{split}
h_{0}(t)&=h(t),\\
h_{r}(t)&= {{t}^{-r}} \mathbb{E}\bigg[h\bigg(t\displaystyle \prod_{i=1}^{r}\beta^{-\frac1\alpha}_{\frac{\alpha+i-1}{\alpha},\frac{1-\alpha}{\alpha}}\bigg)\bigg]/\mathbb{E}[S^{-r}_{\alpha}],\quad r=1,2,\ldots,\\
\end{split}
\end{equation*}
and the probability measures, for each integer $r\ge 1,$ 
\begin{equation}
\gamma_{r}(dt)/dt=h_{r}(t)f_{\alpha}(t)=\mathbb{E}\bigg[h\bigg(t\prod_{i=1}^{r} \beta^{-\frac1\alpha}_{\frac{\alpha+i-1}{\alpha},\frac{1-\alpha}{\alpha}}\bigg)\bigg]f_{\alpha,r}(t).
\label{margSr}
\end{equation}

\begin{lem}\label{LemmaMLMC}
Consider the $\mathrm{MLMC}(\alpha,0)$ sequence of local times  $\mathbf{Z}:=(Z_{r}; r\ge 0)\overset{d}=(S^{-\alpha}_{\alpha,r};r\ge 0),$ satisfying (\ref{jamesidPDrecurse2}) with $\theta=0.$ Then, the law of the sequence $\mathbf{Z}|Z_{0}=y^{-\alpha}$ is denoted as $\mathrm{MLMC}(\alpha|y)$, where the joint behaviour is determined by~(\ref{transitiong}). Further mixing over the density $\gamma(dy)/dy:=h(y)f_{\alpha}(y)$ leads to the distribution denoted as $\mathrm{MLMC}^{[\gamma]}(\alpha).$ It follows that under this law, for each $r\ge 0,$ $Z^{-\frac1\alpha}_{r}$ has distribution $\gamma_{r},$ and hence the marginal density of $Z_{r}$ can be expressed as
\begin{equation}
g_{\alpha,r}(s;\gamma)=\mathbb{E}\bigg[h\bigg(s^{-\frac1\alpha} \prod_{i=1}^{r}\beta^{-\frac1\alpha}_{\frac{\alpha+i-1}{\alpha},\frac{1-\alpha}{\alpha}}\bigg)\bigg]g_{\alpha,r}(s).
\label{margZj}
\end{equation}
In addition, the corresponding $((P_{k,r});r\ge 0)$ is such that for each $r,$ $(P_{k,r})$ has distribution $\mathrm{PK}_{\alpha}(\gamma_{r})=\int_{0}^{\infty}\mathrm{PD}(\alpha|s^{-\frac1\alpha})g_{\alpha,r}(s;\gamma)ds.$
\end{lem}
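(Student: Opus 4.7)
My strategy is to lift the distributional identity \eqref{jamesidPDrecurse2} (at $\theta=0$) to a statement about the joint law of $(Z_0,Z_r)$ under the Markov chain, then perform a change-of-measure computation and a routine change of variables.

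First I would check that, under $\mathrm{MLMC}(\alpha,0)$, the ratio $B_r:=Z_0/Z_r$ is independent of $Z_r$ with distribution $\prod_{j=1}^{r}\beta_{\frac{\alpha+j-1}{\alpha},\frac{1-\alpha}{\alpha}}$. For $r=1$ this is a one-line computation starting from \eqref{transitiong}: substituting $w=z_0/y$ in the joint density $g_{\alpha}(z_0)\,\mathbb{P}(Z_{1}\in dy\mid Z_{0}=z_0)/dy$ and dividing by the marginal of $Z_1$ (which is $g_{\alpha,1}$), one obtains a conditional density of $W=Z_0/Z_1$ which is free of $y$ and reproduces $\beta_{1,\frac{1-\alpha}{\alpha}}$. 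The general case follows by iterating along the Markov chain, and is equivalent in distribution to \eqref{jamesidPDrecurse2} with $\theta=0,n=r$. Thus on the canonical probability space of $\mathrm{MLMC}(\alpha,0)$, one has the factorization
\[
Z_0^{-1/\alpha} \;=\; Z_r^{-1/\alpha}\cdot B_r^{-1/\alpha}
\]
with independent factors, where $Z_r^{-1/\alpha}\overset{d}=S_{\alpha,r}$ has density $f_{\alpha,r}$ and $B_r^{-1/\alpha}=\prod_{j=1}^{r}\beta^{-1/\alpha}_{\frac{\alpha+j-1}{\alpha},\frac{1-\alpha}{\alpha}}$.

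Next I would pass to $\mathrm{MLMC}^{[\gamma]}(\alpha)$. By construction the mixing tilts the joint law of $(Z_r^{-1/\alpha},B_r^{-1/\alpha})$ by $h(Z_0^{-1/\alpha})=h\bigl(Z_r^{-1/\alpha}\cdot B_r^{-1/\alpha}\bigr)$; the normalization $\mathbb{E}[h(S_{\alpha})]=1$ confirms this is a bona fide probability measure. Integrating out the independent factor $B_r^{-1/\alpha}$ gives the marginal density of $Z_r^{-1/\alpha}$ as
\[
f_{\alpha,r}(t)\,\mathbb{E}\!\left[h\!\left(t\prod_{i=1}^{r}\beta^{-1/\alpha}_{\frac{\alpha+i-1}{\alpha},\frac{1-\alpha}{\alpha}}\right)\right]
\;=\;h_r(t)f_{\alpha}(t)\;=\;\gamma_r(dt)/dt,
\]
where the first equality is just the definition of $h_r$ with the factor $t^{-r}/\mathbb{E}[S_{\alpha}^{-r}]$ absorbed into $f_{\alpha,r}(t)=t^{-r}f_{\alpha}(t)/\mathbb{E}[S_{\alpha}^{-r}]$. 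This proves $Z_r^{-1/\alpha}\sim\gamma_r$. Applying the change of variable $z=t^{-\alpha}$ with Jacobian $\alpha^{-1}z^{-1/\alpha-1}$, and using $g_{\alpha}(z)=f_{\alpha}(z^{-1/\alpha})z^{-1/\alpha-1}/\alpha$ together with $g_{\alpha,r}(z)=z^{r/\alpha}g_{\alpha}(z)/\mathbb{E}[S_{\alpha}^{-r}]$, yields exactly the density \eqref{margZj} for $Z_r$.

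Finally, the $\mathrm{PK}_{\alpha}(\gamma_r)$ claim for $(P_{k,r})$ is immediate from the coupling assumption $(P_{k,r})\mid Z_r=z,\ldots,Z_0=z_0\sim\mathrm{PD}(\alpha\mid z^{-1/\alpha})$, which depends only on $Z_r$: mixing against the just-derived marginal density $g_{\alpha,r}(\cdot;\gamma)$ of $Z_r$ gives the displayed integral representation of $\mathrm{PK}_{\alpha}(\gamma_r)$. The main (and only nontrivial) obstacle is the verification in the first step that $B_r$ is independent of $Z_r$ under \eqref{transitiong}; everything else is change of measure, integration, and beta--gamma calculus.
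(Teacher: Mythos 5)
Your proposal is correct and follows essentially the route the paper indicates (it gives no detailed proof, asserting the lemma "follows readily" from the Markov structure in~(\ref{transitiong}) together with the factorization~(\ref{jamesidPDrecurse2})): you decompose $Z_0=Z_r\cdot B_r$ with $B_r$ an independent beta product, tilt the joint law by $h(Z_0^{-1/\alpha})$, integrate out $B_r$, and change variables, exactly as intended. Your extra step of re-deriving the beta-ratio independence directly from the transition density is a sound consistency check of the $\theta=0$ case of~(\ref{jamesidPDrecurse2}), which the paper simply cites.
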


Set
$$
\tilde{V}^{(\frac{n}{\alpha}-k)}_{\alpha,\frac{\theta}{\alpha}+k}(y)=\mathbb{E}\bigg[h\bigg(y\frac{S_{\alpha,\theta+n}}{\beta^{\frac1\alpha}_{k+\frac{\theta}{\alpha},\frac{n}{\alpha}-k}}\bigg)\bigg]=\mathbb{E}_{\alpha,\theta}[h(yS_{\alpha,\theta})|K_{n}=k],
$$
and thus, 
$
\tilde{V}^{(\frac{n}{\alpha}-k)}_{\alpha,k}(1)=V_{n,k}\frac{\alpha^{1-k}\Gamma(n)}{\Gamma(k)}.
$
We now provide a description of the corresponding EPPF's and 
the distributions of the numbers of blocks for the nested sequence of random partitions of $[n].$ 
\begin{prop}\label{EPPFMLMC}Consider $((P_{k,r}),Z_{r};r\ge 0)\sim \mathrm{MLMC}^{[\gamma]}(\alpha),$ where $(P_{k,0})\sim  
\mathrm{PK}_{\alpha}(\gamma)$ with EPPF expressed as
$$
\tilde{V}^{(\frac{n}{\alpha}-k)}_{\alpha,k}(1)\times p_{\alpha}(n_{1},\ldots,n_{k}).
$$
\begin{enumerate}
\item[(i)]For each $r\ge 0,$ the $\mathrm{EPPF}$ of $(P_{k,r})\sim  
\mathrm{PK}_{\alpha}(\gamma_{r})$ can be expressed as
$$
\mathbb{E}\bigg[\tilde{V}^{(\frac{n}{\alpha}-k)}_{\alpha,\frac{r}{\alpha}+k}\left(\prod_{i=1}^{r}\beta^{-\frac1\alpha}_{\frac{\alpha+i-1}{\alpha},\frac{1-\alpha}{\alpha}}\right)\bigg]p_{\alpha,r}(n_{1},\ldots,n_{k}).
$$
\item[(ii)] Let $(K_{n,r},r \ge 0)$ denote the increasing sequence in $r$, where, for each fixed $r,$ $K_{n,r}$ is the corresponding number of blocks in a $\mathrm{PK}_{\alpha}(\gamma_{r})$ partition of $[n]$. The probability mass function of $K_{n,r}$ is, for $k=1,\ldots,n$,
$$
\mathbb{P}^{(n)}_{\alpha|\gamma_{r}}(k)=\mathbb{E}\bigg[\tilde{V}^{(\frac{n}{\alpha}-k)}_{\alpha,\frac{r}{\alpha}+k}\left(\prod_{i=1}^{r} \beta^{-\frac1\alpha}_{\frac{\alpha+i-1}{\alpha},\frac{1-\alpha}{\alpha}}\right)\bigg]\mathbb{P}^{(n)}_{\alpha,r}(k),
$$
where, for $K_{n,0},$ $\mathbb{P}^{(n)}_{\alpha|\gamma}(k)=\tilde{V}^{(\frac{n}{\alpha}-k)}_{\alpha,k}(1) \times \mathbb{P}^{(n)}_{\alpha,0}(k)$.
\item[(iii)] As $n\rightarrow \infty,$ $n^{-\alpha}K_{n,r}\overset{a.s.}\rightarrow Z_{r},$ where $Z_{r}$ has density~(\ref{margZj}). 
\end{enumerate}
\end{prop}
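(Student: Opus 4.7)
The plan is to reduce each part of the proposition to the master EPPF formula~(\ref{VEPPF2}) combined with Lemma~\ref{LemmaMLMC}, and then to carry out a careful power-biasing argument that translates the $h_r$-based representation into the $\tilde V$-based representation. By Lemma~\ref{LemmaMLMC}, $(P_{k,r})\sim\mathrm{PK}_{\alpha}(\gamma_r)$ with $\gamma_r(dt)/dt = h_r(t) f_\alpha(t)$, so~(\ref{VEPPF2}) together with the identification of $Y^{(n-k\alpha)}_{\alpha,k\alpha}$ in Corollary~\ref{corcond} immediately gives
\begin{equation*}
p_\alpha^{[\gamma_r]}(n_1,\ldots,n_k) = \mathbb{E}\!\left[h_r\!\left(\frac{S_{\alpha,n}}{\beta^{1/\alpha}_{k,\,n/\alpha-k}}\right)\right] p_\alpha(n_1,\ldots,n_k).
\end{equation*}

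For~(i), substituting the definition $h_r(t) = t^{-r}\mathbb{E}[h(t\tilde B_r)]/\mathbb{E}[S_\alpha^{-r}]$ with $\tilde B_r := \prod_{i=1}^r\beta^{-1/\alpha}_{(\alpha+i-1)/\alpha,\,(1-\alpha)/\alpha}$ turns the expectation into
\begin{equation*}
\frac{1}{\mathbb{E}[S_\alpha^{-r}]}\,\mathbb{E}\!\left[S_{\alpha,n}^{-r}\,\beta^{r/\alpha}_{k,\,n/\alpha-k}\,h\!\left(\frac{S_{\alpha,n}\,\tilde B_r}{\beta^{1/\alpha}_{k,\,n/\alpha-k}}\right)\right].
\end{equation*}
I would then read the two power-weights as distributional shifts: $S_{\alpha,n}^{-r}$ turns $S_{\alpha,n}$ into $S_{\alpha,n+r}$ with normalization $\mathbb{E}[S_\alpha^{-(n+r)}]/\mathbb{E}[S_\alpha^{-n}]$, while $\beta_{k,\,n/\alpha-k}^{r/\alpha}$ turns $\beta_{k,\,n/\alpha-k}$ into $\beta_{k+r/\alpha,\,n/\alpha-k}$ with normalization $\Gamma(k+r/\alpha)\Gamma(n/\alpha)/[\Gamma(k)\Gamma((n+r)/\alpha)]$. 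Using the Mellin identity $\mathbb{E}[S_\alpha^{-\theta}] = \Gamma(\theta/\alpha)/[\alpha\Gamma(\theta)]$, the combined constant telescopes via Pochhammer identities to $\alpha(r/\alpha)_k\Gamma(n)/[(r)_n\Gamma(k)]$, which by~(\ref{twoEPPF}) is exactly $p_{\alpha,r}(n_1,\ldots,n_k)/p_\alpha(n_1,\ldots,n_k)$. The residual expectation is $\mathbb{E}[h(\tilde B_r\,S_{\alpha,n+r}/\beta^{1/\alpha}_{k+r/\alpha,\,n/\alpha-k})]$, which by Fubini and the defining formula for $\tilde V$ equals $\mathbb{E}[\tilde V^{(n/\alpha-k)}_{\alpha,r/\alpha+k}(\tilde B_r)]$, establishing~(i).

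Part~(ii) is then a one-line summation: the $\tilde V$-expectation in the EPPF of~(i) is free of the block sizes and factors out, while $\sum p_{\alpha,r}(n_1,\ldots,n_k) = \mathbb{P}^{(n)}_{\alpha,r}(k)$ by definition of the $\mathrm{PD}(\alpha,r)$ block-count law; the $r=0$ special case follows from $h_0 = h$ and the empty product $\tilde B_0 = 1$. For~(iii), I would condition on $Z_r$: Lemma~\ref{LemmaMLMC} gives $(P_{k,r})\,|\,Z_r = z \sim \mathrm{PD}(\alpha\,|\,z^{-1/\alpha})$, and Pitman's almost-sure scaling limit~(\ref{inverselocaltime}) applied to this conditional law yields $n^{-\alpha}K_{n,r}\to z$ a.s.; unconditioning against the marginal density $g_{\alpha,r}(\cdot;\gamma)$ of $Z_r$ supplied by Lemma~\ref{LemmaMLMC} delivers $n^{-\alpha}K_{n,r}\to Z_r$ a.s.

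The main obstacle is the gamma-ratio bookkeeping in~(i): four sources of Gamma factors --- two from power-biasing the stable and beta variables, and two from normalizing moments of $S_\alpha$ --- must collapse exactly to the published form $p_{\alpha,r}/p_\alpha$. This step is delicate but purely mechanical once one commits to $\mathbb{E}[S_\alpha^{-\theta}] = \Gamma(\theta/\alpha)/[\alpha\Gamma(\theta)]$ and uses Pochhammer rearrangements; beyond Corollary~\ref{corcond} and the definition of $\tilde V$, no additional distributional input is required.
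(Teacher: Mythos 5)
Your proposal is correct and follows essentially the same route as the paper: it applies (\ref{VEPPF2}) with $h_{r}$ to get $\mathbb{E}\big[h_{r}\big(S_{\alpha,n}\beta^{-\frac1\alpha}_{k,\frac{n}{\alpha}-k}\big)\big]p_{\alpha}(n_{1},\ldots,n_{k})$, carries out the ``re-arrangement of terms'' (your gamma/Pochhammer bookkeeping, which checks out and which the paper leaves implicit), sums over the EPPF for (ii), and invokes Pitman's almost-sure $\alpha$-diversity limit for (iii), exactly as in the paper's citation of \cite[Proposition 13]{Pit03}.
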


\begin{proof}One may express $g_{\alpha,r}(s;\gamma)$ in~(\ref{margZj}) as $h_{r}\big(s^{-\frac1\alpha}\big )g_{\alpha}(s),$ where\break $h_{r}\big(s^{-\frac1\alpha}\big)= s^{\frac{r}\alpha}\mathbb{E} \bigg[h\bigg(s^{-\frac1\alpha}\prod_{i=1}^{r} \beta^{-\frac1\alpha}_{\frac{\alpha+i-1}{\alpha},\frac{1-\alpha}{\alpha}}\bigg)\bigg] /\mathbb{E}[S^{-r}_{\alpha}].$ Using 
(\ref{VEPPF2}), the EPPF may be expressed as 
$\mathbb{E}\bigg[h_{r} \bigg(S_{\alpha,n}\beta^{-\frac1\alpha}_{k,\frac{n}{\alpha}-k}\bigg)\bigg]
p_{\alpha}(n_{1},\ldots,n_{k}).$
Statements (i) and (ii) follow by a re-arrangement of terms and by summing over the EPPF in (i), respectively. Statement (iii) is a direct consequence of~\cite[Proposition 13]{Pit03}, see also \cite[Lemma 13
]{Pit06}.
\end{proof}

\subsection{Results for the Brownian case of $\alpha=\frac12$}\label{sec:Brownmixed}
We describe further simplifications in the case of $\alpha=\frac12$ using statement (iii) of Proposition~\ref{Propbetaid1}. We first introduce some additional notation and facts. Similar to~\cite{Pit03,Pit06,PitmanMax}, we specify $(\tilde{L}_{1,r};r\ge 0)\overset{d}=(Z_{r}/\sqrt{2};r\ge 0)$ to be the sequence of local times at $0$ up till time one of a nested family of generalized bridges $(B^{(r)}:=(B^{(r)}_{t}:t\in[0,1]);r\ge 0)$ whose excursion intervals correspond in (joint) distribution to the fragmentation operations~(\ref{fraglittle}). See~\cite{PitmanMax} for a formal description of the operations of \cite{DGM} at the level of processes $B^{(r)}.$ When $\mathbf{Z}\sim \mathrm{MLMC}(\frac12,\theta),$ $\tilde{L}_{1,r}:=L_{1,\theta+r}$ is such that 
$L^{2}_{1,\theta+r}\overset{d}=2G_{\theta+r+\frac12}\sim \chi^{2}_{2\theta+2r+1},$ where $\chi^{2}_{p}$ denotes a chi-squared distribution with $p$ degrees of freedom, having a density $f_{\chi^{2}_{p}}.$ The cases of $\theta=0$ and $\theta=\frac12$ correspond to cases where one can take $B^{(0)}$ to be respectively standard Brownian motion and Brownian bridge. Throughout $B_{1}$ denotes Brownian motion at time $1,$ having a standard normal density $\phi(\lambda)=\frac{1}{\sqrt{2\pi}}{\mbox e}^{-{\lambda^{2}}/{2}}$. $L_{1,\frac12}\overset{d}=\sqrt{2G_{1}}$ has a \textit{Rayleigh} distribution with density $f_{L_{1,\frac12}}(x)=x{\mbox e}^{-{x^{2}}/{2}},x>0$. It follows from statement (iii) of Proposition~\ref{Propbetaid1} that when $\alpha=\frac12,$ 
\begin{equation}
\gamma_{r}(dt)/dt=h_{r}(t)f_{\frac12}(t)=\mathbb{E}\bigg[h\bigg(t\beta^{-1}_{\frac{1}{2},r}\bigg)\bigg]f_{\frac12,r}(t),
\label{margSrBrown}
\end{equation}
leading to the following simplified description in this case.

\begin{prop}\label{GenBrownMLMC}Consider $((P_{k,r}),Z_{r};r\ge 0)\sim \mathrm{MLMC}^{[\gamma]}(\frac12),$ where $(P_{k,0})\sim  
\mathrm{PK}_{\frac12}(\gamma),$ and, generally, $(P_{k,r})\sim\mathrm{PK}_{\frac12}(\gamma_{r})$ is specified by (\ref{margSrBrown}).
\begin{enumerate}
\item[(i)]As a straightforward generalization of~(\ref{BCRT}), there is the equivalence in joint distribution, 
\begin{equation}
(\tilde{L}_{1,r};r\ge 0)\overset{d}=\left(\frac{Z_{r}}{\sqrt{2}};r\ge 0\right)\overset{d}=\left(\sqrt{\frac{Z^{2}_{0}}{2}+2\sum_{\ell=1}^{r}\mathbf{e}_{\ell}};r\ge 0\right).
\label{BCRTgen}
\end{equation}
where, for $w(y)=h\big(\frac1y\big),$ $\tilde{L}^{2}_{1,r}\overset{d}=Z^{2}_{r}/2$ has density 
$\mathbb{E}[w(2y\beta_{\frac12,r})]f_{\chi^{2}_{2r+1}}(y).$
\item[(ii)]For each $r\ge 0,$ the $\mathrm{EPPF}$ of $(P_{k,r})\sim  
\mathrm{PK}_{\frac12}(\gamma_{r})$ can be expressed as
$$
\mathbb{E}\bigg[\tilde{V}^{(2n-k)}_{\frac12,2r+k}\left(\frac{1}{\beta_{\frac12,r}}\right)\bigg]p_{\frac12,r}(n_{1},\ldots,n_{k}).
$$
\item[(iii)]The distribution of $K_{n,r}$ for a $\mathrm{PK}_{\frac12}(\gamma_{r})$ partition of $[n]$ can be expressed as
$$
\mathbb{P}^{(n)}_{\frac12|\gamma_{r}}(k)=\mathbb{E}\bigg[\tilde{V}^{(2n-k)}_{\frac12,2r+k}\left(\frac{1}{\beta_{\frac12,r}}\right)\bigg]
\frac{\Gamma(n)}{\Gamma(k)} \frac{(2r)_k}{(r)_n} \binom{2n-k-1}{n-1} 2^{k-2n}.
$$
\item[(iii)]As $n\rightarrow \infty,$ $n^{-\frac12}K_{n,r}\overset{a.s.}\rightarrow Z_{r}\overset{d}=2\sqrt{\frac{Z^{2}_{0}}{4} +\sum_{\ell=1}^{r}\mathbf{e}_{\ell}}$. 
\end{enumerate}
\end{prop}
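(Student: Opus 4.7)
The plan is to specialize the general $\mathrm{MLMC}^{[\gamma]}(\alpha)$ results of Lemma~\ref{LemmaMLMC} and Proposition~\ref{EPPFMLMC} to $\alpha=\tfrac12$, using the crucial Brownian-case identity $\prod_{i=1}^r \beta^{-2}_{2i-1,1}\overset{d}=\beta^{-1}_{\frac12,r}$ from Proposition~\ref{Propbetaid1}(iii). Parts (ii) and (iii) follow most directly: setting $\alpha=\tfrac12$ in Proposition~\ref{EPPFMLMC}(i)--(ii) turns the appearing product $\prod_{i=1}^r \beta^{-1/\alpha}_{(\alpha+i-1)/\alpha,(1-\alpha)/\alpha}$ into $\prod_{i=1}^r\beta^{-2}_{2i-1,1}$, which may be replaced by $\beta^{-1}_{\frac12,r}$ inside the expectation. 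For (iii) the closed-form factor $\frac{\Gamma(n)}{\Gamma(k)}\frac{(2r)_k}{(r)_n}\binom{2n-k-1}{n-1}2^{k-2n}$ is just $\mathbb{P}^{(n)}_{\frac12,r}(k)$, which I would obtain from $\mathbb{P}^{(n)}_{\alpha,\theta}(k)=\frac{\alpha^k(\theta/\alpha)_k}{(\theta)_n}S_\alpha(n,k)$ with the known expression $S_{1/2}(n,k)=\frac{\Gamma(n)}{\Gamma(k)}\binom{2n-k-1}{n-1}2^{2k-2n}$.

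For (i), I would exploit the fact that the joint law of $(Z_r)_{r\ge 0}$ conditional on $Z_0$ is determined entirely by the stationary transition (\ref{transitiong}) and does not depend on $\gamma$. Computing $g_{1/2}(z)=\pi^{-1/2}e^{-z^2/4}$, the $\alpha=\tfrac12$ transition kernel reduces to $\mathbb{P}(Z_1\in dy\,|\,Z_0=z)/dy=y\,e^{-(y^2-z^2)/4}/(2z)\cdot z=\tfrac12 y\,e^{-(y^2-z^2)/4}$ for $y>z$. Changing variable to $U=(Z_1^2-Z_0^2)/4$ reveals that $U$ is standard exponential independent of $Z_0$, and iteration gives $Z_r^2/2=Z_0^2/2+2\sum_{\ell=1}^r \mathbf{e}_\ell$ jointly in $r$, which is precisely (\ref{BCRTgen}). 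For the density assertion, I would start from the marginal density of $Z_r^{-2}$ in (\ref{margSrBrown}), apply the change of variable $y=1/(2t)$ to produce the density of $Z_r^2/2$, invoke $f_{1/2,r}(t)=t^{-r}f_{1/2}(t)/\mathbb{E}[S_{1/2}^{-r}]$ together with $\mathbb{E}[S^{-r}_{1/2}]=4^r\Gamma(r+\tfrac12)/\sqrt{\pi}$ (a direct Gaussian integral against $g_{1/2}$), and simplify; writing $h(t\beta^{-1}_{1/2,r})=w(2y\beta_{1/2,r})$ matches the claimed form $\mathbb{E}[w(2y\beta_{1/2,r})]f_{\chi^2_{2r+1}}(y)$.

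Part (iv) is then immediate from Proposition~\ref{EPPFMLMC}(iii) specialized to $\alpha=\tfrac12$, using the joint representation of $Z_r$ established in (i). The main obstacle I anticipate is the bookkeeping inside the density calculation in (i): one must line up the constants produced by the Jacobian, the power $t^{-r}$ in $f_{1/2,r}$, and the normalizing constant $\mathbb{E}[S_{1/2}^{-r}]$ with the standard form of $f_{\chi^2_{2r+1}}$, and simultaneously reconcile the argument of $h$ (with the beta product) with $w$ evaluated at $2y\beta_{1/2,r}$. Everything else amounts to routine substitution and invocation of Proposition~\ref{Propbetaid1}(iii).
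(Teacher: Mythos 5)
Your proposal is correct and follows essentially the same route as the paper, which presents this proposition as a direct specialization of Lemma~\ref{LemmaMLMC} and Proposition~\ref{EPPFMLMC} to $\alpha=\tfrac12$ via the identity of Proposition~\ref{Propbetaid1}(iii) (yielding~(\ref{margSrBrown})), together with the Brownian representation generalizing~(\ref{BCRT}). Your extra details all check out: the $\alpha=\tfrac12$ kernel~(\ref{transitiong}) indeed makes $(Z_r^2-Z_{r-1}^2)/4$ iid standard exponentials, and the Jacobian/normalization bookkeeping with $\mathbb{E}[S_{1/2}^{-r}]=4^r\Gamma(r+\tfrac12)/\sqrt{\pi}$ produces exactly $\mathbb{E}[w(2y\beta_{\frac12,r})]f_{\chi^2_{2r+1}}(y)$ and the stated form of $\mathbb{P}^{(n)}_{\frac12,r}(k)$.
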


\subsubsection{Brownian size-biased representations}
It is the case that, for each $r\geq 0,$ $(P_{k,r})|\tilde{L}_{1,r}=s$ has the distribution $\mathrm{PD}(\frac{1}{2}\big|\frac{1}{2}s^{-2}).$ Generically, let $(\tilde{P}_{\ell}(s),\ell\ge 1)$ denote the size-biased re-arrangement of a mass partition having law $\mathrm{PD}(\frac{1}{2}\big|\frac{1}{2}s^{-2}),$ where $\tilde{P}_{1}(s)$ is the first size-biased pick. Then, from~\cite[Corollary 3]{AldousPit}, see also \cite[Proposition 14]{Pit03}, one may set
$$
\tilde{P}_{1}(s)\overset{d}=\frac{B^{2}_{1}}{B^{2}_{1}+s^{2}}
$$
with corresponding density, for $0<p<1,$
\begin{equation}
f_{\tilde{P}_{1}}(p|s)=\frac{s}{\sqrt{2\pi}}p^{-\frac12}{(1-p)}^{-\frac32}{\mbox e}^{-\frac{{s}^{2}}{2}\left(\frac{p}{1-p}\right)},
\label{structural}
\end{equation}
and, for each $\ell\ge 1$,
\begin{equation}
\tilde{P}_{\ell}(s)=\frac{s^{2}}{s^{2}+R_{\ell-1}}-\frac{s^{2}}{s^{2}+R_{\ell}},
\label{condstick}
\end{equation}
where $R_{0}=0,$ and $R_{\ell}=\sum_{i=1}^{\ell}X_{i}$ for $X_{i}$ independent with common distribution equivalent to $B^{2}_{1}.$ These facts coupled with Proposition~\ref{GenBrownMLMC} lead to the next result.

\begin{cor}Let $(P_{k,r})\sim  
\mathrm{PK}_{\frac12}(\gamma_{r}),$ specified by (\ref{margSrBrown}), for fixed $r=0,1,2,\ldots.$ Its size-biased re-arrangement is equivalent in distribution to
$$
(\tilde{P}_{\ell}(\tilde{L}_{1,r}); \ell\ge 1),
$$
which is specified by~(\ref{BCRTgen}) and (\ref{condstick}). 
\end{cor}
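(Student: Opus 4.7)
The plan is to prove the corollary by conditioning on $\tilde{L}_{1,r}$ and then integrating out. By Lemma~\ref{LemmaMLMC} together with the identification $\tilde{L}_{1,r}\overset{d}=Z_{r}/\sqrt{2}$ recorded in Proposition~\ref{GenBrownMLMC}(i), the mass partition $(P_{k,r})\sim \mathrm{PK}_{\frac12}(\gamma_{r})$ admits the mixture representation $(P_{k,r})|\tilde{L}_{1,r}=s\sim \mathrm{PD}\big(\tfrac12\big|\tfrac{1}{2}s^{-2}\big)$, with $\tilde{L}_{1,r}$ having the marginal density $\mathbb{E}[w(2y\beta_{\frac12,r})]f_{\chi^{2}_{2r+1}}(y)$ (evaluated at $y=s^{2}$ with the Jacobian absorbed), where $w(y)=h(1/y)$. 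This reduces the corollary to a conditional statement about $\mathrm{PD}(\tfrac12|\tfrac12 s^{-2})$ mass partitions, for each fixed $s>0$.

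The second step is to apply the known size-biased description of $\mathrm{PD}(\tfrac12|\tfrac12 s^{-2})$ recalled immediately before the corollary. From~\cite[Corollary 3]{AldousPit} and~\cite[Proposition 14]{Pit03}, the size-biased re-arrangement of a mass partition governed by $\mathrm{PD}(\tfrac12|\tfrac12 s^{-2})$ is exactly $(\tilde{P}_{\ell}(s);\ell\ge 1)$, with first pick density~(\ref{structural}) and subsequent picks prescribed by~(\ref{condstick}). Hence, conditionally on $\tilde{L}_{1,r}=s$, the size-biased re-arrangement of $(P_{k,r})$ equals $(\tilde{P}_{\ell}(s);\ell\ge 1)$ in distribution.

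Finally, since size-biased re-arrangement is a measurable functional of the underlying mass partition, conditioning commutes with the re-arrangement: integrating the conditional equality of laws against the marginal law of $\tilde{L}_{1,r}$ from Proposition~\ref{GenBrownMLMC}(i) gives the unconditional equivalence in distribution of the size-biased re-arrangement of $(P_{k,r})$ with $(\tilde{P}_{\ell}(\tilde{L}_{1,r});\ell\ge 1)$, as claimed. The only point requiring attention—hardly an obstacle, but worth verifying—is the change-of-variables bookkeeping that converts the description in~(\ref{margSrBrown}) indexed by $t=S_{\alpha}$ into the description indexed by the local time $s=\tilde{L}_{1,r}=(2t)^{-1/2}$, so that the mixing density of $s$ given in Proposition~\ref{GenBrownMLMC}(i) matches the one implicitly appearing in~(\ref{margSrBrown}); once this is checked, the rest of the argument is a direct application of the tower property.
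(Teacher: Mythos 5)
Your proposal is correct and matches the paper's (implicit) argument: the corollary is stated as a direct consequence of the fact that $(P_{k,r})|\tilde{L}_{1,r}=s\sim \mathrm{PD}\big(\tfrac12\big|\tfrac12 s^{-2}\big)$, the Aldous--Pitman/Pitman size-biased description in~(\ref{structural})--(\ref{condstick}), and the law of $\tilde{L}_{1,r}$ from Proposition~\ref{GenBrownMLMC}, which is exactly your condition-then-mix argument. The change-of-variables bookkeeping you flag is the same routine step the paper absorbs into Proposition~\ref{GenBrownMLMC}(i).
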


\subsection{Mixture representations for $\mathrm{MLMC}^{[\gamma]}(\alpha)$}
For each fixed $r\ge 1,$ and $i=1,\ldots, r$, define the probability measures  
\begin{equation}
\gamma_{r,i}(dt)/dt=\frac{\mathbb{E}\left[h\left(t\beta^{-\frac1\alpha}_{i,\frac{r}{\alpha}-i}\right)\right]}{\tilde{V}^{(\frac{r}{\alpha}-i)}_{\alpha,i}(1)}
f_{\alpha,r}(t)
\label{subgammari}
\end{equation}
of random variables $Z^{-\frac1\alpha}_{r,i}$ with corresponding mass partition $(P^{(i)}_{k,r})\sim \mathrm{PK}_{\alpha}(\gamma_{r,i}),$ satisfying~(\ref{inverselocaltimeZ}) with $(r,i)$ in place of $r.$ Recall from 
Proposition~\ref{EPPFMLMC} that $\mathbb{P}^{(r)}_{\alpha|\gamma}(i)=\tilde{V}^{(\frac{r}{\alpha}-i)}_{\alpha,i}(1) \times \mathbb{P}^{(r)}_{\alpha,0}(i)$ is the distribution of $K_{r,0}$ based on a $\mathrm{PK}_{\alpha}(\gamma)$ partition of $[r].$ We now apply Proposition~\ref{Propbetaid1} to Proposition~\ref{EPPFMLMC} to obtain mixture representations of the quantities in Proposition~\ref{EPPFMLMC}. This also leads to the identification, and description of their properties, of other mass partitions, $(P^{(i)}_{k,r})\sim \mathrm{PK}_{\alpha}(\gamma_{r,i}),$ for each $r\ge 1.$
\begin{prop}\label{propbetaMLMCdecomp}
Suppose that $(P_{\ell,0})\sim \mathrm{PK}_{\alpha}(\gamma)$ with EPPF expressed as
$$
\tilde{V}^{(\frac{n}{\alpha}-k)}_{\alpha,k}(1)\times p_{\alpha}(n_{1},\ldots,n_{k}).
$$
The nested family $((P_{\ell,r}),Z_{r};r\ge 0)\sim \mathrm{MLMC}^{[\gamma]}(\alpha)$ has the following properties. 

\begin{enumerate}
\item[(i)]The marginal distribution of $Z^{-\frac1\alpha}_{r},$ $\gamma_{r}$ in~(\ref{margSr}), can be expressed as $\gamma_{r}(dt)=\sum_{i=1}^{r}\mathbb{P}^{(r)}_{\alpha|\gamma}(i)\gamma_{r,i}(dt)$ and 
$$
(P_{k,r})\sim \mathrm{PK}_{\alpha}(\gamma_{r})=\sum_{i=1}^{r}\mathbb{P}^{(r)}_{\alpha|\gamma}(i)\mathrm{PK}_{\alpha}(\gamma_{r,i}).
$$
\item[(ii)] The EPPF of $(P^{(i)}_{k,r})\sim\mathrm{PK}_{\alpha}(\gamma_{r,i})$ based on a partition of $[n]$ can be expressed as 
$$
p^{[\gamma_{r,i}]}_{\alpha}(n_{1},\ldots,n_{k})=\frac{\mathbb{E}\bigg[\tilde{V}^{(\frac{n}{\alpha}-k)}_{\alpha,\frac{r}{\alpha}+k}\bigg(\beta^{-\frac1\alpha}_{i,\frac{r}{\alpha}-i}\bigg)\bigg]}{\tilde{V}^{(\frac{r}{\alpha}-i)}_{\alpha,i}(1)}
p_{\alpha,r}(n_{1},\ldots,n_{k}).
$$
\item[(iii)] The EPPF of $(P_{\ell,r})\sim \mathrm{PK}_{\alpha}(\gamma_{r})$ based on a partition of $[n]$ can be expressed as 
$
\sum_{i=1}^{r}\mathbb{P}^{(r)}_{\alpha|\gamma}(i)p^{[\gamma_{r,i}]}_{\alpha}(n_{1},\ldots,n_{k}).
$
\end{enumerate}
\end{prop}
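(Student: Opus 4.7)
The plan is to use Proposition~\ref{Propbetaid1}(i) to rewrite the product of beta variables appearing in the definition~(\ref{margSr}) of $\gamma_{r}$ as a finite mixture, and then to propagate this mixture through the EPPF formula~(\ref{VEPPF2}). Specializing Proposition~\ref{Propbetaid1}(i) to $\theta=0$, the product $\prod_{j=1}^{r}\beta_{(\alpha+j-1)/\alpha,(1-\alpha)/\alpha}$ has the same law as a $\mathbb{P}^{(r)}_{\alpha,0}(i)$-mixture of $\beta_{i,r/\alpha-i}$ variables. This single identity drives the whole argument.

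For (i), I would substitute this mixture into the expectation defining $h_{r}(t)$ in~(\ref{margSr}) and interchange sum and expectation to obtain $\gamma_{r}(dt)/dt=\sum_{i=1}^{r}\mathbb{P}^{(r)}_{\alpha,0}(i)\mathbb{E}[h(t\beta^{-1/\alpha}_{i,r/\alpha-i})]f_{\alpha,r}(t)$. Multiplying and dividing each summand by $\tilde{V}^{(r/\alpha-i)}_{\alpha,i}(1)$ and invoking the case $n=r$ of Proposition~\ref{EPPFMLMC}(ii), namely $\mathbb{P}^{(r)}_{\alpha|\gamma}(i)=\tilde{V}^{(r/\alpha-i)}_{\alpha,i}(1)\mathbb{P}^{(r)}_{\alpha,0}(i)$, recognises the summand as $\mathbb{P}^{(r)}_{\alpha|\gamma}(i)\gamma_{r,i}(dt)$ in view of~(\ref{subgammari}). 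The corresponding mixture for $\mathrm{PK}_{\alpha}(\gamma_{r})$ follows from the linearity of the map $\gamma\mapsto\int\mathrm{PD}(\alpha|t)\gamma(dt)$.

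For (ii), I would apply~(\ref{VEPPF2}) to $\gamma_{r,i}$, writing $\gamma_{r,i}(dt)/dt=h_{r,i}(t)f_{\alpha}(t)$ with $h_{r,i}(t)$ proportional to $t^{-r}\mathbb{E}[h(t\beta^{-1/\alpha}_{i,r/\alpha-i})]$. The key step is to absorb the $t^{-r}$ factor into the density of $Y^{(n-k\alpha)}_{\alpha,k\alpha}=S_{\alpha,n}\beta^{-1/\alpha}_{k,n/\alpha-k}$: by~(\ref{opdensity}), multiplying that density by $y^{-r}$ produces, up to a gamma ratio, the density of $Y^{(n-k\alpha)}_{\alpha,r+k\alpha}$, which by~(\ref{conddentheta}) equals $S_{\alpha,r+n}/\beta^{1/\alpha}_{r/\alpha+k,n/\alpha-k}$. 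This lets one recognise $\mathbb{E}[\tilde{V}^{(n/\alpha-k)}_{\alpha,r/\alpha+k}(\beta^{-1/\alpha}_{i,r/\alpha-i})]$ directly. A beta--gamma computation using $\mathbb{E}[S^{-r}_{\alpha}]=\Gamma(r/\alpha+1)/\Gamma(r+1)$ should then collapse the remaining gamma ratios into the factor $\alpha(r/\alpha)_{k}\Gamma(n)/[(r)_{n}\Gamma(k)]$ that converts $p_{\alpha}(n_{1},\ldots,n_{k})$ into $p_{\alpha,r}(n_{1},\ldots,n_{k})$ via~(\ref{twoEPPF}). I expect this bookkeeping to be the main obstacle; everything else is a structural rearrangement driven by the single mixture identity from Proposition~\ref{Propbetaid1}(i).

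Statement (iii) will follow immediately: since an EPPF is linear in the underlying mass-partition law, the decomposition of $\mathrm{PK}_{\alpha}(\gamma_{r})$ obtained in (i), together with the EPPF of each $\mathrm{PK}_{\alpha}(\gamma_{r,i})$ computed in (ii), yields the claimed convex combination without further work.
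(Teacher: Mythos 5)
Your proposal is correct and follows essentially the same route as the paper: the $\theta=0$ case of the beta-product mixture identity from Proposition~\ref{Propbetaid1} is substituted into the representation~(\ref{margSr})/Proposition~\ref{EPPFMLMC}, and the normalizing constants $\tilde{V}^{(\frac{r}{\alpha}-i)}_{\alpha,i}(1)$ are regrouped with $\mathbb{P}^{(r)}_{\alpha,0}(i)$ to produce the weights $\mathbb{P}^{(r)}_{\alpha|\gamma}(i)$. Your part (ii) computation (absorbing the $t^{-r}$ bias to pass from $Y^{(n-k\alpha)}_{\alpha,k\alpha}$ to $Y^{(n-k\alpha)}_{\alpha,r+k\alpha}$ and collapsing the gamma ratios into the factor converting $p_{\alpha}$ into $p_{\alpha,r}$) checks out and merely makes explicit bookkeeping that the paper's terse proof leaves implicit.
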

\begin{proof}With regards to Proposition~\ref{EPPFMLMC}, apply a special case of the identity in~(\ref{betaKid}), 
$
\prod_{j=1}^{r}\beta_{\frac{\alpha+j-1}{\alpha},\frac{1-\alpha}{\alpha}}
\overset{d}=\beta_{K_{r},\frac{r}{\alpha}-K_{r}},
$
where its density is given in~(\ref{prodbetadensityid}) of Proposition~\ref{Propbetaid1}, taking the form,
$
\sum_{i=1}^{r}\mathbb{P}^{(r)}_{\alpha,0}(i)f_{\beta_{i,\frac{r}{\alpha}-i}}(u).
$
\end{proof}

\section{Decomposition of special functions and first examples}\label{sec:decomp}
One of the un-exploited features of the Gibbs partitions, beyond the case of inducing various distributions over partitions, is that it provides a method of obtaining decompositions for a host of special functions connected to $f_{\alpha}.$ We further note that while these decompositions will now be shown to arise from basic probabilistic principles, their derivations from other perspectives would not be so transparent. Perhaps the simplest example
is, using~(\ref{PDgenKn}),
\begin{equation}
\mathbb{E}[S^{-\theta}_{\alpha}]=
\mathbb{E}_{\alpha,0}\left[\frac{\Gamma(n)\Gamma(\frac{\theta}{\alpha}+K_{n})}{\Gamma(\theta+n)\Gamma(K_{n})}\right]=\frac{\Gamma(\frac{\theta}{\alpha}+1)}{\Gamma(\theta+1)},
\label{PitmanKnmoment}
\end{equation}
which agrees with~\cite[exercise 3.2.9, p.66]{Pit06}. This is equivalent to 
\begin{equation}
\sum_{j=1}^{n}\mathbb{P}^{(n)}_{\alpha}(j)\frac{\Gamma(\frac{\theta}{\alpha}+j)}{\Gamma(j)}=\frac{\Gamma(\theta+n)\Gamma(\frac{\theta}{\alpha}+1)}{\Gamma(n)\Gamma(\theta+1)}.
\label{PitmanKnmoment2}
\end{equation}

\begin{lem}\label{decomplemma} Let $\varphi(t)$ denote an arbitrary non-negative function such that\break $\mathbb{E}[\varphi(S_{\alpha})]<\infty.$ Set 
$h(t)=\varphi(t)/\mathbb{E}[\varphi(S_{\alpha})],$ and thus $\gamma(dt)/dt=h(t)f_{\alpha}(t).$ For each $n\ge 1,$ there is the decomposition,
\begin{equation}
\mathbb{E}[\varphi(S_{\alpha})]=\sum_{k=1}^{n}\mathbb{E}[\varphi(S_{\alpha})|K_{n}=k]\mathbb{P}_{\alpha}(K_{n}=k)
\label{gdecomp},
\end{equation}
where $\mathbb{E}[\varphi(S_{\alpha})|K_{n}=k]$ can be expressed as
\begin{equation}
\mathbb{E}\bigg[\varphi\bigg(\frac{S_{\alpha,n}}{\beta^{1/\alpha}_{k,\frac{n}{\alpha}-k}}\bigg)\bigg]=
\frac{\alpha \Gamma(n)}{\Gamma(k)}\int_{0}^{\infty}\varphi(t)t^{-n}
\Ip{n-k\alpha}{f_\alpha}{(t)}\,dt.
\label{gdecompk}
\end{equation}
Then,
\begin{enumerate}
\item[(i)] $\mathbb{E}[\varphi(S_{\alpha})]:=\mathbb{E}[\varphi(S_{\alpha})|K_{1}=1]=\alpha\int_{0}^{\infty}\varphi(t)t^{-1}
\Ip{1-\alpha}{f_{\alpha}}{(t)}\,dt$.
\item[(ii)]
$V_{n,k}:=\displaystyle\int_{0}^{\infty}\mathbb{G}_{\alpha}^{(n,k)}(t)\gamma(dt)=\frac{\alpha^{k-1}\Gamma(k)}{\Gamma(n)} \times 
\frac{\mathbb{E}[\varphi(S_{\alpha})|K_{n}=k]}{\mathbb{E}[\varphi(S_{\alpha})]}.
$
\item[(iii)]
The recursion~(\ref{backward}) shows that $\mathbb{E}[\varphi(S_{\alpha})|K_{n}=k]$ can be expressed as
$$
\left(\frac{k\alpha}{n}\right)
\mathbb{E}[\varphi(S_{\alpha})|K_{n+1}=k+1]+
\left(1-\frac{k\alpha}{n}\right)
\mathbb{E}[\varphi(S_{\alpha})|K_{n+1}=k].
$$
\end{enumerate}
\end{lem}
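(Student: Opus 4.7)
The plan is to view this lemma as a direct repackaging of earlier identities, with the substitution $h(t) = \varphi(t)/\mathbb{E}[\varphi(S_\alpha)]$ translating the EPPF machinery of Section~1 into a generic decomposition identity for $\mathbb{E}[\varphi(S_\alpha)]$. The decomposition~(\ref{gdecomp}) itself is nothing more than the tower property applied to $\mathbb{E}[\varphi(S_\alpha)]$ conditional on $K_n$, so the only substantive content is identifying $\mathbb{E}[\varphi(S_\alpha)\mid K_n = k]$ with the stated integral representation. For this I would invoke Corollary~\ref{corcond}, which supplies both the distributional identity $S_\alpha\mid K_n=k \overset{d}{=} S_{\alpha,n}/\beta^{1/\alpha}_{k,n/\alpha-k}$ and the explicit conditional density $f^{(n-k\alpha)}_{\alpha,k\alpha}(t) = \frac{\alpha\Gamma(n)}{\Gamma(k)} t^{-n}\, \Ip{n-k\alpha}{f_\alpha}{(t)}$. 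Integrating $\varphi$ against this density gives both sides of~(\ref{gdecompk}) simultaneously.

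For part (i), setting $n=k=1$ forces $K_1=1$ almost surely, so the conditional expectation collapses to the unconditional one. The resulting integral identity is then just~(\ref{diffeo}) multiplied through by $\varphi(t)/t$ and integrated; equivalently, using $\mathbb{G}_\alpha^{(1,1)}(t) = 1$, the factor $\alpha t^{-1}\Ip{1-\alpha}{f_\alpha}{(t)}$ reduces precisely to $f_\alpha(t)$.

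For part (ii), I would match the two available EPPF expressions. Formula~(\ref{VEPPF}) presents $p^{[\gamma]}_\alpha(n_1,\ldots,n_k) = V_{n,k}\,\frac{\alpha^{1-k}\Gamma(n)}{\Gamma(k)} p_\alpha(n_1,\ldots,n_k)$, while formula~(\ref{VEPPF2}) represents it as $\mathbb{E}\!\left[h\big(S_{\alpha,n}\beta^{-1/\alpha}_{k,n/\alpha-k}\big)\right] p_\alpha(n_1,\ldots,n_k)$. Substituting $h = \varphi/\mathbb{E}[\varphi(S_\alpha)]$ and using that $S_{\alpha,n}\beta^{-1/\alpha}_{k,n/\alpha-k}$ is distributed as $S_\alpha\mid K_n=k$, the expectation on the right equals $\mathbb{E}[\varphi(S_\alpha)\mid K_n=k]/\mathbb{E}[\varphi(S_\alpha)]$; solving for $V_{n,k}$ delivers the claim.

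For part (iii), I would substitute the formula in (ii) into the Gnedin--Pitman backward recursion~(\ref{backward}). The factors $\alpha^{k-1}\Gamma(k)/\Gamma(n)$ telescope cleanly after multiplication by $\Gamma(n+1)/[\alpha^{k-1}\Gamma(k)]$, leaving $n\,\mathbb{E}[\varphi(S_\alpha)\mid K_n=k] = (n-k\alpha)\mathbb{E}[\varphi(S_\alpha)\mid K_{n+1}=k] + k\alpha\,\mathbb{E}[\varphi(S_\alpha)\mid K_{n+1}=k+1]$, from which dividing by $n$ yields the two-point mixture. No part of this proof presents a genuine obstacle; the only care required is the bookkeeping of Gamma and $\alpha$-power prefactors in part (iii), which I would verify explicitly before stating the final normalization.
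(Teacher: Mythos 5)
Your proof is correct and follows exactly the route the paper intends: the lemma is stated as a direct repackaging of the preceding results, namely the tower property, the conditional density of $S_{\alpha}\mid K_{n}=k$ from Corollary~\ref{corcond}, the identity $\mathbb{G}_{\alpha}^{(1,1)}(t)=1$ (equivalently~(\ref{diffeo})) for part (i), the comparison of~(\ref{VEPPF}) with~(\ref{VEPPF2}) for part (ii), and the Gnedin--Pitman recursion~(\ref{backward}) for part (iii). Your bookkeeping of the prefactors in (iii) checks out, so nothing is missing.
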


\begin{rem}
When not considering constructions for $V_{n,k}$, both~(\ref{gdecomp}) and~(\ref{gdecompk})
apply for any integrable real or complex valued function $\varphi.$
\end{rem}

\subsection{Example: Decomposing generalized Mittag-Leffler functions in terms of scaled Prabhakar functions}
As we mentioned in the introduction, the Mittag-Leffler function plays an important role in fractional calculus as described in the book~\cite{GorenfloMittag}. Here we show that the Mittag-Leffler function and its generalizations pertinent to the $\mathrm{PD}(\alpha,\theta)$ distribution can be decomposed in terms of scaled versions of functions introduced by
~\cite{Prab}. We also show that Laplace transforms of $Z^{(\frac{\nu}{\alpha})}_{\alpha,\omega}$ are special cases of functions in~\cite{Prab} up to a constant of proportionality. This example is also inspired by some
results in~\cite{JamesLamp}. Recall that the
Mittag-Leffler function may be defined by
$$\mathrm{E}_{\alpha,1}(-\lambda)=\mathbb{E}\big[{\mbox
e}^{-\lambda
S^{-\alpha}_{\alpha}}\big]=\sum_{\ell=0}^{\infty}\frac{{(-\lambda)}^{\ell}}{\Gamma(\alpha\ell+1)}=\mathbb{E}\big[{\mbox
e}^{-\lambda^{\frac1\alpha}X_{\alpha}}\big],
$$
where, for $S'_{\alpha}\overset{d}=S_{\alpha},$ and otherwise independent, 
$
X_{\alpha}:={S_{\alpha}}/{S'_{\alpha}}.
$
Remarkably although $S_{\alpha}$ does not have a simple density,
except for $\alpha=\frac12$,~\cite{Zolotarev57}~(see also \cite{BPY,Lamperti,PY92} and
~\cite[exercise 4.2.1]{Chaumont}) shows that the density
of $X_{\alpha}$ is, for $y>0,$
\begin{equation}
f_{X_{\alpha}}(y)=\frac{\sin(\pi
\alpha)}{\pi}\frac{y^{\alpha-1}}{y^{2\alpha}+2\cos(\pi
\alpha)y^{\alpha}+1}\label{denX}.
\end{equation}
This coincides with  the integral representation
$$
\mathrm{E}_{\alpha,1}(-\lambda)=\frac{\sin(\pi
\alpha)}{\pi}\int_{0}^{\infty}\frac{{\mbox e}^{-\lambda^{1/\alpha}
y}y^{\alpha-1}}{y^{2\alpha}+2\cos(\pi \alpha)y^{\alpha}+1}dy.
$$
James~\cite[Section~3]{JamesLamp}, here we use a slight adjustment in notation, showed that, for $\theta>-\alpha$,
\begin{equation}
\mathbb{E}\big[{\mbox
e}^{-\lambda S^{-\alpha}_{\alpha,\theta}}\big]=\mathbb{E}\big[{\mbox
e}^{-\lambda^{1/\alpha}X_{\alpha,\theta}}\big]=\mathrm{E}^{(\frac{\theta}{\alpha}+1)}_{\alpha,\theta+1}(-\lambda),
\label{MittagLeffler}
\end{equation}
where $X_{\alpha,\theta}:=S_{\alpha}/S_{\alpha,\theta}$ are the Lamperti variables studied in~\cite{JamesLamp}, and
\begin{equation}
\mathrm{E}^{(\frac{\theta}{\alpha}+1)}_{\alpha,\theta+1}(-\lambda)=\sum_{\ell=0}^{\infty}\frac{{(-\lambda)}^{\ell}}{\ell!}\frac{
\Gamma(\frac{\theta}{\alpha}+1+\ell)\Gamma(\theta+1)}
{\Gamma(\frac{\theta}{\alpha}+1)\Gamma(\alpha
\ell+\theta+1)},\qquad
\theta>-\alpha,
\label{altM}
\end{equation}
which further reduces to 
$\mathrm{E}^{(\frac{\theta}{\alpha})}_{\alpha,\theta}(-\lambda)$, for $\theta>0$. We now extend these results for the general case of $\omega$ and $\nu.$
\begin{prop}\label{GenLaplaceMittag}Consider the random variables $Z^{(\frac{\nu}{\alpha})}_{\alpha,\omega}$ defined in~(\ref{jamesid2}). 
Their Laplace transforms are equal to
\begin{equation}
\mathrm{E}^{(\frac{\omega}{\alpha})}_{\alpha,\omega+\nu}(-\lambda)=
\sum_{\ell=0}^{\infty}\frac{{(-\lambda)}^{\ell}}{\ell!}\frac{\Gamma(\frac{\omega}{\alpha}+\ell)\Gamma(\omega+\nu)}{\Gamma(\frac{\omega}{\alpha})\Gamma(\alpha
\ell+\omega+\nu)}.
\label{altMGen}
\end{equation}
\end{prop}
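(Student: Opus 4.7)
The plan is to compute the Laplace transform by expanding the exponential as a power series and evaluating the resulting moments, exploiting the product representation of $Z^{(\frac{\nu}{\alpha})}_{\alpha,\omega}$ given in~(\ref{jamesid2}).

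First, I would use the representation
$$
Z^{(\frac{\nu}{\alpha})}_{\alpha,\omega} = \frac{\beta_{\frac{\omega}{\alpha},\frac{\nu}{\alpha}}}{S^{\alpha}_{\alpha,\omega+\nu}},
$$
where the beta and generalized stable factors are independent. Writing $\mathbb{E}[\mathrm{e}^{-\lambda Z}] = \sum_{\ell=0}^{\infty}\frac{(-\lambda)^\ell}{\ell!}\mathbb{E}[Z^\ell]$ and using independence, the $\ell$-th moment factors as
$$
\mathbb{E}\big[Z^{\ell}\big] = \mathbb{E}\Big[\beta^{\ell}_{\frac{\omega}{\alpha},\frac{\nu}{\alpha}}\Big]\cdot \mathbb{E}\big[S^{-\alpha\ell}_{\alpha,\omega+\nu}\big].
$$
The beta moment is standard: $\mathbb{E}[\beta^{\ell}_{\frac{\omega}{\alpha},\frac{\nu}{\alpha}}]=\Gamma(\frac{\omega+\nu}{\alpha})\Gamma(\frac{\omega}{\alpha}+\ell)/[\Gamma(\frac{\omega}{\alpha})\Gamma(\frac{\omega+\nu}{\alpha}+\ell)]$. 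For the stable moment, the defining tilt $f_{\alpha,\theta}(t) = t^{-\theta}f_\alpha(t)/\mathbb{E}[S^{-\theta}_\alpha]$ together with the well-known formula $\mathbb{E}[S^{-s}_{\alpha}] = \Gamma(s/\alpha+1)/\Gamma(s+1)$ (a direct consequence of~(\ref{PitmanKnmoment}) or a Mellin transform computation using $\mathbb{E}[\mathrm{e}^{-\lambda S_\alpha}] = \mathrm{e}^{-\lambda^\alpha}$) yields
$$
\mathbb{E}[S^{-\alpha\ell}_{\alpha,\omega+\nu}] = \frac{\mathbb{E}[S^{-(\alpha\ell+\omega+\nu)}_{\alpha}]}{\mathbb{E}[S^{-(\omega+\nu)}_{\alpha}]} = \frac{\Gamma(\frac{\omega+\nu}{\alpha}+\ell)\,\Gamma(\omega+\nu)}{\Gamma(\frac{\omega+\nu}{\alpha})\,\Gamma(\omega+\nu+\alpha\ell)},
$$
after using $\Gamma(x+1)=x\Gamma(x)$ and noting $\frac{\omega+\nu}{\alpha}+\ell = \frac{\omega+\nu+\alpha\ell}{\alpha}$.

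Multiplying the two factors, the $\Gamma(\frac{\omega+\nu}{\alpha})$ and $\Gamma(\frac{\omega+\nu}{\alpha}+\ell)$ terms cancel, giving
$$
\mathbb{E}[Z^{\ell}] = \frac{\Gamma(\frac{\omega}{\alpha}+\ell)\,\Gamma(\omega+\nu)}{\Gamma(\frac{\omega}{\alpha})\,\Gamma(\alpha\ell+\omega+\nu)},
$$
which is precisely the coefficient appearing in~(\ref{altMGen}). Substituting into the power series yields the claimed identity. As a consistency check one recovers~(\ref{altM}) by setting $\omega = \theta+\alpha$ and $\nu = 1-\alpha$, in which case $Z^{(\frac{1-\alpha}{\alpha})}_{\alpha,\theta+\alpha} = S^{-\alpha}_{\alpha,\theta}$ by~(\ref{jamesidPD}).

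There is no real obstacle; the argument is a direct computation. The only delicate points are (i) justifying term-by-term interchange of expectation and infinite sum, which follows from absolute convergence of the series for the entire function $\mathrm{E}^{(\omega/\alpha)}_{\alpha,\omega+\nu}$ (or equivalently from $Z$ being nonnegative and $\mathrm{e}^{-\lambda z}$ bounded), and (ii) the range of parameters: the moment formula for $S_\alpha^{-s}$ requires $s > -1$, which is satisfied since $\omega>0$ and $\nu>0$. Once these are noted the calculation closes immediately.
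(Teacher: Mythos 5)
Your computation is correct, and it arrives at the same place as the paper by a slightly more self-contained route. The paper's own proof does not expand $\mathrm{e}^{-\lambda Z}$ from scratch: it writes $Z^{(\nu/\alpha)}_{\alpha,\omega}=\beta_{\frac{\omega}{\alpha},\frac{\nu}{\alpha}}\,S^{-\alpha}_{\alpha,\omega+\nu}$, invokes the previously stated identity~(\ref{MittagLeffler})--(\ref{altM}) (from James' Lamperti-laws paper) for the Laplace transform of $S^{-\alpha}_{\alpha,\theta}$ with $\theta=\omega+\nu$, so that
$\mathbb{E}\big[\mathrm{e}^{-\lambda Z}\big]=\mathbb{E}\big[\mathrm{E}^{(\frac{\omega+\nu}{\alpha}+1)}_{\alpha,\omega+\nu+1}(-\lambda\beta_{\frac{\omega}{\alpha},\frac{\nu}{\alpha}})\big]$, and then substitutes the beta moments $\mathbb{E}\big[\beta^{\ell}_{\frac{\omega}{\alpha},\frac{\nu}{\alpha}}\big]$ into that series. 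You instead compute $\mathbb{E}[Z^{\ell}]$ directly, deriving the stable factor from the tilt definition of $f_{\alpha,\theta}$ together with $\mathbb{E}[S^{-s}_{\alpha}]=\Gamma(\frac{s}{\alpha}+1)/\Gamma(s+1)$, which in effect re-derives~(\ref{altM}) rather than citing it; the gamma-function bookkeeping is then identical. What the paper's route buys is brevity (one beta-moment substitution into a known transform); what yours buys is independence from the external reference and an explicit handle on the coefficient growth, which also settles the term-by-term interchange, since $\mathbb{E}[Z^{\ell}]/\ell!=O(\ell^{c}/(\ell!)^{\alpha})$ makes the series entire, so dominated convergence (with dominating function $\mathrm{e}^{\lambda Z}$, which is integrable) applies. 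Two small corrections: the negative-moment formula for $S_{\alpha}$ holds for $s>-\alpha$, not $s>-1$ (immaterial here since all exponents used are positive), and "nonnegativity of $Z$ plus boundedness of $\mathrm{e}^{-\lambda z}$" alone does not justify expanding an alternating series termwise -- it is precisely the absolute-convergence/entire-function argument you also give that does the work.
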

\begin{proof}Using~(\ref{MittagLeffler}),
$$
\mathbb{E}\big[{\mbox e}^{-\lambda Z^{(\frac{\nu}{\alpha})}_{\alpha,\omega}}\big]=\mathbb{E}\bigg[{\mbox e}^{-\lambda^{1/\alpha}{\beta^{1/\alpha}_{\frac{\omega}{\alpha},\frac{\nu}{\alpha}}} X_{\alpha,\omega+\nu}}\bigg]
=\mathbb{E}\bigg[\mathrm{E}^{(\frac{\omega+\nu}{\alpha}+1)}_{\alpha,\omega+\nu+1}\big(-\lambda {\beta_{\frac{\omega}{\alpha},\frac{\nu}{\alpha}}}\big)\bigg].
$$
The result is concluded by substituting $\mathbb{E}\big[{\beta^{\ell}_{\frac{\omega}{\alpha},\frac{\nu}{\alpha}}}\big]=
\dfrac{\Gamma(\frac{\omega+\nu}{\alpha})\Gamma(\frac{\omega}{\alpha}+\ell)}{\Gamma(\frac{\omega}{\alpha})\Gamma(\frac{\omega+\nu}{\alpha}+\ell)}.
$
\end{proof}
We now show that the generalized Mittag-Leffler functions can be expressed in terms of special cases of the previous result. 

\begin{prop}\label{PropMittagdecomp} Following Lemma~\ref{decomplemma}, set $\varphi(t)={\mbox e}^{-\lambda t^{-\alpha}}t^{-\theta}/\mathbb{E}[S^{-\theta}_{\alpha}]$. Then,\break $\mathbb{E}[\varphi(S_{\alpha})]=\mathrm{E}^{(\frac{\theta}{\alpha}+1)}_{\alpha,\theta+1}(-\lambda),$ and there is the decomposition, for each fixed $\lambda>0,$
$$
\mathrm{E}^{(\frac{\theta}{\alpha}+1)}_{\alpha,\theta+1}(-\lambda)=
\sum_{k=1}^{n}\mathbb{P}^{(n)}_{\alpha,\theta}(k)\mathrm{E}^{(\frac{\theta}{\alpha}+k)}_{\alpha,\theta+n}(-\lambda)=\mathbb{E}_{\alpha,\theta}\bigg[\mathrm{E}^{(\frac{\theta}{\alpha}+K_{n})}_{\alpha,\theta+n}(-\lambda)\bigg],
$$
where $\mathrm{E}^{(\frac{\theta}{\alpha}+k)}_{\alpha,\theta+n}(-\lambda)=\mathbb{E}\bigg[\mathrm{E}^{(\frac{\theta+n}{\alpha})}_{\alpha,\theta+n}\bigg(-\lambda {\beta_{\frac{\theta}{\alpha}+k,\frac{n}{\alpha}-k}}\bigg)\bigg]$ can be expressed as
$$
\mathrm{E}^{(\frac{\theta}{\alpha}+k)}_{\alpha,\theta+n}(-\lambda)=
\sum_{\ell=0}^{\infty}\frac{{(-\lambda)}^{\ell}}{\ell!}\frac{\Gamma(\frac{\theta}{\alpha}+k+\ell)\Gamma(
\theta+n)}{\Gamma(\frac{\theta}{\alpha}+k)\Gamma(\alpha
\ell+\theta+n)},
$$
as read from~(\ref{altMGen}).
\end{prop}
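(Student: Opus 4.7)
The plan is to verify the initial identification $\mathbb{E}[\varphi(S_\alpha)]=\mathrm{E}^{(\theta/\alpha+1)}_{\alpha,\theta+1}(-\lambda)$, then feed $\varphi$ through Lemma~\ref{decomplemma} and recognize each conditional expectation as a Prabhakar-type Laplace transform supplied by Proposition~\ref{GenLaplaceMittag}. First I would note that since $f_{\alpha,\theta}(t) = t^{-\theta}f_\alpha(t)/\mathbb{E}[S_\alpha^{-\theta}]$ is the density of $S_{\alpha,\theta}$, the change of measure yields
\begin{equation*}
\mathbb{E}[\varphi(S_\alpha)] \;=\; \frac{\mathbb{E}\bigl[\mathrm{e}^{-\lambda S_\alpha^{-\alpha}} S_\alpha^{-\theta}\bigr]}{\mathbb{E}[S_\alpha^{-\theta}]} \;=\; \mathbb{E}\bigl[\mathrm{e}^{-\lambda S_{\alpha,\theta}^{-\alpha}}\bigr],
\end{equation*}
which is precisely $\mathrm{E}^{(\theta/\alpha+1)}_{\alpha,\theta+1}(-\lambda)$ by~(\ref{MittagLeffler}).

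Next I would invoke Lemma~\ref{decomplemma} with this choice of $h(t)=\varphi(t)$ to write
\begin{equation*}
\mathbb{E}[\varphi(S_\alpha)] \;=\; \sum_{k=1}^{n} \mathbb{E}[\varphi(S_\alpha)\mid K_n=k]\,\mathbb{P}^{(n)}_{\alpha,0}(k),
\end{equation*}
and I would split the conditional expectation as
\begin{equation*}
\mathbb{E}[\varphi(S_\alpha)\mid K_n=k] \;=\; \frac{\mathbb{E}[S_\alpha^{-\theta}\mid K_n=k]}{\mathbb{E}[S_\alpha^{-\theta}]}\;\mathbb{E}\bigl[\mathrm{e}^{-\lambda S_{\alpha,\theta}^{-\alpha}}\,\big|\,K_n=k\bigr] \;=\; d^{(n)}_{\alpha,\theta}(k)\,\mathbb{E}\bigl[\mathrm{e}^{-\lambda S_{\alpha,\theta}^{-\alpha}}\,\big|\,K_n=k\bigr],
\end{equation*}
using the formula for $\mathbb{E}[S_\alpha^{-\theta}\mid K_n=k]$ preceding Corollary~\ref{PDRLresults}. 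Combining with the identity $\mathbb{P}^{(n)}_{\alpha,\theta}(k) = d^{(n)}_{\alpha,\theta}(k)\,\mathbb{P}^{(n)}_{\alpha,0}(k)$ from~(\ref{PDgenKn}) immediately produces the asserted mixture in terms of $\mathbb{P}^{(n)}_{\alpha,\theta}(k)$, modulo identification of the conditional Laplace transform.

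For that identification, Corollary~\ref{PDRLresults}(i,iii) tells me that $S_{\alpha,\theta}\mid K_n=k$ is distributed as $Y^{(n-k\alpha)}_{\alpha,\theta+k\alpha} = S_{\alpha,\theta+n}/\beta^{1/\alpha}_{\theta/\alpha+k,n/\alpha-k}$, so $S_{\alpha,\theta}^{-\alpha}\mid K_n=k$ has the distribution of
\begin{equation*}
\bigl(Y^{(n-k\alpha)}_{\alpha,\theta+k\alpha}\bigr)^{-\alpha} \;=\; \frac{\beta_{\theta/\alpha+k,\,n/\alpha-k}}{S^{\alpha}_{\alpha,\theta+n}} \;=\; Z^{(n/\alpha-k)}_{\alpha,\theta+k\alpha},
\end{equation*}
with the last equality reading from~(\ref{jamesid2}) under the parameter choice $\omega=\theta+k\alpha$, $\nu=n-k\alpha$ (so that $\omega+\nu=\theta+n$, $\omega/\alpha=\theta/\alpha+k$). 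Proposition~\ref{GenLaplaceMittag} then gives directly
\begin{equation*}
\mathbb{E}\bigl[\mathrm{e}^{-\lambda Z^{(n/\alpha-k)}_{\alpha,\theta+k\alpha}}\bigr] \;=\; \mathrm{E}^{(\theta/\alpha+k)}_{\alpha,\theta+n}(-\lambda),
\end{equation*}
with the claimed power-series representation simply being~(\ref{altMGen}) at these parameters. Finally, the auxiliary equality $\mathrm{E}^{(\theta/\alpha+k)}_{\alpha,\theta+n}(-\lambda) = \mathbb{E}\bigl[\mathrm{E}^{((\theta+n)/\alpha)}_{\alpha,\theta+n}(-\lambda\beta_{\theta/\alpha+k,n/\alpha-k})\bigr]$ follows by writing $Z^{(n/\alpha-k)}_{\alpha,\theta+k\alpha} = \beta_{\theta/\alpha+k,n/\alpha-k}\cdot S^{-\alpha}_{\alpha,\theta+n}$, conditioning on the beta factor, and applying (\ref{MittagLeffler}) to the inner Laplace transform of $S^{-\alpha}_{\alpha,\theta+n}$.

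There is no conceptual obstacle here; the proof is essentially a tracking exercise. The only place that requires care is the parameter bookkeeping when aligning the pair $(\omega,\nu)=(\theta+k\alpha,\,n-k\alpha)$ in Proposition~\ref{GenLaplaceMittag} with the conditional representation $Y^{(n-k\alpha)}_{\alpha,\theta+k\alpha}$ from Corollary~\ref{PDRLresults}, and in checking that the normalizing constant $d^{(n)}_{\alpha,\theta}(k)$ combines with $\mathbb{P}^{(n)}_{\alpha,0}(k)$ to reproduce $\mathbb{P}^{(n)}_{\alpha,\theta}(k)$ precisely.
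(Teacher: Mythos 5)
Your argument is correct and is essentially the paper's own proof: both decompose $\mathbb{E}[\varphi(S_{\alpha})]$ via Lemma~\ref{decomplemma}, factor the conditional expectation as $d^{(n)}_{\alpha,\theta}(k)\,\mathbb{E}\big[{\mbox e}^{-\lambda Z^{(\frac{n-k\alpha}{\alpha})}_{\alpha,\theta+k\alpha}}\big]$ using Corollary~\ref{PDRLresults} and~(\ref{jamesid2}), and then identify this Laplace transform with $\mathrm{E}^{(\frac{\theta}{\alpha}+k)}_{\alpha,\theta+n}(-\lambda)$ via Proposition~\ref{GenLaplaceMittag}, absorbing $d^{(n)}_{\alpha,\theta}(k)$ into $\mathbb{P}^{(n)}_{\alpha,\theta}(k)$ through~(\ref{PDgenKn}). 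Your extra verifications (the initial identity and the beta-mixture form) are just explicit spellings of steps the paper leaves implicit, so nothing is missing.
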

\begin{proof}
The result follows by combining Proposition~\ref{GenLaplaceMittag} with Lemma~\ref{decomplemma}, where 
$$
\mathbb{E}[\varphi(S_{\alpha})|K_{n}=k]=\mathbb{E}\bigg[{\mbox e}^{-\lambda Z^{(\frac{n-k\alpha}{\alpha})}_{\alpha,\theta+k\alpha}}\bigg]\times \frac{\mathbb{E}[S^{-\theta}_{\alpha}|K_{n}=k]}{\mathbb{E}[S^{-\theta}_{\alpha}]}.
$$
\end{proof}

\subsection{Results for the corresponding mass partition}
It follows that, in this case, there is corresponding $(P_{k})\sim \mathrm{PK}_{\alpha}(\gamma),$ with
\begin{equation}
\gamma(dt)/dt=\frac{{\mbox e}^{-\lambda t^{-\alpha}}t^{-\theta}}{\mathbb{E}[S^{-\theta}_{\alpha}]\mathrm{E}^{(\frac{\theta}{\alpha}+1)}_{\alpha,\theta+1}(-\lambda)}f_{\alpha}(t)=\frac{{\mbox e}^{-\lambda t^{-\alpha}}f_{\alpha,\theta}(t)}{\mathrm{E}^{(\frac{\theta}{\alpha}+1)}_{\alpha,\theta+1}(-\lambda)}.
\label{GammaMittag}
\end{equation}
A change of variable leads to the exponentially tilted density of $S^{-\alpha}_{\alpha,\theta}$ given by
\begin{equation}
g^{(0)}_{\alpha,\theta}(s|\lambda):=\frac{{\mbox e}^{-\lambda s}g_{\alpha,\theta}(s)}{\mathrm{E}^{(\frac{\theta}{\alpha}+1)}_{\alpha,\theta+1}(-\lambda)},
\label{expMittag}
\end{equation}
which reflects the local time up till time one or the $\alpha$-diversity in this setting. We now describe the distribution of the relevant mass partition and its EPPF.

\begin{prop}\label{PropMittagEPPF}Consider the setting in Proposition~\ref{PropMittagdecomp}. The choice of $\varphi$ corresponds to a mass partition $(P_{\ell,0}(\lambda))\sim \mathrm{PK}_{\alpha}(\gamma),$ specified by~(\ref{GammaMittag}), or (\ref{expMittag}), with distribution otherwise denoted by
\begin{equation}
\mathbb{L}^{(0)}_{\alpha,\theta}(\lambda):=\int_{0}^{\infty}\mathrm{PD}(\alpha|s^{-\frac1\alpha})
g^{(0)}_{\alpha,\theta}(s|\lambda)ds.
\label{genMittaglambda}
\end{equation}

\begin{enumerate}
\item[(i)]The EPPF of a partition of $[n]$ is given by
\begin{equation}
p^{(0)}_{\alpha,\theta}(n_{1},\ldots,n_{k}|\lambda)=
\frac{\mathrm{E}^{(\frac{\theta}{\alpha}+k)}_{\alpha,\theta+n}(-\lambda)}{\mathrm{E}^{(\frac{\theta}{\alpha}+1)}_{\alpha,\theta+1}(-\lambda)}p_{\alpha,\theta}(n_{1},\ldots,n_{k}).
\label{MittagEPPF}
\end{equation}
\item[(ii)]The distribution of the number of blocks, $K_{n}(\lambda),$ is 
 \begin{equation}
\mathbb{P}^{(n)}_{\alpha|\gamma}(k):=\omega^{(n)}_{\alpha,\theta}(k|\lambda)=
\frac{\mathrm{E}^{(\frac{\theta}{\alpha}+k)}_{\alpha,\theta+n}(-\lambda)}{\mathrm{E}^{(\frac{\theta}{\alpha}+1)}_{\alpha,\theta+1}(-\lambda)}\mathbb{P}^{(n)}_{\alpha,\theta}(k).
\label{MittagEPPFKn}
\end{equation}
\item[(iii)]$n^{-\alpha}K_{n}(\lambda)\overset{a.s.}\rightarrow Z_{0}(\lambda),$ where $Z^{-\frac1\alpha}_{0}(\lambda)$ has density in~(\ref{GammaMittag}).
\end{enumerate}
\end{prop}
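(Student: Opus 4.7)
The plan is to derive all three assertions as direct consequences of the general Gibbs partition framework, specialized via the decomposition machinery set up in Lemma~\ref{decomplemma}, Proposition~\ref{GenLaplaceMittag}, and Proposition~\ref{PropMittagdecomp}. The choice $\varphi(t)=e^{-\lambda t^{-\alpha}}t^{-\theta}/\mathbb{E}[S^{-\theta}_{\alpha}]$ makes $h(t)=\varphi(t)/\mathbb{E}[\varphi(S_\alpha)]$ the appropriate tilt so that $\gamma(dt)/dt = h(t)f_\alpha(t)$ matches (\ref{GammaMittag}), and by Proposition~\ref{PropMittagdecomp} we have the key identity $\mathbb{E}[\varphi(S_\alpha)] = \mathrm{E}^{(\theta/\alpha+1)}_{\alpha,\theta+1}(-\lambda)$. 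A change of variable $s = t^{-\alpha}$ then rewrites $\gamma$ as the exponentially tilted density $g^{(0)}_{\alpha,\theta}(\,\cdot\,|\lambda)$ of $S^{-\alpha}_{\alpha,\theta}$ in (\ref{expMittag}), confirming the representation (\ref{genMittaglambda}).

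For assertion (i), I would invoke the representation (\ref{VEPPF2}) in the form $p^{[\gamma]}_\alpha(n_1,\ldots,n_k) = \mathbb{E}[h(S_\alpha)|K_n=k]\,p_\alpha(n_1,\ldots,n_k)$, or equivalently use Lemma~\ref{decomplemma}(ii). Substituting the conditional expectation formula from the proof of Proposition~\ref{PropMittagdecomp},
\begin{equation*}
\mathbb{E}[\varphi(S_{\alpha})\mid K_{n}=k]=\mathrm{E}^{(\theta/\alpha+k)}_{\alpha,\theta+n}(-\lambda)\cdot d^{(n)}_{\alpha,\theta}(k),
\end{equation*}
and recalling $p_{\alpha,\theta}(n_{1},\ldots,n_{k})=d^{(n)}_{\alpha,\theta}(k)\,p_{\alpha}(n_{1},\ldots,n_{k})\cdot\Gamma(k)/[\alpha\Gamma(n)(\theta/\alpha)_{k}]\cdot\alpha(\theta/\alpha)_k\Gamma(n)/[\Gamma(k)(\theta)_n]\cdots$ (I would just collect the constants using (\ref{twoEPPF})), the ratio $d^{(n)}_{\alpha,\theta}(k)\,p_{\alpha}(n_{1},\ldots,n_{k})$ collapses exactly to $p_{\alpha,\theta}(n_{1},\ldots,n_{k})$, yielding (\ref{MittagEPPF}).

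For assertion (ii), I would sum the EPPF of (i) over all set partitions of $[n]$ into $k$ blocks. Since the ratio of Mittag--Leffler type terms in front of $p_{\alpha,\theta}$ depends only on $(n,k,\lambda)$, the combinatorial sum of $p_{\alpha,\theta}(n_1,\ldots,n_k)$ over such partitions is, by definition, $\mathbb{P}^{(n)}_{\alpha,\theta}(k)$, which immediately produces (\ref{MittagEPPFKn}). Alternatively one can argue directly from the joint law (\ref{jointSKn}) after substituting the $\varphi$ above and integrating $t$, which is a quick cross-check.

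For assertion (iii), the statement is a direct application of the general almost-sure convergence of $n^{-\alpha}K_n$ under a $\mathrm{PK}_\alpha(\gamma)$ law to the $\alpha$-diversity (this is \cite[Proposition 13]{Pit03}, also invoked in Proposition~\ref{EPPFMLMC}(iii)), interpreted here via the $\alpha$-diversity identification in (\ref{inverselocaltime}). Under $\mathbb{L}^{(0)}_{\alpha,\theta}(\lambda)$ the mixing is with respect to $\gamma(dt)/dt$, so the limiting random variable $Z_0(\lambda)$ is obtained by transforming $S_\alpha \mapsto S_\alpha^{-\alpha}$ under this tilt, giving $Z_0(\lambda)$ the density $g^{(0)}_{\alpha,\theta}(\cdot|\lambda)$; equivalently $Z_{0}^{-1/\alpha}(\lambda)$ has the density in (\ref{GammaMittag}). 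The main conceptual obstacle, which is actually minor, is simply bookkeeping of the normalizing constants coming from $\mathbb{E}[S^{-\theta}_\alpha]$ and the conversion between $p_\alpha$ and $p_{\alpha,\theta}$; all three parts reduce to unwinding formulas already stated in the paper.
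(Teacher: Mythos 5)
Your proposal is correct and follows essentially the route the paper intends: Proposition~\ref{PropMittagEPPF} is an immediate consequence of Proposition~\ref{PropMittagdecomp} together with the Gibbs formula~(\ref{VEPPF2}) (equivalently Lemma~\ref{decomplemma}(ii), as the paper's remark~(\ref{MittagVnk}) makes explicit), summation over partitions for the block count, and Pitman's almost-sure $\alpha$-diversity limit for part~(iii). The only blemish is the garbled string of constants in your part~(i), but the identity you actually use, $d^{(n)}_{\alpha,\theta}(k)\,p_{\alpha}(n_{1},\ldots,n_{k})=p_{\alpha,\theta}(n_{1},\ldots,n_{k})$, is exactly right.
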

\begin{rem}For clarity, in~(\ref{MittagEPPF}),
\begin{equation}
\frac{\alpha^{1-k}\Gamma(n)}{\Gamma(k)}V_{n,k}=\frac{\mathrm{E}^{(\frac{\theta}{\alpha}+k)}_{\alpha,\theta+n}(-\lambda)}{\mathrm{E}^{(\frac{\theta}{\alpha}+1)}_{\alpha,\theta+1}(-\lambda)}\frac{\mathbb{E}[S^{-\theta}_{\alpha}|K_{n}=k]}{
\mathbb{E}[S^{-\theta}_{\alpha}]}.
\label{MittagVnk}
\end{equation}
\end{rem}
Using the addition rules associated with the EPPF~(\ref{MittagEPPF}) leads to the following property.

\begin{cor}For positive integers $k\leq n$, and $\theta>-\alpha$, 
$$
\mathrm{E}^{(\frac{\theta}{\alpha}+k)}_{\alpha,\theta+n}(-\lambda)=\left(\frac{\theta+k\alpha}{\theta+n}\right)\mathrm{E}^{(\frac{\theta}{\alpha}+k+1)}_{\alpha,\theta+n+1}(-\lambda)+
\left(1-\frac{\theta+k\alpha}{\theta+n}\right)\mathrm{E}^{(\frac{\theta}{\alpha}+k)}_{\alpha,\theta+n+1}(-\lambda).
$$
\end{cor}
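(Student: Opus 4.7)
The plan is to apply the standard EPPF \emph{addition rule} to~(\ref{MittagEPPF}) and read off the identity by matching coefficients. Recall that any exchangeable partition probability function $p(n_{1},\ldots,n_{k})$ satisfies the consistency relation
$$
p(n_{1},\ldots,n_{k})=p(n_{1},\ldots,n_{k},1)+\sum_{i=1}^{k}p(n_{1},\ldots,n_{i}+1,\ldots,n_{k}),
$$
which simply records that element $n+1$ must either seed a new block or join one of the $k$ existing blocks. For the two-parameter EPPF, the Chinese restaurant transition probabilities give the familiar identities
$$
\frac{p_{\alpha,\theta}(n_{1},\ldots,n_{k},1)}{p_{\alpha,\theta}(n_{1},\ldots,n_{k})}=\frac{\theta+k\alpha}{n+\theta},\qquad \frac{p_{\alpha,\theta}(n_{1},\ldots,n_{i}+1,\ldots,n_{k})}{p_{\alpha,\theta}(n_{1},\ldots,n_{k})}=\frac{n_{i}-\alpha}{n+\theta}.
$$

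First I would substitute~(\ref{MittagEPPF}) into the addition rule. For the ``new block'' term the indices $(n,k)$ advance to $(n+1,k+1)$, producing the factor $\mathrm{E}^{(\theta/\alpha+k+1)}_{\alpha,\theta+n+1}(-\lambda)/\mathrm{E}^{(\theta/\alpha+1)}_{\alpha,\theta+1}(-\lambda)$ multiplied by $(\theta+k\alpha)/(n+\theta)$; for each ``join'' term the indices advance to $(n+1,k)$, producing $\mathrm{E}^{(\theta/\alpha+k)}_{\alpha,\theta+n+1}(-\lambda)/\mathrm{E}^{(\theta/\alpha+1)}_{\alpha,\theta+1}(-\lambda)$ multiplied by $(n_{i}-\alpha)/(n+\theta)$. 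Summing the join contributions over $i$ collapses $\sum_{i=1}^{k}(n_{i}-\alpha)=n-k\alpha$.

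Next I would cancel the universal factors $p_{\alpha,\theta}(n_{1},\ldots,n_{k})$ and $\mathrm{E}^{(\theta/\alpha+1)}_{\alpha,\theta+1}(-\lambda)$ from both sides, obtaining
$$
\mathrm{E}^{(\theta/\alpha+k)}_{\alpha,\theta+n}(-\lambda)=\frac{\theta+k\alpha}{n+\theta}\,\mathrm{E}^{(\theta/\alpha+k+1)}_{\alpha,\theta+n+1}(-\lambda)+\frac{n-k\alpha}{n+\theta}\,\mathrm{E}^{(\theta/\alpha+k)}_{\alpha,\theta+n+1}(-\lambda).
$$
Finally, observing that $(n-k\alpha)/(n+\theta)=1-(\theta+k\alpha)/(n+\theta)$ yields the stated formula. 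There is no hard step here: the only possible obstacle is to make sure the one-step CRP weights attached to~(\ref{twoEPPF}) are applied correctly, but these are completely standard. The real content of the corollary is the observation that the abstract Gibbs backward recursion~(\ref{backward}), specialized to the $V_{n,k}$ given by the ratios in~(\ref{MittagVnk}), becomes a nontrivial two-term recursion among generalized Mittag-Leffler (Prabhakar) functions.
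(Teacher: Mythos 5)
Your proposal is correct and is essentially the paper's own argument: the paper derives the corollary precisely by "using the addition rules associated with the EPPF~(\ref{MittagEPPF})," i.e.\ the consistency relation with the CRP weights $(\theta+k\alpha)/(\theta+n)$ and $(n_i-\alpha)/(\theta+n)$, followed by cancellation of $p_{\alpha,\theta}(n_1,\ldots,n_k)$ and of $\mathrm{E}^{(\theta/\alpha+1)}_{\alpha,\theta+1}(-\lambda)$, exactly as you do. (Equivalently, this is the backward recursion~(\ref{backward}) specialized to the $V_{n,k}$ in~(\ref{MittagVnk}), as you note at the end.)
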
 
\begin{rem}\label{MittagHermite}
When $\alpha=\frac12,$ as remarked in~\cite[Remark 3.1, p.1320]{JamesLamp}, from the setting in~\cite[Section~8]{Pit03}, there is the identity
\begin{eqnarray*}
\mathbb{E}_{\frac{1}{2},0} \bigg[{\big(1-\tilde{G}_{1}\big)}^{\theta+\frac{1}{2}}\bigg|L_{1}=\lambda\bigg]
&=&\mathbb{E}\big[{|B_{1}|}^{2\theta+1}\big]\tilde{h}_{-(2\theta+1)}(\lambda)\\
&=& \mathrm{E}^{(2\theta+1)}_{\frac{1}{2},\theta+1}\left(-\frac{\lambda}{\sqrt{2}}\right),
\end{eqnarray*} 
where the first expression represents the moments of the meander length, $(1-\tilde{G}_{1}),$ of a Brownian motion  $B:=(B_{t}:t\in [0,1]),$ conditioned on its local time $L_{1},$ as in Pitman~\cite[eq. (88)]{Pit03}, $\mathbb{E}\big[{|B_{1}|}^{2\theta+1}\big]$ replaces the incorrect value $\mathbb{E}\big[{|B_{1}|}^{\theta+\frac{1}{2}}\big]$ in~\cite[Remark 3.1, p.1320]{JamesLamp}, and $\tilde{h}_{-(2\theta+1)}(\lambda)$ denotes a Hermite function of index $-(2\theta+1)$.
\end{rem}
\begin{rem}The function
introduced by~\cite{Prab} takes the form
\begin{equation}
\mathrm{\tilde{E}}^{\kappa}_{\rho,\mu}(-\lambda)=\sum_{\ell=0}^{\infty}\frac{{(-\lambda)}^{\ell}}{\ell!}\frac{(\kappa)_{\ell}}{\Gamma(\rho
\ell+\mu)}\label{Mittaggen2},
\end{equation}
where $\rho,\mu,\kappa\in \mathbb{C}, $ and $Re(\rho)>0.$
It follows that 
(\ref{altMGen}) may be expressed as
$$
\mathrm{E}^{(\frac{\omega}{\alpha})}_{\alpha,\omega+\nu}(-\lambda)=
\Gamma(\omega+\nu)\mathrm{\tilde{E}}^{\frac{\omega}{\alpha}}_{\alpha,\omega+\nu}(-\lambda).
$$
See Gorenflo, Kilbas, Mainardi, and Rogosin
\cite[Chapter 5]{GorenfloMittag} for more discussion on these functions. 
\end{rem}

\subsection{More general Mittag-Leffler functions}
There are a multitude of generalizations of the Mittag-Leffler function in the literature. \cite[Chapter 5]{GorenfloMittag}, although not exhaustive, describes quite a few. Within our exposition, we shall encounter the $3m$-parametric Mittag-Leffler function~(see~\cite[eq. (6.3.8)
]{GorenfloMittag}), which generalizes the Prabhakar's function~(\ref{Mittaggen2}), defined as
\begin{equation}
\mathrm{\tilde{E}}^{(\kappa_{i}),m}_{(\rho_{i}),(\mu_{i})}(-\lambda)=\sum_{\ell=0}^{\infty}\frac{{(-\lambda)}^{\ell}}{\ell!}\frac{\prod_{i=1}^{m}(\kappa_{i})_{\ell}}{\prod_{i=1}^{m}\Gamma(\rho_{i}
\ell+\mu_{i})}\label{Mittaggen3m},
\end{equation}
where $\rho_i,\mu_i,\kappa_i\in \mathbb{C}$ and $Re(\rho_i)>0,$ for $i=1,\ldots,m.$

\begin{lem}\label{3mMittag}Let $\beta_{\frac{\omega_{j}}{\alpha},\frac{\nu_{j}}{\alpha}}$, for $j=1,\ldots,r$, denote $r$ independent beta random variables. Then, 
$$
\mathbb{E}\bigg[\mathrm{E}^{(\frac{\omega_{0}}{\alpha})}_{\alpha,\omega_{0}+\nu_{0}}\bigg(-\lambda\prod_{j=1}^{r}\beta_{\frac{\omega_{j}}{\alpha},\frac{\nu_{j}}{\alpha}}\bigg)\bigg]:=
\sum_{\ell=0}^{\infty}\frac{{(-\lambda)}^{\ell}}{\ell!}\frac{\Gamma(\omega_{0}+\nu_{0})}{\Gamma(\alpha\ell+\omega_{0}+\nu_{0})}\frac{\prod_{i=0}^{r}(\frac{\omega_{i}}{\alpha})_{\ell}}{\prod_{i=1}^{r}
(\frac{\omega_{i}+\nu_{i}}{\alpha})_{\ell}
},
$$ 
is proportional to the $3m$-parametric Mittag-Leffler function in~(\ref{Mittaggen3m}) 
with $m=r+1,\kappa_i = \frac{\omega_i}\alpha$ and $\mu_{i}=\frac{\rho_i(\omega_{i}+\nu_{i})}\alpha,$ for $i=1,\ldots, r+1$, $\rho_{i}=\left\{\begin{array}{cl}1,&i=1,\ldots,r\\\alpha, &i=r+1\end{array}\right.$, and the constant of proportionality is $\prod_{i=1}^{r+1} \Gamma(\mu_i)$.
\end{lem}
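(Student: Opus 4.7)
My plan is to prove the identity by expanding the Prabhakar-type Mittag-Leffler function in a power series, computing the expectation termwise against independent beta variables, and then reading off the proportionality to the $3m$-parametric Mittag-Leffler function.

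First, I would invoke the series representation from Proposition \ref{GenLaplaceMittag}, namely
\begin{equation*}
\mathrm{E}^{(\frac{\omega_0}{\alpha})}_{\alpha,\omega_0+\nu_0}(-\lambda y) = \sum_{\ell=0}^{\infty}\frac{(-\lambda y)^{\ell}}{\ell!}\frac{\Gamma(\frac{\omega_0}{\alpha}+\ell)\Gamma(\omega_0+\nu_0)}{\Gamma(\frac{\omega_0}{\alpha})\Gamma(\alpha\ell+\omega_0+\nu_0)},
\end{equation*}
and substitute $y=\prod_{j=1}^{r}\beta_{\omega_{j}/\alpha,\nu_{j}/\alpha}$. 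Exchanging expectation and summation (justified by absolute convergence of the series, since $\Gamma(\alpha\ell+\omega_0+\nu_0)$ grows faster than any exponential and the beta variables are bounded in $[0,1]$), the problem reduces to computing $\mathbb{E}[y^\ell]$.

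Next I would apply independence and the standard beta moment $\mathbb{E}[\beta_{a,b}^{\ell}] = \Gamma(a+b)\Gamma(a+\ell)/[\Gamma(a)\Gamma(a+b+\ell)] = (a)_\ell/(a+b)_\ell$ to each factor, yielding
\begin{equation*}
\mathbb{E}\bigg[\prod_{j=1}^{r}\beta_{\omega_{j}/\alpha,\nu_{j}/\alpha}^{\ell}\bigg]=\prod_{j=1}^{r}\frac{(\omega_{j}/\alpha)_{\ell}}{((\omega_{j}+\nu_{j})/\alpha)_{\ell}}.
\end{equation*}
Combining this with the $\ell$-th coefficient of the series and using $\Gamma(\omega_0/\alpha+\ell)/\Gamma(\omega_0/\alpha)=(\omega_0/\alpha)_\ell$ delivers the stated first equality after collecting the Pochhammer symbols into $\prod_{i=0}^{r}(\omega_{i}/\alpha)_{\ell}/\prod_{i=1}^{r}((\omega_{i}+\nu_{i})/\alpha)_{\ell}$ with the residual factor $\Gamma(\omega_0+\nu_0)/\Gamma(\alpha\ell+\omega_0+\nu_0)$.

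Finally, to identify the result as a constant multiple of the $3m$-parametric Mittag-Leffler function in \eqref{Mittaggen3m}, I would rewrite each denominator Pochhammer as $((\omega_i+\nu_i)/\alpha)_\ell=\Gamma(\ell+(\omega_i+\nu_i)/\alpha)/\Gamma((\omega_i+\nu_i)/\alpha)$ for $i=1,\ldots,r$, and treat the residual $1/\Gamma(\alpha\ell+\omega_0+\nu_0)$ as the $(r+1)$-th denominator factor with $\rho_{r+1}=\alpha$ and $\mu_{r+1}=\omega_0+\nu_0$, while the first $r$ factors carry $\rho_i=1$, $\mu_i=(\omega_i+\nu_i)/\alpha$. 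Matching with the defining series \eqref{Mittaggen3m} (after the obvious relabeling $0\leftrightarrow r+1$ of the $\omega$-index) gives $\kappa_i=\omega_i/\alpha$ and $\mu_i=\rho_i(\omega_i+\nu_i)/\alpha$ as claimed, and the overall constant of proportionality is $\prod_{i=1}^{r}\Gamma((\omega_i+\nu_i)/\alpha)\cdot\Gamma(\omega_0+\nu_0)=\prod_{i=1}^{r+1}\Gamma(\mu_i)$. There is no genuine obstacle here, the lemma being essentially bookkeeping; the only delicate point is the interchange of sum and expectation, which is routine.
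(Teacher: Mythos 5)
Your argument is correct and is exactly the intended one: the paper states Lemma~\ref{3mMittag} without proof precisely because it follows from the series in Proposition~\ref{GenLaplaceMittag}, termwise beta moments $(a)_\ell/(a+b)_\ell$ (as in the proof of that proposition), and matching with~(\ref{Mittaggen3m}) under the relabeling $\omega_{r+1}:=\omega_0$, $\nu_{r+1}:=\nu_0$, which yields the constant $\prod_{i=1}^{r+1}\Gamma(\mu_i)$ as you computed. Nothing further is needed; the interchange of sum and expectation is justified as you note since the betas lie in $[0,1]$ and the series converges absolutely.
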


\section{Another view of the Mittag-Leffler function Gibbs class}\label{sec:MLCmittag}
Propositions~\ref{PropMittagdecomp} and \ref{PropMittagEPPF} present a nice illustration of the idea of how to obtain interesting decompositions of special functions in a systematic manner. Furthermore, the results indicate a non-obvious EPPF based on notable special functions (in this case, Mittag-Leffler type functions) that appear in the broad literature~\cite{GorenfloMittag}, and connect this with a mass partition $(P_{\ell}(\lambda)) \sim\mathbb{L}^{(0)}_{\alpha,\theta}(\lambda) $ as in~(\ref{genMittaglambda}). That is,  $\mathrm{PK}_{\alpha}(\gamma)$, with
$$
h(s^{-\frac1\alpha})=\frac{{\mbox e}^{-\lambda s}s^{\frac\theta\alpha}}{\mathbb{E}[S^{-\theta}_{\alpha}]\mathrm{E}^{(\frac{\theta}{\alpha}+1)}_{\alpha,\theta+1}(-\lambda)}.
$$
In this section, we offer two related derivations of this Gibbs class, based on conditioning on a variable $\mathscr{M}^{(\eta)}_{\alpha,\theta}:=G_{\frac{\theta-\eta}{\alpha}}{S^{\alpha}_{\alpha,\eta}},$ defined for $\theta>\eta>-\alpha,$ and based on waiting times with marginals $G_{j}S^{\alpha}_{\alpha,\theta}\overset{d}=\mathscr{M}^{(\theta)}_{\alpha,\theta+j\alpha}, j=1,2,\ldots,$ from a mixed Poisson waiting time perspective following~\cite{PitmanPoissonMix}. We also define an $\mathrm{MLMC}^{[\gamma]}(\alpha)$ class where $(P_{\ell,0}(\lambda))$ has law (\ref{genMittaglambda}).   
We then specialize the results to the interesting Brownian case of $\alpha=\frac12,$ where more explicit results are obtained. As a highlight, in the case of $\alpha=\frac12,$ we focus on explicit properties of a variable which is related to variables appearing in~\cite{ChassaingJanson,Pek2016,Pit99local}.

\subsection{Conditioning on $\mathscr{M}^{(\eta)}_{\alpha,\theta}$ for $\theta>\eta>-\alpha$}
\begin{prop}\label{PropGenMittaglaw}Let $(P_{\ell})\sim \mathrm{PD}(\alpha,\eta)$ with corresponding local time $S^{-\alpha}_{\alpha,\eta},$ for $\eta>-\alpha$, having density $g_{\alpha,\eta}(s).$ 
Let $S^{-\alpha}_{\alpha,\theta}$ be a variable with density $g_{\alpha,\theta}(s),$ for $\theta>\eta.$
Independent of $(P_{\ell})$, consider a variable
$G_{\frac{\theta-\eta}{\alpha}}\sim \mathrm{Gamma}(\frac{\theta-\eta}{\alpha},1)$, and define the random variable
\begin{equation}
\mathscr{M}^{(\eta)}_{\alpha,\theta}=G_{\frac{\theta-\eta}{\alpha}}{S^{\alpha}_{\alpha,\eta}}
={\left(G^{\frac1\alpha}_{\frac{\theta-\eta}{\alpha}}{S_{\alpha,\eta}}\right)}^{\alpha}.
\label{Mittagrv}
\end{equation}
\begin{enumerate}
\item[(i)]The density of $\mathscr{M}^{(\eta)}_{\alpha,\theta}$ is, for $\lambda>0,$
\begin{equation}
\frac{\lambda^{\frac{\theta-\eta}{\alpha}-1}\mathbb{E}[S^{-\theta}_{\alpha}]}
{\Gamma(\frac{\theta-\eta}{\alpha})\mathbb{E}[S^{-\eta}_{\alpha}]}\mathrm{E}^{(\frac{\theta}{\alpha}+1)}_{\alpha,\theta+1}(-\lambda).
\label{PillaiDist}
\end{equation}
\item[(ii)]The conditional density of $S^{-\alpha}_{\alpha,\eta}|\mathscr{M}^{(\eta)}_{\alpha,\theta}=\lambda$ is
$$
{g}^{(0)}_{\alpha,\theta}(s|\lambda)=\frac{{\mbox e}^{-\lambda s}g_{\alpha,\theta}(s)}{\mathrm{E}^{(\frac{\theta}{\alpha}+1)}_{\alpha,\theta+1}(-\lambda)}.
$$
\item[(iii)]$(P_{\ell})|S^{-\alpha}_{\alpha,\eta}=s,\mathscr{M}^{(\eta)}_{\alpha,\theta}=\lambda$ has distribution $\mathrm{PD}(\alpha|s^{-\frac1\alpha})$.
\item[(iv)]$(P_{\ell})|\mathscr{M}^{(\eta)}_{\alpha,\theta}=\lambda$ has the distribution $\mathbb{L}^{(0)}_{\alpha,\theta}(\lambda)$ in~(\ref{genMittaglambda})
with $\mathrm{EPPF}$ in~(\ref{MittagEPPF}).
\end{enumerate}
\end{prop}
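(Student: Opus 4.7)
The plan is to compute the joint density of the pair $(W,M):=(S^{-\alpha}_{\alpha,\eta},\mathscr{M}^{(\eta)}_{\alpha,\theta})$ by a single change of variables, and then read off the four assertions successively. By the independence of $G_{(\theta-\eta)/\alpha}$ and $S^{-\alpha}_{\alpha,\eta}$, the joint density of $(S^{-\alpha}_{\alpha,\eta},G_{(\theta-\eta)/\alpha})$ factorises; transforming via $M=G_{(\theta-\eta)/\alpha}/W$ (Jacobian $w$) gives
\begin{equation*}
f_{W,M}(w,\lambda)=\frac{\lambda^{(\theta-\eta)/\alpha-1}}{\Gamma((\theta-\eta)/\alpha)}\,e^{-\lambda w}\,w^{(\theta-\eta)/\alpha}\,g_{\alpha,\eta}(w).
\end{equation*}

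The key step is the elementary power-biasing identity $w^{(\theta-\eta)/\alpha}g_{\alpha,\eta}(w)=(\mathbb{E}[S^{-\theta}_{\alpha}]/\mathbb{E}[S^{-\eta}_{\alpha}])\,g_{\alpha,\theta}(w)$, which is immediate from the definition $g_{\alpha,\tau}(s)=s^{\tau/\alpha}g_\alpha(s)/\mathbb{E}[S^{-\tau}_\alpha]$ recorded in the introduction. Substituting converts $g_{\alpha,\eta}$ to $g_{\alpha,\theta}$ in the display above. Integrating out $w$, the Laplace transform $\int_0^\infty e^{-\lambda s}g_{\alpha,\theta}(s)\,ds=\mathbb{E}[e^{-\lambda S^{-\alpha}_{\alpha,\theta}}]=\mathrm{E}^{(\theta/\alpha+1)}_{\alpha,\theta+1}(-\lambda)$ from (\ref{MittagLeffler}) appears, yielding the marginal density in (i). The conditional density in (ii) is then obtained by dividing the joint density by this marginal, and both $\mathbb{E}[S^{-\theta}_\alpha]/\mathbb{E}[S^{-\eta}_\alpha]$ and the power of $\lambda$ cancel cleanly.

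For (iii), observe that given $S^{-\alpha}_{\alpha,\eta}=s$ the conditioning event $\mathscr{M}^{(\eta)}_{\alpha,\theta}=\lambda$ is equivalent to $G_{(\theta-\eta)/\alpha}=\lambda s$, and the latter variable is independent of $(P_\ell)$ by construction. Since $(P_\ell)\mid S^{-\alpha}_{\alpha,\eta}=s$ has law $\mathrm{PD}(\alpha\mid s^{-1/\alpha})$ directly from the stable subordinator construction of $\mathrm{PD}(\alpha,\eta)$ discussed in the introduction, this additional conditioning contributes no further information. Finally, (iv) follows by mixing (iii) against the conditional density in (ii) and invoking Proposition~\ref{PropMittagEPPF} for the matching EPPF.

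The argument is essentially a one-line beta--gamma computation; the only point requiring care is matching the power-biasing shift from $\eta$ to $\theta$ and tracking the normalising constants so that the Laplace transform of $S^{-\alpha}_{\alpha,\theta}$ in (\ref{MittagLeffler}) surfaces exactly, making (i) hold with the constant $\mathbb{E}[S^{-\theta}_\alpha]/\mathbb{E}[S^{-\eta}_\alpha]$ as stated.
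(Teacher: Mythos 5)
Your proof is correct and follows essentially the same route as the paper: compute the joint density of the local time and $\mathscr{M}^{(\eta)}_{\alpha,\theta}$ by a change of variables, recognize the power-biased (tilted) density $g_{\alpha,\theta}$ to get (i) and (ii), and obtain (iii)--(iv) from the fact that $(P_\ell)$ given the local time is $\mathrm{PD}(\alpha|s^{-1/\alpha})$ with the extra conditioning carrying no further information, then mix. The only cosmetic difference is that you work on the scale of $S^{-\alpha}_{\alpha,\eta}$ with explicit normalizing constants, while the paper phrases the same computation in terms of $S_{\alpha,\eta}$ and its density $f_{\alpha,\eta}$ up to proportionality.
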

\begin{proof}Using~(\ref{Mittagrv}) and the density of $S_{\alpha,\eta},$ $f_{\alpha,\eta},$ it follows that the joint density of $(S_{\alpha,\eta},\mathscr{M}^{(\eta)}_{\alpha,\theta})$ is proportional to
$$
\lambda^{\frac{\theta-\eta}{\alpha}-1}s^{-\theta+\eta}{\mbox e}^{-\lambda s^{-\alpha}}f_{\alpha,\eta}(s),
$$
leading to statements (i) and (ii). Statements (iii) and (iv) follow, since by construction $(P_{\ell})|S_{\alpha,\eta}=t, \mathscr{M}^{(\eta)}_{\alpha,\theta}=\lambda $ is $\mathrm{PD}(\alpha|t).$
\end{proof}

\begin{rem}When $\eta=0,$ $\mathscr{M}^{(0)}_{\alpha,\theta}=G_{\frac{\theta}{\alpha}}{S^{\alpha}_{\alpha}}
={\left(G^{\frac1\alpha}_{\frac{\theta}{\alpha}}{S_{\alpha}}\right)}^{\alpha},$ where 
$G_{\theta}X_{\alpha,\theta}\overset{d}=G^{\frac1\alpha}_{\frac{\theta}{\alpha}}{S_{\alpha}}\overset{d}=\hat{S}_{\alpha}(G_{\frac{\theta}{\alpha}})$ are generalized positive Linnik variables as described in \cite[Section~2.3, p.1314]{JamesLamp}. See also~\cite{Bondesson1992,Pillai}.
\end{rem}

\subsection{Results for $\mathrm{MLMC}(\alpha,\eta)$ nested families conditioned on $\mathscr{M}^{(\eta)}_{\alpha,\theta}=\lambda$}
Here we now consider $(Z_{r};r\ge 0)\overset{d}=(S^{-\alpha}_{\alpha,\eta+r};r\ge 0)\sim \mathrm{MLMC}(\alpha,\eta)$ conditioned on $\mathscr{M}^{(\eta)}_{\alpha,\theta}=\lambda.$ We denote the $\mathrm{MLMC}$ sequence with this conditional law as $(Z_{r}(\lambda);r\ge 0),$ where $Z_{0}(\lambda)$ has density ${g}^{(0)}_{\alpha,\theta}(s|\lambda).$
In this section, 
$$
h_{r}(t)=\frac{t^{-(\theta+r)}\mathbb{E}\left[{\mbox e}^{-\lambda t^{-\alpha}\prod_{i=1}^{r}\beta_{\frac{\alpha+i-1}{\alpha},\frac{1-\alpha}{\alpha}}}\prod_{i=1}^{r}\beta^{\frac\theta\alpha}_{\frac{\alpha+i-1}{\alpha},\frac{1-\alpha}{\alpha}}\right]}{\mathbb{E}[S^{-\theta}_{\alpha}]\mathbb{E}[S^{-r}_{\alpha}]\mathrm{E}^{(\frac{\theta}{\alpha}+1)}_{\alpha,\theta+1}(-\lambda)}.
$$
Hence, after some manipulations, it follows that~(\ref{margSr}) in this case is
\begin{equation}
\gamma_{r}(dt)/dt= \frac{\mathbb{E}\left[{\mbox e}^{-\lambda t^{-\alpha}\prod_{i=1}^{r}\beta_{\frac{\theta+\alpha+i-1}{\alpha},\frac{1-\alpha}{\alpha}}}\right]}{\mathrm{E}^{(\frac{\theta}{\alpha}+1)}_{\alpha,\theta+1}(-\lambda)}f_{\alpha,\theta+r}(t),
\label{gammarMittag}
\end{equation}
where $\mathbb{E}\big[{\mbox e}^{-\lambda s\prod_{i=1}^{r}\beta_{\frac{\theta+\alpha+i-1}{\alpha},\frac{1-\alpha}{\alpha}}}\big]=\mathbb{E}_{\alpha,\theta}\big[{\mbox e}^{-\lambda s\beta_{\frac{\theta}{\alpha}+K_{r},\frac{r}{\alpha}-K_{r}}}\big].$

\begin{prop}\label{MittagEPPFMLMC}Suppose that for $\theta>\eta>-\alpha,$ $(P_{\ell,0})\sim \mathrm{PD}(\alpha,\eta),$ with corresponding local time $Z_{0}\overset{d}=S^{-\alpha}_{\alpha,\eta}.$ Furthermore, consider the family $((P_{\ell,r}),Z_{r};r\ge 0)\sim \mathrm{MLMC}(\alpha,\eta).$ Let $(P_{\ell,0}(\lambda),Z_{0}(\lambda))$ denote the mass partition and local time corresponding to the distribution of $(P_{\ell,0})|\mathscr{M}^{(\eta)}_{\alpha,\theta}=\lambda$, which is described in~(\ref{genMittaglambda}). The family of mass partitions $((P_{\ell,r});r\ge 0)|\mathscr{M}^{(\eta)}_{\alpha,\theta}=\lambda$ has the distribution of the family of mass partitions
$(P_{\ell,r}(\lambda);r\ge 0),$ defined by the recursive fragmentation, for $r=1,2,\ldots,$ 
$$
(P_{\ell,r}(\lambda))=\widehat{\mathrm{Frag}}^{(r)}_{\alpha,1-\alpha}((P_{\ell,r-1}(\lambda))),
$$
with corresponding local times denoted as $(Z_{r}(\lambda);r\ge 0).$
Then, for each fixed $r\ge 0,$ the followings hold.
\begin{enumerate}
\item[(i)] The marginal density of $Z_{r}(\lambda),$ which is the density of $Z_{r}|\mathscr{M}^{(\eta)}_{\alpha,\theta}=\lambda,$ is given by
\begin{equation}
g^{(r)}_{\alpha,\theta}(s|\lambda):=\frac{\mathbb{E}\big[{\mbox e}^{-\lambda s\prod_{i=1}^{r}\beta_{\frac{\theta+\alpha+i-1}{\alpha},\frac{1-\alpha}{\alpha}}}\big]}
{\mathrm{E}^{(\frac{\theta}{\alpha}+1)}_{\alpha,\theta+1}(-\lambda)}g_{\alpha,\theta+r}(s).
\label{margZlambdar}
\end{equation}
The density of $\big(Z_{r}(\lambda)\big)^{-\frac1\alpha}$ is denoted as $f^{(r)}_{\alpha,\theta}(t|\lambda)$.
\item[(ii)] The marginal distribution of $(P_{\ell,r}(\lambda))\sim \mathrm{PK}_{\alpha}(\gamma_{r}),$ specified by~(\ref{gammarMittag}), can be expressed as
$$
\mathbb{L}^{(r)}_{\alpha,\theta}(\lambda)=\int_{0}^{\infty}\mathrm{PD}(\alpha|s^{-\frac1\alpha})g^{(r)}_{\alpha,\theta}(s|\lambda)ds.
$$
\item[(iii)] The $\mathrm{EPPF}$ of $(P_{\ell,r}(\lambda))$ can be expressed as
\begin{equation}
\frac{\mathbb{E}\left[\mathrm{E}^{(\frac{\theta+r}{\alpha}+k)}_{\alpha,\theta+r+n}\big(-\lambda
\prod_{i=1}^{r}\beta_{\frac{\theta+\alpha+i-1}{\alpha},\frac{1-\alpha}{\alpha}}
\big)\right]}{\mathrm{E}^{(\frac{\theta}{\alpha}+1)}_{\alpha,\theta+1}(-\lambda)}
\times p_{\alpha,\theta+r}(n_{1},\ldots,n_{k}),
\end{equation}
where the numerator can be further explicitly expressed in terms of a\break $3(r+1)$-parametric Mittag-Leffler function using Lemma~\ref{3mMittag}.
\end{enumerate}
\end{prop}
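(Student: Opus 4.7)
The plan rests on two structural observations: first, the MLMC transition density in~(\ref{transitiong}) depends only on $\alpha,$ not on any initial parameter, so any conditioning that preserves the Markov skeleton can only alter the marginal law of $Z_{0};$ second, the random variable $\mathscr{M}^{(\eta)}_{\alpha,\theta}=G_{(\theta-\eta)/\alpha}S^{\alpha}_{\alpha,\eta}$ is measurable with respect to $Z_{0}=S^{-\alpha}_{\alpha,\eta}$ and an auxiliary gamma variable that is independent of the fragmentation operators $\widehat{\mathrm{Frag}}^{(r)}_{\alpha,1-\alpha}$ of~\cite{DGM} driving the chain. Conditioning on $\mathscr{M}^{(\eta)}_{\alpha,\theta}=\lambda$ therefore replaces the law of $Z_{0}$ by $g^{(0)}_{\alpha,\theta}(\cdot|\lambda)$ via Proposition~\ref{PropGenMittaglaw}(ii), leaves the fragmentation recursion unchanged, and, via Proposition~\ref{PropGenMittaglaw}(iv), produces exactly the nested family $((P_{\ell,r}(\lambda)),Z_{r}(\lambda);r\ge 0)$ asserted in the statement, with $(P_{\ell,0}(\lambda))\sim\mathbb{L}^{(0)}_{\alpha,\theta}(\lambda).$

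For part~(i), I would compare the conditioned chain against an auxiliary $\mathrm{MLMC}(\alpha,\theta)$ chain $(\tilde{Z}_{r})$ driven by the same ($\eta$-independent) transitions, so that $\tilde{Z}_{r}\overset{d}= S^{-\alpha}_{\alpha,\theta+r}.$ Raising~(\ref{jamesidPDrecurse}) to the $-\alpha$ power yields the joint representation $\tilde{Z}_{0}=\tilde{Z}_{r}B_{r}$ with $B_{r}=\prod_{j=1}^{r}\beta_{(\theta+\alpha+j-1)/\alpha,(1-\alpha)/\alpha}$ independent of $\tilde{Z}_{r},$ as in~(\ref{jamesidPDrecurse2}). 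Tilting the initial law of $\tilde{Z}_{0}$ by ${\mbox e}^{-\lambda\tilde{Z}_{0}}$ produces an initial density $g^{(0)}_{\alpha,\theta}(\cdot|\lambda)$ with normalizer $\mathbb{E}[{\mbox e}^{-\lambda\tilde{Z}_{0}}]=\mathrm{E}^{(\theta/\alpha+1)}_{\alpha,\theta+1}(-\lambda)$ from~(\ref{MittagLeffler}), and does not affect the transitions; the tilted chain and $(Z_{r}(\lambda))$ are therefore Markov chains with identical initial law and identical transition kernels, hence equal in distribution. Writing the tilt as ${\mbox e}^{-\lambda\tilde{Z}_{r}B_{r}}$ and integrating out the independent $B_{r}$ against the marginal density $g_{\alpha,\theta+r}(s)$ of $\tilde{Z}_{r}$ gives~(\ref{margZlambdar}) after normalization.

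Part~(ii) then follows from the tower property: by the $\mathrm{PK}_{\alpha}(\gamma_{r})$ construction, $(P_{\ell,r}(\lambda))|Z_{r}(\lambda)=s^{-\alpha}\sim\mathrm{PD}(\alpha|s^{-1/\alpha}),$ and integrating against the marginal density from part~(i) realizes $(P_{\ell,r}(\lambda))$ as the mixture $\mathbb{L}^{(r)}_{\alpha,\theta}(\lambda)$ with the $\gamma_{r}$ displayed in~(\ref{gammarMittag}). For part~(iii), the EPPF equals $\int p_{\alpha}(n_{1},\ldots,n_{k}|s^{-1/\alpha})\,g^{(r)}_{\alpha,\theta}(s|\lambda)\,ds$ by~(\ref{PitEPPF}) and~(ii); conditioning first on $B_{r}$ reduces the inner $s$-integral to the un-normalized form of the EPPF of Proposition~\ref{PropMittagEPPF}(i) with parameter pair $(\theta+r,\lambda B_{r})$ in place of $(\theta,\lambda),$ producing $\mathrm{E}^{((\theta+r)/\alpha+k)}_{\alpha,\theta+r+n}(-\lambda B_{r})\,p_{\alpha,\theta+r}(n_{1},\ldots,n_{k}).$ Averaging over $B_{r}$ and dividing by $\mathrm{E}^{(\theta/\alpha+1)}_{\alpha,\theta+1}(-\lambda)$ yields the claimed form, and Lemma~\ref{3mMittag} recasts the resulting expectation as a $3(r+1)$-parametric Mittag-Leffler function by taking $\omega_{0}=\theta+r+k\alpha,$ $\nu_{0}=n-k\alpha,$ and $(\omega_{i},\nu_{i})=(\theta+\alpha+i-1,1-\alpha)$ for $i=1,\ldots,r.$ The main point to verify carefully is the reindexing step: that tilting $\tilde{Z}_{0}$ in the $\mathrm{MLMC}(\alpha,\theta)$ chain really does reproduce the chain obtained by conditioning the $\mathrm{MLMC}(\alpha,\eta)$ chain on $\mathscr{M}^{(\eta)}_{\alpha,\theta}=\lambda,$ which as noted above reduces to matching two Markov chains with the same $\eta$-independent transition kernel and the same initial law $g^{(0)}_{\alpha,\theta}(\cdot|\lambda).$
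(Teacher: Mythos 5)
Your proposal is correct, and it fills in a proof that the paper leaves implicit: the paper states Proposition~\ref{MittagEPPFMLMC} after computing $h_{r}$ and asserting, ``after some manipulations,'' the form~(\ref{gammarMittag}), with the intended justification being specialization of the general $\mathrm{MLMC}^{[\gamma]}(\alpha)$ machinery (Lemma~\ref{LemmaMLMC} and Proposition~\ref{EPPFMLMC}) to the Mittag-Leffler choice of $h$ from Proposition~\ref{PropGenMittaglaw}. Your route is a mild but genuine reorganization of the same ingredients: instead of pushing the tilt through the generic formula for $h_{r}$ (which requires a parameter-shift manipulation converting the $\theta$-biased $\mathrm{PD}(\alpha,0)$ beta products $\beta_{\frac{\alpha+i-1}{\alpha},\frac{1-\alpha}{\alpha}}$ into $\beta_{\frac{\theta+\alpha+i-1}{\alpha},\frac{1-\alpha}{\alpha}}$), you first identify the conditioned family as the $e^{-\lambda Z_{0}}$-tilt of an $\mathrm{MLMC}(\alpha,\theta)$ chain — legitimate because the kernel~(\ref{transitiong}) is $\eta$-free, $\mathscr{M}^{(\eta)}_{\alpha,\theta}$ is a function of $Z_{0}$ and an independent gamma, and the fragmentation operators are independent of both — and then read off~(\ref{margZlambdar}) directly from the independence structure in~(\ref{jamesidPDrecurse2}), which already carries the $\theta$-shifted betas. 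This makes the ``manipulations'' transparent and lets you obtain statement~(iii) by applying Proposition~\ref{PropMittagEPPF} conditionally on the beta product with parameters $(\theta+r,\lambda B_{r})$, rather than via the $\tilde{V}$ coefficients of Proposition~\ref{EPPFMLMC}; your parameter identification $\omega_{0}=\theta+r+k\alpha$, $\nu_{0}=n-k\alpha$, $(\omega_{i},\nu_{i})=(\theta+\alpha+i-1,1-\alpha)$ in Lemma~\ref{3mMittag} is also correct. The trade-off is that the paper's route exhibits the conditioned family explicitly as an instance of the general $\mathrm{PK}_{\alpha}(\gamma_{r})$ framework (useful for the subsequent mixture representations via Proposition~\ref{propbetaMLMCdecomp}), while your tilting comparison is more self-contained and makes the Markov-chain identification, which the paper only asserts, explicit.
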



\subsubsection{Mixture representations}
For each fixed $r\ge 1,$ and $i=1,\ldots, r$, define the probability measures $\gamma_{r,i}$ of random variables $\big(Z_{r,i}(\lambda)\big)^{-\frac1\alpha}$ corresponding to the appropriate specification of~(\ref{subgammari}) in this setting as   
\begin{equation}
\gamma_{r,i}(dt)/dt=\frac{_1F_1(\frac{\theta}{\alpha}+i;\frac{\theta}{\alpha}+\frac{r}{\alpha};-\lambda t^{-\alpha})}{\mathrm{E}^{(\frac{\theta}{\alpha}+i)}_{\alpha,\theta+r}(-\lambda)}f_{\alpha,\theta+r}(t)
\label{subgammariMittag},
\end{equation}
where
$
{_1F_1\left(\frac{\theta}{\alpha}+i;\frac{\theta}{\alpha}+\frac{r}{\alpha};-\lambda s\right)}=\mathbb{E}\left[{\mbox e}^{-\lambda s\beta_{\frac{\theta}{\alpha}+ i,\frac{r}{\alpha}-i}}\right]
$
is a confluent hypergeometric\break function of the first kind. Furthermore, $\mathbb{P}^{(r)}_{\alpha|\gamma}(i):=\omega^{(r)}_{\alpha,\theta}(i|\lambda),$ as in~(\ref{MittagEPPFKn}).
The next result follows from an application of Proposition~\ref{propbetaMLMCdecomp}.
\begin{prop}
Consider the same settings as in Proposition~\ref{MittagEPPFMLMC}, and $\gamma_{r,i}$ specified by
(\ref{subgammariMittag}). Then, for $r\ge 1,$ and $i=1,\ldots,r,$
\begin{enumerate}
\item[(i)]$(P_{\ell,r}(\lambda))\sim 
\mathbb{L}^{(r)}_{\alpha,\theta}(\lambda)=\sum_{i=1}^{r}\omega^{(r)}_{\alpha,\theta}(i|\lambda)\mathrm{PK}_{\alpha}(\gamma_{r,i})$.
\item[(ii)] The EPPF of $(P^{(i)}_{k,r})\sim\mathrm{PK}_{\alpha}(\gamma_{r,i})$ based on a partition of $[n]$ can be expressed as 
\begin{equation}
\frac{\mathbb{E}\left[\mathrm{E}^{(\frac{\theta+r}{\alpha}+k)}_{\alpha,\theta+n+r}\big(-\lambda\beta_{\frac{\theta}{\alpha}+ i,\frac{r}{\alpha}-i}\big)\right]}
{\mathrm{E}^{(\frac{\theta}{\alpha}+i)}_{\alpha,\theta+r}(-\lambda)}
p_{\alpha,\theta+r}(n_{1},\ldots,n_{k}),
\label{BrownianMittagEPPFj}
\end{equation}
where the expectation at the numerator equals
$$
\sum_{\ell=0}^{\infty}\frac{{(-\lambda)}^{\ell}}{\ell!}\frac{\Gamma(\theta+n+r)}{\Gamma(\alpha\ell+\theta+n+r)}\frac{(\frac{\theta+r}{\alpha}+k)_{\ell}(\frac{\theta}{\alpha}+i)_{\ell}}{(\frac{\theta+r}{\alpha})_{\ell}
},
$$
and corresponds to a special case of Lemma~\ref{3mMittag}.
\end{enumerate}
\end{prop}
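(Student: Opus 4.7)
The plan is to apply identity~(\ref{betaKid}), which already holds in the $\mathrm{PD}(\alpha,\theta)$ setting with $K_{r}\sim \mathbb{P}^{(r)}_{\alpha,\theta}$, to collapse the $r$-fold beta product $\prod_{i=1}^{r}\beta_{\frac{\theta+\alpha+i-1}{\alpha},\frac{1-\alpha}{\alpha}}$ appearing both in the mixing density~(\ref{gammarMittag}) and inside the EPPF expression in Proposition~\ref{MittagEPPFMLMC}(iii) into a single mixed beta $\beta_{\frac{\theta}{\alpha}+K_{r},\frac{r}{\alpha}-K_{r}}$. This is the Mittag-Leffler analogue of the step used to prove Proposition~\ref{propbetaMLMCdecomp}, transported to the $\theta$-shifted $\mathrm{PD}(\alpha,\theta)$ setting by direct use of~(\ref{betaKid}) rather than its $\theta=0$ specialization.

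For part~(i), substituting this identity into~(\ref{gammarMittag}) and unfolding the expectation over $K_{r}$ yields
$$
\gamma_{r}(dt)/dt=\sum_{i=1}^{r}\mathbb{P}^{(r)}_{\alpha,\theta}(i)\,\frac{\mathbb{E}\left[{\mbox e}^{-\lambda t^{-\alpha}\beta_{\frac{\theta}{\alpha}+i,\frac{r}{\alpha}-i}}\right]}{\mathrm{E}^{(\frac{\theta}{\alpha}+1)}_{\alpha,\theta+1}(-\lambda)}\,f_{\alpha,\theta+r}(t),
$$
where the inner expectation is exactly the confluent hypergeometric function ${_1F_1(\frac{\theta}{\alpha}+i;\frac{\theta}{\alpha}+\frac{r}{\alpha};-\lambda t^{-\alpha})}$ featuring in~(\ref{subgammariMittag}). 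Multiplying and dividing each summand by $\mathrm{E}^{(\frac{\theta}{\alpha}+i)}_{\alpha,\theta+r}(-\lambda)$ regroups the factors as $\omega^{(r)}_{\alpha,\theta}(i|\lambda)\times\gamma_{r,i}(dt)/dt$, with the weights identified via~(\ref{MittagEPPFKn}). Integrating $\mathrm{PD}(\alpha|s^{-\frac{1}{\alpha}})$ against this mixture of probability measures then produces the stated decomposition of $\mathbb{L}^{(r)}_{\alpha,\theta}(\lambda)$.

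For part~(ii), the same substitution inside the expectation in Proposition~\ref{MittagEPPFMLMC}(iii) converts
$$
\mathbb{E}\left[\mathrm{E}^{(\frac{\theta+r}{\alpha}+k)}_{\alpha,\theta+r+n}\left(-\lambda\prod_{i=1}^{r}\beta_{\frac{\theta+\alpha+i-1}{\alpha},\frac{1-\alpha}{\alpha}}\right)\right]
$$
into a $\mathbb{P}^{(r)}_{\alpha,\theta}$-mixture of single-beta expectations against $\beta_{\frac{\theta}{\alpha}+i,\frac{r}{\alpha}-i}$. Performing the same multiply-and-divide by $\mathrm{E}^{(\frac{\theta}{\alpha}+i)}_{\alpha,\theta+r}(-\lambda)$ decomposes the EPPF as $\sum_{i=1}^{r}\omega^{(r)}_{\alpha,\theta}(i|\lambda)\,p^{[\gamma_{r,i}]}_{\alpha}(n_{1},\ldots,n_{k})$, whose $i$-th term must coincide with the Gibbs EPPF in~(\ref{BrownianMittagEPPFj}) for $\mathrm{PK}_{\alpha}(\gamma_{r,i})$ by uniqueness of the Gibbs representation attached to each $\gamma_{r,i}$. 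The explicit series form then follows from Lemma~\ref{3mMittag} applied with $m=2$ and the parameter choices $(\omega_{0},\nu_{0})=(\theta+r+k\alpha,n-k\alpha)$ and $(\omega_{1},\nu_{1})=(\theta+i\alpha,r-i\alpha)$, which produce precisely the Pochhammer factors $(\frac{\theta+r}{\alpha}+k)_{\ell}$ and $(\frac{\theta}{\alpha}+i)_{\ell}$ in the numerator, $(\frac{\theta+r}{\alpha})_{\ell}$ in the denominator, and the gamma ratio $\Gamma(\theta+r+n)/\Gamma(\alpha\ell+\theta+r+n)$.

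The main obstacle is purely bookkeeping: juggling the three interacting parameter shifts $\theta$, $\theta+r$, and $\theta+r+k\alpha$, and checking that the various $\mathrm{E}^{(\cdot)}_{\alpha,\cdot}(-\lambda)$ normalizers cancel to leave precisely the weights $\omega^{(r)}_{\alpha,\theta}(i|\lambda)$, and that the parameter matching in Lemma~\ref{3mMittag} indeed lines up with the factors claimed in~(ii).
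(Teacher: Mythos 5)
Your argument is correct and essentially coincides with the paper's: the paper proves this proposition by a one-line application of Proposition~\ref{propbetaMLMCdecomp}, whose proof rests on exactly the beta-product collapse via~(\ref{betaKid}) and~(\ref{prodbetadensityid}) that you carry out directly in the Mittag-Leffler setting (the paper itself records the identity $\mathbb{E}\big[{\mbox e}^{-\lambda s\prod_{i=1}^{r}\beta_{\frac{\theta+\alpha+i-1}{\alpha},\frac{1-\alpha}{\alpha}}}\big]=\mathbb{E}_{\alpha,\theta}\big[{\mbox e}^{-\lambda s\beta_{\frac{\theta}{\alpha}+K_{r},\frac{r}{\alpha}-K_{r}}}\big]$ right after~(\ref{gammarMittag})), and your parameter matching in Lemma~\ref{3mMittag} is right. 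The only loose phrase is the appeal to ``uniqueness of the Gibbs representation'' to identify the $i$-th summand with the $\mathrm{PK}_{\alpha}(\gamma_{r,i})$ EPPF; the clean justification is that the same beta-mixture identity drives both the decomposition of $\gamma_{r}$ and of the numerator expectation, so the $i$-th term equals $\omega^{(r)}_{\alpha,\theta}(i|\lambda)$ times $\frac{\alpha^{1-k}\Gamma(n)}{\Gamma(k)}\left[\int_{0}^{\infty}\mathbb{G}^{(n,k)}_{\alpha}(t)\gamma_{r,i}(dt)\right]p_{\alpha}(n_{1},\ldots,n_{k})$, which is precisely how Proposition~\ref{propbetaMLMCdecomp}(ii) identifies it.
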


\subsection{An interesting distributional result}
We now highlight the following interesting property. 
\begin{prop}\label{Propprojection}
Consider the family $\big((P_{\ell,r}),S^{-\alpha}_{\alpha,\eta+r}\big)\sim \mathrm{MLMC}(\alpha,\eta),$ and let\break $\big((P_{\ell,r}(\lambda)),S^{-\alpha}_{\alpha,r}(\lambda)\big)$ denote the random quantities corresponding to the conditional distribution $\big((P_{\ell,r}),S^{-\alpha}_{\alpha,\eta+r}\big)\big|\mathscr{M}^{(\eta)}_{\alpha,\theta}=\lambda.$ Then, for any $\theta'$ such that $\theta>\theta'\ge\eta,$ mixing $\lambda$ over the density,
$$
f_{\mathscr{M}^{(\theta')}_{\alpha,\theta}}(\lambda)=
\frac{\lambda^{\frac{\theta-\theta'}{\alpha}-1}\mathbb{E}[S^{-\theta}_{\alpha}]}
{\Gamma(\frac{\theta-\theta'}{\alpha})\mathbb{E}[S^{-\theta'}_{\alpha}]}\mathrm{E}^{(\frac{\theta}{\alpha}+1)}_{\alpha,\theta+1}(-\lambda),
$$
of a random variable equivalent in distribution to  $\mathscr{M}^{(\theta')}_{\alpha,\theta}=
G_{\frac{\theta-\theta'}{\alpha}}{S^{\alpha}_{\alpha,\theta'}}
$ leads to 
$$
\bigg((P_{\ell,r}(\mathscr{M}^{(\theta')}_{\alpha,\theta})),S^{-\alpha}_{\alpha,r}(\mathscr{M}^{(\theta')}_{\alpha,\theta})\bigg)\sim \mathrm{MLMC}(\alpha,\theta').
$$
That is, for $r=0,1,2,\ldots,$ $\big(P_{\ell,r}(\mathscr{M}^{(\theta')}_{\alpha,\theta})\big)\sim \mathrm{PD}(\alpha,\theta'+r)$ and 
$S^{-\alpha}_{\alpha,r}(\mathscr{M}^{(\theta')}_{\alpha,\theta})\overset{d}=S^{-\alpha}_{\alpha,\theta'+r}.$
\end{prop}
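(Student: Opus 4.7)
The plan is to reduce the statement to a marginal computation at $r=0$ and then invoke the Markov structure of the MLMC to propagate the result to the entire trajectory. More precisely, I would first verify that mixing $Z_0(\lambda)$ against $f_{\mathscr{M}^{(\theta')}_{\alpha,\theta}}$ produces exactly the density $g_{\alpha,\theta'}$ of $S^{-\alpha}_{\alpha,\theta'}$, and then observe that the universal stationary transition kernel~(\ref{transitiong}) together with the independent fragmentation operators $\widehat{\mathrm{Frag}}^{(r)}_{\alpha,1-\alpha}$ force the resulting coupled family to be $\mathrm{MLMC}(\alpha,\theta')$.

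For the marginal step, I would combine the conditional density
$g^{(0)}_{\alpha,\theta}(s|\lambda) = e^{-\lambda s}g_{\alpha,\theta}(s)/\mathrm{E}^{(\frac{\theta}{\alpha}+1)}_{\alpha,\theta+1}(-\lambda)$
from Proposition~\ref{PropGenMittaglaw}(ii) with $f_{\mathscr{M}^{(\theta')}_{\alpha,\theta}}(\lambda)$ as given in the statement. The Mittag-Leffler factor $\mathrm{E}^{(\frac{\theta}{\alpha}+1)}_{\alpha,\theta+1}(-\lambda)$ cancels between numerator and denominator, reducing the $\lambda$-integral to the elementary gamma integral
$\int_0^\infty e^{-\lambda s}\lambda^{(\theta-\theta')/\alpha - 1}\,d\lambda = \Gamma((\theta-\theta')/\alpha)\, s^{-(\theta-\theta')/\alpha}$.
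Combining this with the power-biasing relation $g_{\alpha,\theta}(s) = s^{\theta/\alpha} g_\alpha(s)/\mathbb{E}[S^{-\theta}_\alpha]$ and simplifying the residual ratio $\mathbb{E}[S^{-\theta}_\alpha]/\mathbb{E}[S^{-\theta'}_\alpha]$, the mixed density collapses to $s^{\theta'/\alpha}g_\alpha(s)/\mathbb{E}[S^{-\theta'}_\alpha] = g_{\alpha,\theta'}(s)$, establishing $Z_0(\mathscr{M}^{(\theta')}_{\alpha,\theta}) \overset{d}{=} S^{-\alpha}_{\alpha,\theta'}$. The boundary case $\theta'=\eta$ then serves as a consistency check, since Bayes' theorem automatically returns the marginal $g_{\alpha,\eta}$.

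For the propagation, I would rely on the structural observation that $\mathscr{M}^{(\eta)}_{\alpha,\theta} = G_{(\theta-\eta)/\alpha}\cdot S^{\alpha}_{\alpha,\eta}$ is a function of $Z_0$ together with an independent gamma variable, whereas the downstream evolution $(P_{\ell,r},Z_r;\,r\ge 1)$ of the MLMC is driven by the universal stationary transition kernel~(\ref{transitiong}) and the independent fragmentation operators $\widehat{\mathrm{Frag}}^{(r)}_{\alpha,1-\alpha}$. Consequently, the conditional law of $(P_{\ell,r},Z_r;\,r\ge 1)$ given $(P_{\ell,0},Z_0)$ is unaltered by conditioning on $\mathscr{M}^{(\eta)}_{\alpha,\theta}$, and mixing over $\lambda$ modifies only the initial distribution. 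By the first step this initial law is $S^{-\alpha}_{\alpha,\theta'}$ (with attendant mass partition $\mathrm{PD}(\alpha,\theta')$), and the resulting process inherits the MLMC transitions on top of it, which is the definition of $\mathrm{MLMC}(\alpha,\theta')$.

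The main obstacle is mild: apart from verifying the cancellation of the Mittag-Leffler factor and performing the gamma integral, the only care required is in articulating precisely why the conditional distribution of the post-$0$ trajectory is unaffected by conditioning on $\mathscr{M}^{(\eta)}_{\alpha,\theta}$. This amounts to noting that the auxiliary variable $G_{(\theta-\eta)/\alpha}$ used to build $\mathscr{M}^{(\eta)}_{\alpha,\theta}$ is independent of the fragmentation randomness, so that once $Z_0$ is fixed the remainder of the chain evolves identically in the conditioned and unconditioned models.
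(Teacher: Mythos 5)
Your proposal is correct and follows essentially the same route as the paper: the core is the same gamma-integral computation showing $\int_0^\infty g^{(0)}_{\alpha,\theta}(s|\lambda)\,f_{\mathscr{M}^{(\theta')}_{\alpha,\theta}}(\lambda)\,d\lambda = g_{\alpha,\theta'}(s)$ after the Mittag--Leffler factor cancels. The only difference is that you spell out the propagation to $r\ge 1$ via the $\theta$-free transition kernel and the independence of $G_{(\theta-\eta)/\alpha}$ from the fragmentation randomness, a step the paper leaves implicit.
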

\begin{proof}The result follows by a simple gamma integral calculation which shows that the density of $Z_{0}\overset{d}=S^{-\alpha}_{\alpha,0}(\mathscr{M}^{(\theta')}_{\alpha,\theta})$ can be expressed as
$$
\int_{0}^{\infty}\frac{{\mbox e}^{-\lambda s}g_{\alpha,\theta}(s)}{\mathrm{E}^{(\frac{\theta}{\alpha}+1)}_{\alpha,\theta+1}(-\lambda)}f_{\mathscr{M}^{(\theta')}_{\alpha,\theta}}(\lambda)d\lambda=g_{\alpha,\theta'}(s).
$$
\end{proof}

\subsection{A mixed Poisson process viewpoint with $A=S^{-\alpha}_{\alpha,\theta}\sim \mathrm{ML}(\alpha,\theta)$}
We now revisit the mixed Poisson framework in~\cite{PitmanPoissonMix}, as described in Section~\ref{sec:mixedPoisson}. In particular, here we set $A=S^{-\alpha}_{\alpha,\theta}\sim \mathrm{ML}(\alpha,\theta),$ the local time or $\alpha$-diversity of a $\mathrm{PD}(
\alpha,\theta)$ distribution, whereas Section~\ref{sec:mixedPoisson} focused on the inverse local time $S_{\alpha}.$ While this explicit case does not yet appear there, \cite{PitmanPoissonMix} is timely in terms of helping us offer another very interesting interpretation of the Mittag-Leffler distributions we encounter in terms of waiting times. As in Section~\ref{sec:mixedPoisson}, $G_{j}:=\sum_{i=1}^{j}\mathbf{e}_{i},$ $j\ge 1,$
$(\mathbf{e}_{i})$ are iid standard exponential random variables.
Let $\mathcal{N}_{\alpha,\theta}(\lambda,j)=\big(N_{S^{-\alpha}_{\alpha,\theta}}(y);0\leq y\leq \lambda, N_{S^{-\alpha}_{\alpha,\theta}}(\lambda)=j\big)$.

\begin{prop}\label{PropmixedL}Suppose that $(P_{\ell})\sim\mathrm{PD}(\alpha,\theta),$ with local time/$\alpha$-diversity $S^{-\alpha}_{\alpha,\theta}.$ Set $A=S^{-\alpha}_{\alpha,\theta},$ with density $g_{\alpha,\theta}(s),$  for $\theta>-\alpha,$ and consider the mixed Poisson process $\big(N_{S^{-\alpha}_{\alpha,\theta}}(t);~t\ge 0\big)$ with waiting times $(T_{j}=G_{j}S^{\alpha}_{\alpha,\theta};~j\ge 1).$ The joint distribution of the waiting times can be expressed as
$$
(T_{j};~j\ge 1)\overset{d}=\big(\mathscr{M}^{(\theta)}_{\alpha,\theta+j\alpha};~j\ge 1\big),
$$
where the joint dependence on the right hand side is dictated by the representation of $(T_{j};~j\ge 1)$, and, for each $j,$ the marginal density of $T_{j}\overset{d}=\mathscr{M}^{(\theta)}_{\alpha,\theta+j\alpha}$ is given by
\begin{equation}
f_{T_{j}}(\lambda)=\frac{\lambda^{j-1}\mathbb{E}\big[S^{-(\theta+j\alpha)}_{\alpha}\big]}
{\Gamma(j)\mathbb{E}[S^{-\theta}_{\alpha}]}\mathrm{E}^{(\frac{\theta}{\alpha}+j+1)}_{\alpha,\theta+j\alpha+1}(-\lambda).
\label{PillaiDistwaiting}
\end{equation}
\begin{enumerate}
\item[(i)]For any $\theta>-\alpha,$ the mixing distribution~(conditional local time) in~Proposition~\ref{PropMittagdecomp} may be interpreted as
$$
\mathbb{P}\big(S^{-\alpha}_{\alpha,\theta}\in ds|N_{S^{-\alpha}_{\alpha,\theta}}(\lambda)=0\big)/ds=\frac{{\mbox e}^{-\lambda s}g_{\alpha,\theta}(s)}{\mathrm{E}^{(\frac{\theta}{\alpha}+1)}_{\alpha,\theta+1}(-\lambda)}:=
g^{(0)}_{\alpha,\theta}(s|\lambda).
$$
\item[(ii)]The conditional density of $S^{-\alpha}_{\alpha,\theta}|T_{j}=\lambda$, for $j=1,2,\ldots,$ is equivalent to
$$
\mathbb{P}\big(S^{-\alpha}_{\alpha,\theta}\in ds|\mathcal{N}_{\alpha,\theta}(\lambda,j)\big)/ds=\frac{{\mbox e}^{-\lambda s}g_{\alpha,\theta+j\alpha}(s)}{\mathrm{E}^{(\frac{\theta}{\alpha}+j+1)}_{\alpha,\theta+j\alpha+1}(-\lambda)}:=g^{(0)}_{\alpha,\theta+j\alpha}(s|\lambda).
$$
\item[(iii)]
$
\mathbb{P}\big(S^{-\alpha}_{\alpha,\theta}\in ds|\mathcal{N}_{\alpha,\theta}(\lambda,j)\big)=\mathbb{P}\big(S^{-\alpha}_{\alpha,\theta+j\alpha}\in ds|N_{S^{-\alpha}_{\alpha,\theta+j\alpha}}(\lambda)=0\big).
$
\item[(iv)]
$(P_{\ell})|\mathcal{N}_{\alpha,\theta}(\lambda,j)\sim\mathbb{L}^{(0)}_{\alpha,\theta+j\alpha}(\lambda),$ for $j=0,1,2,\ldots.$
\end{enumerate}
\end{prop}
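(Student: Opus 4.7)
The plan is to treat every assertion as a specialization of the standard mixed Poisson conditional formula~(\ref{AconPoisson}) with $A=S^{-\alpha}_{\alpha,\theta}$, cashing in on two elementary structural facts: the power-biased definition $g_{\alpha,\theta}(z)=z^{\theta/\alpha}g_{\alpha}(z)/\mathbb{E}[S^{-\theta}_{\alpha}]$, and the Laplace-transform identity (\ref{MittagLeffler}), namely $\mathbb{E}[\mathrm{e}^{-\lambda S^{-\alpha}_{\alpha,\theta}}]=\mathrm{E}^{(\theta/\alpha+1)}_{\alpha,\theta+1}(-\lambda)$.

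First I would handle the joint identification of waiting times. By the definition (\ref{Mittagrv}), $\mathscr{M}^{(\eta)}_{\alpha,\eta'}=G_{(\eta'-\eta)/\alpha}S^{\alpha}_{\alpha,\eta}$; taking $\eta=\theta$ and $\eta'=\theta+j\alpha$ gives $\mathscr{M}^{(\theta)}_{\alpha,\theta+j\alpha}=G_{j}S^{\alpha}_{\alpha,\theta}=T_{j}$ pointwise, so the joint equality in distribution is immediate on a common probability space carrying the $(G_{j})$ and $S_{\alpha,\theta}$. The marginal density (\ref{PillaiDistwaiting}) is then Proposition~\ref{PropGenMittaglaw}(i) read under the substitution $(\eta,\theta)\mapsto(\theta,\theta+j\alpha)$, using $(\theta+j\alpha-\theta)/\alpha=j$ in the Gamma exponent.

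For part~(i), specialize (\ref{AconPoisson}) to $A=S^{-\alpha}_{\alpha,\theta}$ at $N_{A}(\lambda)=0$; the normalizing constant $\mathbb{E}[\mathrm{e}^{-\lambda A}]$ equals $\mathrm{E}^{(\theta/\alpha+1)}_{\alpha,\theta+1}(-\lambda)$, recovering $g^{(0)}_{\alpha,\theta}(s|\lambda)$ as defined in (\ref{expMittag}). For part~(ii) the same formula at generic $j$ produces a numerator $s^{j}\mathrm{e}^{-\lambda s}g_{\alpha,\theta}(s)$; the key algebraic step is the power-biasing identity $s^{j}g_{\alpha,\theta}(s)=\bigl(\mathbb{E}[S^{-(\theta+j\alpha)}_{\alpha}]/\mathbb{E}[S^{-\theta}_{\alpha}]\bigr)\,g_{\alpha,\theta+j\alpha}(s)$, a direct consequence of the power-biased definition of $g_{\alpha,\theta}$. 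The denominator $\mathbb{E}[A^{j}\mathrm{e}^{-\lambda A}]$ is read off from the gamma-mixture representation $T_{j}=G_{j}/A$, which yields $f_{T_{j}}(\lambda)=\lambda^{j-1}\mathbb{E}[A^{j}\mathrm{e}^{-\lambda A}]/\Gamma(j)$; plugging in (\ref{PillaiDistwaiting}) collapses everything to $g^{(0)}_{\alpha,\theta+j\alpha}(s|\lambda)$.

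Part~(iii) is then immediate by inspection: both sides equal $\mathrm{e}^{-\lambda s}g_{\alpha,\theta+j\alpha}(s)/\mathrm{E}^{(\theta/\alpha+j+1)}_{\alpha,\theta+j\alpha+1}(-\lambda)$. Part~(iv) follows from the conditional independence built into the mixed Poisson construction: $(P_{\ell})$ is conditionally independent of the process $N_{A}$ given $A$, so averaging the conditional law $\mathrm{PD}(\alpha|s^{-1/\alpha})$ of $(P_{\ell})$ against the density from (ii)/(iii) reproduces exactly the mixture definition~(\ref{genMittaglambda}) of $\mathbb{L}^{(0)}_{\alpha,\theta+j\alpha}(\lambda)$. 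I do not anticipate any deep obstacle; the only delicate bookkeeping is tracking the parameter shift $\theta\mapsto\theta+j\alpha$ produced by $j$-power-biasing of the Mittag-Leffler density, which is precisely the mechanism by which the Prabhakar structure closes under conditioning on the mixed Poisson $\sigma$-field.
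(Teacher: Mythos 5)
Your proposal is correct and follows essentially the same route as the paper: identify $T_{j}=G_{j}S^{\alpha}_{\alpha,\theta}$ with $\mathscr{M}^{(\theta)}_{\alpha,\theta+j\alpha}$ directly from the definition~(\ref{Mittagrv}), read the marginal density and part (i) from Proposition~\ref{PropGenMittaglaw} with $\eta=\theta$ and $\theta$ replaced by $\theta+j\alpha$, and obtain (ii) from~(\ref{AconPoisson}) together with the power-biasing identity $s^{j}g_{\alpha,\theta}(s)\propto g_{\alpha,\theta+j\alpha}(s)$, with (iii) and (iv) following by inspection and conditional independence. Your explicit computation of $\mathbb{E}[A^{j}{\mbox e}^{-\lambda A}]$ via the gamma-mixture form of $f_{T_{j}}$ merely fleshes out the normalization the paper leaves implicit.
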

\begin{proof}Beyond the descriptions from~\cite{PitmanPoissonMix}, the result in~(\ref{PillaiDistwaiting}) and statement (i) follow from Proposition~\ref{PropGenMittaglaw} with $\eta =\theta$ and $\theta$ otherwise replaced by $\theta+j\alpha.$ (\ref{AconPoisson}) indicates that the relevant density appearing in statement~(ii) should be expressed to be proportional to ${\mbox e}^{-\lambda s}s^{j}g_{\alpha,\theta}(s),$ which is indeed the case, since $s^{j}g_{\alpha,\theta}(s)\propto g_{\alpha,\theta+j\alpha}(s).$
\end{proof}

The next result, which is now straightforward to verify, describes the marginal distribution of the mixed Poisson process that illustrates a specific case of the \textit{Poisson switching identity} described in~\cite[Lemma 4.5]{PitmanPoissonMix}.
\begin{prop}For $\theta>-\alpha$, and $j=0,1,\ldots$,
$$
\mathbb{P}(N_{S^{-\alpha}_{\alpha,\theta}}(\lambda)=j)=\frac{\lambda^{j}\mathbb{E} \big[S^{-(\theta+j\alpha)}_{\alpha}\big]}
{j!\mathbb{E}[S^{-\theta}_{\alpha}]}\mathrm{E}^{(\frac{\theta}{\alpha}+j+1)}_{\alpha,\theta+j\alpha+1}(-\lambda),
$$
which is the same as $(\lambda/j)f_{T_{j}}(\lambda)$, for $j\neq 0.$ Furthermore, this implies the identity
$$
\mathbb{E}[S^{-\theta}_{\alpha}]=
\frac{\Gamma(\frac{\theta}{\alpha}+1)}
{\Gamma({\theta}+1)}=
\sum_{j=0}^{\infty}\frac{\lambda^{j}\Gamma(\frac{\theta}{\alpha}+j+1)}
{j!\Gamma({\theta}+j\alpha+1)}\mathrm{E}^{(\frac{\theta}{\alpha}+j+1)}_{\alpha,\theta+j\alpha+1}(-\lambda).
$$
\end{prop}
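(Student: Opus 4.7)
The plan is to compute $\mathbb{P}(N_{S^{-\alpha}_{\alpha,\theta}}(\lambda)=j)$ directly from the defining mixing construction of the mixed Poisson process, then recognize each ingredient in terms of already-established Mittag-Leffler Laplace transforms and $\mathrm{ML}(\alpha,\theta)$ normalizing constants. Finally, I would read off both the $(\lambda/j) f_{T_j}(\lambda)$ relation and the series identity as consequences.

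First I would write, conditioning on $A=S^{-\alpha}_{\alpha,\theta}$,
\begin{equation*}
\mathbb{P}(N_{S^{-\alpha}_{\alpha,\theta}}(\lambda)=j)
=\mathbb{E}\!\left[\frac{(\lambda S^{-\alpha}_{\alpha,\theta})^{j}}{j!}{\mbox e}^{-\lambda S^{-\alpha}_{\alpha,\theta}}\right]
=\frac{\lambda^{j}}{j!}\int_{0}^{\infty}s^{j}{\mbox e}^{-\lambda s}g_{\alpha,\theta}(s)\,ds.
\end{equation*}
The key observation, which follows immediately from the power-biasing representation $g_{\alpha,\theta}(z)=z^{\theta/\alpha}g_{\alpha}(z)/\mathbb{E}[S^{-\theta}_{\alpha}]$ given in the introduction, is that $s^{j}g_{\alpha,\theta}(s)=\big(\mathbb{E}[S^{-(\theta+j\alpha)}_{\alpha}]/\mathbb{E}[S^{-\theta}_{\alpha}]\big)g_{\alpha,\theta+j\alpha}(s)$. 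Substituting this and then applying~(\ref{MittagLeffler}) with $\theta$ replaced by $\theta+j\alpha$ to the remaining Laplace transform yields exactly
\begin{equation*}
\mathbb{P}(N_{S^{-\alpha}_{\alpha,\theta}}(\lambda)=j)
=\frac{\lambda^{j}\mathbb{E}[S^{-(\theta+j\alpha)}_{\alpha}]}{j!\,\mathbb{E}[S^{-\theta}_{\alpha}]}\mathrm{E}^{(\frac{\theta}{\alpha}+j+1)}_{\alpha,\theta+j\alpha+1}(-\lambda),
\end{equation*}
which is the claimed formula. The identity $(\lambda/j)f_{T_{j}}(\lambda)=\mathbb{P}(N_{S^{-\alpha}_{\alpha,\theta}}(\lambda)=j)$ for $j\ge 1$ follows by direct inspection from comparing this with~(\ref{PillaiDistwaiting}); alternatively, it is the mixed-Poisson version of the standard gamma-arrival-time/Poisson-count relation, obtained under conditioning on $A$ and then integrating out.

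For the series identity, I would use the total probability relation $\sum_{j=0}^{\infty}\mathbb{P}(N_{S^{-\alpha}_{\alpha,\theta}}(\lambda)=j)=1$, multiply through by $\mathbb{E}[S^{-\theta}_{\alpha}]$, and substitute the closed-form moments $\mathbb{E}[S^{-(\theta+j\alpha)}_{\alpha}]=\Gamma(\frac{\theta}{\alpha}+j+1)/\Gamma(\theta+j\alpha+1)$ supplied by (\ref{PitmanKnmoment}), along with $\mathbb{E}[S^{-\theta}_{\alpha}]=\Gamma(\frac{\theta}{\alpha}+1)/\Gamma(\theta+1)$ for the prefactor. This delivers the displayed expansion term by term.

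There is no substantive obstacle: the entire argument is a short assembly of (i) the defining mixture representation of the mixed Poisson law, (ii) the power-biasing identity that converts the $j$-th moment weighting of $g_{\alpha,\theta}$ into a reparametrized density $g_{\alpha,\theta+j\alpha}$, and (iii) the Mittag-Leffler Laplace transform~(\ref{MittagLeffler}). The only minor care required is bookkeeping of the two parameters of $\mathrm{E}^{(\cdot)}_{\cdot,\cdot}(-\lambda)$ when $\theta$ is shifted by $j\alpha$, to ensure one arrives at $\mathrm{E}^{(\frac{\theta}{\alpha}+j+1)}_{\alpha,\theta+j\alpha+1}(-\lambda)$ rather than a nearby index.
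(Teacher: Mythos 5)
Your proposal is correct and follows essentially the route the paper intends: the paper states the result as "straightforward to verify" from the mixed Poisson mixture representation, the power-biasing relation $s^{j}g_{\alpha,\theta}(s)\propto g_{\alpha,\theta+j\alpha}(s)$, and the Laplace transform~(\ref{MittagLeffler}), which is exactly your assembly, with the $(\lambda/j)f_{T_j}(\lambda)$ relation read off against~(\ref{PillaiDistwaiting}) as an instance of the Poisson switching identity. No gaps.
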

\begin{rem}The mixed Poisson process $(N_{S^{-\alpha}_{\alpha,\theta}}(t);~t\ge 0)$ has similarities to generalizations of the fractional Poisson process, see~\cite[Section~9.6]{GorenfloMittag} for references and further details, except in that case the waiting times are iid.  
\end{rem}

\subsection{The Brownian case $\alpha=\frac12,$ Hermite functions and Mills ratio}\label{sec:Hermitesection}
We now specialize the results of the previous sections to the Brownian case of $\alpha=\frac12.$ First, with respect to $(P_{\ell}(s))\sim\mathrm{PD}(\frac{1}{2}|\frac{1}{2}s^{-2}),$ we describe the special $\alpha=\frac12$ explicit case of the Gibbs partitions (EPPF) of $[n]$ in terms of Hermite functions as derived in~\cite{Pit03}, see also~\cite[Section~4.5]{Pit06}, as
\begin{equation}
p_{\frac12}(n_{1},\ldots,n_{k}|s)= s^{k-1}\tilde{h}_{k+1-2n}(s)\frac{\Gamma(n)}{2^{1-n}\Gamma(k)}
p_{\frac{1}{2}}(n_{1},\ldots,n_{k}),
\label{hermiteEPPF}
\end{equation}
where, for $U(a,b,c)$ a confluent hypergeometric function of the
second\break kind~(see~\cite[p.263]{Lebedev72}),
$$
\tilde{h}_{-2q}(s) = 2^{-q} U\left(q,
\frac{1}{2}, \frac{s^2}{2}\right)=\sum_{\ell=0}^{\infty}\frac{{(-s)}^{\ell}}{\ell!}
\frac{\Gamma(q+\frac{\ell}{2})}{2\Gamma(2q)}
2^{q+\frac{\ell}{2}}.
$$
is a Hermite function of index $-2q.$ That is to say 
\begin{equation}
\mathbb{G}^{(n,k)}_{\frac12}\bigg(\frac{1}{2}s^{-2}\bigg)=2^{n-k}\lambda^{k-1}\tilde{h}_{k+1-2n}(s).
\end{equation} 
The distribution of the number of blocks $\tilde{K}_{n}(s)$ can be read from~\cite[eq.~(115)]{Pit03} as
\begin{equation}
\mathbb{P}(\tilde{K}_{n}(s)=k)=\frac{(2n-k-1)!s^{k-1}\tilde{h}_{k+1-2n}(s)}{(n-k)!(k-1)!2^{n-k}}.
\label{localblocks}
\end{equation}
Let ${L}^{(0)}_{\frac12,\theta}(\lambda)$ denote a random variable with density, defined for $\theta>-\frac12,$
\begin{equation}
\tilde{g}^{(0)}_{\frac12,\theta}(x|\lambda):=\frac{x^{2\theta}{\mbox e}^{-\frac{1}{2}x^{2}}{\mbox e}^{-\lambda x}}{\Gamma(2\theta+1)\tilde{h}_{-(2\theta+1)}(\lambda)}=
\frac{{\mbox e}^{-\lambda x}f_{L_{1,\theta}}(x)}{\mathbb{E}\big[{|B_{1}|}^{2\theta+1}\big]\tilde{h}_{-(2\theta+1)}(\lambda)},
\label{halfMittag}
\end{equation}
which arises as the density of the conditional local time in the next few results. We first present the special case of Proposition~\ref{PropGenMittaglaw}.

\begin{prop}\label{PropHermite}For $\eta>-\frac12,$ as in Proposition~\ref{PropGenMittaglaw} with $\alpha=\frac12,$
let ${L}_{1,\eta}:=\big(2S_{\frac{1}{2},\eta}\big)^{-\frac12}\overset{d}=
\sqrt{2G_{\eta+\frac{1}{2}}}$ denote the local time at $0$ up till time $1$ of a process $B=(B_{t}:~t\in [0,1])$
whose ranked excursion lengths $(P_{\ell})\sim \mathrm{PD}(\frac{1}{2},\eta).$ Then, for $\theta>\eta,$  
set $\sqrt{2}\mathscr{M}^{(\eta)}_{\frac{1}{2},\theta}:=\frac{G_{2(\theta-\eta)}}{{L}_{1,\eta}}\overset{d}=\frac{G_{2(\theta-\eta)}}{\sqrt{2G_{\eta+\frac{1}{2}}}}.
$
\begin{enumerate}
\item[(i)]The density of $\sqrt{2}\mathscr{M}^{(\eta)}_{\frac{1}{2},\theta}$ can be expressed in terms of the Hermite\break function $\tilde{h}_{-(2\theta+1)}(\lambda)$ as
$$
\frac{2^{(\theta-\eta)}\Gamma(2\theta+1)}{\Gamma(\eta+\frac{1}{2})\Gamma(2(\theta-\eta))}
\lambda^{2(\theta-\eta)-1}\mathbb{E}\big[{|B_{1}|}^{2\theta+1}\big]\tilde{h}_{-(2\theta+1)}(\lambda).
$$
\item[(ii)]The density of $L_{1,\eta}\big|\sqrt{2}\mathscr{M}^{(\eta)}_{\frac{1}{2},\theta}=\lambda$ is $\tilde{g}^{(0)}_{\frac12,\theta}(x|\lambda)$, defined in~(\ref{halfMittag}).
\item[(iii)]$(P_{\ell})\big|L_{1,\eta}=s,\sqrt{2}\mathscr{M}^{(\eta)}_{\frac{1}{2},\theta}=\lambda$ is $\mathrm{PD}(\frac{1}{2}\big|\frac{1}{2}s^{-2}),$ with $\mathrm{EPPF}$ as in~(\ref{hermiteEPPF}).
\item[(iv)]The distribution of $(P_{\ell})\big|\sqrt{2}\mathscr{M}^{(\eta)}_{\frac{1}{2},\theta}=\lambda$ is 
$
\mathbb{L}^{(0)}_{\frac12,\theta}\left(\frac{\lambda}{\sqrt{2}}\right)
$
with $\mathrm{EPPF},$
\begin{equation}
\frac{\mathrm{E}^{(2\theta+k)}_{\frac{1}{2},\theta+n}(-\frac{\lambda}{\sqrt{2}})}{\mathbb{E}\big[{|B_{1}|}^{2\theta+1}\big]\tilde{h}_{-(2\theta+1)}(\lambda)}p_{\frac{1}{2},\theta}(n_{1},\ldots,n_{k}),
\label{BrownianMittagEPPF}
\end{equation}
as in~(\ref{MittagEPPF}).
\end{enumerate}
\end{prop}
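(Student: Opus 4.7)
The proof strategy is to specialize Proposition~\ref{PropGenMittaglaw} to $\alpha=\frac12$ and then reparametrize in terms of the Brownian local time $L_{1,\eta}$. The key identification is $L_{1,\eta}=(2S_{\frac12,\eta})^{-1/2}$, which yields $S^{1/2}_{\frac12,\eta}=1/(\sqrt{2}\,L_{1,\eta})$ and hence $\mathscr{M}^{(\eta)}_{\frac12,\theta}=G_{2(\theta-\eta)}S^{1/2}_{\frac12,\eta}=G_{2(\theta-\eta)}/(\sqrt{2}\,L_{1,\eta})$, confirming that $\sqrt{2}\mathscr{M}^{(\eta)}_{\frac12,\theta}=G_{2(\theta-\eta)}/L_{1,\eta}$ as stated. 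The second ingredient is Remark~\ref{MittagHermite}, which converts $\mathrm{E}^{(2\theta+1)}_{\frac12,\theta+1}(-\lambda/\sqrt{2})$ to $\mathbb{E}[|B_1|^{2\theta+1}]\tilde{h}_{-(2\theta+1)}(\lambda)$; after this substitution, the rest reduces to careful bookkeeping.

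For (i), I would start from (\ref{PillaiDist}) with $\alpha=\frac12$ evaluated at $\mu$, then change variables to $\lambda=\sqrt{2}\mu$, which contributes a Jacobian $1/\sqrt{2}$ and rescales $\mu^{2(\theta-\eta)-1}$ to $\lambda^{2(\theta-\eta)-1}/2^{\theta-\eta}$. Replacing $\mathrm{E}^{(2\theta+1)}_{\frac12,\theta+1}(-\lambda/\sqrt{2})$ via Remark~\ref{MittagHermite}, I would simplify the prefactor by substituting $\mathbb{E}[S^{-\theta}_{1/2}]=\Gamma(2\theta+1)/\Gamma(\theta+1)$ (with a parallel expression at $\eta$) together with the Legendre duplication formula $\Gamma(2\eta+1)=2^{2\eta}\Gamma(\eta+\tfrac12)\Gamma(\eta+1)/\sqrt{\pi}$, so that the gamma ratios collapse to $\Gamma(2\theta+1)/\Gamma(\eta+\tfrac12)$ and the stated prefactor emerges.

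For (ii), apply Proposition~\ref{PropGenMittaglaw}(ii) at $\alpha=\frac12$ to obtain the conditional density $e^{-\mu s}g_{\frac12,\theta}(s)/\mathrm{E}^{(2\theta+1)}_{\frac12,\theta+1}(-\mu)$ of $S^{-1/2}_{\frac12,\eta}$ given $\mathscr{M}^{(\eta)}_{\frac12,\theta}=\mu$. Using the change of variables $s=\sqrt{2}x$ (since $S^{-1/2}_{\frac12,\eta}=\sqrt{2}L_{1,\eta}$) and $\mu=\lambda/\sqrt{2}$, the exponential becomes $e^{-\lambda x}$ and the Jacobian is $\sqrt{2}$. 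The resulting numerator $\sqrt{2}g_{\frac12,\theta}(\sqrt{2}x)$ should equal $f_{L_{1,\theta}}(x)$, which I would verify by computing $g_{\frac12}(s)=e^{-s^2/4}/\sqrt{\pi}$ directly from $f_{\frac12}$, then $g_{\frac12,\theta}(s)=\Gamma(\theta+1)s^{2\theta}e^{-s^2/4}/[\sqrt{\pi}\,\Gamma(2\theta+1)]$, whence $\sqrt{2}g_{\frac12,\theta}(\sqrt{2}x)=\mathbb{E}[|B_1|^{2\theta+1}]x^{2\theta}e^{-x^2/2}/\Gamma(2\theta+1)$ matches the density of $L_{1,\theta}$. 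Applying Remark~\ref{MittagHermite} to the denominator recovers exactly (\ref{halfMittag}).

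Parts (iii) and (iv) are then immediate. For (iii), Proposition~\ref{PropGenMittaglaw}(iii) with $s=\sqrt{2}x$ turns $\mathrm{PD}(\frac12\,|\,s^{-2})$ into $\mathrm{PD}(\frac12\,|\,\frac12 x^{-2})$, whose EPPF is given by (\ref{hermiteEPPF}). For (iv), Proposition~\ref{PropGenMittaglaw}(iv) at $\alpha=\frac12$ with argument $\lambda/\sqrt{2}$ gives $(P_\ell)|\sqrt{2}\mathscr{M}^{(\eta)}_{\frac12,\theta}=\lambda\sim\mathbb{L}^{(0)}_{\frac12,\theta}(\lambda/\sqrt{2})$; writing out its EPPF from (\ref{MittagEPPF}) and using Remark~\ref{MittagHermite} on the denominator yields (\ref{BrownianMittagEPPF}). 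The only nonroutine step is the bookkeeping in (i): tracking the various powers of $2$ and $\sqrt{\pi}$ that arise from the $\sqrt{2}$-rescaling, from $\mathbb{E}[|B_1|^{2\theta+1}]=2^{\theta+1/2}\Gamma(\theta+1)/\sqrt{\pi}$, and from the duplication formula, and ensuring they combine to the stated prefactor.
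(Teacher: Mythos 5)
Your overall route is the same as the paper's: Proposition~\ref{PropHermite} carries no separate proof there and is meant exactly as the $\alpha=\tfrac12$ specialization of Proposition~\ref{PropGenMittaglaw}, reparametrized via $L_{1,\eta}=(2S_{\frac12,\eta})^{-1/2}$ and combined with Remark~\ref{MittagHermite}; your treatments of (ii), (iii) and (iv), including the verification $\sqrt2\,g_{\frac12,\theta}(\sqrt2 x)=f_{L_{1,\theta}}(x)$, are correct and match that reading.

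The gap is in your bookkeeping for (i): the gamma ratio does \emph{not} collapse to $\Gamma(2\theta+1)/\Gamma(\eta+\tfrac12)$, and the printed prefactor does not emerge from the steps you describe. Carrying them out, one has $\mathbb{E}[S_{\frac12}^{-\theta}]/\mathbb{E}[S_{\frac12}^{-\eta}]=\sqrt{\pi}\,\Gamma(2\theta+1)/\bigl(2^{2\eta}\Gamma(\theta+1)\Gamma(\eta+\tfrac12)\bigr)$, and together with the factor $2^{-(\theta-\eta)}$ from the change of variable the density of $\sqrt2\,\mathscr{M}^{(\eta)}_{\frac12,\theta}$ comes out as
\begin{equation*}
\frac{2^{\theta-\eta}\,\Gamma(\theta+\tfrac12)}{\Gamma(\eta+\tfrac12)\Gamma(2(\theta-\eta))}\,
\lambda^{2(\theta-\eta)-1}\,\mathbb{E}\bigl[|B_{1}|^{2\theta+1}\bigr]\tilde{h}_{-(2\theta+1)}(\lambda),
\end{equation*}
i.e.\ with $\Gamma(\theta+\tfrac12)$ where the statement prints $\Gamma(2\theta+1)$; the two agree only at $\theta=\tfrac12$. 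You can confirm this normalization directly: since $\mathrm{E}^{(2\theta+1)}_{\frac12,\theta+1}(-\lambda/\sqrt2)=\mathbb{E}[\mathrm{e}^{-\lambda L_{1,\theta}}]$, one gets $\int_{0}^{\infty}\lambda^{2(\theta-\eta)-1}\mathbb{E}[\mathrm{e}^{-\lambda L_{1,\theta}}]\,d\lambda=\Gamma(2(\theta-\eta))\,2^{-(\theta-\eta)}\Gamma(\eta+\tfrac12)/\Gamma(\theta+\tfrac12)$, which forces the prefactor above; alternatively, for $\theta=1,\eta=0$ the variable is $G_{2}/|B_{1}|$ with density $(2\sqrt2/\sqrt{\pi})\lambda\tilde{h}_{-3}(\lambda)$, whereas the printed constant would give $(8\sqrt2/\pi)\lambda\tilde{h}_{-3}(\lambda)$, which does not integrate to one. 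So either flag the constant in the statement (apparently a typo, $\Gamma(\theta+\tfrac12)$ intended) or correct your claimed simplification; as written, asserting that ``the stated prefactor emerges'' is an algebra error rather than a proof of (i). The probabilistic content of (ii)--(iv) is unaffected, since there the Hermite-function factor enters only through the ratio in Remark~\ref{MittagHermite} and cancels correctly.
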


We now specialize the mixed Poisson waiting time framework of Proposition~\ref{PropmixedL} to this setting with $A:=L_{1,\theta}.$

\begin{prop}\label{mixedPoissonBrown}Let $B=(B_{t};~t\in [0,1])$ with 
ranked excursion lengths $(V_{\ell})\sim \mathrm{PD}(\frac{1}{2},\theta),$ and corresponding local time ${L}_{1,\theta}:=\big(2S_{\frac{1}{2},\theta}\big)^{-\frac12},$ for $\theta>-\frac12$. Consider the mixed Poisson process $\big(N_{L_{1,\theta}}(t);t\ge 0\big)=\bigg(N_{S^{-1/2}_{\frac{1}{2},\theta}}\left(\frac{t}{\sqrt{2}}\right);t\ge 0\bigg),$ with waiting times 
$$
\left(\tilde{T}_{j}=\frac{G_{j}}{L_{1,\theta}};j\ge 1\right)\overset{d}=\bigg(\sqrt{2} \mathcal{M}^{(\theta)}_{\frac12,\theta+\frac{j}2};j\ge 1\bigg).
$$ 
Then, for $\lambda>0$ and  $j=0,1,2,\ldots,$ 
\begin{enumerate}
\item[(i)] $(V_{\ell})\left|\mathcal{N}_{\frac12,\theta}\left(\frac{\lambda}{\sqrt{2}},j\right)\right.\sim\mathbb{L}^{(0)}_{\frac12,\theta+\frac{j}2}\left(\frac{\lambda}{\sqrt{2}}\right).$
\item[(ii)] $\mathbb{P}\left(L_{1,\theta}\in dx\left|\mathcal{N}_{\frac12,\theta}\left(\frac{\lambda}{\sqrt{2}},j\right)\right.\right)=
\mathbb{P}\left(L_{1,\theta+\frac{j}{2}}\in dx\left|N_{L_{1,\theta+\frac{j}{2}}}(\lambda)=0\right.\right)$ has density equivalent to that of the random variable ${L}^{(0)}_{\frac12,\theta+\frac{j}2}(\lambda),$
\begin{equation}
\tilde{g}^{(0)}_{\frac12,\theta+\frac{j}{2}}(x|\lambda):=\frac{x^{2\theta+j}{\mbox e}^{-\frac{1}{2}x^{2}}{\mbox e}^{-\lambda x}}{\Gamma(2\theta+j+1)\tilde{h}_{-(2\theta+j+1)}(\lambda)}.
\label{halfMittag2}
\end{equation}
\end{enumerate}  
\end{prop}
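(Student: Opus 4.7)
The plan is to deduce Proposition~\ref{mixedPoissonBrown} as a direct specialization of Propositions~\ref{PropGenMittaglaw} and~\ref{PropmixedL} to $\alpha=\frac12$, after relating the Brownian local time to the corresponding Mittag--Leffler variable via $\sqrt{2}\,L_{1,\theta}=S^{-1/2}_{1/2,\theta}$. The identity $L_{1,\theta}=(2S_{1/2,\theta})^{-1/2}$ immediately yields the waiting-time representation $\tilde{T}_{j}=G_{j}/L_{1,\theta}=\sqrt{2}\,G_{j}S^{1/2}_{1/2,\theta}=\sqrt{2}\,\mathscr{M}^{(\theta)}_{1/2,\theta+j/2}$ and the time change $N_{L_{1,\theta}}(t)=N_{S^{-1/2}_{1/2,\theta}}(t/\sqrt{2})$ for every $t\geq 0$. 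Consequently, the filtration generated by $(N_{L_{1,\theta}}(y);\,0\leq y\leq\lambda)$ coincides with that of $(N_{S^{-1/2}_{1/2,\theta}}(z);\,0\leq z\leq\lambda/\sqrt{2})$, so conditioning on $\mathcal{N}_{1/2,\theta}(\lambda/\sqrt{2},j)$ amounts to conditioning on the $L_{1,\theta}$-clock observed through time $\lambda$ with $j$ arrivals.

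Statement~(i) then follows at once from Proposition~\ref{PropmixedL}(iv): apply it with $\alpha=\frac12$ and Poisson-time $\lambda/\sqrt{2}$ to obtain $(V_{\ell})\mid\mathcal{N}_{1/2,\theta}(\lambda/\sqrt{2},j)\sim\mathbb{L}^{(0)}_{1/2,\theta+j/2}(\lambda/\sqrt{2})$.

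For statement~(ii), I would first invoke Proposition~\ref{PropmixedL}(iii) to identify the law of $S^{-1/2}_{1/2,\theta}$ given the Poisson data with the unconditioned exponentially tilted law indexed by $\theta+j/2$; the $\sqrt{2}$ rescaling $x=s/\sqrt{2}$ then yields the equality $\mathbb{P}(L_{1,\theta}\in dx\mid\mathcal{N}_{1/2,\theta}(\lambda/\sqrt{2},j))=\mathbb{P}(L_{1,\theta+j/2}\in dx\mid N_{L_{1,\theta+j/2}}(\lambda)=0)$. To read off the explicit density~(\ref{halfMittag2}), I would apply the size-biasing identity~(\ref{AconPoisson}) directly with $A=L_{1,\theta}$: the conditional density is proportional to $x^{j}e^{-\lambda x}f_{L_{1,\theta}}(x)\propto x^{2\theta+j}e^{-x^{2}/2-\lambda x}$, and since $x^{2\theta+j}e^{-x^{2}/2}$ is proportional to $f_{L_{1,\theta+j/2}}(x)$, this expression is visibly the exponentially tilted law of $L_{1,\theta+j/2}$. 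The normalizer is pinned down by the integral representation $\int_{0}^{\infty}x^{q-1}e^{-x^{2}/2-\lambda x}\,dx=\Gamma(q)\tilde{h}_{-q}(\lambda)$, read off from the series defining $\tilde{h}_{-q}$ (equivalently, from Remark~\ref{MittagHermite} at $q=2\theta+j+1$), producing exactly the factor $\Gamma(2\theta+j+1)\tilde{h}_{-(2\theta+j+1)}(\lambda)$ in~(\ref{halfMittag2}).

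No serious obstacle arises; the only delicate point is bookkeeping the $\sqrt{2}$ between $L_{1,\theta}$ and $S^{-1/2}_{1/2,\theta}$ and reconciling the Hermite-function normalizer produced by the integral representation with the Prabhakar/Mittag--Leffler expression $\mathrm{E}^{(2\theta'+1)}_{1/2,\theta'+1}(-\lambda/\sqrt{2})=\mathbb{E}[|B_{1}|^{2\theta'+1}]\tilde{h}_{-(2\theta'+1)}(\lambda)$ of Remark~\ref{MittagHermite} at $\theta'=\theta+j/2$, which certifies that the two forms of the normalizing constant agree and hence that the displayed density in~(\ref{halfMittag2}) is correct.
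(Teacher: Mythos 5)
Your proposal is correct and follows essentially the same route as the paper, which offers no separate proof and treats the proposition as the direct specialization of Propositions~\ref{PropGenMittaglaw} and~\ref{PropmixedL} to $\alpha=\tfrac12$ with $A=L_{1,\theta}$, using $L_{1,\theta}=S^{-1/2}_{\frac12,\theta}/\sqrt{2}$ to handle the time change and identifying the tilted $\chi$-type density with the Hermite-function normalizer of~(\ref{halfMittag}). Your explicit verification that $\int_{0}^{\infty}x^{q-1}e^{-x^{2}/2-\lambda x}\,dx=\Gamma(q)\tilde{h}_{-q}(\lambda)$ correctly pins down the constant in~(\ref{halfMittag2}).
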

\begin{rem}\label{ULdist}
The density~(\ref{halfMittag2}) of the local time variable ${L}^{(0)}_{\frac12,\theta+\frac{j}2}(\lambda)$ agrees, up to a scaling factor, with that of~\cite[example 9, eq. (3.1)]{Pek2016} arising from a P\'olya urn model where additional black balls are added at geometrically distributed waiting times. In particular, they have 
the distribution
$\mathrm{UL}(2\theta+j+1,(\frac{\lambda}{2\theta+j+1},\frac{1}{2\theta+j+1})),$ as defined in~\cite{Pek2016}. 
\end{rem}

\subsubsection{Properties of the variable ${L}^{(0)}_{\frac12,\theta}(\lambda)$ with density in~(\ref{halfMittag})}
We now focus on obtaining a more explicit description of the variable ${L}^{(0)}_{\frac12,\theta}(\lambda)$ with density~(\ref{halfMittag}), which is related to variables appearing in \cite{ChassaingJanson,Pek2016,Pit99local}. We shall show some parallels using the continuous waiting time framework in Proposition~\ref{mixedPoissonBrown}.

First, in addition to Remark~\ref{MittagHermite}, there are the identities,
\begin{equation}
\mathbb{E}\bigg[{(\tilde{P}_{1}(\lambda))}^{\theta+\frac{1}{2}}\bigg]=\mathbb{E}\big[{|B_{1}|}^{2\theta+1}\big]\tilde{h}_{-(2\theta+1)}(\lambda)=\mathbb{E}\bigg[{\mbox e}^{-\lambda\sqrt{2G_{\theta+\frac{1}{2}}}}\bigg].
\label{structuralmoments}
\end{equation}
Furthermore, from~\cite[p. 25]{Pit03} or~\cite[p. 93]{Pit06},
$$
\tilde{h}_{-1}(\lambda)=\frac{\mathbb{P}(B_{1}>\lambda)}{\phi(\lambda)} \quad{\mbox { and }}\quad\tilde{h}_{-2}(\lambda)=1-\lambda \tilde{h}_{-1}(\lambda)=\mathbb{P}(\tilde{K}_{2}(\lambda)=1),
$$
where $\tilde{h}_{-1}(\lambda)$ equates with \textit{Mill's ratio} and $\tilde{h}_{-2}(\lambda)$ equates with the probability that two independent uniform random variables fall into the excursion interval of $(B_{t}:~t\in [0,1])$ given $L_{1,0}=\lambda,$ when the sample size is $n=2.$ From this, one may express the cases of $\theta=0$ and $\theta=\frac12$ of the densities in~(\ref{halfMittag}) as 
$$
\frac{{\mbox e}^{-\frac{1}{2}x^{2}}{\mbox e}^{-\lambda x}\phi(\lambda)}{\mathbb{P}(B_{1}>\lambda)}\quad
{\mbox { and }}\quad
\frac{x{\mbox e}^{-\frac{1}{2}x^{2}}{\mbox e}^{-\lambda x}\phi(\lambda)}{\phi(\lambda)-\lambda\mathbb{P}(B_{1}>\lambda)},
$$
respectively. In addition, see~\cite[p.1758]{ChassaingJanson} for various interpretations, there is the simple formula,
\begin{equation}
\mathbb{P}\big((L_{1,\frac{1}{2}}-\lambda)\ge x\big|L_{1,\frac{1}{2}}\ge \lambda\big.\big)={\mbox e}^{-\frac{1}{2}x^{2}}{\mbox e}^{-\lambda x},
\label{LFR}
\end{equation}
corresponding to distributions with linear hazard rates. Pitman~\cite[eq. (18)]{Pit99local}
shows that~(\ref{LFR}) is the survival distribution of a random variable representable as 
\begin{equation}
L_{1,\frac{1}{2}}\sqrt{\frac{B^{2}_{1}}{B^{2}_{1}+\lambda^{2}}},
\label{pitid}
\end{equation}
where $L_{1,\frac{1}{2}}$ is independent of $B_{1}.$ See~\cite[eq. (17)]{Pit99local} for various interpretations of the random variable $\frac{B^{2}_{1}}{B^{2}_{1}+\lambda^{2}}\overset{d}=\tilde{P}_{1}(\lambda).$ Next, we provide a mixture representation of random variables having the density in~(\ref{halfMittag}).

\begin{prop}\label{PropL}
For $\theta>-\frac12,$ let $\tilde{P}^{(\theta+\frac{1}{2})}_{1}(\lambda)$ denote the random variable with density corresponding to the  $\theta+\frac12$ bias of the density~(\ref{structural}) of $\tilde{P}_{1}(\lambda).$ That is, 
\begin{equation}
f_{\tilde{P}^{(\theta+\frac{1}{2})}_{1}}(p|\lambda)=\frac{p^{\theta+\frac12}f_{\tilde{P}_{1}}(p|\lambda)}{\mathbb{E}[{|B_{1}|}^{2\theta+1}]\tilde{h}_{-(2\theta+1)}(\lambda)}.
\label{biasedstructuraldensity}
\end{equation}
\begin{enumerate}
\item[(i)] ${L}^{(0)}_{\frac12,\theta}(\lambda)\overset{d}={L}_{1,\theta}\sqrt{\tilde{P}^{(\theta+\frac{1}{2})}_{1}(\lambda)}$ with density $\tilde{g}^{(0)}_{\frac12,\theta}(x|\lambda)$ in~(\ref{halfMittag}).
\item[(ii)] For $\theta>\eta,$
\begin{equation}
{L}^{2}_{1,\eta}\overset{d}={L}^{2}_{1,\theta}\tilde{P}^{(\theta+\frac{1}{2})}_{1}\big(\sqrt{2}\mathscr{M}^{(\eta)}_{\frac{1}{2},\theta}\big)\sim\chi^{2}_{2\eta+1}.
\label{localtimeMittagid}
\end{equation}
\item[(iii)] $\tilde{P}^{(\theta+\frac{1}{2})}_{1}\big(\sqrt{2}\mathscr{M}^{(\eta)}_{\frac{1}{2},\theta}\big)\overset{d}=
\beta_{\eta+\frac{1}{2},\theta-\eta}$.
\item[(iv)]When $\theta=0,$
\begin{equation}
f_{\tilde{P}^{(\frac{1}{2})}_{1}}(p|\lambda)=\frac{\lambda\phi(\lambda)}{\mathbb{P}(B_{1}>\lambda)\sqrt{2\pi}}{(1-p)}^{-\frac{3}{2}}{\mbox e}^{-\frac{{\lambda}^{2}}{2}\left(\frac{p}{1-p}\right)}.
\label{structural2}
\end{equation}
\item[(v)]When $\theta=\frac12,$ $\tilde{P}^{(1)}_{1}(\lambda)$ has the size-biased distribution of $\tilde{P}_{1}(\lambda)$ with density
\begin{equation}
f_{\tilde{P}^{(1)}_{1}}(p|\lambda)=\frac{\lambda\phi(\lambda)}{[\phi(\lambda)-\lambda\mathbb{P}(B_{1}>\lambda)]\sqrt{2\pi}} p^{\frac12}{(1-p)}^{-\frac{3}{2}}{\mbox e}^{-\frac{{\lambda}^{2}}{2}\left(\frac{p}{1-p}\right)}.
\label{structural3}
\end{equation}
\end{enumerate}
\end{prop}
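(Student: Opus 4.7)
The plan is to prove (i) by Mellin transform matching, deduce (ii) by coupling with Proposition~\ref{PropHermite}, (iii) by standard beta--gamma algebra, and finally (iv)--(v) by direct substitution of the Hermite-function identities. For (i), the identity~(\ref{structuralmoments}) reads $\mathbb{E}[\tilde{P}_1(\lambda)^{\theta+1/2}] = \mathbb{E}[|B_1|^{2\theta+1}]\tilde{h}_{-(2\theta+1)}(\lambda) = \mathbb{E}[e^{-\lambda L_{1,\theta}}]$, and it is valid whenever $\theta > -1/2$, hence also with $\theta$ replaced by $\theta + s/2$ on a strip about $s=0$. This pins down the Mellin transform of the biased factor as the ratio $\mathbb{E}[\tilde{P}_1(\lambda)^{\theta+1/2+s/2}]/\mathbb{E}[\tilde{P}_1(\lambda)^{\theta+1/2}]$. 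Since $L_{1,\theta}$ is independent of $\tilde{P}_1^{(\theta+1/2)}(\lambda)$, and $\mathbb{E}[L_{1,\theta}^s] = 2^{s/2}\Gamma(\theta+1/2+s/2)/\Gamma(\theta+1/2)$, the Mellin transform of $Y = L_{1,\theta}\sqrt{\tilde{P}_1^{(\theta+1/2)}(\lambda)}$ factors as a product of gamma ratios and Hermite-function ratios. Two applications of Legendre's duplication formula---at $z = \theta+1/2+s/2$ and at $z = \theta+1/2$---collapse the gamma product to $\Gamma(2\theta+1+s)/\Gamma(2\theta+1)$, yielding
\[
\mathbb{E}[Y^s] = \frac{\Gamma(2\theta+1+s)\,\tilde{h}_{-(2\theta+1+s)}(\lambda)}{\Gamma(2\theta+1)\,\tilde{h}_{-(2\theta+1)}(\lambda)}.
\]
Direct integration of $x^s$ against~(\ref{halfMittag}) gives the same expression---using the normalization $\int_0^\infty x^{2\theta+s}e^{-x^2/2-\lambda x}dx = \Gamma(2\theta+1+s)\tilde{h}_{-(2\theta+1+s)}(\lambda)$, itself the $\theta\mapsto\theta+s/2$ shift of~(\ref{structuralmoments})---and Mellin uniqueness on $(0,\infty)$ then delivers (i).

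For (ii), Proposition~\ref{PropHermite}(ii) gives that the law of $L_{1,\eta}$ conditionally on $\sqrt{2}\mathscr{M}^{(\eta)}_{1/2,\theta}=\lambda$ has density $\tilde{g}^{(0)}_{1/2,\theta}(x|\lambda)$, which by (i) coincides with the law of $\tilde{L}_{1,\theta}\sqrt{\tilde{P}_1^{(\theta+1/2)}(\lambda)}$ for an independent copy $\tilde{L}_{1,\theta}$. Mixing over $\lambda$ yields the product representation $L_{1,\eta}\overset{d}=\tilde{L}_{1,\theta}\sqrt{\tilde{P}_1^{(\theta+1/2)}(\sqrt{2}\mathscr{M}^{(\eta)}_{1/2,\theta})}$, and squaring gives (ii); the chi-square identification is immediate from $L_{1,\eta}\overset{d}=\sqrt{2G_{\eta+1/2}}$. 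For (iii), the relation in (ii) reads $2G_{\eta+1/2}\overset{d}= 2G_{\theta+1/2}\cdot X$ with $X := \tilde{P}_1^{(\theta+1/2)}(\sqrt{2}\mathscr{M}^{(\eta)}_{1/2,\theta})\in(0,1)$ independent of the $G_{\theta+1/2}$ factor; dividing Mellin transforms gives
\[
\mathbb{E}[X^s] = \frac{\Gamma(\eta+1/2+s)\,\Gamma(\theta+1/2)}{\Gamma(\eta+1/2)\,\Gamma(\theta+1/2+s)},
\]
which is exactly the Mellin transform of $\beta_{\eta+1/2,\theta-\eta}$, and Mellin uniqueness on $[0,1]$ concludes.

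Parts (iv) and (v) reduce to direct substitution of $\theta=0$ and $\theta=1/2$ into~(\ref{biasedstructuraldensity}), using $\mathbb{E}[|B_1|]=\sqrt{2/\pi}$, $\mathbb{E}[|B_1|^2]=1$, and the identities $\tilde{h}_{-1}(\lambda)=\mathbb{P}(B_1>\lambda)/\phi(\lambda)$, $\tilde{h}_{-2}(\lambda)=[\phi(\lambda)-\lambda\mathbb{P}(B_1>\lambda)]/\phi(\lambda)$; only the normalizing constant requires bookkeeping. The main technical obstacle is the gamma-function algebra in (i): one must reconcile the $\Gamma(\theta+1/2+s/2)$ factor coming from the Gaussian-type moments of $L_{1,\theta}$ with the $\Gamma(\theta+1+s/2)$ factor arising through~(\ref{structuralmoments}) from the biased Hermite expression, so that their product collapses via Legendre duplication to $\Gamma(2\theta+1+s)$; once this identification is in place, the remainder of the argument is routine.
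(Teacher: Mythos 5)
Your proposal is correct, and parts (ii)--(v) follow essentially the paper's own route: (ii) from Proposition~\ref{PropHermite} by mixing over $\lambda$, (iii) by beta--gamma cancellation (your Mellin-division step is just a formalization of ``standard beta--gamma algebra''), and (iv)--(v) by substitution. For (i), however, you take a genuinely different path. The paper does the scale-mixture density computation directly: conditioning on $\tilde{P}^{(\theta+\frac12)}_{1}(\lambda)=p$, the Jacobian factor $p^{-\theta-\frac12}$ from scaling $L_{1,\theta}$ is cancelled by the $p^{\theta+\frac12}$ bias, and the remaining integral over $p$ is evaluated via the identity $\mathbb{E}\big[{\mbox e}^{-x^{2}/(2\tilde{P}_{1}(\lambda))}\big]={\mbox e}^{-x^{2}/2}{\mbox e}^{-\lambda x}$ (equivalently $1/\tilde{P}_{1}(\lambda)\overset{d}=1+2\lambda^{2}S_{1/2}$), which also exhibits the link to~(\ref{LFR}). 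You instead match Mellin transforms, using~(\ref{structuralmoments}) with $\theta$ shifted to $\theta+s/2$ and two applications of Legendre duplication; your gamma algebra and the resulting formula $\mathbb{E}[Y^{s}]=\Gamma(2\theta+1+s)\tilde{h}_{-(2\theta+1+s)}(\lambda)/[\Gamma(2\theta+1)\tilde{h}_{-(2\theta+1)}(\lambda)]$ check out, and Mellin uniqueness on a strip is legitimate since all moments involved are finite for $s>-2\theta-1$. The paper's argument is shorter and carries the probabilistic explanation behind the linear-hazard formula; yours avoids the $S_{1/2}$ subordination identity entirely, needing only~(\ref{structuralmoments}) and the normalization of~(\ref{halfMittag}) at shifted index. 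In (iii), add one sentence on why the $G_{\theta+\frac12}$ factor is independent of $X=\tilde{P}^{(\theta+\frac{1}{2})}_{1}\big(\sqrt{2}\mathscr{M}^{(\eta)}_{\frac{1}{2},\theta}\big)$: conditionally on $\mathscr{M}^{(\eta)}_{\frac{1}{2},\theta}=\lambda$ the two factors are independent and the conditional law of $L_{1,\theta}$ does not depend on $\lambda$.

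One warning on (iv): carrying out the substitution you describe, with $\mathbb{E}[|B_{1}|]=\sqrt{2/\pi}$ and $\tilde{h}_{-1}(\lambda)=\mathbb{P}(B_{1}>\lambda)/\phi(\lambda)$, gives the normalizing constant $\lambda\phi(\lambda)/\big(2\,\mathbb{P}(B_{1}>\lambda)\big)$, not $\lambda\phi(\lambda)/\big(\sqrt{2\pi}\,\mathbb{P}(B_{1}>\lambda)\big)$ as displayed in~(\ref{structural2}); indeed $\int_{0}^{1}(1-p)^{-3/2}{\mbox e}^{-\frac{\lambda^{2}}{2}\frac{p}{1-p}}dp=2\mathbb{P}(B_{1}>\lambda)/\big(\lambda\phi(\lambda)\big)$, so the displayed constant does not normalize the density. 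Your method is the right one, but done carefully it exposes what appears to be a typo in the constant of~(\ref{structural2}) (the factor $\mathbb{E}[|B_{1}|]=\sqrt{2/\pi}$ has been dropped), whereas~(\ref{structural3}) is consistent as stated; you should not assert that the substitution delivers~(\ref{structural2}) verbatim.
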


\begin{proof}The form of the density in~(\ref{biasedstructuraldensity}) is determined by~(\ref{structural}) and~(\ref{structuralmoments}). Statement (i) follows by the usual argument to derive the density of a randomly scaled gamma variable, which in this case involves cancellations from the choice of the density~(\ref{biasedstructuraldensity}). The form of the density is concluded by noting that, (compare with (\ref{LFR})),
$$
\mathbb{E}\bigg[{\mbox e}^{-\frac{1}{2}\frac{x^{2}}{\tilde{P}_{1}(\lambda)}}\bigg]=\mathbb{E}\bigg[{\mbox e}^{-\frac{x^{2}}{2}\big(2\lambda^{2}S_{{1}/{2}}+1\big)}\bigg]={\mbox e}^{-\frac{1}{2}x^{2}}{\mbox e}^{-\lambda x}.
$$
Statement (ii) follows from Proposition~\ref{PropHermite}. Statement (iii) is obtained by standard beta-gamma algebra. Statements (iv) and (v) follow from the preceding discussion. 
\end{proof}
In view of~(\ref{structural2}) and~(\ref{structural3}), we establish a simple mixture relationship to the random variable defined by~(\ref{LFR}) and (\ref{pitid}) appearing in~\cite{ChassaingJanson,Pit99local}.

\begin{prop}Consider the random variable defined by~(\ref{LFR}) and (\ref{pitid}), and let 
$\tilde{K}_{2}(\lambda)$ denote the random variable with distribution in~(\ref{localblocks}) for $n=2,$ $s=\lambda,$ and hence $k=1,2.$ Then, setting $\theta=1-\tilde{K}_{2}(\lambda)/2$ in Proposition~\ref{PropL} gives
$$
L_{1,\frac{1}{2}}\sqrt{\frac{B^{2}_{1}}{B^{2}_{1}+\lambda^{2}}}\overset{d}={L}^{(0)}_{\frac{1}{2},\frac{2-\tilde{K}_{2}(\lambda)}{2}}(\lambda)\overset{d}={L}_{1,\frac{2-\tilde{K}_{2}(\lambda)}{2}}\sqrt{\tilde{P}^{\big(\frac{3-\tilde{K}_{2}(\lambda)}{2}\big)}_{1}(\lambda)}.
$$
\end{prop}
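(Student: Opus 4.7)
The proof reduces to an explicit density computation, exploiting the fact that $\tilde{K}_{2}(\lambda)$ takes only two values, $1$ or $2$, so the middle expression is a two-component mixture. My plan is to verify the first distributional equality by matching densities on $(0,\infty)$, and then invoke Proposition~\ref{PropL}(i) at the random index $\theta = (2-\tilde{K}_{2}(\lambda))/2$ to obtain the second equality for free.

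For the first equality, I would begin by reading the density of the left-hand side from~(\ref{LFR}): since $\mathbb{P}(X\ge x)=e^{-\lambda x - x^{2}/2}$, differentiating gives $f_{X}(x)=(x+\lambda)e^{-\lambda x - x^{2}/2}$ on $x>0$. Next, using~(\ref{localblocks}) specialized to $n=2$ together with the identities $\tilde{h}_{-1}(\lambda)=\mathbb{P}(B_{1}>\lambda)/\phi(\lambda)$ and $\tilde{h}_{-2}(\lambda)=1-\lambda\tilde{h}_{-1}(\lambda)$ noted in the text, I would record
$$\mathbb{P}(\tilde{K}_{2}(\lambda)=1)=\tilde{h}_{-2}(\lambda),\qquad \mathbb{P}(\tilde{K}_{2}(\lambda)=2)=\lambda\tilde{h}_{-1}(\lambda).$$
Then the mixture density of ${L}^{(0)}_{\frac12,(2-\tilde{K}_{2}(\lambda))/2}(\lambda)$, obtained from~(\ref{halfMittag}) at $\theta=\tfrac{1}{2}$ and $\theta=0$ respectively, becomes
$$\tilde{h}_{-2}(\lambda)\cdot\frac{x\,e^{-x^{2}/2}e^{-\lambda x}}{\Gamma(2)\tilde{h}_{-2}(\lambda)}+\lambda\tilde{h}_{-1}(\lambda)\cdot\frac{e^{-x^{2}/2}e^{-\lambda x}}{\Gamma(1)\tilde{h}_{-1}(\lambda)}=(x+\lambda)e^{-\lambda x - x^{2}/2}.$$
The normalizing Hermite factors cancel exactly, leaving the density of the left-hand side; this is the whole content of the first equality.

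For the second equality, I would simply apply Proposition~\ref{PropL}(i) conditionally on $\tilde{K}_{2}(\lambda)=j$ for each $j\in\{1,2\}$: conditionally, ${L}^{(0)}_{\frac12,(2-j)/2}(\lambda)\overset{d}{=}L_{1,(2-j)/2}\sqrt{\tilde{P}^{((3-j)/2)}_{1}(\lambda)}$, and taking the mixture over $\tilde{K}_{2}(\lambda)$ (which is constructed independently of the generic building blocks $L_{1,\theta}$ and $\tilde{P}^{(\theta+1/2)}_{1}(\lambda)$ on the right) yields the asserted distributional equality.

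The only point that requires care is the bookkeeping of the normalizing constants: because the weights $\mathbb{P}(\tilde{K}_{2}(\lambda)=j)$ are precisely the Hermite factors appearing in the denominators of $\tilde{g}^{(0)}_{\frac12,(2-j)/2}(\cdot|\lambda)$, the mixture telescopes cleanly; I expect no real obstacle once the expressions for $\tilde{h}_{-1}$ and $\tilde{h}_{-2}$ are inserted and the two terms are added.
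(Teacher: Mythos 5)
Your proposal is correct and follows essentially the same route as the paper: one differentiates the survival function from (\ref{LFR}) to get the density $(x+\lambda)e^{-x^{2}/2}e^{-\lambda x}$ and checks it equals the two-point mixture $\mathbb{P}(\tilde{K}_{2}(\lambda)=1)\tilde{g}^{(0)}_{\frac12,\frac12}(x|\lambda)+\mathbb{P}(\tilde{K}_{2}(\lambda)=2)\tilde{g}^{(0)}_{\frac12,0}(x|\lambda)$, the Hermite normalizers cancelling against the weights $\tilde{h}_{-2}(\lambda)$ and $\lambda\tilde{h}_{-1}(\lambda)$ exactly as you computed, with the second equality then following from Proposition~\ref{PropL}(i) applied conditionally on $\tilde{K}_{2}(\lambda)$.
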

\begin{proof}From (\ref{LFR}), it follows that the density of (\ref{pitid}) can be expressed as 
$$
(x+\lambda){\mbox e}^{-\frac{1}{2}x^{2}}{\mbox e}^{-\lambda x}=\mathbb{P}(\tilde{K}_{2}(\lambda)=1)\tilde{g}^{(0)}_{\frac12,\frac{1}{2}}(x|\lambda)+\mathbb{P}(\tilde{K}_{2}(\lambda)=2)\tilde{g}^{(0)}_{\frac12,0}(x|\lambda).
$$
\end{proof}

The results above allows us to express the $\frac{1}{2}$-diversity in terms of a specific random variable, which we now describe relative to the mixed Poisson waiting time setting.
Recall that the distribution of the number of blocks $K_{n}$ of a $\mathrm{PD}(\frac12,\theta)$ partition of $[n]$ has the explicit form
\begin{equation}
\mathbb{P}^{(n)}_{\frac{1}{2},\theta}(k)= \frac{\Gamma(n)}{\Gamma(k)} \frac{(2\theta)_k}{(\theta)_n} \binom{2n-k-1}{n-1} 2^{k-2n}.
\label{KnBrowntheta}
\end{equation}

\begin{cor}\label{CorollaryKnBrown} Let $K_{n}$ denote the number of blocks in a $\mathrm{PD}(\frac12,\theta)$ partition of $[n],$ having distribution
$\mathbb{P}^{(n)}_{\frac{1}{2},\theta}(k)$ as in~(\ref{KnBrowntheta}). Consider the mixed Poisson/waiting time setting in Proposition~\ref{mixedPoissonBrown}, and, for each $j=0,1,2,\ldots,$ let $K_{n}(\frac{\lambda}{\sqrt{2}},j)$ denote the random variable that equates with the distribution of $K_{n}|\mathcal{N}_{\frac12,\theta}\big(\frac{\lambda}{\sqrt{2}},j\big).$ 
\begin{enumerate}
\item[(i)]From~(\ref{BrownianMittagEPPF}),
$$
\mathbb{P}\left(K_{n}\bigg(\frac{\lambda}{\sqrt{2}},j\bigg)=k\right)=
\frac{\mathrm{E}^{(2\theta+j+k)}_{\frac{1}{2},\theta+\frac{j}2+n}\big(-\frac{\lambda}{\sqrt{2}}\big)}{\mathbb{E}\big[{|B_{1}|}^{2\theta+j+1}\big]\tilde{h}_{-(2\theta+j+1)}(\lambda)}\mathbb{P}^{(n)}_{\frac{1}{2},\theta+\frac{j}2}(k).
$$
\item[(ii)]As $n\rightarrow\infty,$ ${(2n)}^{-\frac12}K_{n}\big(\frac{\lambda}{\sqrt{2}},j\big)\overset{a.s.}\rightarrow {L}^{(0)}_{\frac12,\theta+\frac{j}2}(\lambda)\overset{d}={L}_{1,\theta+\frac{j}2}\sqrt{\tilde{P}^{\big(\theta+\frac{j+1}{2}\big)}_{1}(\lambda)},$
with density $\tilde{g}^{(0)}_{\frac12,\theta+\frac{j}2}(x|\lambda).$
\end{enumerate}
\end{cor}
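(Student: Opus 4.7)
The statement is a corollary assembled from earlier results, so my plan is to identify the correct reindexing in each invocation and then verify that indices match across formulas. First, for part (i), I would apply Proposition~\ref{mixedPoissonBrown}(i), which says that $(V_{\ell})|\mathcal{N}_{\frac12,\theta}(\lambda/\sqrt{2},j) \sim \mathbb{L}^{(0)}_{\frac12,\theta+j/2}(\lambda/\sqrt{2})$. The EPPF of this law is the $\theta \mapsto \theta+j/2$ and $\lambda \mapsto \lambda$ (with $\lambda/\sqrt{2}$ inside the Mittag-Leffler arguments) specialization of (\ref{BrownianMittagEPPF}). Summing its EPPF over all set partitions of $[n]$ with $k$ blocks is exactly the operation performed in Proposition~\ref{PropMittagEPPF}(ii) (at $\alpha=\frac12$) and yields
$$\frac{\mathrm{E}^{(2\theta+j+k)}_{\frac12,\theta+j/2+n}(-\lambda/\sqrt{2})}{\mathrm{E}^{(2\theta+j+1)}_{\frac12,\theta+j/2+1}(-\lambda/\sqrt{2})}\,\mathbb{P}^{(n)}_{\frac12,\theta+j/2}(k).$$
To convert the denominator into $\mathbb{E}[|B_1|^{2\theta+j+1}]\tilde{h}_{-(2\theta+j+1)}(\lambda)$, I would invoke the identity in Remark~\ref{MittagHermite}, which equates these two expressions after the substitution $\theta \mapsto \theta+j/2$.

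For part (ii), I would use the general a.s.~limit for Gibbs partitions ($n^{-\alpha}K_n \to Z_0$) cited in Proposition~\ref{EPPFMLMC}(iii) via \cite[Proposition~13]{Pit03}. Applied to the conditional PK law $\mathbb{L}^{(0)}_{\frac12,\theta+j/2}(\lambda/\sqrt{2})$, this gives $n^{-1/2}K_n(\lambda/\sqrt{2},j) \to S^{-1/2}_{\frac12,\theta+j/2}(\lambda)$ almost surely, where the limit has the conditional density $g^{(0)}_{\frac12,\theta+j/2}(\,\cdot\,|\lambda/\sqrt{2})$. Using the definition $L_{1,\eta} = (2S_{\frac12,\eta})^{-1/2}$, i.e.\ $S^{-1/2}_{\frac12,\eta} = \sqrt{2}\,L_{1,\eta}$, rescaling by $(2n)^{-1/2}$ instead of $n^{-1/2}$ absorbs the factor $\sqrt{2}$ and yields the limit $L^{(0)}_{\frac12,\theta+j/2}(\lambda)$. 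To identify its density as $\tilde{g}^{(0)}_{\frac12,\theta+j/2}(x|\lambda)$, I would change variables from $s = (\sqrt{2}x)^{-2}/2$ in $g^{(0)}_{\frac12,\theta+j/2}(s|\lambda/\sqrt{2})$, which matches Proposition~\ref{mixedPoissonBrown}(ii) directly. Finally, the multiplicative representation $L_{1,\theta+j/2}\sqrt{\tilde{P}_1^{(\theta+(j+1)/2)}(\lambda)}$ is the $\theta \mapsto \theta+j/2$ case of Proposition~\ref{PropL}(i).

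The main obstacle is bookkeeping rather than conceptual: one must track three simultaneous reindexings ($\theta \mapsto \theta+j/2$, the argument switch between $\lambda$ and $\lambda/\sqrt{2}$, and the passage from the $\alpha$-diversity $S^{-1/2}_{\frac12,\cdot}$ to the local time $L_{1,\cdot}$ via the factor $\sqrt{2}$) and verify that the Hermite-function identity of Remark~\ref{MittagHermite} correctly converts a ratio of generalized Mittag-Leffler functions into the form displayed in (i). No new estimates or limit arguments are needed beyond invoking the standard a.s.~convergence for Poisson-Kingman partitions under the appropriate conditional distribution.
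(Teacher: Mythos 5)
Your proposal is correct and follows essentially the same route the paper intends: part (i) is the block-count marginal of the EPPF (\ref{BrownianMittagEPPF}) with $\theta\mapsto\theta+\frac{j}{2}$ (equivalently Proposition~\ref{PropMittagEPPF}(ii) at $\alpha=\frac12$), with the denominator rewritten via Remark~\ref{MittagHermite}, and part (ii) is the standard a.s.\ $\alpha$-diversity limit applied to the conditional PK law, with the $\sqrt2$ from $L_{1,\cdot}=(2S_{\frac12,\cdot})^{-\frac12}$ absorbed by the $(2n)^{-\frac12}$ scaling and the density and product representation read from Proposition~\ref{mixedPoissonBrown}(ii) and Proposition~\ref{PropL}(i). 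The only blemish is the mis-stated substitution ``$s=(\sqrt2 x)^{-2}/2$'' (it should be $s=\sqrt2\,x$ for the $\alpha$-diversity variable), but this is immaterial since Proposition~\ref{mixedPoissonBrown}(ii) already identifies the limiting density directly, as you note.
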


\begin{rem}One may further compare the conditional limiting results for discrete geometric waiting times as described in~\cite[p.6]{Pek2016} with statement~(ii) of Corollary~\ref{CorollaryKnBrown}. 
\end{rem}

\subsubsection{$\mathrm{MLMC}(\frac12,\eta)$ conditioned on $\sqrt{2}\mathscr{M}^{(\eta)}_{\frac{1}{2},\theta}=\lambda$}
Proposition~\ref{mixedPoissonBrown} and Corollary~\ref{CorollaryKnBrown} seem to suggest strong relationships to the discrete waiting time framework in \cite{Pek2016}, which is worthy of future investigations and also lends support to the notion of looking at more exotic choices of $\gamma$ in constructive models. Here,
relative to an $\mathrm{MLMC}(\frac12,\eta)$ conditioned on $\sqrt{2}\mathscr{M}^{(\eta)}_{\frac{1}{2},\theta}=\lambda,$ setting $\alpha=\frac12,$ in~(\ref{gammarMittag}), applying a change of variable, Proposition~\ref{GenBrownMLMC} and the results in the previous section lead to a sequence of local times $({L}^{(r)}_{\frac12,\theta}(\lambda),r\ge 0)$
satisfying (in joint distribution)
\begin{equation}
\bigg({L}^{(r)}_{\frac{1}{2},\theta}(\lambda);~r\ge 0\bigg)\overset{d}=\left(\sqrt{L^{2}_{1,\theta}\tilde{P}^{(\theta+\frac{1}{2})}_{1}(\lambda)+2\sum_{i=1}^{r}\mathbf{e}_{i}};~r\ge 0\right),
\label{jointMLMCBrownianlocal}
\end{equation}
with respective marginal densities, for $r=0,1,2,\ldots,$ 
\begin{equation}
\tilde{g}^{(r)}_{\frac12,\theta}(s|\lambda)=\frac{\mathbb{E}\bigg[{\mbox e}^{-\lambda s\sqrt{\beta_{\theta+\frac{1}{2},r}}}\bigg] f_{L_{1,\theta+r}}(s)}
{\mathbb{E}\big[{|B_{1}|}^{2\theta+1}\big]\tilde{h}_{-(2\theta+1)}(\lambda)},
\label{HermiteTreelocaltimedensity}
\end{equation}
and 
$$
\mathbb{E}\bigg[{\mbox e}^{-\lambda s\sqrt{\beta_{\theta+\frac{1}{2},r}}}\bigg]=\sum_{\ell=0}^{\infty}\frac{{(-\lambda s)}^{\ell}}{\ell!}\frac{\Gamma(\theta+\frac{1}{2}+r)\Gamma(\theta+\frac{\ell+1}{2})}{\Gamma(\theta+\frac{1}{2})\Gamma(\theta+\frac{\ell+1}{2}+r)}.
$$
One has $\mathbb{L}^{(r)}_{\frac12,\theta}(\frac{\lambda}{\sqrt{2}})=\int_{0}^{\infty}\mathrm{PD}\big(\frac{1}{2}|\frac{1}{2}s^{-2}\big)\tilde{g}^{(r)}_{\frac12,\theta}(s|\lambda)ds.$

\begin{prop}\label{prophermitetree}Consider $\big((P_{k,r}),\sqrt{2}L_{1,\eta+r}; r\ge 0\big)\sim \mathrm{MLMC}(\frac12,\eta).$ Then, the conditional distribution of $\big((P_{k,r}),\sqrt{2}L_{1,\eta+r}; r\ge 0\big)\left|\sqrt{2}\mathscr{M}^{(\eta)}_{\frac{1}{2},\theta}=\lambda\right.$ is such that $(P_{k,r})\left|\sqrt{2}\mathscr{M}^{(\eta)}_{\frac{1}{2},\theta}=\lambda\right.\sim \mathbb{L}^{(r)}_{\frac12,\theta}\big(\frac{\lambda}{\sqrt{2}}\big)$, and the nested family is jointly equivalent in distribution to $((P_{\ell,r}(\lambda));r\ge 0)$ defined by the recursive fragmentation, for $r=1,2,\ldots$,
$$
(P_{\ell,r}(\lambda))=\widehat{\mathrm{Frag}}^{(r)}_{\frac{1}{2},\frac{1}{2}}\circ\cdots\circ \widehat{\mathrm{Frag}}^{(1)}_{\frac{1}{2},\frac{1}{2}}((P_{\ell,0}(\lambda))),
$$
where $(P_{\ell,0}(\lambda))\sim \mathbb{L}^{(0)}_{\frac12,\theta}\big(\frac{\lambda}{\sqrt{2}}\big).$

\begin{enumerate}
\item[(i)]The joint conditional distribution of $\left(L_{1,\eta+r};r\ge 0\right)\left|\sqrt{2}\mathscr{M}^{(\eta)}_{\frac{1}{2},\theta}=\lambda\right.$ is equivalent in joint distribution to the collection $\big({L}^{(r)}_{\frac{1}{2},\theta}(\lambda);~r\ge 0\big)$ in (\ref{jointMLMCBrownianlocal}) with marginal densities~(\ref{HermiteTreelocaltimedensity}).
\item[(ii)]For each fixed $r,$ the $\mathrm{EPPF}$ of the $\mathbb{L}^{(r)}_{\frac12,\theta}\big(\frac{\lambda}{\sqrt{2}}\big)$ partition of $[n]$ can be expressed as
$$
\frac{\mathbb{E}\left[\mathrm{E}^{(2\theta+2r+k)}_{\frac{1}{2},\theta+r+n}\bigg(-\frac{\lambda\sqrt{\beta_{\theta+\frac{1}{2},r}}}{\sqrt{2}}\bigg)\right]}{\mathbb{E}\big[{|B_{1}|}^{2\theta+1}\big]\tilde{h}_{-(2\theta+1)}(\lambda)}p_{\frac{1}{2},\theta+r}(n_{1},\ldots,n_{k}),
$$
where the expectation at the numerator can be expressed as
$$
\sum_{\ell=0}^{\infty}\frac{{(-\lambda)}^{\ell}}{\ell!}\frac{2^{-\frac{\ell}{2}}\Gamma(
\theta+r+n)}{\Gamma(
\frac{\ell}{2}+\theta+r+n)} \frac{(\theta+\frac{1}{2})_r (2(\theta+r)+k)_\ell}{(\theta+\frac{\ell+1}{2})_r}.
$$
\end{enumerate}
\end{prop}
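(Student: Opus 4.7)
The plan is to derive Proposition~\ref{prophermitetree} as the $\alpha=\tfrac12$ specialization of Proposition~\ref{MittagEPPFMLMC}, combined with the explicit Brownian representations from Propositions~\ref{GenBrownMLMC} and~\ref{PropL}. Since $\sqrt{2}\mathscr{M}^{(\eta)}_{\frac12,\theta}=\lambda$ corresponds to $\mathscr{M}^{(\eta)}_{\frac12,\theta}=\lambda/\sqrt{2}$, I will throughout substitute $\lambda/\sqrt{2}$ into the general Mittag-Leffler formulas of Section~\ref{sec:MLCmittag}.

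I will first establish the structural claim about the fragmentation chain. By Proposition~\ref{PropHermite}(iv), the conditional law of $(P_{k,0})$ given $\sqrt{2}\mathscr{M}^{(\eta)}_{\frac12,\theta}=\lambda$ is $\mathbb{L}^{(0)}_{\frac12,\theta}(\lambda/\sqrt{2})$. The operators $\widehat{\mathrm{Frag}}^{(r)}_{\frac12,\frac12}$ are driven by an independent sequence of $\mathrm{PD}(\frac12,\frac12)$ partitions, which is independent of $L_{1,\eta}$ and of the auxiliary $G_{2(\theta-\eta)}$ used to build $\mathscr{M}^{(\eta)}_{\frac12,\theta}$. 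A routine disintegration then shows that conditioning on $\sqrt{2}\mathscr{M}^{(\eta)}_{\frac12,\theta}$ only alters the initial mass partition, so the conditional chain coincides with the stated recursive fragmentation starting from $(P_{\ell,0}(\lambda))$. For statement~(i), I use the Brownian identity $(L^2_{1,\eta+r};r\ge 0)\overset{d}=(L^2_{1,\eta}+2\sum_{\ell=1}^{r}\mathbf{e}_\ell;r\ge 0)$ which is an immediate rewrite of~(\ref{BCRT}). Because the exponentials are independent of both $L_{1,\eta}$ and $G_{2(\theta-\eta)}$, conditioning on $\sqrt{2}\mathscr{M}^{(\eta)}_{\frac12,\theta}=\lambda$ acts only on $L_{1,\eta}$, and Proposition~\ref{PropL}(i)-(ii) identifies the conditional law of $L^2_{1,\eta}$ with that of $L^2_{1,\theta}\tilde P^{(\theta+\frac12)}_1(\lambda)$. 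Substituting yields exactly the joint representation~(\ref{jointMLMCBrownianlocal}); the marginal density~(\ref{HermiteTreelocaltimedensity}) follows from the explicit form of Proposition~\ref{MittagEPPFMLMC}(i) after using Proposition~\ref{Propbetaid1}(iii) to write the product of betas there as $\sqrt{\beta_{\theta+\frac12,r}}$.

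For statement~(ii), I start from the general EPPF in Proposition~\ref{MittagEPPFMLMC}(iii) with $\alpha=\tfrac12$ and argument $\lambda/\sqrt{2}$. The product $\prod_{i=1}^{r}\beta_{\frac{\theta+\alpha+i-1}{\alpha},\frac{1-\alpha}{\alpha}}$ specializes to $\prod_{i=1}^{r}\beta_{2\theta+2i-1,1}$, which by Proposition~\ref{Propbetaid1}(iii) has the law of $\sqrt{\beta_{\theta+\frac12,r}}$; this produces the argument $-\lambda\sqrt{\beta_{\theta+\frac12,r}}/\sqrt{2}$ inside the inner Mittag-Leffler function. For the denominator I invoke Remark~\ref{MittagHermite} to rewrite $\mathrm{E}^{(2\theta+1)}_{\frac12,\theta+1}(-\lambda/\sqrt{2})=\mathbb{E}[|B_1|^{2\theta+1}]\tilde h_{-(2\theta+1)}(\lambda)$, which is the denominator appearing in the proposition. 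Expanding the inner Mittag-Leffler series term by term and using
\[
\mathbb{E}\bigl[\beta_{\theta+\frac12,r}^{\ell/2}\bigr]=\frac{(\theta+\frac12)_r}{(\theta+\frac{\ell+1}{2})_r},
\]
together with $(-\lambda/\sqrt{2})^\ell=(-\lambda)^\ell 2^{-\ell/2}$, produces the explicit series stated. This is essentially Lemma~\ref{3mMittag} with a half-integer shift accounting for the square root on the beta variable.

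The only delicate point I anticipate is the independence underlying the first step: one must verify carefully that $\sqrt{2}\mathscr{M}^{(\eta)}_{\frac12,\theta}$, as a function of $L_{1,\eta}$ and the independent $G_{2(\theta-\eta)}$, is independent of both the fragmentation drivers and the exponential increments of the MLMC local-time chain, so that conditioning decouples as claimed. Because all three families—$(\mathbf{e}_\ell)$, $G_{2(\theta-\eta)}$, and the fragmentation $\mathrm{PD}(\frac12,\frac12)$ partitions—are chosen mutually independent of $L_{1,\eta}$ by construction, this is straightforward; once it is recorded, the remainder of the proof is algebraic specialization of Proposition~\ref{MittagEPPFMLMC} and bookkeeping with beta-gamma identities.
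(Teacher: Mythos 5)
Your proposal is correct and follows essentially the same route the paper takes: the paper derives this proposition by specializing the general conditional MLMC results (Proposition~\ref{MittagEPPFMLMC} and~(\ref{gammarMittag})) to $\alpha=\frac12$ with argument $\frac{\lambda}{\sqrt2}$, using Proposition~\ref{Propbetaid1}(iii) to collapse the beta product to $\sqrt{\beta_{\theta+\frac12,r}}$, Proposition~\ref{GenBrownMLMC} and Proposition~\ref{PropL} for the local-time representation~(\ref{jointMLMCBrownianlocal})--(\ref{HermiteTreelocaltimedensity}), and Remark~\ref{MittagHermite} to express the normalizing constant via the Hermite function. Your series computation via $\mathbb{E}\bigl[\beta_{\theta+\frac12,r}^{\ell/2}\bigr]=(\theta+\frac12)_r/(\theta+\frac{\ell+1}{2})_r$ matches the stated expansion, so no gaps remain.
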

The next result combines the waiting time framework with the MLMC models.

\begin{cor}\label{BCRTwaiting}Suppose that for $\theta>-\frac12,$ $((P_{k,r}),\sqrt{2}L_{1,\theta+r}, r\ge 0)\sim \mathrm{MLMC}(\frac12,\theta).$ Consider the waiting time framework in Proposition~\ref{mixedPoissonBrown} and Corollary~\ref{CorollaryKnBrown}. Then, $((P_{k,r}),\sqrt{2}L_{1,\theta+r}; r\ge 0)|\mathcal{N}_{\frac12,\theta}\big(\frac{\lambda}{\sqrt{2}},j\big)$ has the distributional properties in Proposition~\ref{prophermitetree}, with $\theta$ replaced by $\theta+\frac{j}2$ for $j=0,1,2,\ldots.$
\end{cor}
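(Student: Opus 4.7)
The plan is to reduce the assertion to a direct application of Proposition~\ref{prophermitetree} after identifying the conditional law induced by the mixed Poisson waiting-time information.

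First, I would note the following key structural observation. In the construction of Proposition~\ref{mixedPoissonBrown}, the mixed Poisson process $(N_{L_{1,\theta}}(t); t\ge 0)$ with waiting times $(\tilde{T}_{j} = G_{j}/L_{1,\theta}; j\ge 1)$ is built from $L_{1,\theta}$ together with an \emph{independent} sequence of standard exponential increments $(\mathbf{e}_{i})$. Since the $\mathrm{MLMC}(\tfrac12,\theta)$ family is Markovian, and, by Proposition~\ref{GenBrownMLMC}(i), its local-time sequence admits the joint representation $\bigl(\sqrt{2}L_{1,\theta+r};r\ge 0\bigr)\stackrel{d}{=}\bigl(\sqrt{2L^{2}_{1,\theta}+4\sum_{\ell=1}^{r}\mathbf{e}'_{\ell}};r\ge 0\bigr)$ in terms of $L_{1,\theta}$ and an independent sequence $(\mathbf{e}'_{\ell})$ (which, in turn, drives the fragmentation operations producing $((P_{k,r});r\ge 1)$), one can realize the Poisson exponentials $(\mathbf{e}_{i})$ as a further independent family. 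Thus $\mathcal{N}_{\frac12,\theta}\big(\tfrac{\lambda}{\sqrt{2}},j\big)$ is a $\sigma(L_{1,\theta})$-measurable event augmented by independent randomness, so conditioning on it acts only on the initial marginal $(P_{k,0}),L_{1,\theta}$ and leaves the conditional law of $((P_{k,r}),L_{1,\theta+r};r\ge 1)$ given $(P_{k,0}),L_{1,\theta}$ unchanged.

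Next, I would apply Proposition~\ref{mixedPoissonBrown}(ii) to identify the conditional distribution of $L_{1,\theta}$ given $\mathcal{N}_{\frac12,\theta}\big(\tfrac{\lambda}{\sqrt{2}},j\big)$ with the density $\tilde{g}^{(0)}_{\frac12,\theta+j/2}(x|\lambda)$ of $L^{(0)}_{\frac12,\theta+j/2}(\lambda)$, and Proposition~\ref{mixedPoissonBrown}(i) to conclude that $(P_{k,0})\mid \mathcal{N}_{\frac12,\theta}\big(\tfrac{\lambda}{\sqrt{2}},j\big)\sim \mathbb{L}^{(0)}_{\frac12,\theta+j/2}\big(\tfrac{\lambda}{\sqrt{2}}\big)$. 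In particular the conditional joint law of $((P_{k,0}),L_{1,\theta})$ coincides with that of $((P_{\ell,0}(\lambda)),L^{(0)}_{\frac12,\theta+j/2}(\lambda))$ appearing at the start of Proposition~\ref{prophermitetree} when its parameter $\theta$ is replaced by $\theta+\tfrac{j}{2}$.

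Combining these two steps, given $\mathcal{N}_{\frac12,\theta}\big(\tfrac{\lambda}{\sqrt{2}},j\big)$ the sequence $((P_{k,r}),\sqrt{2}L_{1,\theta+r};r\ge 0)$ is distributed as: start from $(P_{k,0})\sim\mathbb{L}^{(0)}_{\frac12,\theta+j/2}\big(\tfrac{\lambda}{\sqrt{2}}\big)$ with associated local time $\sqrt{2}L^{(0)}_{\frac12,\theta+j/2}(\lambda)$, and evolve by the independent fragmentation operators $\widehat{\mathrm{Frag}}^{(r)}_{\frac12,\frac12}$ together with independent exponential increments of the squared local time. This is precisely the description in Proposition~\ref{prophermitetree} with $\theta$ replaced by $\theta+\tfrac{j}{2}$, giving in particular the conditional marginals $\mathbb{L}^{(r)}_{\frac12,\theta+j/2}\big(\tfrac{\lambda}{\sqrt{2}}\big)$, the joint local-time representation $\big(L^{(r)}_{\frac12,\theta+j/2}(\lambda);r\ge 0\big)$ in~(\ref{jointMLMCBrownianlocal}), and the associated EPPF formula.

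The only real issue is Step~1, the clean independence/Markov argument that allows the conditioning to act only on the initial coordinates. This is routine once the construction in Proposition~\ref{mixedPoissonBrown} is viewed together with the Markovian construction~(\ref{BCRTgen}) of the MLMC, so the corollary follows directly by substitution into Proposition~\ref{prophermitetree}.
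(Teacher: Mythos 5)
Your argument is correct and takes essentially the route the paper intends: the corollary is stated there without proof, being regarded as an immediate combination of Proposition~\ref{mixedPoissonBrown} (which identifies the conditional law of the initial pair $((P_{k,0}),L_{1,\theta})$ given $\mathcal{N}_{\frac12,\theta}\big(\frac{\lambda}{\sqrt{2}},j\big)$ as the $\theta+\frac{j}{2}$ case) with Proposition~\ref{prophermitetree}. Your Step~1 merely makes explicit the independence of the mixed-Poisson exponentials from the fragmentation randomness, which justifies that the conditioning only reweights the initial coordinates and is exactly what the paper leaves implicit.
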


\begin{rem}As is known, the $\mathrm{MLMC}(\frac12,\frac12)$ distribution corresponds to the components of the line-breaking construction of the BCRT as described in Aldous\cite{AldousCRTI, AldousCRTIII}. Corollary~\ref{BCRTwaiting} shows that a BCRT conditioned appropriately on the events $\mathcal{N}_{\frac12,\frac12}\big(\frac{\lambda}{\sqrt{2}},j\big)$ is equivalent in distribution to the line-breaking construction based on the collection $\bigg(\sqrt{2}{L}^{(r)}_{\frac{1}{2},\frac{j+1}{2}}(\lambda);~r\ge 0\bigg)$ where ${L}^{(0)}_{\frac{1}{2},\frac{j+1}{2}}(\lambda)$ has a $\mathrm{UL}(2+j,(\frac{\lambda}{2+j},\frac{1}{2+j}))$ distribution, described in~\cite{Pek2016}, as noted in Remark~\ref{ULdist}. Less obvious is that Corollary~\ref{BCRTwaiting}, coupled with the results in \cite{Pek2016,Pek2017jointpref}, shows that these limits may be achieved by a preferential attachment scheme with Bernoulli immigration and appropriate conditioning. 
\end{rem}
\begin{rem}It follows from Proposition~\ref{Propprojection} that, for any $\theta'$ satisfying $\theta>\theta'\ge \eta,$  
$$
\bigg({L}^{(r)}_{\frac{1}{2},\theta}\big(\sqrt{2}\mathscr{M}^{(\theta')}_{\frac{1}{2},\theta}\big);r\ge 0\bigg)\overset{d}=\left(\sqrt{L^{2}_{1,\theta'}+2\sum_{\ell=1}^{r}\mathbf{e}_{\ell}};r\ge 0\right).
$$
See \cite{Pek2017jointpref} for some possible interpretations.
\end{rem}
\begin{rem}\cite{PitmanMax} describes various properties and constructions of the class of generalized Brownian bridges $B=(B_{t}:t\in [0,1])$ with local times $L_{1,\theta}$ for $\theta>-\frac12$, where $L^{2}_{1,\theta}\sim \chi^{2}_{2\theta+1}$ has a chi-squared distribution with $2\theta+1$ degrees of freedom. 
\end{rem}

\section{Gibbs partitions derived from the $\mathrm{Frag}_{\alpha,-\alpha\delta}$ fragmentation operator}
\label{sec:PitFrag}
Let $(P_{k,0})\in\mathcal{P}_{\infty}$ denote a general mass partition. Independent of this, consider a countable collection of iid mass partitions $((Q^{(\ell)}_{k});~\ell\ge 1)$ with common distribution $\mathrm{PD}(
\alpha,-\alpha\delta)$, where $\alpha,\delta \in (0,1).$ Then, a $\mathrm{PD}(\alpha,-\alpha\delta)$ fragmentation operator, $\mathrm{Frag}_{\alpha,-\alpha\delta},$ defined in \cite{BerPit2000,Pit99Coag}, creates a mass partition $(P_{k,1})$ by fragmenting $(P_{k,0})$ as follows.
\begin{equation}
(P_{k,1}):=\mathrm{Frag}_{\alpha,-\alpha\delta}((P_{k,0}))=
\mathrm{Rank}\big(P_{\ell,0}(Q^{(\ell)}_{j});~\ell\ge 1\big).
\label{fragoperator}
\end{equation}
This in fact corresponds to a generic notion of an infinite fragmentation where one can replace $\mathrm{PD}(
\alpha,-\alpha\delta),$ as descrbed in~\cite{BerFrag,Pit06}. However,~\cite{Pit99Coag} shows that when $(P_{k,0})\sim \mathrm{PD}(\alpha\delta,\theta)$ for $\theta>-\alpha\delta$, it follows that $(P_{k,1})\sim \mathrm{PD}(\alpha,\theta),$ which generalizes many important special cases appearing in the literature. 
Specialized to the setting where $(P_{k,1})\sim \mathrm{PD}(\alpha,\theta)$, \cite{Pit99Coag} shows there is a dual coagulation operation, $\mathrm{Coag}_{\delta,\frac{\theta}{\alpha}},$ such that 
$$
(P_{k,0}):=\mathrm{Coag}_{\delta,\frac{\theta}{\alpha}}((P_{k,1})),
$$
where the coagulator is based on a mass partition $(W_{k,1})\sim \mathrm{PD}(\delta,\frac{\theta}{\alpha})$ independent of $(P_{k,1}).$ This operation may be understood in terms of the following equivalent property of composition of independent distribution functions $P_{\alpha\delta,\theta}=P_{\alpha,\theta}\circ P_{\delta,\frac{\theta}{\alpha}},$ details are omitted for brevity. 
It follows, for general $\theta>-\alpha\delta,$ that $((P_{k,0}),(P_{k,1}),(W_{k,1}))$ have respective local times $(S^{-\alpha\delta}_{\alpha\delta,\theta},S^{-\alpha}_{\alpha,\theta},S^{-\delta}_{\delta,\frac{\theta}{\alpha}})$ satisfying the exact relationship,
\begin{equation}
S^{-\alpha\delta}_{\alpha\delta,\theta}=S^{-\alpha\delta}_{\alpha,\theta}\times S^{-\delta}_{\delta,\frac{\theta}{\alpha}},
\label{alphadeltalocal}
\end{equation}
where $S_{\alpha,\theta}$ and $S_{\delta,\frac{\theta}{\alpha}}$ are independent.
The next lemma, which is simple to prove, shows that the conditional densities of $S_{\alpha,\theta,}|S_{\alpha\delta,\theta}=y$ and 
$S_{\delta,\frac{\theta}{\alpha}}|S_{\alpha\delta,\theta}=y$ do not depend on $\theta.$
\begin{lem}For all $\theta>-\alpha\delta,$ consider the random variables $(S_{\alpha\delta,\theta},S_{\alpha,\theta},S_{\delta,\frac{\theta}{\alpha}})$ satisfying~(\ref{alphadeltalocal}). 
\begin{enumerate}
\item[(i)]The conditional density of $S_{\alpha,\theta}|S_{\alpha\delta,\theta}=y$ is the same as the case of $\theta=0,$ and is given by
\begin{equation}
f_{\alpha|\alpha\delta}(t|y)
=\left[\frac{\alpha f_{\delta}({(\frac{y}{t})}^{\alpha})t^{-\alpha}}
{y^{1-\alpha}f_{\alpha\delta}(y)}\right]f_{\alpha}(t).
\label{densityalphagivenalphadelta}
\end{equation}
\item[(ii)]The conditional density of $S_{\delta,\frac{\theta}{\alpha}}|S_{\alpha\delta,\theta}=y$ is the same as the case of $\theta=0,$ and is given by
\begin{equation}
f^{\dagger}_{\delta|\alpha\delta}(\omega|y)
=\frac{f_{\alpha}(y\omega^{-\frac1\alpha})}
{\omega^{\frac1\alpha}f_{\alpha\delta}(y)}f_{\delta}(\omega).
\end{equation}
\item[(iii)]When $S_{\alpha\delta}$ has density $\gamma(dy)/dy=h(y)f_{\alpha\delta}(y),$ for 
$\mathbb{E}[h(S_{\alpha\delta})]=1$, the joint density of $(S_{\alpha\delta},S_{\alpha})$ is given by
\begin{equation}
\frac{\alpha f_{\delta}({\big(\frac{y}{t}\big)}^{\alpha})t^{-\alpha}f_{\alpha}(t)\gamma(dy)}
{y^{1-\alpha}f_{\alpha\delta}(y)dy}=\alpha y^{\alpha-1}h(y) f_{\delta}\bigg({\bigg(\frac{y}{t}\bigg)}^{\alpha}\bigg)t^{-\alpha}f_{\alpha}(t),
\label{densityalphagivenalphadelta2}
\end{equation}
and $S_{\alpha}$ has density
\begin{equation}
\gamma_{\alpha|\delta}(dt)/dt=\left[\int_{0}^{\infty}h(r^{\frac1\alpha}t)f_{\delta}(r)dr\right]f_{\alpha}(t)=\mathbb{E}\big[h(tS^{\frac1\alpha}_{\delta})\big] f_{\alpha}(t).
\label{margalphadelta}
\end{equation} 
\item[(iv)]The joint density of $(S_{\alpha},S_{\delta})$ can be expressed as
\begin{equation}
\kappa^{[\gamma]}_{\alpha,\delta}(t,\omega)=f_{\alpha}(t)f_{\delta}(\omega)h(t\omega^{\frac1\alpha}).
\end{equation}
\end{enumerate}
\end{lem}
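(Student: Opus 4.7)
The plan is to derive everything from the exact decomposition~(\ref{alphadeltalocal}), which can be rewritten as the distributional identity
$$
S_{\alpha\delta,\theta}\,\stackrel{d}{=}\, S_{\alpha,\theta}\cdot S_{\delta,\theta/\alpha}^{1/\alpha},
$$
with $S_{\alpha,\theta}$ and $S_{\delta,\theta/\alpha}$ independent. Write $T=S_{\alpha,\theta}$, $W=S_{\delta,\theta/\alpha}$, $Y=TW^{1/\alpha}$. I would first record the joint density of $(T,W)$ from the defining $\theta$-tiltings,
$$
f_{T,W}(t,w)=\frac{t^{-\theta}f_\alpha(t)}{\mathbb{E}[S_\alpha^{-\theta}]}\cdot\frac{w^{-\theta/\alpha}f_\delta(w)}{\mathbb{E}[S_\delta^{-\theta/\alpha}]},
$$
and use the key moment identity, read from~(\ref{PitmanKnmoment}),
$$
\mathbb{E}[S_\alpha^{-\theta}]\,\mathbb{E}[S_\delta^{-\theta/\alpha}]=\frac{\Gamma(\theta/\alpha+1)}{\Gamma(\theta+1)}\cdot\frac{\Gamma(\theta/(\alpha\delta)+1)}{\Gamma(\theta/\alpha+1)}=\mathbb{E}[S_{\alpha\delta}^{-\theta}],
$$
which is the source of the eventual $\theta$-cancellation.

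For~(i), I would change variables from $(T,W)$ to $(T,Y)$ via $W=(Y/T)^\alpha$, whose Jacobian is $\alpha y^{\alpha-1}/t^\alpha$. Substituting $w^{-\theta/\alpha}=(y/t)^{-\theta}$ the $\theta$-exponents combine into $y^{-\theta}$, and after dividing by the marginal density $f_Y(y)=y^{-\theta}f_{\alpha\delta}(y)/\mathbb{E}[S_{\alpha\delta}^{-\theta}]$ the $\theta$ and the normalizing constants fall out, leaving exactly~(\ref{densityalphagivenalphadelta}). For~(ii), the symmetric change of variables from $(T,W)$ to $(W,Y)$ via $T=YW^{-1/\alpha}$ with Jacobian $w^{-1/\alpha}$ produces, by the same cancellation mechanism, the stated density $f^{\dagger}_{\delta|\alpha\delta}(\omega|y)$.

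For~(iii), since $\theta$ has dropped out, I may use the $\theta=0$ conditional density $f_{\alpha|\alpha\delta}(t|y)$ found in~(i) and multiply by the prescribed marginal $\gamma(dy)/dy=h(y)f_{\alpha\delta}(y)$; the $f_{\alpha\delta}(y)$ in the denominator cancels, producing~(\ref{densityalphagivenalphadelta2}). To marginalize to $S_\alpha$, I would perform the substitution $r=(y/t)^\alpha$, so that $y=tr^{1/\alpha}$ and $dy=(t/\alpha)r^{1/\alpha-1}dr$; the factors of $t$ and $r$ collapse, and what remains is $f_\alpha(t)\int_0^\infty h(tr^{1/\alpha})f_\delta(r)\,dr=\mathbb{E}[h(tS_\delta^{1/\alpha})]f_\alpha(t)$, which is~(\ref{margalphadelta}).

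For~(iv), viewing $(S_\alpha,S_\delta)$ as independent under the unbiased law and $S_{\alpha\delta}=S_\alpha S_\delta^{1/\alpha}$, the tilting of the marginal of $S_{\alpha\delta}$ by $h$ lifts to multiplication of the joint density by $h(t\omega^{1/\alpha})$; normalization is guaranteed because $\mathbb{E}[h(S_{\alpha\delta})]=1$. The main bookkeeping obstacle is simply ensuring, in~(i) and~(ii), that the ratio $w^{-\theta/\alpha}t^{-\theta}/y^{-\theta}=1$ used together with the moment identity truly eliminates every trace of $\theta$; once this is checked, the remaining computations are routine changes of variables.
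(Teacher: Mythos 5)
Your proof is correct: the joint density of $(S_{\alpha,\theta},S_{\delta,\theta/\alpha})$, the change of variables to $(T,Y)$ and $(W,Y)$, the cancellation of the $\theta$-tilts via $t^{-\theta}w^{-\theta/\alpha}=y^{-\theta}$ together with $\mathbb{E}[S_{\alpha\delta}^{-\theta}]=\mathbb{E}[S_{\alpha}^{-\theta}]\,\mathbb{E}[S_{\delta}^{-\theta/\alpha}]$, and the substitution $r=(y/t)^{\alpha}$ yielding (iii) and (iv) all check out. The paper gives no written argument for this lemma (it is declared ``simple to prove''), and your computation is precisely the routine Bayes/change-of-measure calculation it has in mind, so there is nothing to add.
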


As discussed in~\cite{BerFrag,BerLegall00,BerPit2000,Pit99Coag,Pit06},~when $\theta=0,$ and $\alpha$ and $\delta$ vary as time parameters, the dynamic of the $\mathrm{Coag}_{\delta,0}$ operations corresponds in distribution to that of the Bolthausen-Sznitman coalescent. Here given the structural relationship between $(P_{k,0})~\sim \mathrm{PD}(\alpha\delta,0)$ and $(P_{k,1})~\sim \mathrm{PD}(\alpha,0)$ described by the fragmentation operation in~(\ref{fragoperator}), we shall proceed to present a rather interesting description of the law of $(P_{k,1})|S_{\alpha\delta}=y,$ where $S_{\alpha\delta}$ is the inverse local time at zero up till time $1,$ satisfying $(P_{k,0})|S_{\alpha\delta}=y\sim \mathrm{PD}(\alpha\delta|y)$.

\begin{prop}\label{PropPDarcondlaw}
Suppose that $(P_{k,0})\sim \mathrm{PD}(\alpha\delta,0).$ Then, for a version of $(P_{k,0}),$ the following facts can be read from~\cite{Pit99Coag}.
\begin{enumerate}
\item[(i)] $(P_{k,1}):=\mathrm{Frag}_{\alpha,-\alpha\delta}((P_{k,0}))\sim \mathrm{PD}(\alpha,0).$  
\item[(ii)] The relation between their respective local times $(S^{-\alpha\delta}_{\alpha\delta},S^{-\alpha}_{\alpha})$ is given by $S^{-\alpha\delta}_{\alpha\delta}=(S^{-\alpha}_{\alpha})^\delta \times S^{-\delta}_{\delta}$,
where $S^{-\delta}_{\delta}$ is the local time of $(W_{k,1})\sim\mathrm{PD}(\delta,0),$ a mass partition independent of $(P_{k,1}),$ defining the coagulation operator that gives, 
$(P_{k,0}):=\mathrm{Coag}_{\delta,0}((P_{k,1}))$. 
\end{enumerate}
These facts indicate that the distribution of $(P_{k,1})|S_{\alpha\delta}=y$ is specified by the PK 
distribution, $\mathrm{PD}_{\alpha|\delta}(\alpha|S_{\alpha\delta}=y):=\mathrm{PD}_{\alpha|\delta}(\alpha|y),$ defined as
\begin{equation}
\mathrm{PD}_{\alpha|\delta}(\alpha|y):=\int_{0}^{\infty}\mathrm{PD}(\alpha|t)f_{\alpha|\alpha\delta}(t|y)\,dt,
\label{PDarcond}
\end{equation}
where $f_{\alpha|\alpha\delta}(t|y)$ is the density of $S_{\alpha}|S_{\alpha\delta}=y$, described in~(\ref{densityalphagivenalphadelta}). 
\end{prop}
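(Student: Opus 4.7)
The plan is to prove statement (iii) by combining the two facts (i) and (ii) cited from Pitman~\cite{Pit99Coag} with a conditional independence argument. Let me sketch how I would organize this.

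First, I would take (i) and (ii) as given. Rewriting the local time relation in (ii) as $S_{\alpha\delta} = S_\alpha \cdot S_\delta^{1/\alpha}$ (by raising both sides of $S_{\alpha\delta}^{-\alpha\delta} = S_\alpha^{-\alpha\delta} S_\delta^{-\delta}$ to the power $-1/(\alpha\delta)$), we see that $S_{\alpha\delta}$ is a deterministic function of the pair $(S_\alpha, S_\delta)$, with $S_\delta$ the inverse local time associated to the independent coagulator $(W_{k,1}) \sim \mathrm{PD}(\delta,0)$. Since by construction $(W_{k,1})$ is independent of $(P_{k,1})$, and $S_\alpha$ is a measurable function of $(P_{k,1})$ (via the $\alpha$-diversity), the triple $(S_\alpha, (P_{k,1}), S_\delta)$ satisfies $S_\delta \perp (S_\alpha, (P_{k,1}))$.

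Next I would establish the key conditional independence: $(P_{k,1}) \perp S_{\alpha\delta} \mid S_\alpha$. Indeed, given $S_\alpha = t$, the value of $S_{\alpha\delta} = t \cdot S_\delta^{1/\alpha}$ depends only on $S_\delta$, which is independent of $(P_{k,1})$ conditionally on $S_\alpha$. Hence, for any measurable set $A$ in the space of mass partitions,
\begin{equation*}
\mathbb{P}\bigl((P_{k,1}) \in A \,\big|\, S_\alpha = t, S_{\alpha\delta} = y\bigr)
= \mathbb{P}\bigl((P_{k,1}) \in A \,\big|\, S_\alpha = t\bigr)
= \mathrm{PD}(\alpha|t)(A),
\end{equation*}
where the last equality uses (i) together with the definition of $\mathrm{PD}(\alpha|t)$ as the conditional law of a $\mathrm{PD}(\alpha,0)$ mass partition given its inverse local time.

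Finally, an application of the tower property and the identification of the conditional density $f_{\alpha|\alpha\delta}(t|y)$ of $S_\alpha \mid S_{\alpha\delta} = y$ from~(\ref{densityalphagivenalphadelta}) in the preceding lemma yields
\begin{equation*}
\mathbb{P}\bigl((P_{k,1}) \in A \,\big|\, S_{\alpha\delta} = y\bigr)
= \int_0^\infty \mathrm{PD}(\alpha|t)(A)\, f_{\alpha|\alpha\delta}(t|y)\, dt,
\end{equation*}
which is exactly the definition of $\mathrm{PD}_{\alpha|\delta}(\alpha|y)$ in~(\ref{PDarcond}). I do not anticipate a substantial obstacle here: the main subtlety is making the conditional independence argument rigorous, but it is entirely transparent once the construction is unpacked through the factorization $S_{\alpha\delta} = S_\alpha \cdot S_\delta^{1/\alpha}$ and the independence of the coagulator from $(P_{k,1})$.
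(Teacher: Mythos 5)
Your proposal is correct and follows essentially the same route as the paper's own proof: the paper also sets $Y=S_{\alpha}\times S_{\delta}^{1/\alpha}$, observes that $(P_{k,1})\,|\,S_{\alpha}=t,\,Y=y$ coincides with $(P_{k,1})\,|\,S_{\alpha}=t\sim \mathrm{PD}(\alpha|t)$ because the coagulator (hence $S_{\delta}$) is independent of $(P_{k,1})$, and then obtains~(\ref{PDarcond}) by Bayes' rule using the conditional density $f_{\alpha|\alpha\delta}(t|y)$. You have merely made the conditional independence step more explicit, which is fine.
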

\begin{proof}Set $Y=S_{\alpha}\times S_{\delta}^{\frac1\alpha}$. It follows that $(P_{k,1})|S_{\alpha}=t,Y=y$ is the same as $(P_{k,1})|S_{\alpha}=t,$ which is $\mathrm{PD}(\alpha|t).$ The derivation of (\ref{PDarcond}) is a straightforward application of Bayes rule.
\end{proof}

\begin{rem}
Refer to~\cite[Corollary 10]{Haas2},
in the case where $\alpha\in \big(\frac{1}{2},1\big)$ and $\delta=\frac{1-\alpha}{\alpha},$ $(P_{\ell,0})\sim \mathrm{PD}(1-\alpha,1-\alpha),$ with inverse local time $S_{1-\alpha,1-\alpha},$ corresponds (in distribution) to the coarse spinal partition of a stable tree of dimension $1<\frac{1}{\alpha}<2,$ $(P_{\ell,1})=\mathrm{Frag}_{\alpha,\alpha-1}((P_{\ell,0}))\sim \mathrm{PD}(\alpha,1-\alpha)$ corresponds to the fine spinal partition. The coagulator $(W_{\ell,1})\sim \mathrm{PD}\big(\frac{1-\alpha}{\alpha},\frac{1-\alpha}{\alpha}\big).$ Hence, $(P_{\ell,1})|S_{1-\alpha,1-\alpha}=y$ is $\mathrm{PD}_{\alpha|\frac{1-\alpha}{\alpha}}(\alpha|y).$
\end{rem}

Proposition~\ref{PropPDarcondlaw} indicates laws for the general case where $(P_{k,0})\sim \mathrm{PK}_{\alpha\delta}(\gamma).$ Thus it provides a description of all laws generated by a stable $(\alpha\delta)$ subordinator via the $\mathrm{PD}(\alpha,-\alpha\delta)$ fragmentation operation. 
\begin{cor}\label{corPDarcondlaw}
In the setting of Proposition~\ref{PropPDarcondlaw}, if
$(P_{k,0})\sim \mathrm{PK}_{\alpha\delta}(\gamma),$ then the joint density of $(S_{\alpha\delta},S_{\alpha})$ is given by~(\ref{densityalphagivenalphadelta2}), and hence the marginal distribution of $(P_{k,1}):=\mathrm{Frag}_{\alpha,-\alpha\delta}((P_{k,0}))$ is 
$$
\mathrm{PK}_{\alpha}(\gamma_{\alpha|\delta}):=\int_{0}^{\infty}\mathrm{PD}(\alpha|t)\gamma_{\alpha|\delta}(dt)=\int_{0}^{\infty}\mathrm{PD}_{\alpha|\delta}(\alpha|y)\gamma(dy),
$$
where 
$\gamma_{\alpha|\delta}(dt)/dt=\mathbb{E}\big[h\big(tS^{\frac1\alpha}_{\delta}\big)\big] f_{\alpha}(t),$ as in
(\ref{margalphadelta}).
\end{cor}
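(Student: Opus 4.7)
The plan is to exploit the coupling from Proposition~\ref{PropPDarcondlaw}: build $(P_{k,0})=\mathrm{Coag}_{\delta,0}((P_{k,1}))$ on a single probability space from $(P_{k,1})\sim\mathrm{PD}(\alpha,0)$ and an independent coagulator $(W_{k,1})\sim\mathrm{PD}(\delta,0)$, so that $S_\alpha$ and $S_\delta$ are independent with densities $f_\alpha,f_\delta$ and $S_{\alpha\delta}=S_\alpha S_\delta^{1/\alpha}$. Passing to $(P_{k,0})\sim\mathrm{PK}_{\alpha\delta}(\gamma)$ is then realised by a Radon-Nikodym tilt of this baseline law by the weight $h(S_{\alpha\delta})=h(S_\alpha S_\delta^{1/\alpha})$, which is $\sigma(S_\alpha,S_\delta)$-measurable. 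The two claims of the corollary---the joint density formula and the two equivalent PK representations---will then follow by computing the tilted joint law of $(S_\alpha,S_\delta)$ and by applying the tower property in two different orders.

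For the joint-density claim, the tilt immediately yields the bivariate density $\kappa^{[\gamma]}_{\alpha,\delta}(t,\omega)=f_\alpha(t)f_\delta(\omega)h(t\omega^{1/\alpha})$, matching part~(iv) of the Lemma. The change of variables $(t,\omega)\mapsto(t,y=t\omega^{1/\alpha})$---equivalently, multiplying the marginal density $h(y)f_{\alpha\delta}(y)$ of $S_{\alpha\delta}$ by the mixing-invariant conditional $f_{\alpha|\alpha\delta}(t|y)$ supplied by parts~(i)--(ii) of the Lemma---delivers the joint density of $(S_{\alpha\delta},S_\alpha)$ in the form~(\ref{densityalphagivenalphadelta2}). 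Integrating $\omega$ out of $\kappa^{[\gamma]}_{\alpha,\delta}$ then recovers the marginal $\gamma_{\alpha|\delta}(dt)/dt=\mathbb{E}[h(tS_\delta^{1/\alpha})]f_\alpha(t)$ promised in~(\ref{margalphadelta}).

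For the marginal law of $(P_{k,1})$, I would observe that under the baseline law $(P_{k,1})|S_\alpha=t\sim\mathrm{PD}(\alpha|t)$ by the defining property of the inverse local time, and that $(P_{k,1})$ is conditionally independent of $S_\delta$ (and of $(W_{k,1})$) given $S_\alpha$. Because the tilting factor $h(S_\alpha S_\delta^{1/\alpha})$ factorises in the conditional expectation given $S_\alpha$, this conditional law is preserved under the tilt, so integrating against the tilted marginal $\gamma_{\alpha|\delta}$ of $S_\alpha$ yields the first representation $\int_0^\infty\mathrm{PD}(\alpha|t)\gamma_{\alpha|\delta}(dt)$. The same preservation argument applied to the conditional law $(P_{k,1})|S_{\alpha\delta}=y\sim\mathrm{PD}_{\alpha|\delta}(\alpha|y)$ from Proposition~\ref{PropPDarcondlaw}, followed by integration against the tilted marginal $\gamma$ of $S_{\alpha\delta}$, produces the second representation $\int_0^\infty\mathrm{PD}_{\alpha|\delta}(\alpha|y)\gamma(dy)$.

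The main subtle point to pin down is the invariance of the regular conditional distributions $(P_{k,1})|S_\alpha$ and $(P_{k,1})|S_{\alpha\delta}$ under the $\sigma(S_\alpha,S_\delta)$-measurable tilt. This reduces to the standard fact that when $(P_{k,1})$ is conditionally independent of the extra randomness entering the tilting weight (namely $S_\delta$) given the conditioning variable, the weight factorises out of both numerator and denominator in the Bayes formula and leaves the conditional law untouched. Once this invariance is in hand, both PK representations collapse to routine disintegrations.
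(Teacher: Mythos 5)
Your proposal is correct and follows essentially the same route as the paper: the corollary is obtained by mixing the conditional law $(P_{k,1})\,|\,S_{\alpha\delta}=y\sim \mathrm{PD}_{\alpha|\delta}(\alpha|y)$ from Proposition~\ref{PropPDarcondlaw} over $\gamma$ and invoking parts (iii)--(iv) of the preceding lemma for the joint density of $(S_{\alpha\delta},S_{\alpha})$ and the marginal $\gamma_{\alpha|\delta}$, and your $h(S_{\alpha\delta})$-tilt of the baseline $\theta=0$ coagulation coupling is exactly how the $\mathrm{PK}_{\alpha\delta}(\gamma)$ law and those lemma densities are encoded in the paper. The Bayes/conditional-independence factorization you single out (the weight being $\sigma(S_{\alpha\delta})$-measurable, hence a function of $(P_{k,0})$, so the fiber conditionals and the fragmentation relation are unchanged) is precisely the disintegration the paper relies on implicitly, so no new ideas or gaps arise.
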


\begin{rem}
Setting $h(y)=y^{-\theta}/\mathbb{E}[S^{-\theta}_{\alpha\delta}]$ yields the case of $(P_{k,0})\sim\mathrm{PD}(\alpha\delta,\theta)$ and $(P_{k,1})\sim\mathrm{PD}(\alpha,\theta),$ which follows from $\mathbb{E}[S^{-\theta}_{\alpha\delta}]= \mathbb{E}[S^{-\theta}_{\alpha}] \mathbb{E}\big[S^{-\frac\theta\alpha}_{\delta}\big].$
\end{rem}

\subsection{The $\mathrm{PD}_{\alpha|\delta}(\alpha|y)$ Gibbs partition of $[n]$}\label{sec:GibbsEPPFfraglocal}
Recall that when $(P_{\ell,0}) \sim \mathrm{PD}(\alpha\delta,0)$, $(P_{\ell,0}) |S_{\alpha\delta} = y$ has the associated Gibbs partition of $[n]$ described by the conditional EPPF,
\begin{equation}
p_{\alpha\delta}(n_{1},\ldots,n_{k}|y):=\frac{f^{(n-k\alpha\delta)}_{\alpha\delta,k\alpha\delta}(y)}{f_{\alpha\delta}(y)} p_{\alpha\delta}(n_{1},\ldots,n_{k}),
\label{GibbsalphadeltaEPPF}
\end{equation}
where
$$
\frac{f^{(n-k\alpha\delta)}_{\alpha\delta,k\alpha\delta}(y)}{f_{\alpha\delta}(y)} = \mathbb{G}^{(n,k)}_{\alpha\delta}(y)\frac{{(\alpha\delta)}^{1-k}\Gamma(n)}{\Gamma(k)}.
$$
The next theorem 
provides the rather remarkable description of the EPPF of $\mathrm{PD}_{\alpha|\delta}(\alpha|y)$ under the same setting as specified in Proposition~\ref{PropPDarcondlaw}.

\begin{thm}\label{TheoremGibbsparitionFrag}
Consider $(P_{\ell,0})\sim \mathrm{PD}(\alpha\delta,0)$ and $(P_{\ell,1}):=\mathrm{Frag}_{\alpha,-\alpha\delta}((P_{\ell,0}))\sim \mathrm{PD}(\alpha,0),$ as specified in Proposition~\ref{PropPDarcondlaw}. Then, $(P_{\ell,1})|S_{\alpha\delta}=y$ has law $\mathrm{PD}_{\alpha|\delta}(\alpha|y)$ as defined in~(\ref{PDarcond}). The EPPF of the $\mathrm{PD}_{\alpha|\delta}(\alpha|y)$ Gibbs partition of~$[n]$ 
can be expressed as
\begin{equation}
p_{\alpha|\delta}(n_{1},\ldots,n_{k}|y):=\left[\sum_{j=1}^{k}\mathbb{P}^{(k)}_{\delta,0}(j)\frac{f^{(n-j\alpha\delta)}_{\alpha\delta,j\alpha\delta}(y)}{f_{\alpha\delta}(y)} \right]p_{\alpha}(n_{1},\ldots,n_{k}),
\label{Gibbsalpha|delta}
\end{equation}
where $\sum_{j=1}^{k}\mathbb{P}^{(k)}_{\delta,0}(j){f^{(n-j\alpha\delta)}_{\alpha\delta,j\alpha\delta}(y)}$ is, for $K^{(1)}_{n}$ the number of blocks in a $\mathrm{PD}(\alpha,0)$ partition of $[n],$ the conditional density of $S_{\alpha\delta}|K^{(1)}_{n}=k.$  $\mathbb{P}^{(k)}_{\delta,0}(j)=\mathbb{P}_{\delta,0}(K_{k}=j)$ is the distribution of the number of blocks in a $\mathrm{PD}(\delta,0)$ partition of $[k].$
\end{thm}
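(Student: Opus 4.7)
The plan is to express the EPPF as an integral against the conditional density $f_{\alpha|\alpha\delta}(t|y)$ of $S_\alpha|S_{\alpha\delta}=y$ and then rewrite the result via a probabilistic mixture argument over the coarser partition. Since Proposition~\ref{PropPDarcondlaw} gives $(P_{\ell,1})|S_{\alpha\delta}=y\sim \int \mathrm{PD}(\alpha|t)f_{\alpha|\alpha\delta}(t|y)\,dt$, the EPPF factors as
$$
p_{\alpha|\delta}(n_1,\ldots,n_k|y)=\int_0^\infty p_\alpha(n_1,\ldots,n_k|t)\,f_{\alpha|\alpha\delta}(t|y)\,dt.
$$
Applying (\ref{pitEPPFdecomp}) to write $p_\alpha(n_1,\ldots,n_k|t)=[f^{(n-k\alpha)}_{\alpha,k\alpha}(t)/f_\alpha(t)]\,p_\alpha(n_1,\ldots,n_k)$ and using the explicit form (\ref{densityalphagivenalphadelta}) of $f_{\alpha|\alpha\delta}(t|y)$, the factor $f_\alpha(t)$ cancels. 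A standard change-of-variables then identifies the resulting integral as $f_Y(y)/f_{\alpha\delta}(y)$, where $Y=T\,S_\delta^{1/\alpha}$ with $T$ having density $f^{(n-k\alpha)}_{\alpha,k\alpha}$ (equivalently, $T\overset{d}=S_\alpha|K_n^{(1)}=k$) and independent of $S_\delta$. Since $S_\delta$ is independent of $(S_\alpha,K_n^{(1)})$, one has $Y\overset{d}=S_{\alpha\delta}|K_n^{(1)}=k$, yielding
$$
p_{\alpha|\delta}(n_1,\ldots,n_k|y)=p_\alpha(n_1,\ldots,n_k)\cdot \frac{f_{S_{\alpha\delta}|K_n^{(1)}=k}(y)}{f_{\alpha\delta}(y)}.
$$

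The remaining step is to identify the conditional density as the stated mixture
$$
f_{S_{\alpha\delta}|K_n^{(1)}=k}(y)=\sum_{j=1}^{k}\mathbb{P}^{(k)}_{\delta,0}(j)\,f^{(n-j\alpha\delta)}_{\alpha\delta,j\alpha\delta}(y).
$$
I would obtain this by conditioning on $K_n^{(0)}$, the number of blocks in the coarser partition sampled from $(P_{\ell,0})$, and invoking two facts: (i) the dual coagulation relation from~\cite{Pit99Coag} gives $K_n^{(0)}|K_n^{(1)}=k\sim\mathbb{P}^{(k)}_{\delta,0}(\cdot)$, because conditional on $K_n^{(1)}=k$ the coagulator induces a $\mathrm{PD}(\delta,0)$ partition of $[k]$; and (ii) the conditional independence $K_n^{(1)}\perp S_{\alpha\delta}\,|\,K_n^{(0)}$. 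Combined with the standard identity $f_{S_{\alpha\delta}|K_n^{(0)}=j}(y)=f^{(n-j\alpha\delta)}_{\alpha\delta,j\alpha\delta}(y)$, these give the desired mixture.

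The main obstacle is justifying (ii), and the crucial tool is the Gibbs product structure. Conditional on $K_n^{(0)}=j$, the block sizes $(m_1,\ldots,m_j)$ of the coarse partition are independent of $S_{\alpha\delta}$: the factor $\mathbb{G}^{(n,j)}_{\alpha\delta}(y)$ in the conditional EPPF $p_{\alpha\delta}(m_1,\ldots,m_j|y)$ does not depend on the individual $m_i$ and hence cancels in the conditional law of the sizes. Moreover, by the construction of $\mathrm{Frag}_{\alpha,-\alpha\delta}$ we have $K_n^{(1)}=\sum_{i=1}^{j}K_i$, where each $K_i$ is the number of blocks in an independent $\mathrm{PD}(\alpha,-\alpha\delta)$ partition of $[m_i]$; so given the block sizes, $K_n^{(1)}$ is independent of $S_{\alpha\delta}$. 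Chaining these conditional independences establishes (ii), completing both the identification of the mixture and the EPPF formula.
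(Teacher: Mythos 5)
Your proposal is correct, and its first half coincides with the paper's own reduction: the paper likewise uses the Gibbs structure/Bayes to write the conditional EPPF as $p_{\alpha}(n_{1},\ldots,n_{k})$ times the conditional density of $S_{\alpha\delta}$ given $K^{(1)}_{n}=k$, divided by $f_{\alpha\delta}(y)$ (your identification of the integral against $f_{\alpha|\alpha\delta}(t|y)$ with the density of $T\,S_{\delta}^{1/\alpha}$ is exactly the observation recorded at~(\ref{basicSadcond})). Where you genuinely diverge is in proving the key mixture identity $f_{S_{\alpha\delta}|K^{(1)}_{n}=k}(y)=\sum_{j=1}^{k}\mathbb{P}^{(k)}_{\delta,0}(j)f^{(n-j\alpha\delta)}_{\alpha\delta,j\alpha\delta}(y)$, which the paper calls ``rather challenging'' and proves in the appendix (Theorem~\ref{PropCondSKn}) by an explicit chain of random-variable identities: Lemma~\ref{PropSalphadelta}, the subordinator composition $\tau_{\alpha}(\tau_{\delta}(\cdot))$ and \cite[Proposition 21]{PY97}, beta--gamma algebra, and the coagulation relation $K_{n}=K^{(2)}(K^{(1)}_{n})$ of Lemma~\ref{lemmablockscoag}, all carried out for general $\theta>-\alpha\delta$. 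You instead argue softly: total probability over the latent coarse block count $K^{(0)}_{n}$, with (a) $K^{(0)}_{n}|K^{(1)}_{n}=k\sim\mathbb{P}^{(k)}_{\delta,0}$ from the partition-level coagulation duality (this is precisely Lemma~\ref{lemmablockscoag}), and (b) the conditional independence $S_{\alpha\delta}\perp K^{(1)}_{n}\,|\,K^{(0)}_{n}$, which you correctly obtain by chaining two facts: given $K^{(0)}_{n}$ the coarse block sizes are independent of $S_{\alpha\delta}$ because the factor $\mathbb{G}^{(n,j)}_{\alpha\delta}(y)$ in~(\ref{GibbsalphadeltaEPPF}) does not involve the sizes, and given the coarse partition $K^{(1)}_{n}$ is a function of the sizes and of fragmentation randomness independent of $S_{\alpha\delta}$; combined with $f_{S_{\alpha\delta}|K^{(0)}_{n}=j}=f^{(n-j\alpha\delta)}_{\alpha\delta,j\alpha\delta}$ (Corollary~\ref{corcond} with $\alpha\delta$ in place of $\alpha$) this gives the mixture. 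The one point you should state explicitly is that (a) and (b) describe the same joint law, i.e.\ you are invoking Pitman's duality to pass between the coagulation-direction description of $(K^{(0)}_{n},K^{(1)}_{n})$ and the fragmentation-direction construction of the fine partition; this is legitimate and already encapsulated in Lemma~\ref{lemmablockscoag} and the sampling scheme of Section~\ref{sec:sampling}. The trade-off: your route is shorter and more conceptual, delivering exactly the density identity needed at $\theta=0$, whereas the paper's appendix computation buys the general-$\theta$ statement together with explicit conditional representations such as~(\ref{EqnCondSKn}), (\ref{coagid}) and~(\ref{condCoag}) that are reused elsewhere.
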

\begin{proof}The $\mathrm{EPPF}$ is the conditional distribution of a $\mathrm{PD}(\alpha,0)$ partition of $[n]$ given $S_{\alpha\delta}=y.$ (\ref{Gibbsalpha|delta}) is such a representation in terms of the marginal EPPF $p_{\alpha}(n_{1},\ldots,n_{k})$ and the conditional density of $S_{\alpha\delta}|K^{(1)}_{n}=k.$
It remains to show that $S_{\alpha\delta}|K^{(1)}_{n}=k$ agrees with the expression in (\ref{Gibbsalpha|delta}) as indicated, which is rather challenging. The formal proof of this result is given in Theorem~\ref{PropCondSKn}
of the appendix.
\end{proof}
We now describe the distribution of the number of blocks and its limiting behavior.

\begin{cor}Suppose that, for fixed $y>0,$ a mass partition $(\hat{P}_{\ell}(y))$ has law $\mathrm{PD}_{\alpha|\delta}(\alpha|y)$ as defined in~(\ref{PDarcond}). Its EPPF is given by~(\ref{Gibbsalpha|delta}) based on a partition of $[n]$, with $\hat{K}_{n}(y)$ denoting the random number of unique blocks. Then, for $k=1,\ldots,n,$
$$
\mathbb{P}(\hat{K}_{n}(y)=k)=\left[\sum_{j=1}^{k}\mathbb{P}^{(k)}_{\delta,0}(j)\frac{f^{(n-j\alpha\delta)}_{\alpha\delta,j\alpha\delta}(y)}{f_{\alpha\delta}(y)} \right]\mathbb{P}^{(n)}_{\alpha,0}(k),
$$
and, as $n\rightarrow \infty,$ $n^{-\alpha}\hat{K}_{n}(y)\overset{a.s.}\rightarrow \hat{Z}(y),$ where $\hat{Z}(y)$ is equivalent in distribution to\break $S^{-\alpha}_{\alpha}|S_{\alpha\delta}=y,$ specified by~(\ref{densityalphagivenalphadelta}).
\end{cor}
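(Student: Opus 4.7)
The plan is to derive both assertions directly from Theorem~\ref{TheoremGibbsparitionFrag}. For the probability mass function of $\hat{K}_n(y)$, I would start from the EPPF representation in~(\ref{Gibbsalpha|delta}) and observe that the bracketed factor depends on $(n_1,\ldots,n_k)$ only through $(n,k)$. Summing over all ordered partitions $(n_1,\ldots,n_k)$ of $[n]$ with exactly $k$ blocks, I would pull the bracketed factor outside the sum, so that
\[
\mathbb{P}(\hat K_n(y)=k)=\Bigl[\sum_{j=1}^{k}\mathbb{P}^{(k)}_{\delta,0}(j)\,\frac{f^{(n-j\alpha\delta)}_{\alpha\delta,j\alpha\delta}(y)}{f_{\alpha\delta}(y)}\Bigr]\sum_{(n_1,\ldots,n_k)}p_{\alpha}(n_1,\ldots,n_k).
\]
The inner sum on the right is, by the very definition of the one-parameter EPPF, equal to $\mathbb{P}^{(n)}_{\alpha,0}(k)$. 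This gives the stated formula.

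For the almost-sure limit, I would invoke~\cite[Proposition 13]{Pit03} (cited as Lemma 13 in~\cite{Pit06}), which states that for any $\mathrm{PK}_{\alpha}(\gamma)$ law, the number of blocks $K_n$ satisfies $n^{-\alpha}K_n\overset{a.s.}\rightarrow S^{-\alpha}_{\alpha}$ where $S_\alpha$ is distributed according to the mixing measure $\gamma$. The law $\mathrm{PD}_{\alpha|\delta}(\alpha|y)$ is, by~(\ref{PDarcond}), exactly the $\mathrm{PK}_{\alpha}$ mixture with mixing density $f_{\alpha|\alpha\delta}(t|y)$, i.e.\ the conditional density of $S_{\alpha}$ given $S_{\alpha\delta}=y$. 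Applying Pitman's result to this choice of mixing measure yields $n^{-\alpha}\hat K_n(y)\to \hat Z(y)$ almost surely, with $\hat Z(y)\overset{d}{=}S^{-\alpha}_{\alpha}\mid S_{\alpha\delta}=y$.

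There is no real obstacle here: the first part is a routine marginalization of the EPPF, and the second is an immediate specialization of the general Pitman a.s.\ limit theorem to the particular mixing density identified in Proposition~\ref{PropPDarcondlaw}. The only thing worth double-checking is consistency with Theorem~\ref{TheoremGibbsparitionFrag}: the bracketed factor summed against $\mathbb{P}^{(n)}_{\alpha,0}(k)$ must integrate to $1$ in $k$, and indeed, since $\sum_{k=1}^{n}\mathbb{P}^{(k)}_{\delta,0}(j)f^{(n-j\alpha\delta)}_{\alpha\delta,j\alpha\delta}(y)\mathbb{P}^{(n)}_{\alpha,0}(k)$ recovers $f_{\alpha\delta}(y)$ by the identification of $S_{\alpha\delta}\mid K^{(1)}_n=k$ established in the theorem. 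This provides a sanity check that the expression is indeed a probability mass function.
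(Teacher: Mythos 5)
Your argument is correct and is essentially the route the paper intends: the corollary follows by summing the EPPF~(\ref{Gibbsalpha|delta}) over partitions of $[n]$ with $k$ blocks (the bracketed Gibbs weight depends only on $(n,k)$, so the sum produces $\mathbb{P}^{(n)}_{\alpha,0}(k)$), and the almost-sure limit is exactly the specialization of Pitman's result in~\cite[Proposition 13]{Pit03} to the mixing density $f_{\alpha|\alpha\delta}(t|y)$ identified in Proposition~\ref{PropPDarcondlaw}, just as the paper invokes it for the analogous statement in Proposition~\ref{EPPFMLMC}(iii). The only blemish is the final sanity check, where the sum over $j$ is misplaced in your displayed expression; the intended statement (that $\sum_{j=1}^{k}\mathbb{P}^{(k)}_{\delta,0}(j)f^{(n-j\alpha\delta)}_{\alpha\delta,j\alpha\delta}(y)$ is the conditional density of $S_{\alpha\delta}$ given $K^{(1)}_{n}=k$, so mixing over $\mathbb{P}^{(n)}_{\alpha,0}(k)$ recovers $f_{\alpha\delta}(y)$) is correct and immaterial to the proof.
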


Now, suppose that $(P_{\ell,0})\sim \mathrm{PK}_{\alpha\delta}(\gamma),$ where $\gamma(dy)/dy=h(y)f_{\alpha\delta}(y)$ with\break $\mathbb{E}[h(S_{\alpha\delta})] = 1$. The associated Gibbs partition of $[n]$ is described by the $\mathrm{EPPF}$ 
expressed as
\begin{equation}
p^{[\gamma]}_{\alpha\delta}(n_{1},\ldots,n_{k})=\tilde{V}_{n,k} \times p_{\alpha\delta}(n_{1},\ldots,n_{k}),
\label{alphadeltaEPPF}
\end{equation}
where  $\tilde{V}_{n,k}=\mathbb{E}_{\alpha\delta}[h(S_{\alpha\delta})|K_{n}=k]
=V_{n,k}\frac{{(\alpha\delta)}^{1-k}\Gamma(n)}{\Gamma(k)}$ and, for clarity, $K_{n}$ is the number of blocks of a $\mathrm{PD}(\alpha\delta,0)$ partition of $[n].$ 

\begin{prop}\label{PropVnfrag}
Suppose that $(P_{\ell,0})\sim \mathrm{PK}_{\alpha\delta}(\gamma)$ with $\mathrm{EPPF}$ in~(\ref{alphadeltaEPPF}). Corollary~\ref{corPDarcondlaw} gives
$
(P_{\ell,1}):=\mathrm{Frag}_{\alpha,-\alpha\delta}((P_{\ell,0}))\sim\mathrm{PK}_{\alpha}(\gamma_{\alpha|\delta}).
$
The $\mathrm{PK}_{\alpha}(\gamma_{\alpha|\delta})$ $\mathrm{EPPF}$ of the associated Gibbs partition of $[n]$ can be expressed as
\begin{equation}
\left[\sum_{j=1}^{k}\mathbb{P}^{(k)}_{\delta,0}(j)\tilde{V}_{n,j}\right]p_{\alpha}(n_{1},\ldots,n_{k})
\label{Gibbsalpha|deltaV}.
\end{equation}
\end{prop}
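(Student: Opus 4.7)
The plan is to deduce this proposition as a direct corollary of Theorem~\ref{TheoremGibbsparitionFrag} by integrating the conditional EPPF against the mixing density of $S_{\alpha\delta}$. Under $(P_{\ell,0})\sim\mathrm{PK}_{\alpha\delta}(\gamma)$, the inverse local time $S_{\alpha\delta}$ has density $h(y)f_{\alpha\delta}(y)$. Since $(P_{\ell,1})$ is obtained from $(P_{\ell,0})$ by applying the $\mathrm{Frag}_{\alpha,-\alpha\delta}$ operator, and since the distribution of $(P_{\ell,1})|S_{\alpha\delta}=y$ is $\mathrm{PD}_{\alpha|\delta}(\alpha|y)$ (which does not depend on the mixing choice $h$, by construction), the EPPF of $(P_{\ell,1})$ is the $\gamma$-mixture of the conditional EPPF $p_{\alpha|\delta}(n_1,\ldots,n_k|y)$ given in~(\ref{Gibbsalpha|delta}).

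The computation proceeds by swapping the finite sum with the integral:
\begin{equation*}
\int_{0}^{\infty}\!\!\left[\sum_{j=1}^{k}\mathbb{P}^{(k)}_{\delta,0}(j)\frac{f^{(n-j\alpha\delta)}_{\alpha\delta,j\alpha\delta}(y)}{f_{\alpha\delta}(y)}\right]\! p_{\alpha}(n_{1},\ldots,n_{k})\,h(y)f_{\alpha\delta}(y)\,dy
= p_{\alpha}(n_{1},\ldots,n_{k})\sum_{j=1}^{k}\mathbb{P}^{(k)}_{\delta,0}(j)\int_{0}^{\infty}\!h(y)f^{(n-j\alpha\delta)}_{\alpha\delta,j\alpha\delta}(y)\,dy.
\end{equation*}
The remaining integral is recognized as $\mathbb{E}_{\alpha\delta}[h(S_{\alpha\delta})\mid K_{n}=j]=\tilde{V}_{n,j}$, directly by the definition of $\tilde{V}_{n,j}$ in~(\ref{alphadeltaEPPF}) and the fact that $f^{(n-j\alpha\delta)}_{\alpha\delta,j\alpha\delta}$ is the conditional density of $S_{\alpha\delta}$ given $K_{n}=j$ under $\mathrm{PD}(\alpha\delta,0)$. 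Substituting yields the claimed formula.

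The only potentially subtle point is verifying that mixing the $S_{\alpha\delta}$-conditional law of $(P_{\ell,1})$ against $\gamma$ indeed recovers the marginal law of $(P_{\ell,1})$ under $\mathrm{PK}_{\alpha\delta}(\gamma)$. This is justified by the tower property, observing that $(P_{\ell,1})|S_{\alpha\delta}=y$ is the same measure $\mathrm{PD}_{\alpha|\delta}(\alpha|y)$ regardless of the choice of $h$, as noted in Proposition~\ref{PropPDarcondlaw}; hence the Gibbs form in~(\ref{Gibbsalpha|deltaV}) matches the representation of $\mathrm{PK}_{\alpha}(\gamma_{\alpha|\delta})$ stated in Corollary~\ref{corPDarcondlaw}. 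Since Theorem~\ref{TheoremGibbsparitionFrag} supplies the non-trivial identification of $S_{\alpha\delta}|K^{(1)}_{n}=k$ that underlies the conditional EPPF, the present proposition requires only this final mixing step, so no substantive obstacle is expected beyond a careful exchange of sum and integral (which is justified by non-negativity of all terms).
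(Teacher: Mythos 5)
Your proposal is correct and is essentially the paper's own argument: the paper proves this proposition in one line by writing the EPPF as $\int_{0}^{\infty}p_{\alpha|\delta}(n_{1},\ldots,n_{k}|y)\gamma(dy)$ and invoking~(\ref{Gibbsalpha|delta}), which is precisely your mixing step. Your additional details — exchanging the sum and integral and identifying $\int_{0}^{\infty}h(y)f^{(n-j\alpha\delta)}_{\alpha\delta,j\alpha\delta}(y)\,dy=\mathbb{E}_{\alpha\delta}[h(S_{\alpha\delta})\mid K_{n}=j]=\tilde{V}_{n,j}$, with the conditional law $\mathrm{PD}_{\alpha|\delta}(\alpha|y)$ not depending on $h$ as in Corollary~\ref{corPDarcondlaw} — simply make explicit what the paper leaves implicit.
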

\begin{proof}The $\mathrm{EPPF}$ is equivalent to $\int_{0}^{\infty}p_{\alpha|\delta}(n_{1},\ldots,n_{k}|y)\gamma(dy),$ indicated by~(\ref{Gibbsalpha|delta}).
\end{proof}

\begin{rem}(\ref{Gibbsalpha|deltaV}) provides a description of any mass partition with distribution $\mathrm{PK}_{\alpha}(\gamma_{\alpha|\delta}),$ 
where 
$\gamma_{\alpha|\delta}(dt)/dt=\mathbb{E}\big[h\big(tS^{\frac1\alpha}_{\delta}\big)\big] f_{\alpha}(t),$
regardless of whether or not it actually arises from a fragmentation operation.
\end{rem}

As a check, in the case where $(P_{\ell,0})\sim \mathrm{PD}(\alpha\delta,\theta),$ (\ref{Gibbsalpha|deltaV}) must satisfy
\begin{equation}
\sum_{j=1}^{k}\mathbb{P}^{(k)}_{\delta,0}(j)\frac{\Gamma\big(\frac{\theta}{\alpha\delta}+j\big)}{\Gamma\big(\frac{\theta}{\alpha\delta}+1\big)\Gamma(j)}=
\frac{\Gamma\big(\frac{\theta}{\alpha}+k\big)}{\Gamma\big(\frac{\theta}{\alpha}+1\big)\Gamma(k)}.
\label{Pitmanmoments}
\end{equation}
However,~(\ref{Pitmanmoments}) is verified since, similar to~(\ref{PitmanKnmoment}), it agrees with \cite[exercise 3.2.9, p.66]{Pit06}, with $k$ in place of $n.$
There is the following Corollary in the case of $\delta=\frac12$.

\begin{cor}Specializing Theorem~\ref{PropVnfrag} to the case of $\delta=\frac12,$ where $(P_{k,0})\sim \mathrm{PK}_{\frac\alpha2}(\gamma),$ and $\tilde{V}_{n,k}=\mathbb{E}_{\frac\alpha2}[h(S_{\frac\alpha2})|K_{n}=k],$ the $\mathrm{PK}_{\alpha}(\gamma_{\alpha|\frac{1}{2}})$ $\mathrm{EPPF}$ 
in~(\ref{Gibbsalpha|deltaV}) can be expressed as
\begin{equation}
\left[\sum_{j=1}^{k}
{{2k-j-1}\choose{k-1}}2^{j+1-2k}
\tilde{V}_{n,j}\right]p_{\alpha}(n_{1},\ldots,n_{k}).
\label{GibbsalphahalfV}
\end{equation}
\end{cor}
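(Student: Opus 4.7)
The plan is to specialize formula~(\ref{Gibbsalpha|deltaV}) of Proposition~\ref{PropVnfrag} to $\delta=\tfrac{1}{2}$, so the only content is to make $\mathbb{P}^{(k)}_{\frac{1}{2},0}(j)$ explicit as $\binom{2k-j-1}{k-1}2^{j+1-2k}$. Once that closed form is in hand, substitution immediately gives the bracketed sum in~(\ref{GibbsalphahalfV}) and the statement follows without further manipulation.

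The explicit form of $\mathbb{P}^{(k)}_{\frac{1}{2},0}(j)$ will be obtained by evaluating the two-parameter Brownian formula~(\ref{KnBrowntheta}) in the limit $\theta\downarrow 0$. Specifically, taking $n=k$, the function $\theta \mapsto \mathbb{P}^{(k)}_{\frac{1}{2},\theta}(j)$ is continuous, so the desired probability equals
\[
\lim_{\theta\to 0}\frac{\Gamma(k)}{\Gamma(j)}\frac{(2\theta)_{j}}{(\theta)_{k}}\binom{2k-j-1}{k-1}2^{j-2k}.
\]
Using $(2\theta)_j = 2\theta\,(2\theta+1)_{j-1}$ and $(\theta)_k = \theta\,(\theta+1)_{k-1}$, the simple pole at $\theta=0$ cancels and
\[
\lim_{\theta\to 0}\frac{(2\theta)_{j}}{(\theta)_{k}}=\frac{2\,(j-1)!}{(k-1)!}.
\]
Combined with $\Gamma(k)/\Gamma(j) = (k-1)!/(j-1)!$, all factorials cancel and I am left with $2\binom{2k-j-1}{k-1}2^{j-2k} = \binom{2k-j-1}{k-1}2^{j+1-2k}$, as required.

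Plugging this closed form into~(\ref{Gibbsalpha|deltaV}) with $\delta=\tfrac{1}{2}$ produces exactly~(\ref{GibbsalphahalfV}). There is no serious obstacle: the entire argument is a specialization plus a short beta--gamma limit, and the expression can equivalently be derived directly from the generalized Stirling number $S_{\frac{1}{2}}(k,j)$ via $\mathbb{P}^{(k)}_{\frac{1}{2}}(j)=\tfrac{(1/2)^{j-1}\Gamma(j)}{\Gamma(k)}S_{\frac{1}{2}}(k,j)$ if one prefers to avoid the limit, yielding the same $\binom{2k-j-1}{k-1}2^{j+1-2k}$. As a sanity check, summing $\mathbb{P}^{(k)}_{\frac{1}{2},0}(j)$ over $j=1,\ldots,k$ should give $1$, which follows from the classical identity $\sum_{j=1}^{k}\binom{2k-j-1}{k-1}2^{j-1}=2^{2k-1}$; this also agrees with the consistency relation~(\ref{Pitmanmoments}) of Proposition~\ref{PropVnfrag} restricted to $\delta=\tfrac{1}{2}$, confirming that the identification of $\mathbb{P}^{(k)}_{\frac{1}{2},0}(j)$ is correct.
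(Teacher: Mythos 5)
Your proposal is correct and is essentially the paper's (implicit) argument: the corollary is just Proposition~\ref{PropVnfrag} with $\delta=\tfrac12$ combined with the classical block-count formula $\mathbb{P}^{(k)}_{\frac12,0}(j)=\binom{2k-j-1}{k-1}2^{j+1-2k}$, which your $\theta\downarrow 0$ limit of~(\ref{KnBrowntheta}) recovers correctly and which the paper itself uses elsewhere (e.g.\ in the appendix). One minor slip in your sanity check: the normalization identity should be $\sum_{j=1}^{k}\binom{2k-j-1}{k-1}2^{j-1}=2^{2k-2}$, not $2^{2k-1}$, though this does not affect the argument.
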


\subsubsection{An equivalent expression for the EPPF}\label{sec:equivEPPF}
A remarkable feature of the EPPF's in Theorem~\ref{TheoremGibbsparitionFrag} and (\ref{Gibbsalpha|deltaV}) is that in general, in view of the results of \cite{HJL}, the most tractable results only require $\alpha\delta=m/r$ and otherwise $\alpha$ can be quite general. As we shall show in the appendix, those results require more delicate distributional arguments exploiting properties of the dual coagulation operation. It is however worthwhile to describe an alternative expression for the EPPF, which we do next. It is the case for the expression below that, in general, in order to get expressions in terms of Meijer $G$ functions, both $\alpha$ and $\delta$ have to be rational numbers. In that case, the Meijer $G$ representation is not difficult to obtain, but for brevity we do not include it here. 
\begin{prop}
Suppose that $(P_{k})\sim\mathrm{PD}_{\alpha|\delta}(\alpha|y)$ as in Theorem~\ref{TheoremGibbsparitionFrag}. The EPPF may be expressed in terms of Fox $H$ functions as
$$
\frac{\alpha\Hfun{0,2}{2,2}{y}{\left(1-\frac{1}{\alpha\delta},\frac{1}{\alpha\delta}\right), \left(1-\frac{1}{\alpha}-k,\frac{1}{\alpha}\right)}
{\left(1-\frac{1}{\alpha},\frac{1}{\alpha}\right),(-n,1)}}
{\Hfun{0,1}{1,1}{y}{\left(1-\frac{1}{\alpha\delta},\frac{1}{\alpha\delta}\right)}
{\left(0,1\right)}}\frac{\Gamma(n)}{\Gamma(k)}p_{\alpha}(n_{1},\ldots,n_{k}).
$$
\end{prop}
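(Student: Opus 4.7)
The plan is to reduce the claim to a pair of Fox $H$-function identities via Mellin transforms. Substituting~(\ref{PitEPPF}), (\ref{bigG}), and~(\ref{densityalphagivenalphadelta}) into~(\ref{PDarcond}), and using $\prod_{j=1}^{k}(1-\alpha)_{n_{j}-1}=\Gamma(n)p_{\alpha}(n_{1},\ldots,n_{k})/[\alpha^{k-1}\Gamma(k)]$, elementary simplification yields
$$
p_{\alpha|\delta}(n_{1},\ldots,n_{k}|y)=\frac{\alpha^{2}y^{\alpha-1}}{f_{\alpha\delta}(y)}\cdot\frac{\Gamma(n)}{\Gamma(k)}p_{\alpha}(n_{1},\ldots,n_{k})\cdot I(y),
$$
where $I(y):=\int_{0}^{\infty}\Ip{n-k\alpha}{f_{\alpha}}{(t)}\,f_{\delta}((y/t)^{\alpha})\,t^{-n-\alpha}\,dt$. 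It thus suffices to establish the two Fox $H$-function identities $\Hfun{0,1}{1,1}{y}{\ldots}{\ldots}=\alpha\delta\,f_{\alpha\delta}(y)$ and $\Hfun{0,2}{2,2}{y}{\ldots}{\ldots}=\alpha^{2}\delta\,y^{\alpha-1}I(y)$ with the parameters stated in the proposition, since the proposed ratio will then collapse to $\alpha^{2}y^{\alpha-1}I(y)/f_{\alpha\delta}(y)$ and match the display above.

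For the denominator, the Mellin transform of $\alpha\delta\,f_{\alpha\delta}(y)$ at $\sigma$ equals $\alpha\delta\,\Gamma(1+(1-\sigma)/(\alpha\delta))/\Gamma(2-\sigma)$, and two applications of $\Gamma(z+1)=z\Gamma(z)$ reduce this to $\Gamma((1-\sigma)/(\alpha\delta))/\Gamma(1-\sigma)$, which is precisely the Mellin-Barnes integrand of the stated $H^{0,1}_{1,1}$; Mellin inversion yields the identity. For the numerator, the crucial observation is that $\alpha y^{\alpha-1}I(y)$ is a Mellin convolution: with $V:=S_{\delta}^{1/\alpha}$ (density $f_{V}(u)=\alpha u^{\alpha-1}f_{\delta}(u^{\alpha})$) and $A(t):=t^{-n}\Ip{n-k\alpha}{f_{\alpha}}{(t)}$, a change of variable shows
$$
\alpha y^{\alpha-1}I(y)=\int_{0}^{\infty}A(t)\,f_{V}(y/t)\,\frac{dt}{t}=:(A\star f_{V})(y),
$$
so the Mellin convolution theorem gives $\mathcal{M}[A\star f_{V}](\sigma)=\mathcal{M}[A](\sigma)\cdot\mathcal{M}[f_{V}](\sigma)$. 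Using the Fubini identity $\int_{0}^{\infty}t^{s-1}\Ip{\nu}{f_{\alpha}}{(t)}\,dt=\Gamma(1-s-\nu)\,\mathbb{E}[S_{\alpha}^{s+\nu-1}]/\Gamma(1-s)$ together with $\mathbb{E}[S_{\alpha}^{\sigma}]=\Gamma(1-\sigma/\alpha)/\Gamma(1-\sigma)$, then applying $\Gamma(z+1)=z\Gamma(z)$ and the factorization $k\alpha+1-\sigma=\alpha\big(k+(1-\sigma)/\alpha\big)$, one computes
$$
\mathcal{M}[A](\sigma)=\frac{\Gamma(k+(1-\sigma)/\alpha)}{\alpha\,\Gamma(n+1-\sigma)},\qquad \mathcal{M}[f_{V}](\sigma)=\frac{\Gamma((1-\sigma)/(\alpha\delta))}{\delta\,\Gamma((1-\sigma)/\alpha)}.
$$
Their product is exactly $(\alpha\delta)^{-1}$ times the Mellin-Barnes integrand of the stated $H^{0,2}_{2,2}$, and Mellin inversion concludes the second identity.

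The main obstacle is the careful gamma-function bookkeeping required to absorb the constants $\alpha,\delta,\alpha\delta$ into exactly the Fox $H$ parameters $(1-1/(\alpha\delta),1/(\alpha\delta))$, $(1-1/\alpha-k,1/\alpha)$, $(1-1/\alpha,1/\alpha)$, and $(-n,1)$ of the statement; the key cancellation $\Gamma(k+1+z)/(k\alpha+1-\sigma)=\Gamma(k+z)/\alpha$ with $z=(1-\sigma)/\alpha$ is what ultimately produces the lone factor $\alpha$ in the final ratio. Convergence of the Mellin-Barnes contours is standard for $\alpha,\delta\in(0,1)$ and $1\le k\le n$, in a vertical strip $\sigma<1+\alpha\delta$, after which the identity extends to $y>0$ by analytic continuation.
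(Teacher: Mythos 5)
Your proposal is correct, and it follows essentially the same route as the paper: after the same reduction of the EPPF via (\ref{PDarcond}), (\ref{bigG}) and (\ref{densityalphagivenalphadelta}), the numerator you analyze is exactly (up to the constant $\Gamma(k)/[\alpha\Gamma(n)]$) the density of the product variable $S_{\alpha,n}\beta^{-1/\alpha}_{k,n/\alpha-k}\times S_{\delta}^{1/\alpha}$ in (\ref{basicSadcond}), whose Fox $H$ form the paper obtains by citing the $H$-function representations of $f_{\delta}$ and $f^{(n-k\alpha)}_{\alpha,k\alpha}$ together with the Carter--Springer product theorem, itself a Mellin-convolution statement. The only difference is that you verify the two Mellin--Barnes identities (with the correct constants $\alpha\delta$ and $\alpha^{2}\delta$) explicitly rather than by citation, which is a sound and self-contained substitute.
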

\begin{proof} 
$\alpha y^{\alpha-1}\int_{0}^{\infty}f_{\delta}({(y/t)}^{\alpha})t^{-\alpha}f^{(n-k\alpha)}_{\alpha,k\alpha}(t)dt$ is, by the definition of $f^{(n-k\alpha)}_{\alpha,k\alpha}(t),$ the density of the random variable
\begin{equation}
\frac{S_{\alpha,n}}{\beta^{\frac1\alpha}_{k,\frac{n}{\alpha}-k}}\times
S_{\delta}^{\frac1\alpha}.
\label{basicSadcond}
\end{equation}
The Fox $H$ expression for this is obtained by noting the Fox $H$ representations for $f_{\delta}$ and $f^{(n-k\alpha)}_{\alpha,k\alpha},$ followed by applying~\cite[Theorem 4.1]{CarterSpringer}. One then uses the Fox $H$ representation for $f_{\alpha\delta}(y).$ Otherwise details are similar to the arguments in \cite{HJL}.
\end{proof}

\subsection{Generating $\mathrm{PD}_{\alpha|\delta}(\alpha|y)$ partitions via $\mathrm{PD}(\alpha,-\alpha\delta)$ fragmentation of partitions}\label{sec:sampling}
While the EPPF's~(\ref{Gibbsalpha|delta}) and~(\ref{Gibbsalpha|deltaV}) are quite interesting from various perspectives, it is not entirely necessary to employ them directly to obtain random partitions from $\mathrm{PD}_{\alpha|\delta}(\alpha|y)$ and  $\mathrm{PK}_{\alpha}(\gamma_{\alpha|\delta}).$ A two-stage sampling scheme may be employed utilizing the dual partition based interpretation of the $\mathrm{Frag}_{\alpha,-\alpha\delta}$ operator. The following scheme can be deduced from Bertoin~\cite{BerFrag}, see also \cite{Pit99Coag,Pit06}. 
\begin{itemize}
\item[1.]Generate $n$ iid $\mathrm{PD}(\alpha,-\alpha\delta)$ partitions of $[n],$ say, $\mathcal{A}_{1},\ldots,\mathcal{A}_{n},$ where, for each $i,$ $\mathcal{A}_{i}:=\{A^{(i)}_{1},\ldots,A^{(i)}_{M^{(i)}_{n}}\}$ with $M^{(i)}_{n}$ blocks.
\item[2.]Independent of this, for each fixed $y,$ generate a $\mathrm{PD}(\alpha\delta|y)$ partition of $[n]$, say, $\{H_{1},\ldots,H_{K_{n}(y)}\}$ where $K_{n}(y)$ denotes the number of blocks.
\item[3.] Consider the pairs $(H_{i}, \mathcal{A}_{i})$, for $i=1,\ldots, K_{n}(y)=\ell\leq n.$
\item[4.]For $i=1,\ldots,\ell$, fragment $H_{i}$ by $\mathcal{A}_{i}$ according to
$$
\mathcal{H}_{i}=\bigg\{H_{i,j}:=H_{i}\cap A^{(i)}_{j}: H_{i}\cap A^{(i)}_{j}\neq \emptyset, j\in\big\{1,\ldots,M^{(i)}_{n}\big\}\bigg\}
$$
\item[5.]The collection $\{H_{i,j}\in \mathcal{H}_{i}: i\in [K_{n}(y)]\}$ (arranged according to least element) constitutes a $\mathrm{PD}_{\alpha|\delta}(\alpha|y)$ partition of $[n],$ with $\hat{K}_{n}(y):=\sum_{i=1}^{K_{n}(y)}|\mathcal{H}_{i}|$ number of blocks. 
\item[6.]Replace step 2 with a $\mathrm{PK}_{\alpha\delta}(\gamma)$ partition of $[n]$ to obtain a 
 $\mathrm{PK}_{\alpha}(\gamma_{\alpha|\delta})$ partition of $[n]$.
\end{itemize}
\begin{rem}
The scheme above requires sampling of a $\mathrm{PD}(\alpha\delta|y)$ partition of $[n].$ The 
relevant results of~\cite{HJL} show that this is the easiest when $\alpha\delta$ is a rational number. In that case, $\mathbb{G}^{(n,k)}_{\alpha\delta}(y)$ has a tractable representation in terms of Meijer $G$ functions. So, this applies to, in particular, $(P_{k,1})\sim \mathrm{PD}_{\alpha|\frac{m}{r\alpha}}(\alpha|y)$ for every $\alpha>\frac{m}r,$ where $m<r$ are coprime positive integers. We look at perhaps the most remarkable case, $\mathrm{PD}_{\alpha|\frac{1}{2\alpha}}(\alpha|y),$ in the next section.
\end{rem}

\subsection{$\mathrm{PD}(\alpha,-\frac12)$ Fragmentation of a Brownian excursion partition conditioned on its local time}
Following Pitman~\cite[Section~8]{Pit03} and \cite[Section~4.5, p.90]{Pit06}, let $(P_{\ell,0})\sim \mathrm{PD}\big(\frac12,0\big)$ denote the ranked excursion lengths of a standard Brownian motion $B:=(B_{t}:t\in [0,1])$, with corresponding local time at $0$ up till time $1$ given by $L_{1}:=\big(2S_{\frac12}\big)^{-\frac12} \overset{d}=\sqrt{2G_{\frac12}}\overset{d}=|B_{1}|.$ 
Then, it follows that $(P_{\ell,0})|L_{1}=\lambda$ has a $\mathrm{PD}(\frac{1}{2}|\frac{1}{2}\lambda^{-2})$ distribution. 

\begin{prop}\label{PropHermitealpha} As in~\cite[Proposition 14]{Pit03}, consider the sequence $(\tilde{P}_{\ell,0}(\lambda))$ of length-biased random permutations of the lengths of excursions of a Brownian motion or a standard
Brownian bridge over $[0,1]$ conditioning on $L_{1}=\lambda$, as defined by~(\ref{condstick}), and the corresponding $\mathrm{EPPF}$ in~(\ref{hermiteEPPF}). Then, for a fixed $\alpha\in \big(\frac12,1\big)$, the ranked mass partition $(P_{\ell,1}(\lambda))$ defined as the $\mathrm{PD}(\alpha,-\frac12)$ fragmentation of $(\tilde{P}_{\ell,0}(\lambda))$ has a $\mathrm{PD}_{\alpha|\frac{1}{2\alpha}}\big(\alpha\big|\frac{1}{2}\lambda^{-2}\big)$ distribution. That is,
\begin{equation}
({P_{\ell,1}(\lambda)}):=
\mathrm{Frag}_{\alpha,-\frac12}((\tilde{P}_{\ell,0}(\lambda)))\sim \mathrm{PD}_{\alpha|\frac{1}{2\alpha}}\left(\alpha\bigg|\frac{1}{2}\lambda^{-2}\right)
\label{Fragalpha1/2}.
\end{equation}
The corresponding EPPF of the $\mathrm{PD}_{\alpha|\frac{1}{2\alpha}}\big(\alpha|\frac{1}{2}\lambda^{-2}\big)$ partition of $[n]$ can be expressed in terms of a mixture of Hermite functions,
\begin{equation}
\left[\sum_{j=1}^{k}\mathbb{P}^{(k)}_{\frac{1}{2\alpha}}(j)2^{n-1}\lambda^{j-1}\tilde{h}_{j+1-2n}(\lambda)\frac{\Gamma(n)}{\Gamma(j)}\right]p_{\alpha}(n_{1},\ldots,n_{k}),
\label{MixedalphahermiteEPPF}
\end{equation}
where $\mathbb{P}^{(k)}_{\frac{1}{2\alpha}}(j)=\mathbb{P}_{\frac{1}{2\alpha},0}(K_{k}=j)$ is the distribution of the number of blocks in a\break $\mathrm{PD}(\frac{1}{2\alpha},0)$ partition of $[k].$
\end{prop}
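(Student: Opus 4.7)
\noindent\textbf{Proof plan for Proposition~\ref{PropHermitealpha}.}

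The plan is to view this as a special case of the general fragmentation results established earlier, with the parameter choice $\alpha\delta=\tfrac12$, i.e.\ $\delta=\tfrac{1}{2\alpha}$. Note that the hypothesis $\alpha\in(\tfrac12,1)$ guarantees $\delta\in(\tfrac12,1)$, which is in the admissible range of the fragmentation operator. For the first claim, the starting mass partition $(P_{\ell,0})\sim\mathrm{PD}(\tfrac12,0)$ has inverse local time $S_{1/2}=\tfrac12 L_1^{-2}$, so conditioning on $L_1=\lambda$ is conditioning on $S_{1/2}=\tfrac12\lambda^{-2}$. The operator $\mathrm{Frag}_{\alpha,-1/2}$ fragments each part using an independent $\mathrm{PD}(\alpha,-\tfrac12)$ partition and then takes the ranked rearrangement, so it depends only on the ranked input. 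Consequently, applying it to the size-biased sequence $(\tilde P_{\ell,0}(\lambda))$ yields the same law on $\mathcal{P}_\infty$ as applying it to the ranked version of $(P_{\ell,0})\mid L_1=\lambda$. Proposition~\ref{PropPDarcondlaw} then identifies this law as $\mathrm{PD}_{\alpha|1/(2\alpha)}(\alpha\mid \tfrac12\lambda^{-2})$, giving (\ref{Fragalpha1/2}).

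For the EPPF, apply Theorem~\ref{TheoremGibbsparitionFrag} with the same choices $\alpha\delta=\tfrac12$, $\delta=\tfrac{1}{2\alpha}$ and $y=\tfrac12\lambda^{-2}$. The expression (\ref{Gibbsalpha|delta}) immediately reduces to
\[
\left[\sum_{j=1}^{k}\mathbb{P}^{(k)}_{1/(2\alpha),0}(j)\,\frac{f^{(n-j/2)}_{1/2,\,j/2}(\tfrac12\lambda^{-2})}{f_{1/2}(\tfrac12\lambda^{-2})}\right]p_{\alpha}(n_{1},\ldots,n_{k}),
\]
so it only remains to identify the inner ratio with $2^{n-1}\lambda^{j-1}\tilde h_{j+1-2n}(\lambda)\,\Gamma(n)/\Gamma(j)$.

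This identification is a direct change of variables. From the general decomposition (\ref{pitEPPFdecomp}) applied at $\alpha=\tfrac12$, the ratio $f^{(n-j/2)}_{1/2,j/2}(y)/f_{1/2}(y)$ equals $\mathbb{G}^{(n,j)}_{1/2}(y)\cdot(1/2)^{1-j}\Gamma(n)/\Gamma(j)$. Substituting the Hermite-function representation $\mathbb{G}^{(n,j)}_{1/2}(\tfrac12\lambda^{-2})=2^{n-j}\lambda^{j-1}\tilde h_{j+1-2n}(\lambda)$, which is (\ref{hermiteEPPF}) read off via the identity $f^{(n-k\alpha)}_{\alpha,k\alpha}(t)=(\alpha^{1-k}\Gamma(n)/\Gamma(k))\mathbb{G}^{(n,k)}_{\alpha}(t)f_\alpha(t)$, one gets
\[
\frac{f^{(n-j/2)}_{1/2,\,j/2}(\tfrac12\lambda^{-2})}{f_{1/2}(\tfrac12\lambda^{-2})}
= 2^{n-j}\lambda^{j-1}\tilde h_{j+1-2n}(\lambda)\cdot 2^{j-1}\,\frac{\Gamma(n)}{\Gamma(j)}
= 2^{n-1}\lambda^{j-1}\tilde h_{j+1-2n}(\lambda)\,\frac{\Gamma(n)}{\Gamma(j)},
\]
which is exactly the bracketed factor in (\ref{MixedalphahermiteEPPF}).

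There is no substantive obstacle here: both claims follow essentially by specialization of Proposition~\ref{PropPDarcondlaw} and Theorem~\ref{TheoremGibbsparitionFrag}. The only subtlety is the bookkeeping between the local time $\lambda=L_1$ and the inverse local time $y=\tfrac12\lambda^{-2}$ that enters the Gibbs kernel, together with the remark that the $\mathrm{Frag}$ operator is blind to any size-biased reordering of its input, so that using $(\tilde P_{\ell,0}(\lambda))$ or the ranked conditional version produces the same law on $\mathcal{P}_\infty$.
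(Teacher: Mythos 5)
Your proposal is correct and follows essentially the same route as the paper, which proves this proposition precisely as a specialization of Proposition~\ref{PropPDarcondlaw} and Theorem~\ref{TheoremGibbsparitionFrag} with $\delta=\frac{1}{2\alpha}$, combined with the Hermite-function form of the EPPF in~(\ref{hermiteEPPF}). Your extra bookkeeping — the identification $S_{\frac12}=\tfrac12\lambda^{-2}$ when $L_{1}=\lambda$, the observation that $\mathrm{Frag}_{\alpha,-\frac12}$ is insensitive to the size-biased reordering of its input, and the explicit verification that $f^{(n-j/2)}_{\frac12,j/2}(\tfrac12\lambda^{-2})/f_{\frac12}(\tfrac12\lambda^{-2})=2^{n-1}\lambda^{j-1}\tilde{h}_{j+1-2n}(\lambda)\Gamma(n)/\Gamma(j)$ — is exactly the detail the paper leaves implicit, and your algebra checks out.
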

\begin{proof}The result is just a special case of Proposition~\ref{PropPDarcondlaw} and Theorem~\ref{TheoremGibbsparitionFrag}, with $\delta=\frac{1}{2\alpha},$  and otherwise using the explicit form of the EPPF in~(\ref{hermiteEPPF}). 
\end{proof}
\begin{rem}In order to obtain a partition of $[n]$ corresponding to the EPPF in~(\ref{MixedalphahermiteEPPF}), one can sample from (\ref{hermiteEPPF}) via the prediction rules indicated in~\cite[eqs.~(111) and (112)]{Pit03}, or otherwise employ the scheme described in Section~\ref{sec:sampling}.
\end{rem}
\begin{rem}\label{remarkfragextend}
Besides properties~(\ref{condstick}) and~(\ref{hermiteEPPF}), a striking feature of the $\mathrm{PD}(\frac{1}{2}|t)$ distribution is that one can construct continuous time fragmentation processes based on processes $(\Pi_{\infty}(\lambda);~\lambda\ge 0)$ and 
$((\tilde{P}_{\ell,0}(\lambda));~\lambda\ge 0)$ by varying in $\lambda$ as described in~\cite{AldousPit,BerBrown,BerFrag,Pit06}. \cite{MiermontSchweinsberg2003} showed that one cannot extend such constructions to the distribution of $(P_{\ell})|S_{\alpha}=t,$ $\mathrm{PD}(\alpha|t),$ for other choices of $\alpha.$ However, in the present setting, since the $\mathrm{Frag}_{\alpha,-\frac12}$ operator is independent of $((\tilde{P}_{\ell,0}(\lambda));~\lambda\ge 0)$ and is not affected by the time $\lambda,$ it would be interesting to investigate the properties of $(({P_{\ell,1}(\lambda)});~\lambda\ge 0),$ defined by (\ref{Fragalpha1/2}), in this regard. Note the range $\alpha=\frac{1}{\tilde{\alpha}}\in \big(\frac12,1\big)$ suggests possible connections to stable processes or trees of index $1<\tilde{\alpha}<2$~(See, for instance,~\cite{CurienHaas,Miermont2005}).  
\end{rem}

\subsection{$\mathrm{PD}(\alpha,-\alpha\delta)$ Fragmenting the Mittag-Leffler function case}
We now show that a $\mathrm{PD}(\alpha,-\alpha\delta)$ fragmentation of $(P_{\ell,0}(\lambda))\sim \mathbb{L}^{(0)}_{\alpha\delta,\theta}(\lambda)$ leads to a mass partition $(P_{\ell,1}(\lambda)),$ whose corresponding $\alpha$-diversity, say $\hat{Z}_{1}(\lambda),$ has density expressed in terms of a ratio of two Mittag-Leffler functions, 
\begin{equation}
\hat{g}^{(1)}_{\alpha|\delta,\theta}(s|\lambda)=\frac{\mathrm{E}^{(\frac{\theta}{\alpha\delta}+1)}_{\delta,\frac{\theta}{\alpha}+1}\left(-{\lambda s^{\delta}}\right)
g_{\alpha,\theta}(s)}{\mathrm{E}^{(\frac{\theta}{\alpha\delta}+1)}_{\alpha\delta,\theta+1}(-\lambda)}.
\label{MittagFragdensity}
\end{equation}

\begin{prop}As in~(\ref{MittagEPPF}), and~(\ref{genMittaglambda}), where $\alpha$ is replaced by $\alpha\delta,$ let $(P_{\ell,0}(\lambda))\sim \mathbb{L}^{(0)}_{\alpha\delta,\theta}(\lambda),$
with EPPF
\begin{equation}
\frac{\mathrm{E}^{(\frac{\theta}{\alpha\delta}+k)}_{\alpha\delta,\theta+n}(-\lambda)}{\mathrm{E}^{(\frac{\theta}{\alpha\delta}+1)}_{\alpha\delta,\theta+1}(-\lambda)}p_{\alpha\delta,\theta}(n_{1},\ldots,n_{k}).
\label{MittagEPPF3}
\end{equation}
Then,
$
(P_{\ell,1}(\lambda)):=\mathrm{Frag}_{\alpha,-\alpha\delta}((P_{\ell,0}(\lambda)))\sim\mathbb{L}^{(1)}_{\alpha|\delta,\theta}(\lambda),$ where 
\begin{equation}
\mathbb{L}^{(1)}_{\alpha|\delta,\theta}(\lambda)=\int_{0}^{\infty}\mathrm{PD}(\alpha|s^{-\frac1\alpha})\,\hat{g}^{(1)}_{\alpha|\delta,\theta}(s|\lambda)ds,
\label{MittagFragdist}
\end{equation}
with $\hat{g}^{(1)}_{\alpha|\delta,\theta}(s|\lambda)$ given in~(\ref{MittagFragdensity}).
\begin{enumerate}
\item[(i)]The $\mathbb{L}^{(1)}_{\alpha|\delta,\theta}(\lambda)~\mathrm{EPPF}$ of $[n]$ can be expressed as 
\begin{equation}
\left[\sum_{j=1}^{k}\mathbb{P}^{(k)}_{\delta,\frac{\theta}{\alpha}}(j)\frac{\mathrm{E}^{(\frac{\theta}{\alpha\delta}+j)}_{\alpha\delta,\theta+n}(-\lambda)}{\mathrm{E}^{(\frac{\theta}{\alpha\delta}+1)}_{\alpha\delta,\theta+1}(-\lambda)}\right]p_{\alpha,\theta}(n_{1},\ldots,n_{k})
\label{Mittagalpha|deltaV2}.
\end{equation}
\item[(ii)]
In~(\ref{Mittagalpha|deltaV2}), $\sum_{j=1}^{k}\mathbb{P}^{(k)}_{\delta,\frac{\theta}{\alpha}}(j)\mathrm{E}^{(\frac{\theta}{\alpha\delta}+j)}_{\alpha\delta,\theta+n}(-\lambda)$ is equivalent to
$$
\mathbb{E}\left[\mathrm{E}^{(\frac{\theta}{\alpha\delta}+1)}_{\delta,\frac{\theta}{\alpha}+1}\big(-\lambda S^{-\alpha\delta}_{\alpha,n}\beta^{\delta}_{k,\frac{n}{\alpha}-k}\big) \times 
S^{-\theta}_{\alpha,n}\beta^{\frac{\theta}{\alpha}}_{k,\frac{n}{\alpha}-k}\right]/\mathbb{E}[S^{-\theta}_{\alpha}],
$$
which is given by
$$
\sum_{\ell=0}^{\infty}\frac{{(-\lambda)}^{\ell}}{\ell!}\frac{\Gamma(\theta+1)}
{\Gamma(\alpha\delta\ell+\theta+n)}
\frac{\Gamma(\frac{\theta}{\alpha\delta}+1+\ell)\Gamma(\delta\ell+\frac{\theta}{\alpha}+k)}
{\Gamma(\frac{\theta}{\alpha\delta}+1)\Gamma(\delta
\ell+\frac{\theta}{\alpha}+1)}.
$$
\end{enumerate}
\end{prop}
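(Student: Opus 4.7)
The central claim --- that the $\mathrm{Frag}_{\alpha,-\alpha\delta}$ image of $(P_{\ell,0}(\lambda))$ is $\mathbb{L}^{(1)}_{\alpha|\delta,\theta}(\lambda)$ with density~(\ref{MittagFragdensity}) --- is a direct consequence of Corollary~\ref{corPDarcondlaw}; the real work is to reduce the push-forward mixing measure to the stated closed form. From Proposition~\ref{PropMittagEPPF} applied with $\alpha\delta$ in place of $\alpha$, $(P_{\ell,0}(\lambda))\sim\mathrm{PK}_{\alpha\delta}(\gamma)$ with
$$
h(y)=\frac{e^{-\lambda y^{-\alpha\delta}}\,y^{-\theta}}{\mathbb{E}[S^{-\theta}_{\alpha\delta}]\,\mathrm{E}^{(\theta/(\alpha\delta)+1)}_{\alpha\delta,\theta+1}(-\lambda)}.
$$
Substituting $y=tS^{1/\alpha}_{\delta}$ gives $h(y)\propto t^{-\theta}\,S_\delta^{-\theta/\alpha}\,e^{-\lambda t^{-\alpha\delta}S_\delta^{-\delta}}$. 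Setting $U=S_\delta^{-\delta}$ (density $g_\delta$), the tilt identity $g_{\delta,\theta/\alpha}(u)=u^{\theta/(\alpha\delta)}g_\delta(u)/\mathbb{E}[S_\delta^{-\theta/\alpha}]$ together with~(\ref{MittagLeffler}) with $\alpha\!\to\!\delta,\ \theta\!\to\!\theta/\alpha$ supplies
$\mathbb{E}[S_\delta^{-\theta/\alpha}e^{-\mu S_\delta^{-\delta}}]=\mathbb{E}[S_\delta^{-\theta/\alpha}]\,\mathrm{E}^{(\theta/(\alpha\delta)+1)}_{\delta,\theta/\alpha+1}(-\mu)$. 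Taking $\mu=\lambda t^{-\alpha\delta}$, using $\mathbb{E}[S^{-\theta}_{\alpha\delta}]=\mathbb{E}[S_\alpha^{-\theta}]\,\mathbb{E}[S_\delta^{-\theta/\alpha}]$ (from~(\ref{PitmanKnmoment})), and changing variable to $s=t^{-\alpha}$ (so $f_{\alpha,\theta}(t)\,dt=g_{\alpha,\theta}(s)\,ds$ and $t^{-\alpha\delta}=s^{\delta}$), the formula $\gamma_{\alpha|\delta}(dt)/dt=\mathbb{E}[h(tS^{1/\alpha}_\delta)]f_\alpha(t)$ from Corollary~\ref{corPDarcondlaw} collapses to~(\ref{MittagFragdensity}).

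For statement~(i) I would invoke Proposition~\ref{PropVnfrag}. The Gibbs weight for $(P_{\ell,0}(\lambda))$ read off~(\ref{MittagEPPF3}) is
$$
\tilde V_{n,j}=\frac{\mathrm{E}^{(\theta/(\alpha\delta)+j)}_{\alpha\delta,\theta+n}(-\lambda)}{\mathrm{E}^{(\theta/(\alpha\delta)+1)}_{\alpha\delta,\theta+1}(-\lambda)}\cdot\frac{\alpha\delta\,(\theta/(\alpha\delta))_j\,\Gamma(n)}{(\theta)_n\,\Gamma(j)}.
$$
Plugging into $[\sum_j\mathbb{P}^{(k)}_{\delta,0}(j)\tilde V_{n,j}]\,p_\alpha(n_1,\ldots,n_k)$, I would apply~(\ref{PDgenKn}) with $(\alpha,\theta,n)\!\to\!(\delta,\theta/\alpha,k)$ to bundle $\mathbb{P}^{(k)}_{\delta,0}(j)\cdot d^{(k)}_{\delta,\theta/\alpha}(j)=\mathbb{P}^{(k)}_{\delta,\theta/\alpha}(j)$; the leftover Pochhammer ratio $\alpha(\theta/\alpha)_k\Gamma(n)/[(\theta)_n\Gamma(k)]$ then converts $p_\alpha$ into $p_{\alpha,\theta}$, producing exactly~(\ref{Mittagalpha|deltaV2}).

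For statement~(ii) I would expand the inner Mittag-Leffler via~(\ref{altMGen}) with random argument $x=S_{\alpha,n}^{-\alpha\delta}\beta^{\delta}_{k,n/\alpha-k}$, multiply term-wise by $S_{\alpha,n}^{-\theta}\beta^{\theta/\alpha}_{k,n/\alpha-k}$, and take expectations using independence. The required moments are
$$
\mathbb{E}[S_{\alpha,n}^{-\alpha\delta\ell-\theta}]=\frac{\Gamma(n/\alpha+\delta\ell+\theta/\alpha+1)\,\Gamma(n+1)}{\Gamma(n+\alpha\delta\ell+\theta+1)\,\Gamma(n/\alpha+1)},\qquad
\mathbb{E}[\beta^{\delta\ell+\theta/\alpha}_{k,n/\alpha-k}]=\frac{\Gamma(n/\alpha)\,\Gamma(k+\delta\ell+\theta/\alpha)}{\Gamma(k)\,\Gamma(n/\alpha+\delta\ell+\theta/\alpha)};
$$
their product telescopes via the identity $\alpha(n/\alpha+\delta\ell+\theta/\alpha)=n+\alpha\delta\ell+\theta$, and dividing by $\mathbb{E}[S_\alpha^{-\theta}]=\Gamma(\theta/\alpha+1)/\Gamma(\theta+1)$ delivers the displayed series. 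Equality with the sum $\sum_j\mathbb{P}^{(k)}_{\delta,\theta/\alpha}(j)\mathrm{E}^{(\theta/(\alpha\delta)+j)}_{\alpha\delta,\theta+n}(-\lambda)$ then comes from reversing the order of summation over $j$ and $\ell$ and applying the PD$(\delta,\theta/\alpha)$ version of~(\ref{PitmanKnmoment}), which supplies $\sum_j\mathbb{P}^{(k)}_{\delta,\theta/\alpha}(j)\,\Gamma(\theta/(\alpha\delta)+j+\ell)/\Gamma(\theta/(\alpha\delta)+j)$ in closed form via $\mathbb{E}[S^{-\delta\ell}_{\delta,\theta/\alpha}\mid K_k=j]$. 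The chief obstacle is the gamma-function bookkeeping in~(ii): the Pochhammer cancellations are routine but one has to track several simultaneous reductions between $\Gamma$'s at $n/\alpha+\delta\ell+\theta/\alpha$ and $n+\alpha\delta\ell+\theta$ to recognise the telescoping pattern and to keep the $p_\alpha$ versus $p_{\alpha,\theta}$ prefactors straight.
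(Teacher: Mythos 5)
Your proposal is correct and follows essentially the same route as the paper: the law of the fragmented partition and statement (i) come from Corollary~\ref{corPDarcondlaw} and Proposition~\ref{PropVnfrag} combined with the Laplace-transform identity $\mathbb{E}\big[{\mbox e}^{-\lambda y^{-\alpha\delta}S^{-\delta}_{\delta,\frac{\theta}{\alpha}}}\big]=\mathrm{E}^{(\frac{\theta}{\alpha\delta}+1)}_{\delta,\frac{\theta}{\alpha}+1}(-\lambda y^{-\alpha\delta})$, and your explicit Pochhammer bookkeeping for (i) is just an unwinding of the identity~(\ref{appendixdecomp}) that the paper cites. For (ii) you invoke exactly the paper's term-wise moment identity $\mathbb{E}\big[S^{-\alpha\delta\ell-\theta}_{\alpha,n}\big]\mathbb{E}\big[\beta^{\delta\ell+\frac{\theta}{\alpha}}_{k,\frac{n}{\alpha}-k}\big]=\Gamma(n)\Gamma(\delta\ell+\frac{\theta}{\alpha}+k)/[\Gamma(k)\Gamma(\alpha\delta\ell+\theta+n)]$, so the two arguments coincide.
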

\begin{proof}
Using Corollary~\ref{corPDarcondlaw} and Proposition~\ref{PropVnfrag},~(\ref{MittagFragdist}) follows from the fact that
$$
h(y)=\frac{{\mbox e}^{-\lambda y^{-\alpha\delta}}y^{-\theta}}{\mathrm{E}^{(\frac{\theta}{\alpha\delta}+1)}_{\alpha\delta,\theta+1}(-\lambda)\mathbb{E}\big[S^{-\theta}_{\alpha\delta}\big]} \quad{\mbox { and }}\quad\mathbb{E}\left[{\mbox e}^{-{\lambda}{ y^{-\alpha\delta}}S^{-\delta}_{\delta,\frac{\theta}{\alpha}}}\right]=\mathrm{E}^{(\frac{\theta}{\alpha\delta}+1)}_{\delta,\frac{\theta}{\alpha}+1}\left(-\frac{\lambda}{ y^{\alpha\delta}}
\right).
$$
From~(\ref{MittagVnk}),~(\ref{Gibbsalpha|deltaV}), and~(\ref{MittagEPPF3}) the EPPF can be represented as
$$
\left[\sum_{j=1}^{k}\mathbb{P}^{(k)}_{\delta,0}(j)\frac{\mathrm{E}^{(\frac{\theta}{\alpha\delta}+j)}_{\alpha\delta,\theta+n}(-\lambda)}{\mathrm{E}^{(\frac{\theta}{\alpha\delta}+1)}_{\alpha\delta,\theta+1}(-\lambda)}\frac{\mathbb{E}\big[S^{-\theta}_{\alpha\delta}|K_{n}=j\big]}{
\mathbb{E}\big[S^{-\theta}_{\alpha\delta}\big]}\right]p_{\alpha}(n_{1},\ldots,n_{k}),
$$
which, along with~(\ref{appendixdecomp}), yields (i). Statement~(ii) follows from $\mathbb{E}[S^{-\alpha\delta\ell-\theta}_{\alpha,n}]\mathbb{E}\big[\beta^{\delta\ell+\frac{\theta}{\alpha}}_{k,\frac{n}{\alpha}-k}\big]=\dfrac{\Gamma(n)\Gamma(\delta\ell+\frac{\theta}{\alpha}+k)}{\Gamma(k)\Gamma(\alpha\delta\ell+\theta+n)}.$ 
\end{proof}
In the case of a $\mathrm{PD}(\alpha|-\frac{\alpha}{2})$ fragmentation of
$(P_{\ell,0}(\lambda))\sim \mathbb{L}^{(0)}_{\frac{\alpha}{2},\theta}(\lambda),$
where $\delta=\frac12,$ we see from~(\ref{MittagFragdist}) that the density of $\hat{Z}_{1}(\lambda)$ can be expressed in terms of a Hermite function,
$$
\hat{g}^{(1)}_{\alpha|\frac{1}{2},\theta}(s|\lambda)=\frac{\mathbb{E}\big[{|B_{1}|}^{\frac{2\theta}{\alpha}+1}\big]\tilde{h}_{-(\frac{2\theta}{\alpha}+1)}(\lambda\sqrt{2s})g_{\alpha,\theta}(s)}{\mathrm{E}^{(\frac{2\theta}{\alpha}+1)}_{\frac{\alpha}{2},\theta+1}(-\lambda)}.
$$
When $\alpha=\frac12,$ $\big(P_{\ell,1}\big(\frac{\lambda}{\sqrt{2}}\big)\big)\sim\mathbb{L}^{(1)}_{\frac{1}{2}|\frac{1}{2},\theta}\big(\frac{\lambda}{\sqrt{2}}\big)$ results in a local time $\hat{L}_{1}(\lambda)=\hat{Z}_{1}\big(\frac{\lambda}{\sqrt{2}}\big)/\sqrt{2}$ with density
$$
\frac{\mathbb{E}\big[{|B_{1}|}^{4\theta+1}\big]\tilde{h}_{-(4\theta+1)}(\lambda\sqrt{x})f_{L_{1,\theta}}(x)}{\mathrm{E}^{(4\theta+1)}_{\frac{1}{4},\theta+1}\big(-\frac{\lambda}{\sqrt{2}}\big)}.
$$
Furthermore, for $\hat{Z}_{1}(\frac{\lambda}{\sqrt{2}})$, where $\alpha\delta=1/2,$ one has, for $\alpha\in (\frac{1}{2},1),$ densities of the form
$$
\hat{g}^{(1)}_{\alpha|\frac{1}{2\alpha},\theta}\left(s\bigg|\frac{\lambda}{\sqrt{2}}\right)=\frac{\mathrm{E}^{(2{\theta}+1)}_{\frac{1}{2\alpha},\frac{\theta}{\alpha}+1}\left(-{\frac{\lambda}{\sqrt{2}} s^{\frac{1}{2\alpha}}}\right)
g_{\alpha,\theta}(s)}
{\mathbb{E}\big[{|B_{1}|}^{2\theta+1}\big]\tilde{h}_{-(2\theta+1)}(\lambda)}.
$$


\appendix\section{Results derived from the $\mathrm{Coag}_{\delta,\frac{\theta}{\alpha}}$ operator}
\subsection{Identities associated with the $\mathrm{Coag}_{\delta,\frac{\theta}{\alpha}}$ operator}
We now proceed to derive a variety of results related to the~coagulation operator, $\mathrm{Coag}_{\delta,\frac{\theta}{\alpha}}.$
As shown in \cite{BerLegall00}, see also \cite{BerFrag,Pit99Coag,Pit06}, the coagulation results are the easiest to obtain in the important case of $\theta=0,$ corresponding to the Bolthausen-Sznitman coalescent.  This in  effect corresponds to well-known results about composition of independent stable subordinators 
$\hat{S}_{\alpha\delta}(t)\overset{d}=\hat{S}_{\alpha}(\hat{S}_{\delta}(t)),$ and corresponding random variables 
$S_{\alpha,\delta}\overset{d}=S_{\alpha}S^{1/\alpha}_{\delta}.$ 
The next lemma, which is known in some form (see~\cite{DevroyeJames2}), illustrates that many properties in the general case can be deduced from the case of $\theta=0$ by a simple change of measure argument. It in effect presents an alternative illustration of  how the choices of $(\alpha,\theta)$ and $({\delta},\frac{\theta}{\alpha})$ arise in~\cite{Pit99Coag}.

\begin{lem}\label{coaglemma}Let $0<\alpha_{i}<1,$ for $i=1,\ldots,m$, denote indices for independent stable random variables and their two-parameter counterparts.  Set $\tilde{\alpha}_{k}:=\prod_{i=1}^{k}\alpha_{i}$, for $k=1,\ldots, m,$ and choose $\theta>-\tilde{\alpha}_{m}.$
The decomposition,
\begin{equation}
S_{\tilde{\alpha}_{m}}=S_{\alpha_{1}}\times S^{1/\alpha_{1}}_{\alpha_{2}}\times \cdots\times S_{\alpha_{m}}^{1/\tilde{\alpha}_{m-1}},
\label{stabledecomp}
\end{equation}
implies, under a $\mathrm{PD}(\tilde{\alpha}_{m},\theta)$ distribution,
\begin{equation}
S_{\tilde{\alpha}_{m},\theta}=S_{\alpha_{1},\theta}\times S^{1/\alpha_{1}}_{\alpha_{2},\frac{\theta}{\alpha_{1}}}\times \cdots\times S_{\alpha_{m},\frac{\theta}{\tilde{\alpha}_{m-1}}}^{1/\tilde{\alpha}_{m-1}}.
\label{coagdecomp}
\end{equation}
As a special case $S_{\alpha^{m},\theta}=\prod_{i=1}^{m}S^{\alpha^{1-i}}_{\alpha,\theta\alpha^{1-i}}.$
\end{lem}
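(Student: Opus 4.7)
The plan is to derive (\ref{coagdecomp}) from (\ref{stabledecomp}) by a change-of-measure argument, exploiting the fact that $f_{\alpha,\theta}(t) = t^{-\theta}f_{\alpha}(t)/\mathbb{E}[S_{\alpha}^{-\theta}]$ is an exponential-family-type tilt of $f_{\alpha}$ by the power $t^{-\theta}$, and noting that the product defining the right-hand side of (\ref{stabledecomp}) converts the tilt on $S_{\tilde{\alpha}_{m}}$ into a product of independent tilts on each $S_{\alpha_{i}}$. Set $\tilde{\alpha}_{0}=1$ for convenience and write $\tilde{Y}:=S_{\alpha_{1}}\times S_{\alpha_{2}}^{1/\alpha_{1}}\times\cdots\times S_{\alpha_{m}}^{1/\tilde{\alpha}_{m-1}}$, constructed from independent stable variables $(S_{\alpha_{i}})_{i=1}^{m}$. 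By (\ref{stabledecomp}), $\tilde{Y}\overset{d}{=}S_{\tilde{\alpha}_{m}}$.

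First I would observe the algebraic identity $\tilde{Y}^{-\theta}=\prod_{i=1}^{m}S_{\alpha_{i}}^{-\theta/\tilde{\alpha}_{i-1}}$, which is immediate from the form of $\tilde{Y}$. Next I would verify that the condition $\theta>-\tilde{\alpha}_{m}$ implies $\theta/\tilde{\alpha}_{i-1}>-\alpha_{i}$ for each $i=1,\ldots,m$: since $0<\alpha_{j}<1$ implies $\tilde{\alpha}_{i}\geq\tilde{\alpha}_{m}$, we have $-\tilde{\alpha}_{i}\leq -\tilde{\alpha}_{m}<\theta$, and dividing by $\tilde{\alpha}_{i-1}>0$ gives $\theta/\tilde{\alpha}_{i-1}>-\alpha_{i}$. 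Hence each tilt $f_{\alpha_{i},\theta/\tilde{\alpha}_{i-1}}$ is well-defined, and each factor $\mathbb{E}[S_{\alpha_{i}}^{-\theta/\tilde{\alpha}_{i-1}}]<\infty$.

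Now by independence of $(S_{\alpha_{i}})_{i=1}^{m}$ and the factorized form of $\tilde{Y}^{-\theta}$, the joint density of $(S_{\alpha_{1}},\ldots,S_{\alpha_{m}})$ tilted by $\tilde{Y}^{-\theta}/\mathbb{E}[\tilde{Y}^{-\theta}]$ becomes
$$
\frac{\prod_{i=1}^{m}t_{i}^{-\theta/\tilde{\alpha}_{i-1}}f_{\alpha_{i}}(t_{i})}{\prod_{i=1}^{m}\mathbb{E}\big[S_{\alpha_{i}}^{-\theta/\tilde{\alpha}_{i-1}}\big]}=\prod_{i=1}^{m}f_{\alpha_{i},\,\theta/\tilde{\alpha}_{i-1}}(t_{i}),
$$
where I have used the multiplicative property $\mathbb{E}[\tilde{Y}^{-\theta}]=\prod_{i=1}^{m}\mathbb{E}[S_{\alpha_{i}}^{-\theta/\tilde{\alpha}_{i-1}}]$. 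This identifies the tilted joint law with that of independent two-parameter variables $\big(S_{\alpha_{i},\,\theta/\tilde{\alpha}_{i-1}}\big)_{i=1}^{m}$. Applying the same tilt on the left-hand side of (\ref{stabledecomp}) converts $S_{\tilde{\alpha}_{m}}$ into $S_{\tilde{\alpha}_{m},\theta}$ by definition of $f_{\tilde{\alpha}_{m},\theta}$, and pushing the equality in distribution through the map $(t_{1},\ldots,t_{m})\mapsto t_{1}\cdot t_{2}^{1/\alpha_{1}}\cdots t_{m}^{1/\tilde{\alpha}_{m-1}}$ yields (\ref{coagdecomp}). The special case $\alpha_{i}\equiv\alpha$ is immediate.

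The potentially delicate step is the moment factorization $\mathbb{E}[\tilde{Y}^{-\theta}]=\prod_{i}\mathbb{E}[S_{\alpha_{i}}^{-\theta/\tilde{\alpha}_{i-1}}]$, which I would verify directly: each $S_{\alpha}^{-\theta}$ has the explicit Mellin transform $\Gamma(\theta/\alpha+1)/\Gamma(\theta+1)$ recorded in (\ref{PitmanKnmoment}), so the product telescopes via $\Gamma(\theta+1)=\Gamma(\theta/\alpha_{1}\cdot\alpha_{1}+1)$ and repeated cancellation to $\Gamma(\theta/\tilde{\alpha}_{m}+1)/\Gamma(\theta+1)=\mathbb{E}[S_{\tilde{\alpha}_{m}}^{-\theta}]$, giving an independent confirmation of the factorization. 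No further obstacle arises since the rest is a direct change-of-variable manipulation.
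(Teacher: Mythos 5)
Your proposal is correct and is essentially the paper's own argument: the paper also characterizes $S_{\tilde{\alpha}_{m},\theta}$ via the tilt $\mathbb{E}[S^{-\theta}_{\tilde{\alpha}_{m}}]\,\mathbb{E}[\varphi(S_{\tilde{\alpha}_{m},\theta})]=\mathbb{E}[S^{-\theta}_{\tilde{\alpha}_{m}}\varphi(S_{\tilde{\alpha}_{m}})]$ and then substitutes the decomposition (\ref{stabledecomp}) so that the tilt factorizes across the independent components. Your added details (the parameter check $\theta/\tilde{\alpha}_{i-1}>-\alpha_{i}$ and the telescoping Mellin confirmation) are fine but not essential, since the factorization of $\mathbb{E}[\tilde{Y}^{-\theta}]$ already follows from independence.
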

\begin{proof}
The law of $S_{\tilde{\alpha}_{m},\theta}$ is characterized via the law of $S_{\tilde{\alpha}_{m}}$ by 
$$
\mathbb{E}\big[S^{-\theta}_{\tilde{\alpha}_{m}}\big]
\mathbb{E}[\varphi(S_{\tilde{\alpha}_{m},\theta})]=
\mathbb{E}\big[S^{-\theta}_{\tilde{\alpha}_{m}} \varphi(S_{\tilde{\alpha}_{m}})\big].
$$
Substituting $S_{\tilde{\alpha}_{m}}$ with its decomposition in (\ref{stabledecomp}) deduces the change of measure on the individual components leading to~(\ref{coagdecomp}).
\end{proof}

A version of the next, simple but important, consequence of Pitman's~\cite{Pit99Coag} coagulation operation expressed in terms of random partitions of $[n]$, appears in~\cite{JamesRoss}.~(See~\cite[Section~4]{Camer} for related results.)

\begin{lem}\label{lemmablockscoag} Consider two independent mass partitions with respective distributions $(P_{k,1})\sim\mathrm{PD}(\alpha,\theta)$ and $(P_{k,2})\sim\mathrm{PD}(\delta,\frac{\theta}{\alpha}),$ employed in the coagulation operation described by~$\mathrm{Coag}_{\delta,\frac{\theta}{\alpha}}.$ For positive integers $m$ and $n,$ let $K^{(1)}(n):=K^{(1)}_{n}$ and $K^{(2)}(m):=K^{(2)}_{m}$ denote independent random variables corresponding to the number of blocks in partitions of $[n]$ and $[m]$ generated from  the respective $\mathrm{CRP}(\alpha,\theta)$ and $\mathrm{CRP}(\delta,\frac{\theta}{\alpha})$ processes. Let $K_{n}$ denote the number of blocks of the resulting $\mathrm{PD}(\alpha\delta,\theta)$ partition of $[n],$ corresponding to the $(P_{k,0})\sim \mathrm{PD}(\alpha\delta,\theta)$ mass partition obtained from~$(P_{k,0})=\mathrm{Coag}_{\delta,\frac{\theta}{\alpha}}((P_{k,1}))$. Then,
\begin{enumerate}
\item[(i)] $
K_{n}:=K^{(2)}(K^{(1)}(n)).
$
\item[(ii)]As $n\rightarrow \infty,$ $n^{-\alpha\delta}K_{n}\overset{a.s.}\sim S^{-\alpha\delta}_{\alpha\delta,\theta}=S^{-\alpha\delta}_{\alpha,\theta}\times S^{-\delta}_{\delta,\frac{\theta}{\alpha}}.$
\end{enumerate}  
\end{lem}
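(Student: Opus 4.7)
Plan: For statement~(i), the plan is to exploit the partition-level version of the coagulation operation together with the composition identity $P_{\alpha\delta,\theta}=P_{\alpha,\theta}\circ P_{\delta,\theta/\alpha}$ noted in the paragraph preceding Corollary~\ref{corPDarcondlaw}. The key observation is that the $\mathrm{PD}(\alpha\delta,\theta)$ partition of $[n]$ governed by $(P_{k,0})=\mathrm{Coag}_{\delta,\theta/\alpha}((P_{k,1}))$ can be produced by a two-stage exchangeable sampling: first, $n$ iid uniforms sent through the random distribution $P_{\alpha,\theta}$ associated with $(P_{k,1})$ produce a $\mathrm{CRP}(\alpha,\theta)$ partition of $[n]$ with $K^{(1)}(n)$ distinct first-stage atoms visited; second, independently, each of those $K^{(1)}(n)$ distinct atoms is relabelled by an exchangeable sample of size $K^{(1)}(n)$ from the independent $\mathrm{PD}(\delta,\theta/\alpha)$ coagulator $(P_{k,2})$, which is itself a $\mathrm{CRP}(\delta,\theta/\alpha)$ partition of $[K^{(1)}(n)]$. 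Two indices of $[n]$ share a block of $(P_{k,0})$ if and only if their first-stage atoms are merged at the second stage, so $K_n=K^{(2)}(K^{(1)}(n))$.

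For statement~(ii), I would combine~(i) with Pitman's a.s.\ scaling limit from~\eqref{inverselocaltime} applied to each of the two independent chains, namely $n^{-\alpha}K^{(1)}(n)\to S^{-\alpha}_{\alpha,\theta}$ and $m^{-\delta}K^{(2)}(m)\to S^{-\delta}_{\delta,\theta/\alpha}$ almost surely. Writing
\begin{equation*}
n^{-\alpha\delta}K_n
=\bigl(n^{-\alpha}K^{(1)}(n)\bigr)^{\delta}\cdot\bigl(K^{(1)}(n)\bigr)^{-\delta}K^{(2)}(K^{(1)}(n)),
\end{equation*}
the first factor converges a.s.\ to $(S^{-\alpha}_{\alpha,\theta})^{\delta}=S^{-\alpha\delta}_{\alpha,\theta}$; since $K^{(1)}(n)\to\infty$ a.s.\ and $(K^{(2)}_m)_{m\ge 1}$ is independent of $(K^{(1)}_m)_{m\ge 1}$, a routine composition-of-limits argument along the independent integer-valued time change $m=K^{(1)}(n)$ shows that the second factor converges a.s.\ to $S^{-\delta}_{\delta,\theta/\alpha}$. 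The product equals $S^{-\alpha\delta}_{\alpha,\theta}\cdot S^{-\delta}_{\delta,\theta/\alpha}=S^{-\alpha\delta}_{\alpha\delta,\theta}$ by the local-time factorisation~\eqref{alphadeltalocal}.

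The main obstacle is making the two-stage sampling description used in~(i) completely rigorous; one must verify that the $K^{(1)}(n)$ distinct first-stage atoms actually visited by the sample receive an exchangeable $\mathrm{PD}(\delta,\theta/\alpha)$ relabelling, not one biased by their masses. This is precisely the partition-level content of Pitman's~\cite{Pit99Coag} $\mathrm{Coag}_{\delta,\theta/\alpha}$ operator: the coagulator is defined by assigning exchangeable $P_{\delta,\theta/\alpha}$-labels to the atoms of $(P_{k,1})$ independently of the sampling mechanism, so conditioning on the event $\{K^{(1)}(n)=m\}$ leaves an independent $\mathrm{CRP}(\delta,\theta/\alpha)$ partition of $[m]$ on the visited atoms. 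Once that point is settled, both assertions of the lemma follow.
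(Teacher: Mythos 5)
Your argument is correct, and it is exactly the reasoning the paper has in mind: the paper states this lemma without proof, presenting it as a "simple but important consequence" of Pitman's partition-level $\mathrm{Coag}_{\delta,\frac{\theta}{\alpha}}$ duality (citing James--Ross), which is precisely the restriction/two-stage sampling argument you spell out for (i), while (ii) follows, as you say, from the a.s.\ $\alpha$-diversity limits composed along the time change $m=K^{(1)}(n)$ together with the factorization $S^{-\alpha\delta}_{\alpha\delta,\theta}=S^{-\alpha\delta}_{\alpha,\theta}\times S^{-\delta}_{\delta,\frac{\theta}{\alpha}}$. The only remark worth adding is that the composition-of-limits step needs no independence at all (independence of the two block-counting processes is used only to identify the law of the limit as a product of independent factors), and your handling of the exchangeable relabelling of the visited atoms correctly isolates where Pitman's coagulation theorem is invoked.
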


\subsection{More Beta identities and results for $K_{n}$}
\begin{prop}
Let $\alpha=\frac1{r^{m}}$, for $r=2,3,\ldots,$ and $m=1,2,\ldots$. Then, there is the identity,
\begin{equation}
\prod_{i=1}^{n}\beta^{r^{m}}_{r^{m}(\theta+j-1)+1,r^{m}-1}\overset{d}=
\prod_{i=1}^{r^{m}-1} \beta_{\theta+\frac{i}{r^{m}},n}\overset{d}=
\prod_{j=1}^{m}\prod_{i=1}^{r-1} \beta^{r^{j-1}}_{\theta r^{j-1}+\frac{i}{r},nr^{j-1}}
\label{betaprodsimple2},
\end{equation}
which represents a reduction from $r^{m}-1$ terms to $m(r-1)$ terms.
For $\alpha=\frac14,$ setting $r=2$ and $m=2$ yields
\begin{equation}
\prod_{i=1}^{n}\beta^{4}_{4(\theta+j-1)+1,3} \overset{d}=\beta_{\theta+\frac{1}{4},n}
\beta_{\theta+\frac{1}{2},n} \beta_{\theta+\frac{3}{4},n}\overset{d}=\beta_{\theta+\frac{1}{2},n}
\beta^{2}_{2\theta+\frac{1}{2},2n}.
\label{betaprodsimple3}
\end{equation}
\end{prop}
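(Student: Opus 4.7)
\medskip

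\noindent\textbf{Proof proposal.} The first equality in~(\ref{betaprodsimple2}) is a direct application of Proposition~\ref{Propbetaid1}(ii): the hypothesis there is $\alpha=1/r$ with integer $r\ge 2$, and here we simply take the integer to be $r^{m}\ge 2$ (so $\alpha=1/r^{m}$). No further work is required for that half.

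For the second equality, my plan is to verify it at the level of Mellin transforms. Both sides are products of independent beta variables, hence bounded random variables on $[0,1]$, so they are moment-determined. Using the standard formula $\mathbb{E}[\beta_{a,b}^{t}]=\Gamma(a+t)\Gamma(a+b)/[\Gamma(a)\Gamma(a+b+t)]$, the $s$-th moments of the LHS and the RHS are each a ratio of products of gamma values evaluated at shifts $\theta, \theta+s, \theta+n, \theta+n+s$. I would then apply the Gauss multiplication formula
$\prod_{i=0}^{p-1}\Gamma(z+i/p)=(2\pi)^{(p-1)/2}\,p^{1/2-pz}\,\Gamma(pz)$ with $p=r^{m}$ on the left and with $p=r$ (at each level $j=1,\ldots,m$, applied to $z=r^{j-1}(\theta+a)$) on the right. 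The $(2\pi)$ and $r$ prefactors cancel out of the combined numerator/denominator because the exponents are symmetric in the four shifts $a\in\{0,s,n,n+s\}$ (they satisfy $-s-n+(n+s)=0$), and the remaining gamma quotients on the RHS telescope across $j$, leaving on both sides the common expression
$$\frac{\Gamma(\theta)\,\Gamma(\theta+n+s)\,\Gamma(r^{m}(\theta+s))\,\Gamma(r^{m}(\theta+n))}{\Gamma(\theta+s)\,\Gamma(\theta+n)\,\Gamma(r^{m}\theta)\,\Gamma(r^{m}(\theta+n+s))}.$$

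A more structural alternative, which I would record as a remark, is to iterate the coagulation identity of Lemma~\ref{coaglemma} with $\alpha_{i}=1/r$ for all $i$, yielding $S_{1/r^{m},\theta}\overset{d}{=}\prod_{j=1}^{m}S_{1/r,\,r^{j-1}\theta}^{r^{j-1}}$; combined with (\ref{jamesidPDrecurse}) for $\alpha=1/r^{m}$ (which identifies the LHS with $S_{1/r^{m},\theta+n}/S_{1/r^{m},\theta}$) and Proposition~\ref{Propbetaid1}(ii) applied at each level $j$ with $(\theta,n)$ replaced by $(r^{j-1}\theta,\,r^{j-1}n)$ and the resulting beta products raised to the power $r^{j-1}$, one reassembles the RHS. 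The specialization $(r,m)=(2,2)$ yields~(\ref{betaprodsimple3}), where only $i=1$ survives in the inner product.

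\medskip

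\noindent\textbf{Main obstacle.} The genuine difficulty is making the structural (coagulation) route rigorous: marginal identities from Lemma~\ref{coaglemma} do not immediately yield identities for the ratios $S_{1/r,r^{j-1}(\theta+n)}/S_{1/r,r^{j-1}\theta}$ with consistent joint coupling across the different indices $j$. The Mellin/moment approach is therefore the safer route; its only cost is careful bookkeeping of the four-fold $\Gamma$-products, which the Gauss multiplication formula reduces to the transparent telescoping displayed above.
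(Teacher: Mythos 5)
Your proposal is correct, but it proves the second identity by a genuinely different route than the paper. The first equality you dispatch exactly as the paper does: Proposition~\ref{Propbetaid1}(ii) applied with the integer $r^{m}$ (which is precisely what specializing (\ref{jamesidPDrecurse2}) and (\ref{stablegeninteger}) to $\alpha=1/r^{m}$ gives). For the second equality, however, the paper does not compute moments: it specializes (\ref{jamesidPDrecurse2}) and (\ref{stablegeninteger}) and invokes Lemma~\ref{coaglemma}, $S_{1/r^{m},\theta}\overset{d}=\prod_{j=1}^{m}S^{r^{j-1}}_{1/r,\theta r^{j-1}}$, to obtain the independent-gamma identity $\prod_{j=1}^{m}\prod_{i=1}^{r-1}G^{r^{j-1}}_{\theta r^{j-1}+i/r}\overset{d}=\prod_{j=1}^{m}\prod_{i=1}^{r-1}G^{r^{j-1}}_{(\theta+n)r^{j-1}+i/r}\times\prod_{i=1}^{n}\beta^{r^{m}}_{r^{m}(\theta+i-1)+1,r^{m}-1}$ with the two right-hand products independent, and then concludes by factorwise beta--gamma algebra, exactly as in the proof of Proposition~\ref{Propbetaid1}(ii). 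In other words, the ``structural alternative'' you sketch as a remark is essentially the paper's proof, and the coupling difficulty you flag is circumvented there by working at the level of the gamma factorization coming from (\ref{stablegeninteger}) (where independence is built in), rather than at the level of the ratios $S_{1/r,r^{j-1}(\theta+n)}/S_{1/r,r^{j-1}\theta}$. Your primary Mellin/Gauss-multiplication argument is valid and complete in outline: the $(2\pi)$ and power-of-$r$ prefactors cancel by the symmetry of the four shifts, the level-$j$ quotients telescope to the displayed four-gamma ratio, this matches the $p=r^{m}$ computation for the middle product, and Hausdorff moment determinacy on $[0,1]$ closes the argument; note also that the caveat in the Remark following Proposition~\ref{Propbetaid1} about the difficulty of moment arguments concerns the random $K_{n}$-mixture in (\ref{betaKid}) and does not apply here, since both sides of (\ref{betaprodsimple2}) are fixed products of independent betas. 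What each approach buys: yours is self-contained and elementary, requiring only the multiplication formula; the paper's is shorter given its earlier machinery and exhibits the identity as a direct consequence of the coagulation structure, which is the conceptual point of the appendix.
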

\begin{proof}
Specializing (\ref{jamesidPDrecurse2})
and~(\ref{stablegeninteger}) to the case of $\alpha=\frac1{r^{m}}$, and further applying Lemma~\ref{coaglemma}, leads to 
$$
\prod_{j=1}^{m}\prod_{i=1}^{r-1} G^{r^{j-1}}_{\theta r^{j-1}+\frac{i}{r}}
\overset{d}=
\prod_{j=1}^{m}\prod_{i=1}^{r-1} G^{r^{j-1}}_{(\theta+n)r^{j-1}+\frac{i}{r}}
\times \prod_{i=1}^{n}\beta^{r^{m}}_{r^{m}(\theta+j-1)+1,r^{m}-1}.
$$
The result follows as in Lemma~\ref{Propbetaid1}.
\end{proof}

\begin{prop}\label{propKncoag} Applying  Lemma~\ref{lemmablockscoag} leads to the following identities.
\begin{enumerate}
\item[(i)]
$
\mathbb{P}_{\alpha\delta,\theta}^{(n)}(k)=\displaystyle\sum_{\ell=k}^{n}\mathbb{P}_{\alpha,\theta}^{(n)}(\ell)\mathbb{P}_{\delta,\frac{\theta}{\alpha}}^{(\ell)}(k),
$
for $\theta>-\alpha\delta$.
\item[(ii)]There is the easily checked  result,
\begin{equation}
\frac{\mathbb{E}_{\alpha\delta,0}[S^{-\theta}_{\alpha\delta}|K_{n}=k]}{\mathbb{E}[S^{-\theta}_{\alpha\delta}]}=\frac{\mathbb{E}_{\alpha,0}[S^{-\theta}_{\alpha}|K_{n}=\ell]}{\mathbb{E}[S^{-\theta}_{\alpha}]}\times 
\frac{\mathbb{E}_{\delta,0}\big[S^{-\frac{\theta}{\alpha}}_{\delta}|K_{\ell}=k\big]}{\mathbb{E}\big[S^{-\frac{\theta}{\alpha}}_{\delta}\big]}.
\label{appendixdecomp}
\end{equation}
\item[(iii)]Statement (ii) coincides with the fact that  statement (i) is determined by the case of $\theta=0,$ 
\begin{equation}
\mathbb{P}_{\alpha\delta,0}^{(n)}(k)=\sum_{\ell=k}^{n}\mathbb{P}_{\alpha,0}^{(n)}(\ell)\mathbb{P}_{\delta,0}^{(\ell)}(k),
\label{Kncoagid}
\end{equation}
which leads to identities for the generalized Stirling numbers,
 \begin{equation}
a(n,k,\alpha\delta)=\sum_{\ell=k}^{n}a(n,\ell,\alpha)a(\ell,k,\delta),
\label{Stirlingid}
\end{equation}
or, equivalently,
 \begin{equation}
S_{\alpha\delta}(n,k)=\sum_{\ell=k}^{n}\alpha^{\ell-k}S_{\alpha}(n,\ell)S_{\delta}(\ell,k),
\label{Stirlingid2}
\end{equation}
where $S_\alpha(n,\ell)=\alpha^{-\ell}a(n,\ell,\alpha)=\sum_{j=1}^{\ell}\frac{{(-1)}^{j}}{\ell!}{\ell\choose j}(-j\alpha)_{n}$.
\end{enumerate}
\end{prop}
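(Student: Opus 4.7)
The plan is to derive all three statements as essentially immediate consequences of Lemma~\ref{lemmablockscoag}, combined with the explicit gamma-ratio formula for $d^{(n)}_{\alpha,\theta}(k)=\mathbb{E}_{\alpha,0}[S^{-\theta}_\alpha|K_n=k]/\mathbb{E}[S^{-\theta}_\alpha]$ stated just before Corollary~\ref{PDRLresults}.

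For~(i), I would start from the representation $K_n \overset{d}{=} K^{(2)}(K^{(1)}(n))$ of Lemma~\ref{lemmablockscoag}(i), where $K^{(1)}(n)\sim \mathrm{CRP}(\alpha,\theta)$ on $[n]$ and, conditionally on $K^{(1)}(n)=\ell$, $K^{(2)}(\ell)\sim \mathrm{CRP}(\delta,\theta/\alpha)$ on $[\ell]$ independent of $K^{(1)}$. Then $\mathbb{P}_{\alpha\delta,\theta}^{(n)}(k)=\mathbb{P}(K^{(2)}(K^{(1)}(n))=k)$ unfolds by conditioning on $K^{(1)}(n)=\ell$ and summing $\ell$ from $k$ to $n$, producing exactly the claimed identity. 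The range $\ell\in\{k,\ldots,n\}$ arises since one needs $\ell\ge k$ for $K^{(2)}(\ell)=k$ to have positive probability and $\ell\le n$ since $K^{(1)}(n)\le n$.

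For~(ii), the goal is an algebraic identity, which I would simply verify by substituting the explicit form
$d^{(n)}_{\alpha,\theta}(k)=\frac{\Gamma(n)}{\Gamma(k)}\frac{\alpha(\theta/\alpha)_k}{(\theta)_n}$
into both sides. The right-hand side becomes
$$
\frac{\Gamma(n)}{\Gamma(\ell)}\frac{\alpha(\theta/\alpha)_\ell}{(\theta)_n}\cdot\frac{\Gamma(\ell)}{\Gamma(k)}\frac{\delta(\theta/(\alpha\delta))_k}{(\theta/\alpha)_\ell}
=\frac{\Gamma(n)}{\Gamma(k)}\frac{\alpha\delta\,(\theta/(\alpha\delta))_k}{(\theta)_n},
$$
after the $(\theta/\alpha)_\ell$ factors cancel, which matches $d^{(n)}_{\alpha\delta,\theta}(k)$. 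Notably, the right-hand side has no residual dependence on $\ell$, which is precisely what makes~(iii) work next.

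For~(iii), I would factor (i) through~(\ref{PDgenKn}), writing $\mathbb{P}_{\alpha,\theta}^{(n)}(\ell)=d^{(n)}_{\alpha,\theta}(\ell)\mathbb{P}_{\alpha,0}^{(n)}(\ell)$ (similarly for $\delta$ and $\alpha\delta$). By the $\ell$-free product identity of~(ii), the factor $d^{(n)}_{\alpha\delta,\theta}(k)$ can be pulled out of the sum on the right, and dividing both sides through yields the $\theta=0$ special case~(\ref{Kncoagid}). The Stirling-number reformulations then come by substituting
$\mathbb{P}^{(n)}_{\alpha,0}(k)=\frac{\alpha^{k-1}\Gamma(k)}{\Gamma(n)}S_\alpha(n,k)$
and clearing common factors of $\Gamma(n),\Gamma(\ell),\Gamma(k)$ and powers of $\alpha,\delta$; the telescoping of $\alpha^{\ell-1}\delta^{k-1}$ versus $(\alpha\delta)^{k-1}$ leaves the factor $\alpha^{\ell-k}$ in~(\ref{Stirlingid2}), and the equivalent statement~(\ref{Stirlingid}) follows using $a(n,k,\alpha)=\alpha^k S_\alpha(n,k)$.

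There is no real obstacle here: once Lemma~\ref{lemmablockscoag} is in hand, (i) is conditioning, (ii) is Pochhammer bookkeeping, and (iii) is substitution. The only point requiring mild care is checking that the cancellation in~(ii) indeed eliminates all $\ell$-dependence, as this is the structural fact that allows the random-recursive identity for $K_n$ under $\mathrm{PD}(\alpha\delta,\theta)$ to reduce to the $\theta=0$ case and hence to the Stirling-number convolution.
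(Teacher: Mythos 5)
Your proposal is correct and follows essentially the same route as the paper: (i) by conditioning on $K^{(1)}(n)=\ell$ in the composition $K_n=K^{(2)}(K^{(1)}(n))$ of Lemma~\ref{lemmablockscoag}, (ii) by direct substitution of the gamma-ratio form of $d^{(n)}_{\alpha,\theta}(k)$ (the paper's "easily checked"), and (iii) by factoring through~(\ref{PDgenKn}) and clearing constants to reach the Stirling-number convolutions. Your cancellations in (ii) and the bookkeeping leading to~(\ref{Stirlingid}) and~(\ref{Stirlingid2}) check out.
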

\begin{proof}Lemma~\ref{lemmablockscoag} implies that $\mathbb{P}_{\alpha\delta,\theta}(K_{n}=k|K^{(1)}_{n}=\ell)=\mathbb{P}_{\alpha,\theta}(K^{(2)}_{\ell}=k),$ which leads to statement (i) and~(\ref{Kncoagid}). (\ref{Stirlingid}) and~(\ref{Stirlingid2}) follow from this. The remainder of the results are easily checked.
\end{proof}
\begin{lem}\label{PropConKn} Consider the converse to the setting in Lemma~\ref{lemmablockscoag}, where $(P_{k,0})\sim \mathrm{PD}(\alpha\delta,\theta)$ and $(P_{k,1})\sim\mathrm{PD}(\alpha,\theta)$ as defined by the fragmentation operator~(\ref{fragoperator}). Then, for all $\theta>-\alpha\delta,$ the conditional distribution of the number of blocks of the partition of $[n]$ arising from fragmenting the $\mathrm{PD}(\alpha\delta,\theta)$ partition of $[n]$ with $k$ blocks by iid partitions of $[n]$ from a $\mathrm{PD}(\alpha,-\alpha\delta)$ distribution, that is, $K^{(1)}_{n}|K_{n}=k,$ can be expressed as
$$
\mathbb{P}^{(n)}_{\alpha|\alpha\delta}(\ell|k):=\frac{\mathbb{P}_{\delta,0}(K^{(2)}_{\ell}=k)\mathbb{P}_{\alpha,0}(K^{(1)}_{n}=\ell)}{
\mathbb{P}_{\alpha\delta,0}(K_{n}=k)},\qquad \ell=k,\ldots,n,
$$
which does not depend on $\theta.$ 
\end{lem}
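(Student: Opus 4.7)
The plan is to derive the conditional distribution via Bayes' rule applied directly to the Coag/Frag duality of \cite{Pit99Coag}, and then show that the $\theta$-dependent factors cancel out by exploiting the multiplicative factorization of $d^{(n)}_{\alpha,\theta}(k)$ from~(\ref{PDgenKn}).

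First, I would invoke the dual structure: since $(P_{k,0}) = \mathrm{Coag}_{\delta,\theta/\alpha}((P_{k,1}))$ via an independent $\mathrm{PD}(\delta,\theta/\alpha)$ mass partition, Lemma~\ref{lemmablockscoag} gives the representation $K_n = K^{(2)}(K^{(1)}_n)$, where $(K^{(2)}(m):m\ge 1)$ is an independent $\mathrm{CRP}(\delta,\theta/\alpha)$ block-count process. In particular, conditional on $K^{(1)}_n = \ell$, the number $K_n$ has law $\mathbb{P}^{(\ell)}_{\delta,\theta/\alpha}(k)$. By Bayes' rule applied to the joint distribution of $(K^{(1)}_n,K_n)$ under $\mathrm{PD}(\alpha\delta,\theta)$,
\begin{equation*}
\mathbb{P}(K^{(1)}_n = \ell \mid K_n=k) = \frac{\mathbb{P}^{(\ell)}_{\delta,\theta/\alpha}(k)\,\mathbb{P}^{(n)}_{\alpha,\theta}(\ell)}{\mathbb{P}^{(n)}_{\alpha\delta,\theta}(k)},\qquad \ell=k,\ldots,n.
\end{equation*}

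Next, I would apply the decomposition $\mathbb{P}^{(n)}_{\alpha,\theta}(k)=d^{(n)}_{\alpha,\theta}(k)\mathbb{P}^{(n)}_{\alpha,0}(k)$ from~(\ref{PDgenKn}) to each of the three factors in the above ratio, reducing the claim to the identity
\begin{equation*}
d^{(n)}_{\alpha\delta,\theta}(k) \;=\; d^{(n)}_{\alpha,\theta}(\ell)\,d^{(\ell)}_{\delta,\theta/\alpha}(k),
\end{equation*}
which must hold for every $\ell\in\{k,\ldots,n\}$. The verification is a short Pochhammer computation: the product on the right equals
\begin{equation*}
\frac{\Gamma(n)}{\Gamma(\ell)}\frac{\alpha\,(\theta/\alpha)_\ell}{(\theta)_n}\cdot\frac{\Gamma(\ell)}{\Gamma(k)}\frac{\delta\,(\theta/(\alpha\delta))_k}{(\theta/\alpha)_\ell}
\;=\;\frac{\Gamma(n)}{\Gamma(k)}\frac{\alpha\delta\,(\theta/(\alpha\delta))_k}{(\theta)_n},
\end{equation*}
where the crucial cancellation of $(\theta/\alpha)_\ell$ and of $\Gamma(\ell)$ eliminates all dependence on $\ell$, yielding exactly $d^{(n)}_{\alpha\delta,\theta}(k)$. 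This is the same factorization implicit in equation~(\ref{appendixdecomp}).

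Substituting back, all three $d$-factors cancel from the Bayes ratio, leaving the $\theta$-free expression
\begin{equation*}
\mathbb{P}(K^{(1)}_n=\ell\mid K_n=k) \;=\; \frac{\mathbb{P}^{(\ell)}_{\delta,0}(k)\,\mathbb{P}^{(n)}_{\alpha,0}(\ell)}{\mathbb{P}^{(n)}_{\alpha\delta,0}(k)},
\end{equation*}
which is the claimed $\mathbb{P}^{(n)}_{\alpha|\alpha\delta}(\ell|k)$. The normalization is automatic from~(\ref{Kncoagid}) of Proposition~\ref{propKncoag}. The only conceptual obstacle is recognizing that the factorization of $d^{(n)}_{\alpha\delta,\theta}(k)$ is independent of the intermediate index $\ell$; once this is observed, the rest is routine Bayesian bookkeeping, and the $\theta$-independence drops out automatically. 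The identity also conforms to the general fact that Gibbs-type conditional block counts depend on the EPPF structure only through ratios that cancel the tilting factor $h(t)=t^{-\theta}/\mathbb{E}[S^{-\theta}_{\alpha\delta}]$.
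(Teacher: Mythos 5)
Your argument is correct and is essentially the proof the paper has in mind: Bayes' rule applied to the joint law of $(K^{(1)}_n,K_n)$ given by the coagulation representation $K_n=K^{(2)}(K^{(1)}_n)$ of Lemma~\ref{lemmablockscoag}, followed by cancellation of the tilting factors via the factorization $d^{(n)}_{\alpha\delta,\theta}(k)=d^{(n)}_{\alpha,\theta}(\ell)\,d^{(\ell)}_{\delta,\theta/\alpha}(k)$, which is exactly the content of~(\ref{appendixdecomp}) in Proposition~\ref{propKncoag}(ii). Your Pochhammer verification and the normalization via~(\ref{Kncoagid}) match the paper's route, so there is nothing to add.
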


\subsubsection{$K_{n}$ in the case of 
$\alpha=\frac14$}
Proposition~\ref{propKncoag} leads to a new simpler representation in the case of $K_{n}$ derived from a $\mathrm{PD}(\frac{1}{4},0)$ distribution.

\begin{cor}Using 
$
\mathbb{P}_{\frac{1}{2},0}(K_{n}=\ell)={{2n-\ell-1}\choose{n-1}}2^{\ell+1-2n},$ it follows that
\begin{enumerate}
\item[(i)]$\mathbb{P}^{(n)}_{\frac{1}{4},0}(k):=\mathbb{P}_{\frac{1}{4},0}(K_{n}=k)$ can be expressed as
\begin{equation}
\mathbb{P}^{(n)}_{\frac{1}{4},0}(k)=2^{k+2-2n}\sum_{\ell=k}^{n}2^{-\ell}{{2n-\ell-1}\choose{n-1}}
{{2\ell-k-1}\choose{\ell-1}},
\end{equation}
which further simplifies to 
\begin{equation}
\mathbb{P}^{(n)}_{\frac{1}{4},0}(k)=
2^{2-2n} \binom{2n-k-1}{n-1} 
\pFq{3}{2}{\frac{k+1}{2}, \frac{k}{2}, k-n}{k, 1+k-2n}{2}.
\label{quarterKn}
\end{equation}
\item[(ii)]As a special case of Lemma~\ref{PropConKn}, set $\alpha=\delta=\frac12,$ for $\ell=k,\ldots,n,$
$$
\mathbb{P}^{(n)}_{\frac{1}{2}|\frac{1}{4}}(\ell|k)=\frac{2^{k-\ell}{\displaystyle\binom{2\ell-k-1}{\ell-1}}{\displaystyle\binom{2n-\ell-1}{n-1}}}{\displaystyle\binom{2n-k-1}{n-1} \, 
\pFq{3}{2}{\frac{k+1}{2}, \frac{k}{2}, k-n}{k, 1+k-2n}{2}}.
$$
\end{enumerate}
\end{cor}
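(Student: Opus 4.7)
The plan is to derive both statements directly from the combinatorial identity in Proposition~\ref{propKncoag}(iii) together with the explicit $\mathrm{PD}(\tfrac12,0)$ formula for the number of blocks. First I would set $\alpha=\delta=\tfrac12$ in the identity $\mathbb{P}_{\alpha\delta,0}^{(n)}(k)=\sum_{\ell=k}^{n}\mathbb{P}_{\alpha,0}^{(n)}(\ell)\mathbb{P}_{\delta,0}^{(\ell)}(k)$ and substitute $\mathbb{P}_{\frac12,0}(K_{n}=\ell)=\binom{2n-\ell-1}{n-1}2^{\ell+1-2n}$ for each factor. After collecting the powers of $2$ (namely $2^{\ell+1-2n}\cdot 2^{k+1-2\ell}=2^{k+2-2n-\ell}$), this immediately yields the first display in (i).

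Next, to derive the ${}_3F_2$ representation, I would pull out the $\ell=k$ summand, which contributes $2^{2-2n}\binom{2n-k-1}{n-1}$, and reindex via $j=\ell-k$. Writing $a_{j}=2^{-j}\binom{2n-k-j-1}{n-1}\binom{k+2j-1}{k+j-1}$ relative to $a_{0}=\binom{2n-k-1}{n-1}$, a direct computation of the ratio $a_{j+1}/a_{j}$ gives
\[
\frac{a_{j+1}}{a_j}=\frac{1}{2}\cdot\frac{n-k-j}{2n-k-j-1}\cdot\frac{(k+2j)(k+2j+1)}{(k+j)(j+1)}.
\]
Using the identities $n-k-j=-((k-n)+j)$, $2n-k-j-1=-((k+1-2n)+j)$, and $(k+2j)(k+2j+1)=4(\tfrac{k}{2}+j)(\tfrac{k+1}{2}+j)$, the ratio simplifies to
\[
\frac{a_{j+1}}{a_j}=\frac{2\,((k-n)+j)(\tfrac{k}{2}+j)(\tfrac{k+1}{2}+j)}{((k+1-2n)+j)(k+j)(j+1)},
\]
which is the standard term-ratio of the ${}_3F_2$ series with parameters $\bigl(\tfrac{k+1}{2},\tfrac{k}{2},k-n\,;\,k,1+k-2n\bigr)$ evaluated at $z=2$. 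Since the series terminates at $j=n-k$ (because of the $(k-n)_{j}$ factor), this yields the closed-form ${}_3F_2$ in (i).

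Finally, for (ii) I would simply invoke Lemma~\ref{PropConKn} with $\alpha=\delta=\tfrac12$, substitute the Brownian formula for both $\mathbb{P}_{\frac12,0}(K^{(2)}_{\ell}=k)$ and $\mathbb{P}_{\frac12,0}(K^{(1)}_{n}=\ell)$ in the numerator, and insert the ${}_3F_2$ representation from (i) into the denominator. The powers of $2$ consolidate into the factor $2^{k-\ell}$, producing the stated expression.

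The only mildly technical step is the hypergeometric recognition in step two; the rest is algebraic consolidation. I expect the main obstacle to be carefully tracking the signs and the reindexing when rewriting the decreasing factors $n-k-j$ and $2n-k-j-1$ as shifted rising Pochhammer factors, since this is where the atypical argument $z=2$ (as opposed to unit argument) arises and must be accounted for consistently.
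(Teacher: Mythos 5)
Your proposal is correct and follows the same route as the paper: part (i) comes from substituting $\mathbb{P}_{\frac12,0}(K_n=\ell)=\binom{2n-\ell-1}{n-1}2^{\ell+1-2n}$ into the composition identity $\mathbb{P}^{(n)}_{\alpha\delta,0}(k)=\sum_{\ell=k}^{n}\mathbb{P}^{(n)}_{\alpha,0}(\ell)\mathbb{P}^{(\ell)}_{\delta,0}(k)$ of Proposition~\ref{propKncoag} with $\alpha=\delta=\frac12$, and part (ii) is the $\alpha=\delta=\frac12$ case of Lemma~\ref{PropConKn} combined with (i). The one place you genuinely go beyond the paper is the hypergeometric step: the paper does not prove the closed form (\ref{quarterKn}) at all --- the remark states the simplification was communicated by Jim Pitman using \textsc{Mathematica} --- whereas you verify it by hand via the term ratio $a_{j+1}/a_j$ after reindexing $j=\ell-k$. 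I checked your ratio computation: the binomial ratios $\frac{n-k-j}{2n-k-j-1}$ and $\frac{(k+2j)(k+2j+1)}{(k+j)(j+1)}$ are right, the sign bookkeeping $n-k-j=-((k-n)+j)$, $2n-k-j-1=-((k+1-2n)+j)$ and the splitting $(k+2j)(k+2j+1)=4(\tfrac{k}{2}+j)(\tfrac{k+1}{2}+j)$ correctly produce the argument $z=2$, the initial value $a_0=\binom{2n-k-1}{n-1}$ matches, the series terminates at $j=n-k$ because of the factor $(k-n)_j$, and the denominator parameter $1+k-2n$ causes no vanishing since $j\le n-k<2n-k$. So your argument both reproduces the paper's derivation and supplies an elementary proof of the step the paper outsources to computer algebra; the consolidation of powers of $2$ into $2^{k-\ell}$ in (ii) is likewise correct.
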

\begin{rem} The simplification in (\ref{quarterKn}) was communicated to us by Jim Pitman using \textsc{Mathematica}.
\end{rem}

\section{Conditional representations for $S_{\alpha\delta,\theta}$}
We now describe the key distributional results to obtain the description of the EPPF in Theorem~\ref{TheoremGibbsparitionFrag}. Throughout, similar to~\cite{BerPit2000},
we set $\tau_{\alpha\delta}(\cdot):=\tau_{\alpha}(\tau_{\delta}(\cdot))$ for independent generalized gamma subordinators $(\tau_{\alpha},\tau_{\delta}).$

\begin{lem}\label{PropSalphadelta} 
Consider $S_{\alpha\delta,\theta}
$ under a 
$\mathrm{PD}(\alpha\delta,\theta)$ distribution, with $K_n$ number of blocks of a partition of $[n]$. For each $n\ge 1,$ $S_{\alpha\delta,\theta}$ has an equivalent representation,
\begin{equation}\label{YSalphadelta} 
Y^{(n-K_{n}\alpha\delta)}_{\alpha\delta,\theta+K_{n}\alpha\delta}
=S_{\alpha,\theta+n}
\times
{\left[Y^{(\frac{n}{\alpha}-K_{n}\delta)}_{\delta,\frac{\theta}{\alpha}+K_{n}\delta}\right]}^{\frac1\alpha}.
\end{equation}
\end{lem}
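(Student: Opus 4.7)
The plan is to combine the size-biased identity~(\ref{SKn}) with the coagulation decomposition of Lemma~\ref{coaglemma} to establish both equalities. First, I would apply~(\ref{SKn}) with $\alpha$ replaced by $\alpha\delta$, where $K_{n}$ now denotes the number of blocks of a $\mathrm{PD}(\alpha\delta,\theta)$ partition of $[n]$; this immediately yields the left-hand identification
\[
S_{\alpha\delta,\theta} \;=\; Y^{(n-K_{n}\alpha\delta)}_{\alpha\delta,\theta+K_{n}\alpha\delta} \;=\; \frac{S_{\alpha\delta,\theta+n}}{\beta^{\frac{1}{\alpha\delta}}_{\frac{\theta}{\alpha\delta}+K_{n},\,\frac{n}{\alpha\delta}-K_{n}}},
\]
and, crucially, $S_{\alpha\delta,\theta+n}$ is independent of $K_{n}$.

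Next, I would invoke the coagulation decomposition~(\ref{coagdecomp}) of Lemma~\ref{coaglemma} at the parameter level $\theta'=\theta+n$, giving
\[
S_{\alpha\delta,\theta+n} \;=\; S_{\alpha,\theta+n}\,S^{\frac{1}{\alpha}}_{\delta,\frac{\theta+n}{\alpha}},
\]
with independent factors, both independent of $K_{n}$. Substituting and pulling the exponent $\frac{1}{\alpha}$ through the beta factor via $\beta^{\frac{1}{\alpha\delta}}_{a,b}=\left(\beta^{\frac{1}{\delta}}_{a,b}\right)^{\frac{1}{\alpha}}$ produces
\[
S_{\alpha\delta,\theta} \;=\; S_{\alpha,\theta+n}\left[\frac{S_{\delta,\frac{\theta+n}{\alpha}}}{\beta^{\frac{1}{\delta}}_{\frac{\theta}{\alpha\delta}+K_{n},\,\frac{n}{\alpha\delta}-K_{n}}}\right]^{\frac{1}{\alpha}}.
\]

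The final step is to recognize the bracketed ratio as $Y^{(n/\alpha-K_{n}\delta)}_{\delta,\theta/\alpha+K_{n}\delta}$ via the second form of the definition in~(\ref{jamesid}). Taking $\alpha\mapsto\delta$, $\omega=\theta/\alpha+K_{n}\delta$, and $\nu=n/\alpha-K_{n}\delta$ there, one checks $\omega+\nu=(\theta+n)/\alpha$, $\omega/\delta=\theta/(\alpha\delta)+K_{n}$, and $\nu/\delta=n/(\alpha\delta)-K_{n}$, which matches the displayed ratio exactly and delivers the claimed representation.

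The main obstacle is the bookkeeping of the joint distribution with $K_{n}$: every manipulation must preserve the coupling between $K_{n}$ and the stable/beta factors. The independence of $S_{\alpha\delta,\theta+n}$ from $K_{n}$, guaranteed by the derivation of~(\ref{SKn}), together with the fact that~(\ref{coagdecomp}) decomposes $S_{\alpha\delta,\theta+n}$ as a product of variables independent of $K_{n}$, is what lets coagulation be performed inside the $K_{n}$-conditional construction without altering the joint law.
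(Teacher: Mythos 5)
Your proposal is correct and follows essentially the same route as the paper: apply~(\ref{SKn}) with $\alpha\delta$ in place of $\alpha$, decompose $S_{\alpha\delta,\theta+n}=S_{\alpha,\theta+n}\,S^{1/\alpha}_{\delta,\frac{\theta+n}{\alpha}}$ (the paper does this via the subordinator representation of~\cite[Proposition 21]{PY97}, which is what underlies Lemma~\ref{coaglemma}), pull the $\frac1\alpha$ exponent through the beta factor, and identify the resulting ratio as $Y^{(\frac{n}{\alpha}-K_n\delta)}_{\delta,\frac{\theta}{\alpha}+K_n\delta}$ via~(\ref{jamesid}). Your attention to the coupling with $K_n$ matches the role of the explicit same-space construction in the paper's argument.
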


\begin{proof}
An initial description of $S_{\alpha\delta,\theta}$ in terms of $Y^{(n-K_{n}\alpha\delta)}_{\alpha\delta,\theta+K_{n}\alpha\delta}:= S_{\alpha \delta, \theta+n} \times \beta^{-\frac1{\alpha\delta}}_{\frac{\theta}{\alpha\delta} + K_n, \frac{n}{\alpha\delta}-K_n}$ can be read from (\ref{SKn}). Note that the first variable in the product,
$$
S_{\alpha\delta,n+\theta}=S_{\alpha,n+\theta}\times S^{\frac1\alpha}_{\delta,\frac{\theta+n}{\alpha}}
=\frac{\tau_{\alpha}(\tau_{\delta}(G_{\frac{\theta+n}{\alpha\delta}}))}{{\big(\tau_{\delta}(G_{\frac{\theta+n}{\alpha\delta}})\big)}^{\frac1\alpha}}
{\left[\frac{\tau_{\delta}(G_{\frac{\theta+n}{\alpha\delta}})}{G^{\frac1\delta}_{\frac{\theta+n}{\alpha\delta}}}\right]}^{\frac1\alpha},
$$
where equivalences and independence properties can be read from \cite[Proposition 21]{PY97}. Hence, 
\begin{equation}\label{tempYalphadelta}
Y^{(n-K_{n}\alpha\delta)}_{\alpha\delta,\theta+K_{n}\alpha\delta}= S_{\alpha,\theta+n} \times \left(\frac{S_{\delta,\frac{\theta+n}{\alpha}}}{\beta^{1/\delta}_{\frac{\theta}{\alpha\delta} + K_n, \frac{n}{\alpha\delta}-K_n}}\right)^{\frac1\alpha}.
\end{equation}
The result follows by noticing that the ratio with an exponent $\frac1\alpha$ in the last expression equals $Y^{(\frac{n}{\alpha} - K_n \delta)}_{\delta, \frac{\theta}{\alpha} + K_n \delta}$ by definition in~(\ref{SKn}).
\end{proof}

\begin{thm}\label{PropCondSKn}
Consider $S_{\alpha\delta,\theta}:=S_{\alpha,\theta}\times S^{\frac1\alpha}_{\delta,\frac{\theta}{\alpha}}$ under a 
$\mathrm{PD}(\alpha\delta,\theta)$ distribution. As in Lemma~\ref{lemmablockscoag}, $K_{n}:=K^{(2)}(K^{(1)}_{n})$ denotes the number of blocks of a $\mathrm{PD}(\alpha\delta,\theta)$ partition of $[n]$. The random variable 
\begin{equation}\label{EqnCondSKn}
Y^{\big(n-K^{(2)}_{\ell}\alpha\delta\big)}_{\alpha\delta,\theta+K^{(2)}_{\ell}\alpha\delta}
=\frac{S_{\alpha,\theta+n}}{\beta^{\frac1\alpha}_{\frac{\theta}{\alpha}+\ell,\frac{n}{\alpha}-\ell}}\times \left(S_{\delta,\frac{\theta}{\alpha}}\right)^{\frac1\alpha}
\end{equation}
has the conditional distribution of $S_{\alpha\delta,\theta}|K^{(1)}_{n}=\ell.$ Hence, the conditional density is given by
\begin{equation}
\mathbb{P}(S_{\alpha\delta,\theta}\in dt|K^{(1)}_{n}=\ell)/dt:=\sum_{j=1}^{\ell}\mathbb{P}^{(\ell)}_{\delta,\frac{\theta}{\alpha}}(j)f^{(n-j\alpha\delta)}_{\alpha\delta,\theta+j\alpha\delta}(t),
\label{conddensityalphathetaell}
\end{equation}
which gives an alternative expression for the density of the random variable in (\ref{basicSadcond}) for the case of $\ell=k,$ $t=y$ and  $\theta=0$.
\end{thm}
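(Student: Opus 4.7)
The plan is to establish~(\ref{EqnCondSKn}) in three stages, identifying the conditional law of $S_{\alpha\delta,\theta}$ given $K^{(1)}_n=\ell$ directly from the coagulation decomposition, and then matching that law to the $Y^{(\cdot)}_{\alpha\delta,\cdot}$ representation via beta--gamma algebra.

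First, by the coagulation identity~(\ref{alphadeltalocal}) one has $S_{\alpha\delta,\theta}=S_{\alpha,\theta}\,S_{\delta,\theta/\alpha}^{1/\alpha}$ with $S_{\alpha,\theta}$ the inverse local time of $(P_{k,1})\sim\mathrm{PD}(\alpha,\theta)$ and $S_{\delta,\theta/\alpha}$ the (scaled) inverse local time of the independent coagulator $(W_{k,1})\sim\mathrm{PD}(\delta,\theta/\alpha)$. Since $K^{(1)}_n$ is constructed by exchangeable sampling from $(P_{k,1})$ only, it is independent of $(W_{k,1})$, hence of $S_{\delta,\theta/\alpha}$. Applying Corollary~\ref{PDRLresults}(iii) to the $\mathrm{PD}(\alpha,\theta)$ model gives $S_{\alpha,\theta}\mid K^{(1)}_n=\ell\;\stackrel{d}{=}\;Y^{(n-\ell\alpha)}_{\alpha,\theta+\ell\alpha}=S_{\alpha,\theta+n}/\beta^{1/\alpha}_{\theta/\alpha+\ell,n/\alpha-\ell}$, and multiplying by the independent $S_{\delta,\theta/\alpha}^{1/\alpha}$ immediately yields the RHS of~(\ref{EqnCondSKn}) as the conditional distribution of $S_{\alpha\delta,\theta}$ given $K^{(1)}_n=\ell$.

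Second, to exhibit this conditional distribution in the mixture form $Y^{(n-K^{(2)}_\ell \alpha\delta)}_{\alpha\delta,\theta+K^{(2)}_\ell \alpha\delta}$, condition further on $K^{(2)}_\ell=j$. By Lemma~\ref{lemmablockscoag}, conditionally on $K^{(1)}_n=\ell$, $K^{(2)}_\ell$ is the number of blocks of a $\mathrm{PD}(\delta,\theta/\alpha)$ partition of $[\ell]$ derived from $(W_{k,1})$ alone, and therefore is independent of $S_{\alpha,\theta+n}$. Corollary~\ref{PDRLresults}(iii) applied to $\mathrm{PD}(\delta,\theta/\alpha)$ with sample size $\ell$ then gives $S_{\delta,\theta/\alpha}\mid K^{(2)}_\ell=j\;\stackrel{d}{=}\;S_{\delta,\theta/\alpha+\ell}/\beta^{1/\delta}_{\theta/(\alpha\delta)+j,\ell/\delta-j}$. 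Substituting, the conditional distribution of $S_{\alpha\delta,\theta}$ given $(K^{(1)}_n=\ell,\,K^{(2)}_\ell=j)$ becomes
\begin{equation*}
\frac{S_{\alpha,\theta+n}}{\beta^{1/\alpha}_{\theta/\alpha+\ell,\,n/\alpha-\ell}}\times \left[\frac{S_{\delta,\theta/\alpha+\ell}}{\beta^{1/\delta}_{\theta/(\alpha\delta)+j,\,\ell/\delta-j}}\right]^{1/\alpha}.
\end{equation*}

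Third, this must be identified with $Y^{(n-j\alpha\delta)}_{\alpha\delta,\theta+j\alpha\delta}=S_{\alpha\delta,\theta+n}/\beta^{1/(\alpha\delta)}_{\theta/(\alpha\delta)+j,\,n/(\alpha\delta)-j}$. Using~(\ref{alphadeltalocal}) for $\theta+n$ cancels the $S_{\alpha,\theta+n}$ factors, and raising to the $\alpha$-th power reduces the remaining identity to
\begin{equation*}
\frac{S_{\delta,\theta/\alpha+\ell}}{\beta_{\theta/\alpha+\ell,\,n/\alpha-\ell}\,\beta^{1/\delta}_{\theta/(\alpha\delta)+j,\,\ell/\delta-j}}\;\stackrel{d}{=}\;\frac{S_{\delta,(\theta+n)/\alpha}}{\beta^{1/\delta}_{\theta/(\alpha\delta)+j,\,n/(\alpha\delta)-j}}.
\end{equation*}
Applying~(\ref{jamesid}) with $(\omega,\nu)=(\theta/\alpha+\ell,\,n/\alpha-\ell)$ in the $\delta$ model rewrites the left ratio as $S_{\delta,(\theta+n)/\alpha}/\bigl[\beta^{1/\delta}_{\theta/(\alpha\delta)+\ell/\delta,\,n/(\alpha\delta)-\ell/\delta}\,\beta^{1/\delta}_{\theta/(\alpha\delta)+j,\,\ell/\delta-j}\bigr]$, and the resulting equality reduces to the classical beta--product identity $\beta_{a+b,c}\,\beta_{a,b}\stackrel{d}{=}\beta_{a,b+c}$ with $a=\theta/(\alpha\delta)+j$, $b=\ell/\delta-j$, $c=n/(\alpha\delta)-\ell/\delta$. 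Finally, mixing over $j$ with weights $\mathbb{P}(K^{(2)}_\ell=j)=\mathbb{P}^{(\ell)}_{\delta,\theta/\alpha}(j)$ produces the density~(\ref{conddensityalphathetaell}). The main obstacle is precisely this last beta-beta cancellation: the two representations carry beta variables at \emph{different} parameter scales, and only the application of~(\ref{jamesid}) to re-express $S_{\delta,\theta/\alpha+\ell}/\beta_{\theta/\alpha+\ell,n/\alpha-\ell}$ in terms of $S_{\delta,(\theta+n)/\alpha}$ brings the two sides into a form where a single beta--product identity closes the argument.
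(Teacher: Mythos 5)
Your proof is correct, and it reverses the logical direction of the paper's argument in a way worth noting. The paper launches from Lemma~\ref{PropSalphadelta}, i.e.\ from the unconditional representation $S_{\alpha\delta,\theta}=Y^{(n-K_{n}\alpha\delta)}_{\alpha\delta,\theta+K_{n}\alpha\delta}=S_{\alpha,\theta+n}\times\big[Y^{(\frac{n}{\alpha}-K_{n}\delta)}_{\delta,\frac{\theta}{\alpha}+K_{n}\delta}\big]^{\frac1\alpha}$, inserts the conditioning $\{K^{(1)}_{n}=\ell\}$, splits $\beta_{\frac{\theta}{\alpha}+K_{n}\delta,\frac{n}{\alpha}-K_{n}\delta}$ into $\beta_{\frac{\theta}{\alpha}+K^{(1)}_{n},\frac{n}{\alpha}-K^{(1)}_{n}}\,\beta_{\frac{\theta}{\alpha}+K_{n}\delta,K^{(1)}_{n}-K_{n}\delta}$, and recognizes via~(\ref{SKn}) that the second piece, with $K_{n}=K^{(2)}(\ell)$, reconstitutes exactly $S_{\delta,\frac{\theta}{\alpha}}$; once~(\ref{EqnCondSKn}) is in hand, the mixture density~(\ref{conddensityalphathetaell}) is immediate by setting $K^{(2)}_{\ell}=j$. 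You instead bypass Lemma~\ref{PropSalphadelta} entirely: conditioning the a.s.\ factorization $S_{\alpha\delta,\theta}=S_{\alpha,\theta}S^{1/\alpha}_{\delta,\theta/\alpha}$ on $K^{(1)}_{n}=\ell$ is legitimate because $K^{(1)}_{n}$ is built from $(P_{k,1})$ and its samples alone, so the coagulator block is untouched; Corollary~\ref{PDRLresults}(iii) then gives~(\ref{EqnCondSKn}) almost for free, and your work is concentrated in the converse identification, where conditioning further on $K^{(2)}_{\ell}=j$, applying~(\ref{jamesid}) in the $\delta$-model with $(\omega,\nu)=(\frac{\theta}{\alpha}+\ell,\frac{n}{\alpha}-\ell)$, and invoking $\beta_{a,b}\,\beta_{a+b,c}\overset{d}=\beta_{a,b+c}$ collapses the two beta scales and exhibits the conditional law as $Y^{(n-j\alpha\delta)}_{\alpha\delta,\theta+j\alpha\delta}$, hence the mixture~(\ref{conddensityalphathetaell}). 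The ingredients are the same (the coagulation independence structure, (\ref{jamesid})/(\ref{SKn}), and classical beta algebra), but your organization makes the conditional representation transparent and isolates the genuinely nontrivial step as a single clean beta-product identity, at the cost of having to verify the matching with $f^{(n-j\alpha\delta)}_{\alpha\delta,\theta+j\alpha\delta}$ explicitly, whereas the paper's route buys that matching for free but requires the more delicate manipulation of the random-index beta in Lemma~\ref{PropSalphadelta}. One cosmetic point: rather than ``cancelling'' the common factor $S_{\alpha,\theta+n}$, it is cleaner to say you prove the second factors equal in distribution and then multiply by the independent common factor; as written this is what your argument actually does, so no gap results.
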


\begin{proof}
Combining Lemma~\ref{PropSalphadelta} and $K_{n}:=K^{(2)}\big(K^{(1)}_{n}\big)$ yields 
$$
S_{\alpha\delta,\theta} = Y^{\big(n-K^{(2)}(K^{(1)}_{n})\alpha\delta\big)}_{\alpha\delta,\theta+K^{(2)}(K^{(1)}_{n})\alpha\delta}.
$$ 
Then, given $K_n^{(1)} = \ell$, the right hand side becomes $Y^{\big(n-K^{(2)}_\ell \alpha\delta\big)}_{\alpha\delta,\theta+K^{(2)}_\ell\alpha\delta}$ by writing $K^{(2)}(\ell) := K^{(2)}_\ell$, and is equivalent to the distribution of $S_{\alpha\delta,\theta}|K^{(1)}_n = \ell$. The equality in~(\ref{EqnCondSKn}) can be obtained by inserting the conditional statement $\{K^{(1)}_{n}=\ell\}$ in the equation~(\ref{tempYalphadelta}) in the proof of Lemma~\ref{PropSalphadelta}. Consider the ratio with an exponent $\frac1\alpha$ in~(\ref{tempYalphadelta}) given by
$$
\frac{S_{\delta,\frac{\theta+n}{\alpha}}}{\beta^{\frac 1\delta}_{\frac{\theta}{\alpha\delta}+K_{n},\frac{n}{\alpha\delta}-K_{n}}}=\frac{S_{\delta,\frac{\theta}{\alpha}+K_{n}\delta}}{\beta_{\frac{\theta}{\alpha}+K_{n}\delta,\frac{n}{\alpha}-K_{n}\delta}}
=\frac{S_{\delta,\frac{\theta}{\alpha}+K_{n}\delta}}{\beta_{\frac{\theta}{\alpha}+K^{(1)}_{n},\frac{n}{\alpha}-K^{(1)}_{n}}\,\beta_{\frac{\theta}{\alpha}+K_{n}\delta,K^{(1)}_{n}-K_{n}\delta}},
$$
where the first equality is a special case of~(\ref{jamesid}) and the last is just a beta identity using the fact that $\frac{n}{\alpha}>K^{(1)}_{n}\ge K_{n}.$ Replacing $K_n := K^{(2)}\big(K^{(1)}(n)\big)$ from Lemma~\ref{lemmablockscoag} in the latter expression yields that the ratio under consideration conditioning on $K^{(1)}_n = \ell$ equals $\beta_{\frac{\theta}{\alpha}+\ell,\frac{n}{\alpha}-\ell}^{-1}$ times
$$
\frac{S_{\delta,\frac{\theta}{\alpha}+[K^{(2)}(\ell)]\delta}}{\beta_{\frac{\theta}{\alpha}+[K^{(2)}(\ell)]\delta,\ell-[K^{(2)}(\ell)]\delta}} = Y^{\big(\ell - K^{(2)}_\ell \delta\big)}_{\delta, \frac{\theta}{\alpha} +  K^{(2)}_\ell \delta} = S_{\delta, \frac{\theta}{\alpha}},
$$
where the first equality follows from~(\ref{SKn}), and the last equivalence is a special case of (\ref{SKn}) based on the fact that for any fixed integer $\ell,$  $K^{(2)}_{\ell}$ is the number of blocks of a partition of $[\ell]=\{1,\ldots,\ell\}$ generated from $\mathrm{PD}\big(\delta,\frac\theta\alpha\big).$ 
Combining all these results justifies the right hand side of~(\ref{EqnCondSKn}) to be the conditional distribution of $S_{\alpha\delta,\theta}|K^{(1)}_{n}=\ell.$ What remains is to set $K^{(2)}_{\ell}=j$ so as to recognize that $Y^{(n-j\alpha\delta)}_{\alpha\delta,\theta+j\alpha\delta}$ is equivalent to~(\ref{conddentheta}) with $k=j$ and $\alpha\delta$ in place of $\alpha,$ and thus it has density $f^{(n-j\alpha\delta)}_{\alpha\delta,\theta+j\alpha\delta}.$ Otherwise one mixes over 
$\mathbb{P}_{\delta,\frac{\theta}{\alpha}}^{(\ell)}(j):=\mathbb{P}_{\delta,\frac{\theta}{\alpha}}(K^{(2)}_{\ell}=j)$ as indicated to get~(\ref{conddensityalphathetaell}).
\end{proof}

We conclude this subsection with some more distributional results about $S_{\alpha\delta,\theta}$ following from Lemma~\ref{PropSalphadelta} and Theorem~\ref{PropCondSKn}.

\begin{cor}
Consider the same setting as in Lemma~\ref{PropSalphadelta}.
\begin{enumerate}
\item[(i)] The random variable $Y^{(\frac{n}{\alpha}-K_{n}\delta)}_{\delta,\frac{\theta}{\alpha}+K_{n}\delta} = \frac{S_{\delta,\frac{\theta+n}{\alpha}}}{\beta^{\frac1\delta}_{\frac{\theta}{\alpha\delta} + K_n, \frac{n}{\alpha\delta}-K_n}}$ in the representation~(\ref{YSalphadelta}) of $S_{\alpha\delta, \theta}$ can be alternatively expressed as
\begin{equation}
\label{coagid}
Y^{(\frac{n}{\alpha}-K_{n}\delta)}_{\delta,\frac{\theta}{\alpha}+K_{n}\delta}
=\frac{S_{\delta,\frac{\theta}{\alpha}}}{\beta_{\frac{\theta}{\alpha}+K^{(1)}_{n},\frac{n}{\alpha}-K^{(1)}_{n}}}
\overset{d}=\frac{S_{\delta,\frac{\theta}{\alpha}}}
{\prod_{j=1}^{n}\beta_{\frac{\theta+\alpha+j-1}{\alpha},\frac{1-\alpha}{\alpha}}}.
\end{equation}
\item[(ii)] The distribution of $S_{\alpha\delta,\theta}|K_{n}=k$ corresponds to the random variable $S_{\alpha,\theta+n}
\times
{\left[Y^{(\frac{n}{\alpha}-k\delta)}_{\delta,\frac{\theta}{\alpha}+k\delta}\right]}^{\frac1\alpha},$ where
\begin{equation}
Y^{(\frac{n}{\alpha}-k\delta)}_{\delta,\frac{\theta}{\alpha}+k\delta}=\frac{S_{\delta,\frac{\theta+n}{\alpha}}}{\beta^{\frac1\delta}_{\frac{\theta}{\alpha\delta}+k,\frac{n}{\alpha\delta}-k}}\overset{d}=\frac{S_{\delta,\frac{\theta}{\alpha}}}{\beta_{\frac{\theta}{\alpha}+K^{(1)}_{n|k},\frac{n}{\alpha}-K^{(1)}_{n|k}}},
\label{condCoag}
\end{equation}
and $K^{(1)}_{n|k}$ has distribution $\mathbb{P}^{(n)}_{\alpha|\alpha\delta}(\ell|k).$
\end{enumerate}
\end{cor}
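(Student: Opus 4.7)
The plan is to obtain part (i) by re-examining the chain of manipulations already exhibited in the proof of Theorem~\ref{PropCondSKn}, but retaining the \emph{unconditional} random block counts rather than substituting $K_n^{(1)}=\ell$; once part (i) is in hand, part (ii) follows by conditioning on $\{K_n=k\}$ and invoking the identification of the conditional law of $K_n^{(1)}$ given in Lemma~\ref{PropConKn}.

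For part (i), I would begin from the representation on the far left of~(\ref{coagid}) and first apply the identity~(\ref{jamesid}) with the role of $\alpha$ played by $\delta$, with $\omega=\frac{\theta}{\alpha}+K_n\delta$ and $\nu=\frac{n}{\alpha}-K_n\delta$, to convert
$$
\frac{S_{\delta,\frac{\theta+n}{\alpha}}}{\beta^{1/\delta}_{\frac{\theta}{\alpha\delta}+K_n,\frac{n}{\alpha\delta}-K_n}}
\;=\;\frac{S_{\delta,\frac{\theta}{\alpha}+K_n\delta}}{\beta_{\frac{\theta}{\alpha}+K_n\delta,\frac{n}{\alpha}-K_n\delta}}.
$$
Next, since $K_n\delta\le K_n\le K_n^{(1)}\le n/\alpha$, I would split the beta in the denominator via the standard subdivision $\beta_{a,b+c}\stackrel{d}=\beta_{a,b}\,\beta_{a+b,c}$ with $a=\frac{\theta}{\alpha}+K_n\delta$, $b=K_n^{(1)}-K_n\delta$, and $c=\frac{n}{\alpha}-K_n^{(1)}$, to factor out the independent term $\beta_{\frac{\theta}{\alpha}+K_n^{(1)},\frac{n}{\alpha}-K_n^{(1)}}$. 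The residual ratio $\tfrac{S_{\delta,\theta/\alpha+K_n\delta}}{\beta_{\theta/\alpha+K_n\delta,K_n^{(1)}-K_n\delta}}$ is, upon conditioning on $K_n^{(1)}$ and recalling $K_n=K^{(2)}(K_n^{(1)})$, precisely an instance of~(\ref{SKn}) applied to a $\mathrm{PD}(\delta,\theta/\alpha)$ partition of $[K_n^{(1)}]$, and hence equals $S_{\delta,\theta/\alpha}$ in distribution. This yields the first equality of~(\ref{coagid}). The distributional identity with the product form is then immediate from Proposition~\ref{Propbetaid1}(i), namely~(\ref{betaKid}), since $K_n^{(1)}$ is exactly the number of blocks of a $\mathrm{PD}(\alpha,\theta)$ partition of $[n]$.

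For part (ii), I would simply impose the event $\{K_n=k\}$ throughout. Lemma~\ref{PropSalphadelta} already expresses $S_{\alpha\delta,\theta}$ as $S_{\alpha,\theta+n}\cdot\bigl[Y^{(n/\alpha-K_n\delta)}_{\delta,\theta/\alpha+K_n\delta}\bigr]^{1/\alpha}$ with $S_{\alpha,\theta+n}$ independent of $K_n$, so conditioning on $K_n=k$ fixes the middle expression in~(\ref{condCoag}). The distributional equality with $S_{\delta,\theta/\alpha}/\beta_{\theta/\alpha+K_{n|k}^{(1)},n/\alpha-K_{n|k}^{(1)}}$ is then the translation of part (i) under conditioning, where the conditional law of $K_n^{(1)}\mid K_n=k$ is $\mathbb{P}^{(n)}_{\alpha|\alpha\delta}(\ell|k)$ by Lemma~\ref{PropConKn}.

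The main obstacle I anticipate lies in the second step of part (i): justifying that the beta-subdivision $\beta_{\frac{\theta}{\alpha}+K_n\delta,\frac{n}{\alpha}-K_n\delta}\stackrel{d}=\beta_{\frac{\theta}{\alpha}+K_n\delta,K_n^{(1)}-K_n\delta}\,\beta_{\frac{\theta}{\alpha}+K_n^{(1)},\frac{n}{\alpha}-K_n^{(1)}}$ can be performed \emph{jointly} with the identification of $S_{\delta,\theta/\alpha}$, despite the shape parameter $K_n^{(1)}-K_n\delta$ being random and dependent on $(K_n^{(1)},K_n)$. To handle this cleanly, I would carry out the entire argument conditionally on $(K_n^{(1)},K_n)=(\ell,j)$, where independence between the two freshly constructed beta factors and the ratio is classical beta-gamma algebra for deterministic parameters, then de-condition and observe that the resulting joint law depends on $(\ell,j)$ only through the exhibited functional form. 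The coagulation coupling~(\ref{alphadeltalocal}), together with the independence built into $S_{\alpha,\theta+n}$ (from Proposition~21 of \cite{PY97}, used already inside the proof of Lemma~\ref{PropSalphadelta}), secures that this de-conditioning produces the stated joint equivalence.
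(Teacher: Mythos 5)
Your argument is correct and follows essentially the same route as the paper: part (i) is exactly the chain used in the proof of Theorem~\ref{PropCondSKn} (the (\ref{jamesid}) conversion, the beta subdivision justified conditionally on the block counts, and the identification of the residual ratio as $S_{\delta,\frac{\theta}{\alpha}}$ via (\ref{SKn}) applied to the $\mathrm{PD}(\delta,\frac{\theta}{\alpha})$ partition of $[K^{(1)}_{n}]$), with the product form coming from (\ref{betaKid}), which is precisely what the paper invokes. Part (ii) is likewise obtained, as you propose, by conditioning the coupled identity of part (i) on $\{K_{n}=k\}$ and reading off the conditional law of $K^{(1)}_{n}$ from Lemma~\ref{PropConKn}, understanding the right-hand side of (\ref{condCoag}) as the conditioned coupled construction rather than as independent components.
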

\begin{proof}
In~(\ref{coagid}), the first equality follows from a similar proof to that of Theorem~\ref{PropCondSKn}, while the second equality is due to the identity (\ref{betaKid}).
\end{proof}


\end{document}